\newtheorem{lemma}{{\sc Lemma}}[section]
\newtheorem{corollary}[lemma]{{\sc Corollary}}
\newtheorem{proposition}[lemma]{{\sc Proposition}}
\newtheorem{theorem}[lemma]{{\sc Theorem}}
\theoremstyle{definition}
\newtheorem{remark}[lemma]{{\sc Remark}}
\newtheorem{conjecture}[lemma]{{\sc Conjecture}}
\numberwithin{equation}{section}
\def\Ga{{\mathfrak{a}}}
\def\Gb{{\mathfrak{b}}}
\def\Gc{{\mathfrak{c}}}
\def\Gg{{\mathfrak{g}}}
\def\Gh{{\mathfrak{h}}}
\def\Gm{{\mathfrak{m}}}
\def\Gn{{\mathfrak{n}}}
\def\BA{{\mathbb{A}}}
\def\BC{{\mathbb{C}}}
\def\BF{{\mathbb{F}}}
\def\BQ{{\mathbb{Q}}}
\def\BZ{{\mathbb{Z}}}
\def\CA{{\mathcal A}}
\def\CB{{\mathcal B}}
\def\DD{{\mathcal D}}
\def\CE{{\mathcal E}}
\def\CF{{\mathcal F}}
\def\CO{{\mathcal O}}
\def\CK{{\mathcal K}}
\def\CL{{\mathcal L}}
\def\CR{{\mathcal R}}
\def\CV{{\mathcal V}}
\def\CX{{\mathcal X}}
\def\ad{{\mathop{\rm ad}\nolimits}}
\def\coh{{\mathop{\rm coh}\nolimits}}
\def\Cok{{\mathop{\rm Cok}\nolimits}}
\def\End{\mathop{\rm{End}}\nolimits}
\def\eq{{\mathop{\rm{eq}}\nolimits}}
\def\Ext{{\mathop{\rm Ext}\nolimits}}
\def\For{{\mathop{\rm For}\nolimits}}
\def\Fr{{\mathop{\rm Fr}\nolimits}}
\def\Har{{\mathop{\rm Har}\nolimits}}
\def\Hom{\mathop{\rm Hom}\nolimits}
\def\id{\mathop{\rm id}\nolimits}
\def\Ind{\mathop{\rm Ind}\nolimits}
\def\inte{{\mathop{\rm{int}}\nolimits}}
\def\Image{\mathop{\rm Im}\nolimits}
\def\Ker{\mathop{\rm Ker\hskip.5pt}\nolimits}
\def\Mod{\mathop{\rm Mod}\nolimits}
\def\rank{{\mathop{\rm rank}\nolimits}}
\def\res{\mathop{\rm res}\nolimits}
\def\Trace{{\mathop{\rm Trace}\nolimits}}
\def\Tor{{\rm{Tor}}}
\def\Bk{{\mathbf{k}}}
\begin{document}
\title[quantized flag manifolds]
{
Differential operators on quantized flag manifolds at roots of unity III}
\author{Toshiyuki TANISAKI}
\address{
9-3-12 Jiyugaoka, Munakata, Fukuoka, 811-4163 Japan}
\email{ttanisaki@icloud.com}
\begin{abstract}
We describe the cohomology of the sheaf of twisted differential operators on the quantized flag manifold 
at a root of unity whose order is a prime power.
It follows from this and our previous results that 
for the De Concini-Kac type quantized enveloping algebra, where the parameter $q$ is specialized to a root of unity whose order is a prime power, 
the number of irreducible modules with a certain specified central character coincides with the dimension of the total cohomology group of the corresponding Springer fiber. 
This gives a weak version of a conjecture of Lusztig concerning non-restricted representations of the quantized enveloping algebra.
\end{abstract}
\keywords{quantized enveloping algebra, flag manifold, differential operator}

\subjclass[2020]{Primary: 20G05. Secondary: 17B37}
\maketitle

\section{Introduction}
\subsection{}
Let $\Gg_k$ be the Lie algebra of a connected semisimple algebraic group over an algebraically closed field $k$ of positive characteristic.
In a celebrated work \cite{BMR} 
Bezrukavnikov, Mirkovi\'c and Rumynin established 
two important results concerning the sheaf $\DD$ of twisted differential operators on the corresponding flag manifold.
One is 
the Beilinson-Bernstein type derived equivalence between the category of certain representations of $\Gg_k$ and that of $\DD$-modules.
The other is the split Azumaya property 
of  $\DD$ over a certain central subalgebra.
They obtained a significant application of these results 
to the  non-restricted representation theory of $\Gg_k$.

The present work is the third part of  the series of papers giving an analogue of \cite{BMR} using quantized flag manifolds and quantized enveloping algebras at roots of unity
instead of ordinary flag manifolds and ordinary enveloping algebras in positive characteristics.
\subsection{}
Let $G$ be a connected simply-connected simple algebraic group over $\BC$.
Take a maximal torus $H$ of $G$ and a Borel subgroup $B^-$  of $G$ containing $H$.
Let $P$ and $Q$ be the weight lattice and the root lattice respectively.
Using the corresponding quantum group we can construct  a non-commutative projective scheme $\CB_q$ which is called the 
quantized flag manifold
(see \cite{M}, \cite{R}, \cite{LR}).
We are concerned with the situation where the parameter $q$ is specialized to a primitive $\ell$-th root of unity $\zeta\in\BC^\times$.
Here, $\ell>1$ is an odd integer which is prime to $|P/Q|$.
When $G$ is of type $G_2$, we further assume that $\ell$ is prime to 3.
For each $t\in H$ we have a sheaf $\DD_{\CB_\zeta,t}$ of twisted differential operators on $\CB_\zeta$.
The split Azumaya property of $\DD_{\CB_\zeta,t}$ was already established in \cite{T1}.
We also expect that 
the Beilinson-Bernstein type derived equivalence holds for 
$\DD_{\CB_\zeta,t}$.
Namely, we conjecture that the category of coherent $\DD_{\CB_\zeta,t}$-modules is derived equivalent to the category of finitely generated $U_{\zeta,t}$-modules when $t$ satisfies a certain regularity condition.
Here $U_\zeta$ denotes the De Concini-Kac type quantized enveloping algebra at $q=\zeta$, 
and we set
$U_{\zeta,t}=U_\zeta\otimes_{Z_{\Har}(U_\zeta)}\BC$, 
where 
$Z_{\Har}(U_\zeta)$ denotes the Harish-Chandra center of 
$U_\zeta$ and $Z_{\Har}(U_\zeta)\to\BC$ is the character associated to $t$.
As in \cite{BMR} this conjecture follows if we can show
\begin{equation}
\label{eq:RG}
R\Gamma(\CB_\zeta,\DD_{\CB_\zeta,t})
\cong
U_{\zeta,t}.
\end{equation}
In \cite{T2} we have shown that \eqref{eq:RG} is a consequence of 
\begin{equation}
\label{eq:RI}
R\Ind(\CO_\zeta(B^-)_\ad)\cong 
\CO_\zeta(G)_\ad\otimes_{\CO(H/W\bullet)}\CO(H).
\end{equation}
Here, $\CO_\zeta(G)$ and
$\CO_\zeta(B^-)$
denote the quantized coordinate algebras of $G$ and $B^-$ at $q=\zeta$ respectively.
We add the subscript ``$\ad$'' since they are regarded as comodules over themselves via the adjoint action.
We denote the coordinate algebras of $H$ and $H/W\bullet$ by 
 $\CO(H)$ and $\CO(H/W\bullet)$ respectively, 
 where $H/W\bullet$ is the quotient of $H$ with respect to a twisted action of the Weyl group $W$.
The induction functor from the category of $\CO_\zeta(B^-)$-comodules to that of
$\CO_\zeta(G)$-comodules is denoted as $\Ind$.

In this paper we restrict ourselves to the case when $\ell$ is a prime power, and derive  a weak form of \eqref{eq:RI}
from the corresponding fact for $q=1$.
In the course of the proof we use some facts from the theory of canonical bases (global crystal bases) in an essential way.
The standard resolution given in \cite{APW}
also plays a crucial role in our argument concerning $R\Ind$.
We note that for $G$ of type $A$ we can show \eqref{eq:RI} without assuming that $\ell$ is a prime power by a totally different method.
This will appear in another paper.

From the weak form of \eqref{eq:RI} mentioned above
we obtain the following results.

\begin{theorem}
\label{thm:Int1}
Assume that $\ell$ is a power of a prime.
Let  $t\in H$ and assume that the order of $t^\ell$ is finite and prime to $\ell$.
Then we have
\[
R\Gamma(\CB_\zeta,\DD_{\CB_\zeta,t})
\cong
U_{\zeta,t}.
\]
\end{theorem}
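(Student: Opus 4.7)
The strategy is dictated by the discussion preceding the statement: via \cite{T2}, the theorem is reduced to establishing the isomorphism \eqref{eq:RI}, and in fact only a localized form of \eqref{eq:RI} adapted to the regular locus $\{t\in H\mid t^\ell \text{ has finite order prime to }\ell\}$ is required to recover \eqref{eq:RG} for the $t$ in the hypothesis. So my plan is to prove a ``weak'' form of \eqref{eq:RI} strong enough to yield the desired conclusion after tensoring with the character of $Z_{\Har}(U_\zeta)$ determined by $t$.

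The weak form of \eqref{eq:RI} is to be deduced from its classical $q=1$ analogue, namely the well-known isomorphism
\[
R\Ind(\CO(B^-)_\ad)\cong \CO(G)_\ad\otimes_{\CO(H/W\bullet)}\CO(H),
\]
which follows from Borel-Weil-Bott together with the Chevalley description of the Harish-Chandra center. To transport this to $q=\zeta$, I would apply the standard resolution of $\CO_\zeta(B^-)_\ad$ from \cite{APW}; this reduces the computation of $R\Ind$ on both sides to an analysis term by term of a resolution built from copies of $\CO_\zeta(B^-)$ itself, where $R\Ind$ is tractable. The comparison between $q=1$ and $q=\zeta$ is then carried out via global crystal bases: a canonical $\BZ[q,q^{-1}]$-form compatible with both the adjoint coaction and the Harish-Chandra projection allows the classical isomorphism to be lifted to an identification of the two sides over an appropriate integral form and then specialized.

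The assumption that $\ell$ is a prime power enters at the specialization step of the canonical basis argument; it ensures that the ``bad'' torsion that can appear when specializing $\BZ[q,q^{-1}]$-forms is concentrated at a single rational prime, which is what makes the induction-and-comparison argument sufficiently clean to yield the weak form of \eqref{eq:RI}. The regularity hypothesis that the order of $t^\ell$ is finite and prime to $\ell$ is then exactly the condition that removes the difference between the weak and full forms after tensoring along the character associated to $t$, so that \eqref{eq:RG} --- and hence the stated isomorphism $R\Gamma(\CB_\zeta,\DD_{\CB_\zeta,t})\cong U_{\zeta,t}$ --- can be extracted.

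The main obstacle I expect is the simultaneous compatibility of three structures through the canonical basis comparison: the adjoint $\CO_\zeta(G)$-coaction on $\CO_\zeta(G)_\ad$, the $\CO_\zeta(B^-)$-comodule structure needed to run the APW standard resolution, and the module structure over the Harish-Chandra center $\CO(H/W\bullet)$ that appears on the right-hand side of \eqref{eq:RI}. Producing a global crystal basis, or rather an integral form of both sides, that is adapted to all three structures at once --- so that specialization really yields an \emph{isomorphism} at $q=\zeta$ rather than only an equality of characters --- is the delicate point; once it is available, the deduction of Theorem \ref{thm:Int1} from the weak form of \eqref{eq:RI} proceeds along the lines already laid out in \cite{T2}.
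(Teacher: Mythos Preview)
Your high-level strategy --- reduce via \cite{T2} to a statement about $R\Ind(\CO_\zeta(B^-)_\ad)$, use the APW standard resolution, and compare with the $q=1$ analogue --- matches the paper. But your account of the two key mechanisms is wrong, and the obstacle you anticipate is not the real one. The prime-power hypothesis is not about torsion concentration when specializing $\BZ[q,q^{-1}]$-forms. The actual device is an intermediate Noetherian $\BA$-subalgebra $\CR\subset\BC_\zeta$ with $\overline{\CR}=\CR/(q-1)$ \emph{nonzero} and containing a field; one works over $\CR$ and reduces modulo $q-1$ to land in the classical case over that field. With $m$ the order of $t^\ell$ one takes $\CR=\BZ[\zeta_{\ell m}]$, and then $\overline{\CR}\cong(\BZ/f_\ell(1))\otimes\BZ[\zeta_m]$. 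For $\ell=p^k$ one has $f_\ell(1)=p$, giving $\overline{\CR}\supset\BF_p$; if $\ell$ has two distinct prime factors then $f_\ell(1)=1$ and $\overline{\CR}=0$, so the bridge to $q=1$ vanishes. The hypothesis that $t^\ell$ has finite order prime to $\ell$ is precisely what makes $t\in H(\CR)$ for this $\CR$; it is not a regularity condition and has nothing to do with ``removing the difference between weak and full forms''.

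The comparison itself is also not a direct canonical-basis lift of the classical isomorphism. The paper reformulates both sides as the families of $\CO_\CR(H)$-modules $\Ext^j_{\CR,\Gg}(\Delta_\CR(\lambda),-)$ and $\Ext^j_{\CR,\Gb^-}(\Delta_\CR(\lambda),-)$ for $\lambda\in P^+$ (Proposition~\ref{prop:equiv}); canonical bases enter only earlier, to equip $\CO_R(G)_\ad$ with a good filtration so that these Ext groups are tractable (Propositions~\ref{prop:dualWeyl1}, \ref{prop:dualWeyl2}). The passage from $\overline{\CR}$ back to $\CR$ is then a Nakayama argument: finite generation of the Ext groups over $\CO_\CR(H)$ (Proposition~\ref{prop:FG}), the classical isomorphism over $\overline{\CR}$ (Proposition~\ref{prop:Fp0}), injectivity of the base change $\overline{\CR}\otimes_\CR(-)$ on these Ext groups (Lemma~\ref{lem:M-inj}), and $\bigcap_n(q-1)^n\CR=0$ together force the localized isomorphism. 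The delicate ``simultaneous compatibility of three structures through the canonical basis'' you worry about is never needed, and no mechanism of the sort you describe would produce the argument.
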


\begin{theorem}
\label{thm:Int2}
Let $\ell$ and  $t$ be as in Theorem \ref{thm:Int1}.
We further assume $|W\bullet t|=|W|$.
Then we have an equivalence
\[
D^b(\Mod_{\coh}(\DD_{\CB_\zeta,t}))
\cong
D^b(\Mod_f(U_{\zeta,t}))
\]
between the bounded derived categories.
Here, $\Mod_{\coh}(\DD_{\CB_\zeta,t})$
$($resp.\ 
$\Mod_f(U_{\zeta,t}))$
denotes the category of coherent $\DD_{\CB_\zeta,t}$-modules
$($resp.\ 
finitely generated $U_{\zeta,t}$-modules$)$.
\end{theorem}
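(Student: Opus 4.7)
The plan is to follow the Beilinson--Bernstein/BMR template, with Theorem \ref{thm:Int1} supplying the essential computational input. I would introduce the pair of functors
\[
R\Gamma\colon D^b(\Mod_{\coh}(\DD_{\CB_\zeta,t}))\longrightarrow D^b(\Mod_f(U_{\zeta,t})),
\qquad
\CL=\DD_{\CB_\zeta,t}\otimes^L_{U_{\zeta,t}}(-),
\]
where $R\Gamma$ naturally takes values in $\Mod_f(U_{\zeta,t})$ because $U_{\zeta,t}$ acts on cohomology through its identification with $R\Gamma(\CB_\zeta,\DD_{\CB_\zeta,t})$ given by Theorem \ref{thm:Int1}. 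The goal is to verify that $(\CL,R\Gamma)$ is an adjoint pair of mutually quasi-inverse equivalences.

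The first step is to check that both functors have finite cohomological dimension and preserve the bounded coherent/finitely generated subcategories. For $R\Gamma$ this is the ``dimension of $\CB_\zeta$'' estimate already available from \cite{T1,T2}. For $\CL$ the analogous bound amounts to the finiteness of the Tor-dimension of $\DD_{\CB_\zeta,t}$ as a right $U_{\zeta,t}$-module, and this is the first place where the regularity hypothesis $|W\bullet t|=|W|$ enters: after reducing via the split Azumaya property of \cite{T1}, one is left with the finiteness of the homological dimension of the coordinate ring of the (regular) fiber of the central subalgebra over $t$.

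The core of the argument then splits into the two adjunction isomorphisms. Evaluating the unit $\mathrm{id}\to R\Gamma\circ\CL$ at $U_{\zeta,t}\in D^b(\Mod_f(U_{\zeta,t}))$ produces exactly
\[
U_{\zeta,t}\stackrel{\sim}{\longrightarrow} R\Gamma(\CB_\zeta,\DD_{\CB_\zeta,t}),
\]
which is Theorem \ref{thm:Int1}; since $U_{\zeta,t}$ generates $D^b(\Mod_f(U_{\zeta,t}))$ as a triangulated category and both functors are of finite cohomological dimension, this propagates to an isomorphism on the whole category, so $R\Gamma$ becomes fully faithful. For the counit $\CL\circ R\Gamma\to\mathrm{id}$ I would invoke the Azumaya splitting from \cite{T1} to identify $\Mod_{\coh}(\DD_{\CB_\zeta,t})$ with coherent sheaves on the spectrum $\widetilde{\CB}_\zeta$ of the relevant central subalgebra; essential surjectivity then reduces to the standard fact that such a scheme is generated by its structure sheaf.

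The hard part will be controlling $\CL$ precisely under the hypothesis $|W\bullet t|=|W|$: one needs simultaneously the boundedness of $\CL$ and the fact that the counit is a quasi-isomorphism, and both rest on the interaction of the Azumaya splitting with the regularity of $t$, which should together force the fiber of the central subalgebra at $t$ to be of finite homological dimension and the Azumaya algebra to split globally on that fiber. Once this is in place, a standard devissage, implemented concretely via the APW-type standard resolutions recalled in the Introduction, completes the argument.
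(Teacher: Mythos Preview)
Your overall plan---the BMR template with the adjoint pair $(\CL,R\Gamma)$ and Theorem~\ref{thm:Int1} supplying the isomorphism of the unit on the generator $U_{\zeta,t}$---is correct and is indeed what underlies the result. In the paper itself this step is not redone: it is quoted as Proposition~\ref{prop:D-eq}, proved in \cite{T2}, which says that Conjecture~\ref{conj:D} together with regularity of $t$ already yields the derived equivalence. So the paper's proof of Theorem~\ref{thm:Int2} is the single line ``Theorem~\ref{thm:Int1} verifies Conjecture~\ref{conj:D}; now apply Proposition~\ref{prop:D-eq}.''

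There is, however, a real gap in your reconstruction of the argument behind Proposition~\ref{prop:D-eq}. You invoke the split Azumaya property of $\DD_{\CB_\zeta,t}$ from \cite{T1} both to bound $\CL$ and to establish essential surjectivity. But that property is proved in \cite{T1} only under the extra conditions (b1), (b2) on $\ell$ (weakened to (a4) only in the exceptional situation relevant to Theorem~\ref{thm:Int3}); none of these is part of the hypotheses of Theorem~\ref{thm:Int2}, and Proposition~\ref{prop:D-eq} is stated under (a1)--(a3) alone. So the route through the Azumaya structure of $\DD_{\CB_\zeta,t}$ is not available in this generality. In \cite{T2} essential surjectivity is obtained instead by the line-bundle argument familiar from \cite{BMR}: the twists $\DD_{\CB_\zeta,t}\otimes_{\CO_{\CB_\zeta}}\CO_{\CB_\zeta}(\lambda)$ generate $D^b(\Mod_\coh(\DD_{\CB_\zeta,t}))$ by ampleness, and one checks directly that each such twist lies in the essential image of $\CL$. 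Note also that even if you granted yourself the Azumaya splitting, the spectrum of the central subalgebra is a bundle over the flag variety and is not affine, so ``generated by its structure sheaf'' would still require essentially the same ampleness argument. As written, your approach would only prove Theorem~\ref{thm:Int2} under the additional hypotheses (b1), (b2).
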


\subsection{}
Let us describe the application of our result to the representation theory of $U_\zeta$.
Let $Z(U_\zeta)$ denote the center of $U_\zeta$.
For a character $\xi:Z(U_\zeta)\to\BC$ we set $U_\zeta(\xi)=
U_\zeta/U_\zeta\Ker(\xi)$.
Then any irreducible $U_\zeta$-module is a $U_\zeta(\xi)$-module for some $\xi$.
We can associate to $\xi$ a pair $([t],g)\in (H/W\bullet)\times G$ such that $t^\ell$ is conjugate to $g_{s}$ in $G$.
Here, the Jordan decomposition of $g\in G$ is denoted as $g=g_{s}g_{u}$.
Note that $[t]$ corresponds to the restriction of $\xi$ to the Harish-Chandra center, and $g$ is determined from the restriction of $\xi$ to the Frobenius center.
Let $\xi'$ be another central character whose associated pair is of the form  $([t],g')$.
It is known from the theory of quantum adjoint action (\cite{DKP}) 
that 
$U_\zeta(\xi)\cong U_\zeta(\xi')$ if $g$ and $g'$ are conjugate in $G$.
Let $\CB_0$ be the (ordinary) flag manifold for $G_0=Z_G(g_{s})$, and let 
$\CB_0^{g_u}$ be the Springer fiber associated the the unipotent element $g_u\in G_0$.
Lusztig's conjecture \cite{LK} implies that if $t$ is regular, then there exists a natural isomorphism
\begin{equation}
\label{eq:KK}
K(\Mod_f(U_\zeta(\xi)))\cong K(\Mod_\coh(\CO_{\CB_0^{g_u}}))
\end{equation}
between the Grothendieck groups, where
$\Mod_f(U_\zeta(\xi))$ denotes the category of finitely generated $U_\zeta(\xi)$-modules, and 
$\Mod_\coh(\CO_{\CB_0^{g_u}})$ denotes that of 
coherent $\CO_{\CB_0^{g_u}}$-modules.
This isomorphism implies that the number of irreducible modules in $\Mod_f(U_\zeta(\xi))$ coincides with the dimension of the total cohomology group of the Springer fiber $\CB_0^{g_u}$.
By using the parabolic induction for non-restricted $U_\zeta$-modules, the proof of \eqref{eq:KK} is reduced to the case where $g_{s}$ is exceptional in the sense that the semisimple rank of $G_0$ coincides with that of $G$.

Arguing similarly to \cite{BMR} using 
Theorem \ref{thm:Int2} and the split Azumaya property of 
$\DD_{\CB_\zeta,t}$ established in \cite{T1} we obtain the following result.
\begin{theorem}
\label{thm:Int3}
Assume that $\ell$ is a power of a prime.
Let $\xi$ be a character of $Z(U_\zeta)$ with associated pair  $([t],g)\in (H/W\bullet)\times G$.
We assume that $g_s$ is exceptional, $t^\ell=g_s$, $|W\bullet t|=|W|$, and 
$\ell$ is prime to $|Q/Q_0|$, where 
$Q_0$ is the root lattice of $G_0=Z_G(g_s)$.
Then we have an isomorphism
\[
K(\Mod_f(U_\zeta(\xi)))\cong K(\Mod_\coh(\CO_{\CB_0^{g_u}}))
\]
of Grothendieck groups, where 
$\CB_0$ is the flag manifold for $G_0$.
\end{theorem}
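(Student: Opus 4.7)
The approach is to adapt the strategy of \cite{BMR} to the quantum setting, combining Theorem \ref{thm:Int2} with the split Azumaya property of $\DD_{\CB_\zeta,t}$ established in \cite{T1}. First I would recall that $Z(U_\zeta)$ is generated by the Harish-Chandra center $Z_{\Har}(U_\zeta)$ together with the Frobenius center (the image of the quantum Frobenius map). A central character $\xi$ with associated pair $([t],g)$ is determined by its restriction to each factor: $[t]$ records the Harish-Chandra part and $g$ the Frobenius part, compatibly with the relation $t^\ell=g_s$. Under the hypothesis $|W\bullet t|=|W|$, the algebra $U_{\zeta,t}$ is obtained by specializing $Z_{\Har}(U_\zeta)$ at $[t]$, and $U_\zeta(\xi)$ appears as the direct summand of $U_{\zeta,t}$ cut out by the Frobenius character corresponding to $g$.

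Next, Theorem \ref{thm:Int2} provides the derived equivalence
\[
D^b(\Mod_f(U_{\zeta,t}))\simeq D^b(\Mod_{\coh}(\DD_{\CB_\zeta,t})),
\]
and this equivalence respects the action of the Frobenius center on both sides. Consequently the block $D^b(\Mod_f(U_\zeta(\xi)))$ is identified with the full subcategory of coherent $\DD_{\CB_\zeta,t}$-modules on which the Frobenius center acts through the character determined by $g$, that is, modules set-theoretically supported on the fiber of the quantum Frobenius morphism above $g$. The split Azumaya property from \cite{T1} produces, on this fiber, a Morita equivalence between the relevant category of $\DD_{\CB_\zeta,t}$-modules and the category of coherent sheaves on the splitting locus. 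Under the three conditions that $g_s$ is exceptional, $t^\ell=g_s$, and $\ell$ is prime to $|Q/Q_0|$, this splitting locus is identified with the Springer fiber $\CB_0^{g_u}$ of $G_0=Z_G(g_s)$. Passing to Grothendieck groups then yields the desired isomorphism.

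The main obstacle is this last identification, which is the quantum analogue of the geometric core of \cite{BMR}. Concretely, one must pin down the fiber of the Frobenius-center spectrum over $g$, verify that its dimension matches that of the $G_0$-Springer resolution (where the exceptionality of $g_s$, giving $\rank(G_0)=\rank(G)$, is crucial so that the full quantum flag manifold $\CB_\zeta$ contributes rather than a partial one), and confirm that the Azumaya splitting produces a single copy of $\CB_0^{g_u}$ with the correct twist. The hypothesis that $\ell$ is prime to $|Q/Q_0|$ is used precisely here: it ensures that the $\ell$-th power map on tori is compatible with the lattice data of $G_0$, so that the fiber is a single reduced copy of $\CB_0^{g_u}$ rather than a disjoint union or nontrivial covering. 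Once these geometric matchings are in place, the K-theoretic statement follows formally from the derived equivalence and the Morita equivalence coming from the splitting.
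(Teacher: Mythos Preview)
Your proposal is correct and follows essentially the same route as the paper: combine the derived equivalence of Theorem~\ref{thm:Int2} with the split Azumaya property of $\DD_{\CB_\zeta,t}$ from \cite{T1}, then argue as in \cite{BMR} to pass to $K$-groups via the support decomposition over the Frobenius center. The paper itself gives no more detail than this, simply stating that the result follows ``similarly to \cite{BMR}'' from these two inputs (see Theorem~\ref{thm:K-gp}).

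One small correction of emphasis: in the paper the hypothesis that $\ell$ is prime to $|Q/Q_0|$ (condition (a4)) is invoked already to guarantee the split Azumaya property of $\DD_{\CB_\zeta,t}$ for $t$ with $t^\ell=h_0$, as a replacement for the stronger conditions (b1), (b2) needed in \cite{T1} for general $t$; it is not only used in the final identification of the splitting locus. Also, ``direct summand'' is a slight overstatement---$U_\zeta(\xi)$ is a quotient of $U_{\zeta,t}$, and what one actually uses is that $\Mod_f(U_\zeta(\xi))$ sits inside $\Mod_f(U_{\zeta,t})$ as the full subcategory of modules set-theoretically supported at the point of $\Spec Z_{\Fr}(U_\zeta)$ determined by $g$; the $K$-group then decomposes by support exactly as in \cite{BMR}.
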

\subsection{}
The original conjecture of Lusztig in \cite{LK} is much stronger than \eqref{eq:KK}.
There, the categories 
$\Mod_f(U_\zeta(\xi))$ and
$\Mod_\coh(\CO_{\CB_0^{g_u}})$ 
are replaced by a graded counterpart and an equivariant counterpart respectively, 
and a geometric description of the basis of the equivariant $K$-group of the Springer fiber which should correspond to 
the irreducible graded $U_\zeta(\xi)$-modules is given.
To establish it we need to develop a theory 
analogous to that of \cite{BMR2}, \cite{BM}.
We will treat it in our subsequent work.
\subsection{}
A proof of  \eqref{eq:RG}, when $\ell$ is a prime greater than the Coxeter number is given in \cite{BK}; however, there is a gap in it (see also \cite[Remark 5.3]{T2}).
\subsection{}
For a ring $R$ we denote its center by $Z(R)$. 
The category of left $R$-modules is denoted as $\Mod(R)$. 
If $R$ is a commutative ring, we set $M^*=\Hom_R(M,R)$ for $M\in\Mod(R)$.

For a Hopf algebra $H$ over a commutative ring $R$ we denote the comultiplication, the counit and the antipode 
by $\Delta:H\to H\otimes H$, $\varepsilon:H\to R$, $S:H\to H$ 
respectively.
We sometimes use Sweedler's notation
\[
\Delta^{(n)}(h)=\sum_{(h)}h_{(0)}\otimes\dots\otimes h_{(n)}
\]
for the iterated comultiplication $\Delta^{(n)}:H\to H^{\otimes n+1}$.

\section{Quantized enveloping algebras}
\subsection{}
Let $\Gg$ be a finite-dimensional simple Lie algebra over the complex number field $\BC$, and let $\Gh$ be a Cartan subalgebra of $\Gg$.
Let $\Delta\subset\Gh^*$ be the root system, and let 
$W\subset GL(\Gh^*)$ be the Weyl group.
We denote by 
\begin{equation}
\label{eq:bilin}
(\;,\;):\Gh^*\times\Gh^*\to\BC
\end{equation}
the $W$-invariant symmetric bilinear form such that 
$(\alpha,\alpha)=2$ 
for short roots $\alpha$.
For $\alpha\in\Delta$ we set $\alpha^\vee=2\alpha/(\alpha,\alpha)$.
Set 
\begin{equation}
Q=\sum_{\alpha\in\Delta}\BZ\alpha,
\qquad
Q^\vee=\sum_{\alpha\in\Delta}\BZ\alpha^\vee,
\end{equation}
and
\begin{equation}
P=\{\lambda\in\Gh^*\mid
(\lambda,Q^\vee)\subset\BZ\},
\qquad
P^\vee=\{\lambda\in\Gh^*\mid
(\lambda,Q)\subset\BZ\}.
\end{equation}
Then we have
$
Q\subset Q^\vee\subset P^\vee$
and
$
Q\subset P\subset P^\vee$.
We take a set $\{\alpha_i\mid i\in I\}$ of simple roots, and denote by $\Delta^+$ the corresponding set of positive roots.
Set
\[
Q^+=\sum_{\alpha\in\Delta^+}\BZ_{\geqq0}\alpha, 
\qquad
P^+=
\{\alpha\in P\mid
(\lambda,\alpha^\vee)\geqq0\;\;(\alpha\in\Delta^+)\}.
\]
We set $a_{ij}=(\alpha_j,\alpha_i^\vee)\in\BZ$ for $i, j\in I$, and $d_i=(\alpha_i,\alpha_i)/2\in\BZ$ for $i\in I$.
Let $\Gb^+$ and $\Gb^-$ be the Borel subalgebras of $\Gg$ containing $\Gh$ such that the set of weights of $\Gb^\pm/\Gh$ coincides with $\pm\Delta^+$.
We set $\Gn^\pm=[\Gb^\pm,\Gb^\pm]$.

\subsection{}
For a $\BZ$-lattice $\Gamma$ of $\Gh^*$ satisfying 
\begin{equation}
\label{eq:Gamma}
Q\subset \Gamma\subset P^\vee
\end{equation}
we denote by $U_{\BF}(\Gg;\Gamma)$ the associative algebra over $\BF:=\BQ(q)$ generated by elements 
$k_\gamma$\;($\gamma\in \Gamma$), 
$e_i$, $f_i$\;($i\in I$)
satisfying the relations 
\begin{align}
&k_0=1,
\qquad
k_\gamma k_{\gamma'}=k_{\gamma+\gamma'}\quad&(\gamma, \gamma'\in \Gamma),
\\
&k_\gamma e_i=q^{(\alpha_i,\gamma)}e_ik_\gamma,
\quad
k_\gamma f_i=q^{-(\alpha_i,\gamma)}f_ik_\gamma
\qquad&(\gamma\in \Gamma,\;\;i\in I),
\\
&
e_if_j-f_je_i
=\delta_{ij}
\frac{k_i-k_{i}^{-1}}{q_i-q_i^{-1}}
\qquad&(i, j\in I),
\\
&
\sum_{n=0}^{1-a_{ij}}
(-1)^n
e_i^{(1-a_{ij}
-n)}e_je_i^{(n)}
=
\sum_{n=0}^{1-a_{ij}}
(-1)^n
f_i^{(1-a_{ij}
-n)}f_jf_i^{(n)}
=0
&(i,j\in I, i\ne j).
\end{align}
Here, we set $q_i=q^{d_i}$, $k_i=k_{\alpha_i}$ for $i\in I$,
and 
\[
e_i^{(n)}=e_i^n/[n]_{q_i}!,
\qquad
f_i^{(n)}=f_i^n/[n]_{q_i}!
\]
with
\[
[n]_t!=
\prod_{s=1}^n
\frac{t^{n+1-s}-t^{-n-1+s}}{t^s-t^{-s}}\in\BZ[t,t^{-1}]
\]
for $i\in I$, $n\in\BZ_{\geqq0}$.

The $\BF$-algebra $U_\BF(\Gg;\Gamma)$ is a Hopf algebra via
\begin{align}
&
\Delta(k_\gamma)=k_\gamma\otimes k_\gamma,\quad
\Delta(e_i)=e_i\otimes1+k_i\otimes e_i,\quad
\Delta(f_i)=f_i\otimes k_i^{-1}+1\otimes f_i,
\\
&
\varepsilon(k_\gamma)=1,
\quad
\varepsilon(e_i)=\varepsilon(f_i)=0,
\\
&
S(k_\gamma)=k_{-\gamma},
\quad
S(e_i)=-k_i^{-1}e_i,
\quad
S(f_i)=-f_ik_i
\end{align}
for $\gamma\in \Gamma$, $i\in I$.

We denote by $U_\BF(\Gh;\Gamma)$ 
the $\BF$-subalgebra of $U_\BF(\Gg;\Gamma)$ 
generated by 
$k_\gamma$\;($\gamma\in \Gamma$).
We denote by 
$U_\BF(\Gn^+;\Gamma)$
(resp.\
$U_\BF(\Gn^-;\Gamma)$)
the $\BF$-subalgebra of $U_\BF(\Gg;\Gamma)$ 
generated by 
$e_i$
(resp.\ $f_i$)
for $i\in I$.
We define $U_\BF(\Gb^\pm;\Gamma)$ to be the 
subalgebra of $U_\BF(\Gg;\Gamma)$ 
generated by $U_\BF(\Gh;\Gamma)$ and $U_\BF(\Gn^\pm;\Gamma)$.

\subsection{}
We set $\BA=\BZ[q,q^{-1}]$.
For $\Gamma$ as in \eqref{eq:Gamma} 
we denote by $U_\BA(\Gg;\Gamma)$ the smallest $\BA$-subalgebra of $U_\BF(\Gg;\Gamma)$ which contains 
$k_\gamma$\;($\gamma\in \Gamma$), 
$(q_i-q_i^{-1})e_i$, $(q_i-q_i^{-1})f_i$\;($i\in I$) and is stable under Lusztig's braid group action on $U_\BF(\Gg;\Gamma)$ (see \cite{DP}).
It is an $\BA$-form of $U_\BF(\Gg;\Gamma)$ in the sense that it is a free $\BA$-module satisfying 
$\BF\otimes_\BA U_\BA(\Gg;\Gamma)
\cong
U_\BF(\Gg;\Gamma)$.
It is closely related to the De Concini-Kac form of $U_\BF(\Gg;\Gamma)$, and is sometimes called the
De Concini-Procesi form.
It is a Hopf algebra over $\BA$.

Assume that $\Gamma$ is a $\BZ$-lattice of $\Gh^*$ satisfying 
\begin{equation}
\label{eq:Gamma2}
Q^\vee\subset \Gamma\subset P^\vee.
\end{equation}
In this case we define another $\BA$-form 
$U^L_\BA(\Gg;\Gamma)$
of 
$U_\BF(\Gg;\Gamma)$, called the Lusztig $\BA$-form, as follows.
Set
\[
\Gamma^*=
\{\lambda\in P\mid (\lambda,\Gamma)\subset\BZ\}.
\]
For $\lambda\in \Gamma^*$ we define an $\BF$-algebra homomorphism
\begin{equation}
\label{eq:chi}
\chi_\lambda:U_\BF(\Gh;\Gamma)\to\BF
\end{equation}
by 
$
\chi_\lambda(k_\gamma)=q^{(\lambda,\gamma)}
$ for $\gamma\in \Gamma$.
Define an $\BA$-subalgebra $U^L_\BA(\Gh;\Gamma)$ of $U_\BF(\Gh;\Gamma)$ by 
\[
U^L_\BA(\Gh;\Gamma)=
\{h\in U_\BF(\Gh;\Gamma)\mid
\chi_\lambda(h)\in\BA\;\;(\lambda\in \Gamma^*)\}.
\]
Then we define $U^L_\BA(\Gg;\Gamma)$ to be the $\BA$-subalgebra of 
$U_\BF(\Gg;\Gamma)$ generated by 
$U^L_\BA(\Gh;\Gamma)$ and 
$e_i^{(n)}$, $f_i^{(n)}$
for $i\in I$, $n\in\BZ_{\geqq0}$.
It is a Hopf algebra over $\BA$.
\begin{remark}
The Cartan part $U^L_\BA(\Gh;\Gamma)$ of our $U^L_\BA(\Gg;\Gamma)$ is slightly larger than that of the original Lusztig form (see \cite{DL}, \cite{TA}).
\end{remark}
In the rest of this paper we only use 
$U_\BA(\Gg;P)$ and $U^L_\BA(\Gg;Q^\vee)$.
So we set
\begin{equation}
U_\BA(\Gg)=U_\BA(\Gg;P),
\qquad
U^L_\BA(\Gg)
=
U^L_\BA(\Gg;Q^\vee)
\end{equation}
for simplicity.
An advantage of using $\Gamma=P$ for $U_\BA(\Gg)$ is that the structure of the center becomes simpler (see Proposition \ref{prop:HC} below).
On the other hand an 
advantage of using $\Gamma=Q^\vee$ for $U^L_\BA(\Gg)$ is that we can consider an integrable highest weight module with highest weight $\lambda$ for any $\lambda\in P^+$.

For $\Ga=\Gh, \Gn^\pm, \Gb^\pm$ we set
\[
U_\BA(\Ga)=U_\BA(\Gg)\cap U_\BF(\Ga;P),
\qquad
U^L_\BA(\Ga)=U^L_\BA(\Gg)\cap U_\BF(\Ga;Q^\vee).
\]

Let $R$ be a commutative $\BA$-algebra.
We define Hopf algebras $U_R(\Gg)$, $U^L_R(\Gg)$ by
\[
U_R(\Gg)=R\otimes_\BA U_\BA(\Gg),
\qquad
U^L_R(\Gg)=R\otimes_\BA U^L_\BA(\Gg).
\]
In particular, $U_\BF(\Gg)=U_\BF(\Gg;P)$ and 
$U^L_\BF(\Gg)=U_\BF(\Gg;Q^\vee)$.
For $\Ga=\Gh, \Gn^\pm, \Gb^\pm$ we define subalgebras 
$U_R(\Ga)$ 
(resp.\ $U^L_R(\Ga)$) 
of 
$U_R(\Gg)$ 
(resp.\ $U^L_R(\Gg)$) by
$U_R(\Ga)=R\otimes_\BA U_\BA(\Ga)$
(resp.\ 
$U^L_R(\Ga)=R\otimes_\BA U^L_\BA(\Ga)$).
We will also use
$\tilde{U}_R(\Gn^-):=S({U}_R(\Gn^-))$ and 
$\tilde{U}^L_R(\Gn^-):=S({U}^L_R(\Gn^-))$.

\subsection{}
Let $R$ be a commutative $\BA$-algebra.
Note that for $\lambda\in P$ we have an algebra homomorphism
$
\chi_\lambda:U^L_R(\Gh)\to R
$
induced by \eqref{eq:chi}.
Let $\Ga$ be one of $\Gh$, $\Gb^-$, $\Gg$.
A (left) $U^L_R(\Ga)$-module $M$ is called a weight module 
if we have
\[
M=\bigoplus_{\lambda\in P}M_\lambda,
\qquad
M_\lambda
=\{m\in M\mid
hm=\chi_\lambda(h)m\;\;(h\in U^L_R(\Gh))\}.
\]
In the case $\Ga=\Gh$ any weight module is called an integrable $U^L_R(\Gh)$-module.
In the case $\Ga=\Gg$
(resp.\
$\Ga=\Gb^-$) 
a weight module $M$ over $U^L_R(\Ga)$ is called an integrable $U^L_R(\Ga)$-module
if for any $m\in M$ there exists some $N>0$ such that we have 
$e_i^{(n)}m=f_i^{(n)}m=0$ 
(resp.\
$f_i^{(n)}m=0$) 
for $i\in I$, $n>N$.
We can similarly define the notion of an integrable right $U^L_R(\Ga)$-module.
We denote by $\Mod_\inte(U^L_R(\Ga))$ the category of integrable left $U^L_R(\Ga)$-modules.

For $\lambda\in P^+$ we define a left $U^L_\BA(\Gg)$-module $\Delta_\BA(\lambda)$ and a right
$U^L_\BA(\Gg)$-module $\Delta^r_\BA(\lambda)$
by
\begin{align*}
\Delta_\BA(\lambda)
=U^L_\BA(\Gg)/I(\lambda),
\qquad
\Delta^r_\BA(\lambda)
=U^L_\BA(\Gg)/I^r(\lambda),
\end{align*}
where $I(\lambda)$
(resp.\ $I^r(\lambda)$) 
is the left 
(resp.\ right) 
ideal of $U^L_\BA(\Gg)$ generated by 
$h-\chi_\lambda(h)$ for $h\in U^L_\BA(\Gh)$, 
$e_i^{(n)}$
(resp.\ $f_i^{(n)}$) 
for $i\in I$, $n>0$, 
and 
$f_i^{(n)}$
(resp.\ $e_i^{(n)}$) 
for $i\in I$, $n>(\lambda,\alpha_i^\vee)$.
By the theory of canonical bases they are free of finite rank over $\BA$, and their 
characters are given by Weyl's character formula.
We define a left $U^L_\BA(\Gg)$-module 
$\nabla_\BA(\lambda)$ 
and
a right $U^L_\BA(\Gg)$-module 
$\nabla^r_\BA(\lambda)$
by
\[
\nabla_\BA(\lambda)=
\Hom_\BA(\Delta^r_\BA(\lambda),\BA),
\qquad
\nabla^r_\BA(\lambda)=
\Hom_\BA(\Delta_\BA(\lambda),\BA).
\]
Here, the left (resp.\ right) action of $U^L_\BA(\Gg)$ on 
$\nabla_\BA(\lambda)$
(resp.\  $\nabla^r_\BA(\lambda)$)
is given by
\[
\langle uv^*, v\rangle
=\langle v^*, vu\rangle
\qquad
(u\in U^L_\BA(\Gg), \;
v^*\in \nabla_\BA(\lambda), \;
v\in \Delta^r_\BA(\lambda)),
\]
\[
(\text{resp.\ }
\quad
\langle v^*u, v\rangle
=\langle v^*, uv\rangle
\qquad
(u\in U^L_\BA(\Gg), \;
v^*\in \nabla^r_\BA(\lambda), \;
v\in \Delta_\BA(\lambda))
).
\]
Then $\Delta_\BA(\lambda)$ and $\nabla_\BA(\lambda)$
(resp.\
 $\Delta^r_\BA(\lambda)$ and $\nabla^r_\BA(\lambda)$)
are integrable left 
(resp.\ right)
$U^L_\BA(\Gg)$-modules.
We have a canonical injective homomorphism 
$\Delta_\BA(\lambda)\to \nabla_\BA(\lambda)$ 
(resp.\
$\Delta^r_\BA(\lambda)\to \nabla^r_\BA(\lambda)$ 
)
of left 
(resp.\ right)
$U^L_\BA(\Gg)$-modules.
For a commutative $\BA$-algebra $R$ we set
\begin{align*}
\Delta_R(\lambda)=R\otimes_\BA \Delta_\BA(\lambda),
\qquad
\nabla_R(\lambda)=R\otimes_\BA \nabla_\BA(\lambda),
\\
\Delta^r_R(\lambda)=R\otimes_\BA \Delta^r_\BA(\lambda),
\qquad
\nabla^r_R(\lambda)=R\otimes_\BA \nabla^r_\BA(\lambda).
\end{align*}

\section{Quantized coordinate algebras}
\subsection{}
We denote the connected, simply-connected simple algebraic group over $\BC$ with Lie algebra $\Gg$ by $G$.
Let $H$ (resp.\ $B^\pm$, resp.\ $N^\pm$)  be the connected closed subgroup of $G$ with Lie algebra $\Gh$ (resp.\ $\Gb^\pm$, resp.\ $\Gn^\pm$).

Let $\Ga$ be one of $\Gh$, $\Gb^-$, $\Gg$ and denote by $A$ the corresponding subgroup of $G$.
For a commutative $\BA$-algebra $R$ 
we define a Hopf algebra $\CO_R(A)$ as follows
(see \cite{APW}).
Note that $U^L_R(\Ga)^*:=\Hom_R(U^L_R(\Ga),R)$ is a $U^L_R(\Ga)$-bimodule by 
\begin{equation}
\label{eq:Ob}
\langle 
u_1\cdotp\varphi \cdotp u_2,u
\rangle
=\langle
\varphi, u_2uu_1\rangle
\qquad
(\varphi\in U^L_R(\Ga)^*, 
u, u_1, u_2\in U^L_R(\Ga)).
\end{equation}
Then $\CO_R(A)$ is defined to be the $R$-submodule of 
$U^L_R(\Ga)^*$ consisting of $\varphi\in U^L_R(\Ga)^*$ satisfying the following equivalent conditions:
\begin{itemize}
\item[(a)]
$U^L_R(\Ga)\cdotp\varphi$ is an integrable left 
$U^L_R(\Ga)$-module;
\item[(b)]
$\varphi \cdotp U^L_R(\Ga)$ is an integrable right
$U^L_R(\Ga)$-module.
\end{itemize}
Note that $\CO_R(A)$ is a $U^L_R(\Ga)$-bimodule via
\eqref{eq:Ob}.
Moreover, $\CO_R(A)$ turns out to be a Hopf algebra over $R$ by
\begin{align*}
&\langle\varphi_1\varphi_2,u\rangle
=
\langle\varphi_1\otimes\varphi_2,\Delta(u)\rangle
&(\varphi_1, \varphi_2\in\CO_R(A),\; u\in U^L_R(\Ga)),
\\
&\langle\Delta(\varphi),u_1\otimes u_2\rangle
=
\langle\varphi,u_1u_2\rangle
&
(\varphi\in\CO_R(A),\; u_1, u_2\in U^L_R(\Ga)).
\end{align*}
The inclusions 
$
U^L_R(\Gh)
\subset
U^L_R(\Gb^-)
\subset
U^L_R(\Gg)
$
induce
natural surjective Hopf algebra homomorphisms
\begin{equation}
\CO_R(G)\to\CO_R(B^-)\to\CO_R(H).
\end{equation}
In the case $\Ga=\Gg$ and $\Ga=\Gb^-$ we will sometimes regard $U_R^L(\Ga)^*$ and $\CO_R(A)$ as a left $U^L_R(\Ga)$-module via the adjoint action defined by
\begin{equation}
\label{eq:adO}
\ad(u)(\varphi)
=
\sum_{(u)}
u_{(0)}\cdotp\varphi \cdotp S^{-1}u_{(1)}
\qquad
(u\in
U^L_R(\Ga),
\;
\varphi\in U_R^L(\Ga)^*).
\end{equation}
When $\CO_R(A)$  is regarded as a left $U^L_R(\Ga)$-module via the adjoint action, we write it as $\CO_R(A)_\ad$.

\subsection{}
\label{subsec:AlgGp}
For an algebraically closed field $k$ we denote by $G_k$ the connected, simply-connected, simple algebraic group over $k$ with the same root system $\Delta$ as $G$.
We take a maximal torus $H_k$ and a Borel subgroup $B^-_k$ 
of $G_k$ such that $B_k^-\supset H_k$.
Regard $k$ as an $\BA$-algebra via $q\mapsto 1$.
Then for $\Ga=\Gg, \Gb^-, \Gh$ with $A=G, B^-, H$ respectively, the Hopf algebra $U^L_k(\Ga)$ coincides with the algebra of distributions on $A_k$.
Hence the category $\Mod_\inte(U^L_k(\Ga))$ is naturally identified with the category $\Mod(A_k)$ of (locally finite) rational $A_k$-modules.
Therefore, $\CO_k(A)$ is nothing but the algebra of regular functions on the algebraic group $A_k$ 
(see \cite[Part II, 1.20]{Ja}).

\subsection{}
Recall that for $\lambda\in P$ we have an $R$-algebra homomorphism
$\chi_\lambda:U^L_R(\Gh)\to R$.
It is easily seen that 
\begin{equation}
\label{eq:OH}
\CO_R(H)=\bigoplus_{\lambda\in P}R\chi_\lambda.
\end{equation}
We denote by $H(R)$ the set of $R$-algebra homomorphisms from $\CO_R(H)$ to $R$.

\subsection{}
We have a Hopf algebra homomorphism
$
\pi:U^L_R(\Gb^-)\to U^L_R(\Gh)
$
given by
\[
\pi(h)=h\quad
(h\in U^L_R(\Gh)),
\qquad
\pi(y)=\varepsilon(y)\quad
(y\in \tilde{U}^L_R(\Gn^-)).
\]
This induces an embedding $\CO_R(H)\hookrightarrow\CO_R(B^-)$ of Hopf algebras.
We will sometimes regard $\CO_R(H)$ 
as a Hopf subalgebra of $\CO_R(B^-)$.

Note that we have the isomorphism
\[
{U}^L_R(\Gh)
\otimes
\tilde{U}^L_R({\Gn}^-)
\cong
{U}^L_R({\Gb}^-)
\qquad(h\otimes y\leftrightarrow hy)
\]
of $R$-modules.
Note also that the $R$-algebra $\tilde{U}^L_R({\Gn}^-)$ is equipped with the natural grading 
\[
\tilde{U}^L_R({\Gn}^-)
=
\bigoplus_{\gamma\in Q^+}
\tilde{U}^L_R({\Gn}^-)_{-\gamma}
\]
such that $Sf_i^{(n)}\in \tilde{U}^L_R({\Gn}^-)_{-n\alpha_i}$
for $i\in I$, $n>0$.
We set
\begin{equation}
\tilde{U}^L_R({\Gn}^-)^\bigstar=
\bigoplus_{\gamma\in Q^+}
(\tilde{U}^L_R({\Gn}^-)_{-\gamma})^*
\subset 
\tilde{U}^L_R({\Gn}^-)^*.
\end{equation}
We will identify $\tilde{U}^L_R({\Gn}^-)^\bigstar$ with a subspace of $U^L_R(\Gb^-)^*$ by the embedding
\[
i:\tilde{U}^L_R({\Gn}^-)^\bigstar\to U^L_R(\Gb^-)^*
\]
given by
\[
\langle i(\psi),hy\rangle
=\varepsilon(h)\psi(y)
\qquad
(\psi\in \tilde{U}^L_R({\Gn}^-)^\bigstar, \;
h\in U^L_R(\Gh),\; 
y\in \tilde{U}^L_R({\Gn}^-)).
\]
The following result is standard.
\begin{proposition}
\label{prop:BHN}
\begin{itemize}
\item[(i)]
$\tilde{U}^L_R({\Gn}^-)^\bigstar$ is a subalgebra of 
$\CO_R(B^-)$ characterized by
\[
\tilde{U}^L_R({\Gn}^-)^\bigstar=
\{\psi\in\CO_R(B^-)\mid
\psi h=\varepsilon(h)\psi\;\;(h\in U^L_R(\Gh))\}.
\]

\item[(ii)]
We have 
\[
\tilde{U}^L_R({\Gn}^-)^\bigstar\otimes\CO_R(H)\cong
\CO_R(B^-)
\qquad(\psi\otimes\chi\leftrightarrow\psi\chi).
\]
Moreover, for $\psi\in\tilde{U}^L_R({\Gn}^-)^\bigstar$, $\chi\in\CO_R(H)$ we have
\[
\langle\psi\chi,hy\rangle
=
\langle\psi,y\rangle
\langle\chi,h\rangle
\qquad
(h\in U^L_R(\Gh),\; 
y\in \tilde{U}^L_R({\Gn}^-)).
\]
\end{itemize}
\end{proposition}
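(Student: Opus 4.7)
The plan is to read this as the Hopf-algebraic counterpart of the semidirect product decomposition $B^-=N^-\rtimes H$, with $\CO_R(N^-)$ realized as the graded dual $\tilde{U}^L_R(\Gn^-)^\bigstar$ inside $\CO_R(B^-)$. I would prove (ii) first and then deduce (i) by a linear-independence argument. As a preliminary, check that $i(\tilde{U}^L_R(\Gn^-)^\bigstar)\subseteq\CO_R(B^-)$: for $\psi$ supported on a finite subset $F\subset Q^+$, the defining formula $\langle i(\psi),hy\rangle=\varepsilon(h)\psi(y)$ combined with the $Q^-$-grading on $U^L_R(\Gb^-)$ gives $\langle i(\psi)\cdot f_i^{(n)},v\rangle=\langle i(\psi),f_i^{(n)}v\rangle=0$ uniformly in $v$ once $n$ is larger than a bound depending only on $F$ and $i$, since multiplication by $f_i^{(n)}$ shifts the $\tilde{U}^L_R(\Gn^-)$-grade by $-n\alpha_i$, eventually pushing every homogeneous component out of $-F$. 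Integrability under $U^L_R(\Gh)$ is automatic from the grading.

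The heart of (ii) is the pairing formula $\langle\psi\chi,hy\rangle=\psi(y)\chi(h)$. Expand via the comultiplication:
\[
\langle\psi\chi,hy\rangle=\sum\psi(h_{(1)}y_{(1)})\chi(h_{(2)}y_{(2)})=\sum\varepsilon(h_{(1)})\psi(y_{(1)})\chi(h_{(2)})\varepsilon(y_{(2)}),
\]
where the second equality uses $\langle i(\psi),h'y'\rangle=\varepsilon(h')\psi(y')$ and $\langle\chi,h'y'\rangle=\chi(h')\varepsilon(y')$ (the latter coming from the Hopf embedding $\CO_R(H)\hookrightarrow\CO_R(B^-)$ dual to $\pi$). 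The expression factors as $(\sum\varepsilon(h_{(1)})\chi(h_{(2)}))(\sum\psi(y_{(1)})\varepsilon(y_{(2)}))$ and collapses to $\chi(h)\psi(y)$ by the counit axioms applied separately to $h$ and $y$. Combined with the basis $\{\chi_\lambda\}_{\lambda\in P}$ of $\CO_R(H)$ from \eqref{eq:OH}, this pairing immediately yields injectivity of the multiplication map. For surjectivity, decompose $\varphi\in\CO_R(B^-)$ under the right $U^L_R(\Gh)$-action into weight components $\varphi=\sum_\mu\varphi_\mu$ with $\varphi_\mu\cdot h=\chi_\mu(h)\varphi_\mu$, and set $\psi_\mu:=\varphi_\mu\chi_{-\mu}$ (multiplication in $\CO_R(B^-)$, using that $\CO_R(B^-)$ is a right $U^L_R(\Gb^-)$-module algebra). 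Then $\psi_\mu$ has right weight $0$, i.e., $\psi_\mu\cdot h=\varepsilon(h)\psi_\mu$, so $\psi_\mu=i(\tilde\psi_\mu)$ for $\tilde\psi_\mu:=\psi_\mu|_{\tilde{U}^L_R(\Gn^-)}\in\tilde{U}^L_R(\Gn^-)^*$, and one checks $\varphi_\mu=\psi_\mu\chi_\mu$ directly.

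To deduce (i), if $\varphi=\sum_i\psi_i\chi_{\lambda_i}$ with distinct $\lambda_i$ satisfies $\varphi\cdot h=\varepsilon(h)\varphi$, then $\sum_i\chi_{\lambda_i}(h)\psi_i\chi_{\lambda_i}=\varepsilon(h)\sum_i\psi_i\chi_{\lambda_i}$, and the basis property of $\{\chi_{\lambda_i}\}$ forces $\chi_{\lambda_i}=\varepsilon$ whenever $\psi_i\ne 0$, giving $\varphi=\psi_0\in\tilde{U}^L_R(\Gn^-)^\bigstar$; the reverse inclusion is immediate. The principal technical obstacle lies in the surjectivity step of (ii): showing that $\tilde\psi_\mu$ actually lies in the \emph{graded} dual $\tilde{U}^L_R(\Gn^-)^\bigstar$ rather than merely the full dual. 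This $Q^+$-finiteness rests essentially on the integrability hypothesis defining $\CO_R(B^-)$, specifically the locally nilpotent left action of the divided powers $f_i^{(n)}$, which bounds the support of $\tilde\psi_\mu$; this mirrors the classical fact that the coordinate algebra of the unipotent group $N^-$ is the graded dual of its distribution algebra.
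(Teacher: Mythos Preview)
The paper does not supply a proof of this proposition, declaring it ``standard''; your write-up is therefore more detailed than anything in the source, and its overall architecture---prove the pairing formula, deduce injectivity, obtain surjectivity via the right $U^L_R(\Gh)$-weight decomposition and multiplication by $\chi_{-\mu}$, then read off (i)---is correct and is the expected one.

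There is one genuine inaccuracy in your final paragraph. You attribute the $Q^+$-finiteness of $\tilde\psi_\mu$ to the locally nilpotent left action of the divided powers $f_i^{(n)}$. That is the wrong ingredient of integrability: nilpotence of $f_i^{(n)}\cdot\varphi$ says $\varphi(vf_i^{(n)})=0$ for $n$ large, but an element of $\tilde U^L_R(\Gn^-)_{-\gamma}$ with $\gamma$ large need not be reachable from anything by right multiplication by a single high divided power (think of long alternating words in the $Sf_i$'s). What actually forces finite $\gamma$-support is the \emph{weight-module} part of left integrability. Concretely, if $\varphi$ has right $U^L_R(\Gh)$-weight $\mu$, then its component $\varphi^{[\gamma]}$ supported on $U^L_R(\Gh)\cdot\tilde U^L_R(\Gn^-)_{-\gamma}$ has left $U^L_R(\Gh)$-weight $\mu+\gamma$: using $yh=\sum\chi_\gamma(h_{(1)})h_{(2)}y$ for $y\in\tilde U^L_R(\Gn^-)_{-\gamma}$ one computes
\[
(h\cdot\varphi^{[\gamma]})(h'y)=\varphi^{[\gamma]}(h'yh)=\sum\chi_\gamma(h_{(1)})\chi_\mu(h'h_{(2)})\varphi^{[\gamma]}(y)=\chi_{\mu+\gamma}(h)\,\varphi^{[\gamma]}(h'y).
\]
Thus the $\gamma$-decomposition \emph{is} the left weight decomposition (shifted by $\mu$), and finiteness of the latter gives $\tilde\psi_\mu\in\tilde U^L_R(\Gn^-)^\bigstar$. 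With this correction your argument is complete.
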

\subsection{}
For $\lambda\in P$ set
\[
\CO_R(G)^r_\lambda
=
\{
\varphi\in\CO_R(G)
\mid
\varphi \cdotp h=\chi_\lambda(h)\varphi
\;\;(h\in U^L_R(\Gh))\}.
\]
\begin{proposition}
\label{prop:bcO}
Let $\lambda\in P$.
Then $\CO_\BA(G)^r_\lambda$ is a free $\BA$-module.
Moreover, for any commutative $\BA$-algebra $R$ we have
$\CO_R(G)^r_\lambda\cong 
R\otimes_\BA\CO_\BA(G)^r_\lambda$.
\end{proposition}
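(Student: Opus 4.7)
The plan is to obtain the freeness by reducing to a Peter--Weyl type decomposition of $\CO_\BA(G)$ at the integral level $\BA$. For each $\mu\in P^+$, the matrix-coefficient construction produces a $(U^L_\BA(\Gg),U^L_\BA(\Gg))$-bimodule map involving $\Delta_\BA(\mu)$ and its $\BA$-dual $\nabla^r_\BA(\mu)$ into $\CO_\BA(G)$, and the theory of global crystal bases (as in APW) yields an $\BA$-bimodule decomposition
\[
\CO_\BA(G)\;\cong\;\bigoplus_{\mu\in P^+}\nabla_\BA(\mu)\otimes_\BA\nabla^r_\BA(\mu),
\]
in which the left $U^L_\BA(\Gh)$-action acts only on the first tensor factor and the right $U^L_\BA(\Gh)$-action only on the second. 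This isomorphism is compatible with base change to any commutative $\BA$-algebra $R$ because every summand is free of finite rank over $\BA$.

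Next I would use the fact that each $\Delta_\BA(\mu)$ is a free $\BA$-module of finite rank admitting an $\BA$-basis of weight vectors (provided by the global crystal basis), so that its $\BA$-dual $\nabla^r_\BA(\mu)$ is likewise free of finite rank and decomposes as $\nabla^r_\BA(\mu)=\bigoplus_\nu\nabla^r_\BA(\mu)_\nu$ with every right weight space free over $\BA$ of rank equal to the corresponding multiplicity predicted by Weyl's character formula. Extracting the right $\lambda$-weight part of the Peter--Weyl decomposition then gives
\[
\CO_\BA(G)^r_\lambda\;\cong\;\bigoplus_{\mu\in P^+}\nabla_\BA(\mu)\otimes_\BA\nabla^r_\BA(\mu)_\lambda,
\]
a direct sum of finite-rank free $\BA$-modules, hence free over $\BA$.

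For the base change statement, each summand $\nabla_\BA(\mu)\otimes_\BA\nabla^r_\BA(\mu)_\lambda$ is a free $\BA$-module of finite rank, so $R\otimes_\BA(-)$ commutes with the direct sum and with the formation of weight spaces; the identical Peter--Weyl decomposition over $R$ (obtained by base change) gives the corresponding expression for $\CO_R(G)^r_\lambda$, and comparing term by term yields the desired isomorphism $R\otimes_\BA\CO_\BA(G)^r_\lambda\cong\CO_R(G)^r_\lambda$.

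The main technical obstacle lies in establishing the Peter--Weyl decomposition at the integral level $\BA=\BZ[q,q^{-1}]$: one must verify that matrix coefficients of the Weyl modules span $\CO_\BA(G)$ \emph{over $\BA$} and that matrix coefficients coming from different $\mu\in P^+$ remain $\BA$-linearly independent (not merely $\BF$-linearly independent). Over the field $\BF=\BQ(q)$ this is the classical cosemisimplicity of the coordinate Hopf algebra, and the upgrade to $\BA$ is precisely what the canonical/global crystal basis machinery supplies. Granting this input, the freeness and base-change claims of the proposition reduce to formal manipulations with free modules.
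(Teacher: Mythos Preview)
Your approach is essentially the same as the paper's: both rely on the canonical (global crystal) basis to control $\CO_\BA(G)$ at the integral level, and your outline of the argument is sound.

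One point of care: what you call an ``$\BA$-bimodule decomposition''
\[
\CO_\BA(G)\;\cong\;\bigoplus_{\mu\in P^+}\nabla_\BA(\mu)\otimes_\BA\nabla^r_\BA(\mu)
\]
is \emph{not} available as a $(U^L_\BA(\Gg),U^L_\BA(\Gg))$-bimodule decomposition; over $\BA$ one only has a \emph{filtration} with these subquotients (this is precisely the content of the paper's next proposition, which is proved using the cell partition $\dot{\mathbf{B}}=\bigsqcup_\lambda\dot{\mathbf{B}}[\lambda]$ of the canonical basis of the modified algebra). What the canonical basis does give you is an $\BA$-linear splitting of that filtration which is compatible with the two-sided $U^L_\BA(\Gh)$-weight grading, since canonical basis elements are weight vectors on both sides. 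Your parenthetical ``in which the left $U^L_\BA(\Gh)$-action acts only on the first tensor factor and the right $U^L_\BA(\Gh)$-action only on the second'' captures exactly this weaker (and true) statement, and it is all you need to extract $\CO_\BA(G)^r_\lambda$ as a direct sum of free $\BA$-modules. So your argument goes through, but you should rephrase the decomposition accordingly to avoid asserting the false full-bimodule statement.
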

\begin{proof}
It is proved in \cite{APW} that $\CO_R(G)^r_\lambda$ is a free $R$-module when $R$ is a local ring.
The arguments used there together with some facts on canonical bases
(see \cite[Proposition 23.3.6, Theorem 25.2.1]{Lbook})
imply our desired result.
\end{proof}

The following result is an easy consequence of 
\cite[Theorem 29.3.3]{Lbook}.
We include its proof here for the sake of readers.

\begin{proposition}
\label{prop:dualWeyl0}
As a $U_R^L(\Gg)$-bimodule, $\CO_R(G)$ has a filtration 
\begin{equation}
\label{eq:filt0}
0=C_{-1}\subset C_0\subset C_1\subset \cdots\subset \CO_R(G)
\end{equation}
such that $\CO_R(G)=\bigcup_mC_m$ and 
$C_m/C_{m-1}\cong \nabla_R(\lambda_m)\otimes \nabla^r_R(\lambda_m)$ for some $\lambda_m\in P^+$.
\end{proposition}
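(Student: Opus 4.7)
My strategy is to construct the filtration first over $\BA$ using canonical bases, then descend to general $R$ by base change. Over the generic field $\BF = \BQ(q)$, integrable $U^L_\BF(\Gg)$-modules are semisimple, so a Peter--Weyl type argument gives a bimodule direct sum $\CO_\BF(G) \cong \bigoplus_{\lambda \in P^+} \nabla_\BF(\lambda) \otimes \nabla^r_\BF(\lambda)$ (using that $\Delta_\BF(\lambda) = \nabla_\BF(\lambda)$ there). Fix any enumeration $\lambda_0, \lambda_1, \ldots$ of $P^+$ refining the dominance order, and over $\BF$ set $\widetilde{C}_m = \bigoplus_{k \leq m} \nabla_\BF(\lambda_k) \otimes \nabla^r_\BF(\lambda_k)$.

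For the integral filtration I would define $C_m \subset \CO_\BA(G)$ as the preimage of $\widetilde{C}_m$ under the inclusion $\CO_\BA(G) \hookrightarrow \CO_\BF(G)$. This is evidently an exhaustive ascending chain of $U^L_\BA(\Gg)$-bimodules, and the only nontrivial step is the identification $C_m/C_{m-1} \cong \nabla_\BA(\lambda_m) \otimes \nabla^r_\BA(\lambda_m)$. For this I would invoke \cite[Theorem 29.3.3]{Lbook}: the canonical basis $\dot{\mathbf{B}}$ of the modified algebra $\dot{U}_\BA$ is partitioned into two-sided cells $\dot{\mathbf{B}}[\lambda]$ indexed by $\lambda \in P^+$, and the cellular structure yields a bimodule filtration of $\dot{U}_\BA$ whose subquotients are the based bimodules $\Delta_\BA(\lambda) \otimes \Delta^r_\BA(\lambda)$. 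Taking the restricted $\BA$-dual, which is precisely $\CO_\BA(G)$, converts this lower filtration into an upper filtration and exchanges Weyl with dual-Weyl modules on each factor, producing subquotients $\nabla_\BA(\lambda_m) \otimes \nabla^r_\BA(\lambda_m)$ as required.

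To pass to a general commutative $\BA$-algebra $R$, note that each $\Delta_\BA(\lambda)$ and $\Delta^r_\BA(\lambda)$ is free of finite rank over $\BA$ by canonical basis theory, so their duals $\nabla_\BA(\lambda)$, $\nabla^r_\BA(\lambda)$ are likewise free. Consequently every subquotient $C_m/C_{m-1}$ is $\BA$-flat, the chain $0 \to C_{m-1} \to C_m \to C_m/C_{m-1} \to 0$ splits as $\BA$-modules, and tensoring the whole filtration with $R$ preserves exactness. Proposition \ref{prop:bcO} gives $R \otimes_\BA \CO_\BA(G) \cong \CO_R(G)$ in a manner compatible with the right-weight decomposition, so the tensored filtration is the one demanded by the statement.

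The main obstacle is the duality bookkeeping in the second paragraph: one has to verify that dualising Lusztig's cellular filtration of $\dot{U}_\BA$ yields subquotients $\nabla \otimes \nabla^r$ rather than, say, $\Delta \otimes \nabla^r$. This amounts to a bimodule generalisation of the classical fact that the $\BA$-linear dual of a based standard module is a costandard module; once the matching of based structures on each cell is pinned down, the rest of the argument is formal, which is presumably why the proposition is described as an ``easy consequence'' of Theorem 29.3.3 of \cite{Lbook}.
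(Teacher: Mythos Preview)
Your proposal is correct and follows essentially the same route as the paper: reduce to $R=\BA$, enumerate $P^+$ compatibly with dominance, obtain the Peter--Weyl decomposition over $\BF$, intersect with $\CO_\BA(G)$, and identify the subquotients via Lusztig's cellular filtration \cite[Theorem 29.3.3]{Lbook}. The paper's proof is organised around the annihilator ideals $I_m=\{u\in U^L_\BF(\Gg)\mid u\Delta_\BF(\lambda_r)=0,\ r\leqq m\}$ and $I_{\BA,m}=I_m\cap U^L_\BA(\Gg)$, with $C_m=\Hom_\BA(U^L_\BA(\Gg)/I_{\BA,m},\BA)$, which coincides with your ``preimage of $\widetilde{C}_m$'' description; the duality bookkeeping you worry about is then tautological, since $\nabla_\BA(\lambda)$ and $\nabla^r_\BA(\lambda)$ are by definition the $\BA$-duals of $\Delta^r_\BA(\lambda)$ and $\Delta_\BA(\lambda)$.

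The one point the paper treats more carefully than you do is the passage from $U^L_\BA(\Gg)$ to the modified algebra $\dot U^L_\BA(\Gg)$: since $\CO_\BA(G)$ is defined inside $U^L_\BA(\Gg)^*$, not $\dot U^L_\BA(\Gg)^*$, your assertion that ``the restricted $\BA$-dual of $\dot U_\BA$ is precisely $\CO_\BA(G)$'' needs justification. The paper supplies this by quoting \cite[Lemma~5.8]{LS} to show that the images of $U^L_\BA(\Gg)/I_{\BA,m}$ and $\dot U^L_\BA(\Gg)/\dot I_{\BA,m}$ in $\bigoplus_{r\leqq m}\End_\BF(\Delta_\BF(\lambda_r))$ agree, whence $I_{\BA,m-1}/I_{\BA,m}\cong\dot I_{\BA,m-1}/\dot I_{\BA,m}$ and Lusztig's Theorem~29.3.3 can be applied.
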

\begin{proof}
By Proposition \ref{prop:bcO} we have $\CO_R(G)\cong R\otimes_\BA\CO_\BA(G)$.
Hence we may assume $R=\BA$.

We fix a labelling $P^+=\{\lambda_m\mid m=0, 1, 2, \dots\}$ of $P^+$ such that 
$\lambda_m-\lambda_n\in P^+$ implies 
$m\geqq n$.
Define a two-sided ideal $I_m$ of $U^L_\BF(\Gg)$ by
\[
I_m=\{u\in U^L_\BF(\Gg)
\mid
u\Delta_\BF(\lambda_r)=0\;(r=0,\dots, m)\}.
\]
By definition we have an embedding $U^L_\BF(\Gg)/I_m\hookrightarrow\bigoplus_{r=0}^m\End_\BF(\Delta_\BF(\lambda_r))$.
On the other hand
since $\Delta_\BF(\lambda_r)$ is an irreducible 
$U^L_\BF(\Gg)/I_m$-module for $r\leqq m$, 
we see from a standard fact on finite-dimensional algebras that $\dim (U^L_\BF(\Gg)/I_m)\geqq\sum_{r=0}^m(\dim \Delta_\BF(\lambda_r))^2$.
Hence we have 
$U^L_\BF(\Gg)/I_m\cong\bigoplus_{r=0}^m\End_\BF(\Delta_\BF(\lambda_r))$.
It follows that 
\[
\CO_\BF(G)=\bigcup_m(U^L_\BF(\Gg)/I_m)^*\subset
U^L_\BF(\Gg)^*.
\]
We have also isomorphisms
\[
I_{m-1}/I_m\cong
\End_\BF(\Delta_\BF(\lambda_m))
\cong\Delta_\BF(\lambda_m)\otimes\Delta^r_\BF(\lambda_m)
\]
of two-sided $U^L_\BF(\Gg)$-modules.
Set 
$I_{\BA,m}=
I_m\cap U^L_\BA(\Gg)$ and 
$C_m=\Hom_\BA(U^L_\BA(\Gg)/I_{\BA,m},\BA)
\subset 
\Hom_\BA(U^L_\BA(\Gg),\BA)
\subset(U_\BF^L(\Gg))^*$.
Then by $\CO_\BA(G)=\CO_\BF(G)\cap\Hom_\BA(U^L_\BA(\Gg),\BA)$ we have
$
\CO_\BA(G)=\bigcup_m
C_m
$.
Hence it is sufficient to show $I_{\BA,m-1}/I_{\BA,m}\cong
\Delta_\BA(\lambda_m)\otimes\Delta_\BA^r(\lambda_m)$ as a two-sided $U_\BA^L(\Gg)$-submodule 
of
$I_{m-1}/I_{m}\cong
\Delta_\BF(\lambda_m)\otimes\Delta_\BF^r(\lambda_m)$
for any $m$.

Let us give a reformulation in terms of Lusztig's modified algebra.
Let $\dot{U}^L_\BF(\Gg)$ be the modified algebra associated to ${U}^L_\BF(\Gg)$ and let $\dot{U}^L_\BA(\Gg)$ be its natural $\BA$-form.
In the notation of \cite[Chapter 23]{Lbook} 
we have
$\dot{U}^L_\BF(\Gg)=\dot{\mathbf{U}}$ and
$\dot{U}^L_\BA(\Gg)=
\prescript{}{\mathcal{A}}
{\dot{\mathbf{U}}}$ with $v=q$.
Define a two-sided ideal $\dot{I}_m$ of $\dot{U}^L_\BF(\Gg)$ by
\[
\dot{I}_m=\{v\in \dot{U}^L_\BF(\Gg)
\mid
v\Delta_\BF(\lambda_r)=0\;(r=0,\dots, m)\},
\]
and set $\dot{I}_{\BA,m}=\dot{I}_m\cap \dot{U}^L_\BA(\Gg)$. 
Then we have 
$\dot{U}^L_\BF(\Gg)/\dot{I}_m\cong\bigoplus_{r=1}^m\End_\BF(\Delta_\BF(\lambda_r))$.
Moreover, we have 
\[
\Image(U^L_\BA(\Gg)/I_{\BA,m}\to\bigoplus_{r=1}^m\End_\BF(\Delta_\BF(\lambda_r)))
=
\Image(\dot{U}^L_\BA(\Gg)/\dot{I}_{\BA,m}\to\bigoplus_{r=1}^m\End_\BF(\Delta_\BF(\lambda_r)))
\]
for any $m$ (see the proof of Lemma 5.8 in \cite{LS}).
Hence we have
$I_{m-1}/I_m\cong \dot{I}_{m-1}/\dot{I}_m$ and 
$I_{\BA,m-1}/I_{\BA,m}\cong \dot{I}_{\BA,m-1}/\dot{I}_{\BA,m}$.
Let $\dot{J}_{m}$ be the two-sided ideal of $\dot{U}^L_\BF(\Gg)$, which is denoted by $\dot{\mathbf{U}}[\{\lambda_r\}_{r\geqq m}]$ in \cite[29.2]{Lbook}.
Since $\dot{J}_m\subset\dot{I}_m$ and 
$\dim(\dot{U}^L_\BF(\Gg)/\dot{I}_m)=
\dim(\dot{U}^L_\BF(\Gg)/\dot{J}_m)$, we have $\dot{J}_m=\dot{I}_m$.
Hence we have
\[
{I}_{m-1}/{I}_{m}\cong
\dot{\mathbf{U}}[\geq\lambda_{m}]/
\dot{\mathbf{U}}[>\lambda_m]
\]
in the notation of \cite[29.1.2]{Lbook},
and ${I}_{\BA,m-1}/{I}_{\BA,m}$ is identified with the $\BA$-submodule of 
$\dot{\mathbf{U}}[\geq\lambda_{m}]/
\dot{\mathbf{U}}[>\lambda_m]$ 
generated by $\pi(\dot{\mathbf{B}}[\lambda_m])$ in the notation of \cite[Theorem 29.3.3]{Lbook}.
Therefore, it follows from the theory of based modules (see \cite[Chapter 27]{Lbook}) and \cite[Theorem 29.3.3]{Lbook} 
that 
${I}_{\BA,m-1}/{I}_{\BA,m}$ is generated as a two-sided $U^L_\BA(\Gg)$-submodule of 
$I_{m-1}/I_m\cong\Delta_\BF(\lambda_m)\otimes\Delta^r_\BF(\lambda_m)$ 
by a non-zero element $v$ satisfying 
$e_iv=ve_i=0$ for any $i\in I$.
It follows that
${I}_{\BA,m-1}/{I}_{\BA,m}
\cong
\Delta_\BA(\lambda_m)\otimes\Delta^r_\BA(\lambda_m)
$.
\end{proof}
From this we readily obtain the following.
This fact is crucial in our argument below.
\begin{proposition}
\label{prop:dualWeyl1}
As a $U_R^L(\Gg)$-module with respect to the adjoint action, $\CO_R(G)_\ad$ has a filtration 
\begin{equation}
\label{eq:filt}
0=L_{-1}\subset L_0\subset L_1\subset \cdots\subset \CO_R(G)_\ad
\end{equation}
such that $\CO_R(G)_\ad=\bigcup_mL_m$ and 
$L_m/L_{m-1}\cong \nabla_R(\mu_m)$ for some $\mu_m\in P^+$.
\end{proposition}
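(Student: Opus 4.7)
The plan is to refine the bimodule filtration provided by Proposition \ref{prop:dualWeyl0}. The adjoint action $\ad(u)\varphi=\sum_{(u)}u_{(0)}\cdotp\varphi\cdotp S^{-1}(u_{(1)})$ is built from the left and right $U^L_R(\Gg)$-actions on $\CO_R(G)$, so the bimodule chain $\{C_m\}$ of \eqref{eq:filt0} is automatically a chain of $U^L_R(\Gg)$-submodules of $\CO_R(G)_\ad$, and each subquotient $C_m/C_{m-1}$, regarded now as a left module via the adjoint action, is isomorphic to $\nabla_R(\lambda_m)\otimes\nabla^r_R(\lambda_m)$ with the action $\ad(u)(v\otimes w^*)=\sum u_{(0)}v\otimes w^*\cdotp S^{-1}(u_{(1)})$.

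Next I would reinterpret each such quotient as an ordinary tensor product of two dual Weyl modules. The right $U^L_R(\Gg)$-module $\nabla^r_R(\lambda)$ becomes a left $U^L_R(\Gg)$-module under $u\cdotp w^*:=w^*\cdotp S^{-1}(u)$, and because $S^{-1}(k_\gamma)=k_{-\gamma}$ this left module has the same weight-space characters as $\nabla_R(-w_0\lambda)$, where $w_0\in W$ is the longest element; comparing against Weyl's character formula and using freeness over $\BA$ identifies it with $\nabla_R(-w_0\lambda)$. Under this identification the adjoint action on $C_m/C_{m-1}$ becomes the standard tensor-product action on $\nabla_R(\lambda_m)\otimes\nabla_R(-w_0\lambda_m)$.

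The final step is the quantum analogue of the Donkin--Mathieu theorem: any tensor product of two dual Weyl modules admits a filtration whose successive quotients are of the form $\nabla_R(\mu)$ for $\mu\in P^+$. Over $\BA$ this follows from the theory of based modules in \cite[Chapter 27]{Lbook}, since the tensor product of two based modules carries a canonical based structure whose associated filtration by sub-based-modules has dual Weyl graded pieces. As in Proposition \ref{prop:dualWeyl0}, I would first produce the filtration over $\BA$ and then, using that each dual Weyl graded piece is $\BA$-free with Weyl-character, deduce the result over an arbitrary commutative $\BA$-algebra $R$. Splicing these sub-filtrations into $\{C_m\}$ yields the desired filtration $\{L_m\}$ of $\CO_R(G)_\ad$.

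The main obstacle is this last step: identifying the good filtration on $\nabla_R(\lambda_m)\otimes\nabla_R(-w_0\lambda_m)$ compatibly with base change to $R$. The canonical-basis input needed is exactly of the type invoked in the proofs of Propositions \ref{prop:bcO} and \ref{prop:dualWeyl0}, so the same toolkit applies; the remaining work is essentially bookkeeping to splice the sub-filtrations into $\{C_m\}$ and to check freeness of all graded pieces.
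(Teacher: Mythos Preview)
Your proposal is correct and follows essentially the same route as the paper: start from the bimodule filtration $\{C_m\}$ of Proposition~\ref{prop:dualWeyl0}, observe that it is stable under the adjoint action with $C_m/C_{m-1}\cong\nabla_R(\lambda_m)\otimes\nabla_R(-w_0\lambda_m)$, and then refine using the good filtration on tensor products of dual Weyl modules coming from the theory of based modules. The paper invokes \cite[27.3.3]{Lbook} together with \cite[3.3~(v)]{Xi} for this last step, whereas you appeal only to Chapter~27 of \cite{Lbook}; the additional citation to Xi is what nails down the identification over a general base $R$, so you should consult that reference rather than rely on a character comparison alone.
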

\begin{proof}
The filtration \eqref{eq:filt0} in 
Proposition \ref{prop:dualWeyl0} is stable under the adjoint action, and we have
\[
C_m/C_{m-1}\cong
\nabla_R(\lambda_m)\otimes\nabla_R(-w_0\lambda_m)
\]
with respect to the adjoint action,
where $w_0$ is the longest element of $W$, and 
the $U_R^L(\Gg)\otimes U_R^L(\Gg)$-module
$\nabla_R(\lambda_m)\otimes
\nabla_R(-w_0\lambda_m)$ is regarded as a $U_R^L(\Gg)$-module via the comultiplication 
$\Delta:U_R^L(\Gg)
\to
U_R^L(\Gg)\otimes U_R^L(\Gg)$.
Hence by \cite[27.3.3]{Lbook} and \cite[3.3 (v)]{Xi}
we obtain the desired filtration
\eqref{eq:filt}
by refining \eqref{eq:filt0}.
\end{proof}

\section{Induction functor}
\subsection{}
Assume
\begin{equation}
\label{eq:ac}
\text{$(\Ga,\Gc)$ is one of 
$(\Gg,\Gb^-)$, $(\Gb^-,\Gh)$, $(\Gg,\Gh)$,}
\end{equation}
and let $A$ be the connected closed subgroup of $G$ corresponding to $\Ga$.

Let $R$ be a commutative $\BA$-algebra.
Following \cite{APW} we define a left exact functor 
\begin{equation}
\Ind^{\Ga,\Gc}_{R}
:
\Mod_\inte(U^L_R(\Gc))
\to
\Mod_\inte(U^L_R(\Ga))
\end{equation}
as follows.
Let $M\in \Mod_\inte(U^L_R(\Gc))$.
Define a left $U^L_R(\Gc)$-module structure of 
 $\CO_R(A)\otimes_R M$
 by
\[
y\star(\varphi\otimes m)
=\sum_{(y)}\varphi\cdotp S^{-1}y_{(0)}\otimes y_{(1)}m
\qquad
(y\in U^L_R(\Gc), \; \varphi\in\CO_R(A),\; m\in M)
\]
and set 
\[
\Ind^{\Ga,\Gc}_{R}(M)
:=
\{z\in\CO_R(A)\otimes M\mid
y\star z=\varepsilon(y)z\;\;(y\in U^L_R(\Gc))\}.
\]
Then the left $U^L_R(\Ga)$-module structure of 
$\CO_R(A)\otimes M$ given by 
\[
u(\varphi\otimes m)
=u\cdotp \varphi\otimes m
\qquad
(u\in U^L_R(\Ga), \;  \varphi\in\CO_R(A),\; m\in M)
\]
induces a left $U^L_R(\Ga)$-module structure of 
$\Ind^{\Ga,\Gc}_{R}(M)$ so that
$\Ind^{\Ga,\Gc}_{R}(M)\in\Mod_\inte(U^L_R(\Ga))$.

We recall some  results in \cite{APW} in the following.
For the convenience of the readers we include some of their proofs.
\begin{proposition}[Frobenius reciprocity]
\label{prop:Frob}
For $V\in\Mod_\inte(U^L_R(\Ga))$, $M\in\Mod_\inte(U^L_R(\Gc))$
we have
\[
\Hom_{U^L_R(\Ga)}(V,\Ind^{\Ga,\Gc}_{R}(M))
\cong
\Hom_{U^L_R(\Gc)}(V,M).
\]
\end{proposition}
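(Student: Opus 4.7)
The plan is to construct mutually inverse maps between the two Hom spaces using an ``evaluation at the identity'' map together with the coaction encoding integrability. For the forward direction, define
$$\eta=(\varepsilon_{\CO_R(A)}\otimes\id_M)\big|_{\Ind^{\Ga,\Gc}_R(M)}:\Ind^{\Ga,\Gc}_R(M)\to M,$$
where $\varepsilon_{\CO_R(A)}$ is the counit of $\CO_R(A)$, and set $\Phi(\phi)=\eta\circ\phi$. For the backward direction, exploit that $V\in\Mod_\inte(U^L_R(\Ga))$ is equivalent to a right $\CO_R(A)$-comodule $\rho:V\to V\otimes\CO_R(A)$, $v\mapsto\sum_{(v)}v_{(0)}\otimes v_{(1)}$, uniquely determined by $uv=\sum_{(v)}\langle v_{(1)},u\rangle\,v_{(0)}$ for $u\in U^L_R(\Ga)$. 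Given a $U^L_R(\Gc)$-linear $\psi:V\to M$, define
$$\Psi(\psi):V\to\CO_R(A)\otimes M,\qquad v\mapsto\sum_{(v)}v_{(1)}\otimes\psi(v_{(0)}).$$

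Three points require verification. First, $\eta$ is itself $U^L_R(\Gc)$-equivariant: applying $\varepsilon_{\CO_R(A)}\otimes\id_M$ to the defining identity $y\star z=\varepsilon(y)z$ for $z\in\Ind^{\Ga,\Gc}_R(M)$ and $y\in U^L_R(\Gc)$, and simplifying by means of the antipode identity $\sum_{(y)}y_{(1)}S^{-1}(y_{(0)})=\varepsilon(y)$, matches the resulting expression against the restricted left bimodule action on $\Ind^{\Ga,\Gc}_R(M)$. Second, $\Psi(\psi)(v)\in\Ind^{\Ga,\Gc}_R(M)$: expanding $y\star\Psi(\psi)(v)$, moving $y_{(1)}$ across $\psi$ using $\Gc$-linearity, and invoking coassociativity of $\rho$ together with the antipode and counit axioms collapses the expression to $\varepsilon(y)\Psi(\psi)(v)$. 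Third, $\Psi(\psi)$ is $U^L_R(\Ga)$-linear: the characterization of the $U^L_R(\Ga)$-action on $V$ through $\rho$ combined with coassociativity shows that the left bimodule action on the first tensor factor of $\CO_R(A)\otimes M$ intertwines with the action on $V$.

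The identity $\Phi\circ\Psi=\id$ is immediate from the counit axiom $(\varepsilon_{\CO_R(A)}\otimes\id_V)\rho=\id_V$, giving $\Phi(\Psi(\psi))(v)=\sum_{(v)}\varepsilon_{\CO_R(A)}(v_{(1)})\psi(v_{(0)})=\psi(v)$. Conversely, $\Psi\circ\Phi=\id$ amounts to $\phi(v)=\sum_{(v)}v_{(1)}\otimes\eta(\phi(v_{(0)}))$ for every $\phi\in\Hom_{U^L_R(\Ga)}(V,\Ind^{\Ga,\Gc}_R(M))$, which follows by applying the coaction in the first tensor factor of $\phi(v)\in\CO_R(A)\otimes M$ and using $U^L_R(\Ga)$-linearity of $\phi$ to relate it to $\phi(v_{(0)})$ before evaluating by $\eta$. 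The main obstacle is the Sweedler-level bookkeeping needed to disentangle the $\star$-action, which involves $S^{-1}$ and $\Delta(y)$, from the ordinary restricted left bimodule action that defines the $U^L_R(\Ga)$-module structure on $\Ind^{\Ga,\Gc}_R(M)$, while simultaneously aligning the left $U^L_R(\Ga)$-action and the right $\CO_R(A)$-coaction on $V$; every individual verification reduces to a Hopf-algebraic identity built from $\Delta$, $S$, $S^{-1}$, and $\varepsilon$, but assembling them coherently demands care.
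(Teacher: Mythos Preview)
Your approach is correct and essentially identical to the paper's. The paper does not actually prove this proposition; it states it (citing \cite{APW}) and records only the explicit formula for the inverse map: to $g\in\Hom_{U^L_R(\Gc)}(V,M)$ one assigns $g'(v)=\sum_j\varphi_j\otimes g(v_j)$ where $uv=\sum_j\langle\varphi_j,u\rangle v_j$, which is precisely your $\Psi(\psi)$ rewritten with the comodule structure $\rho(v)=\sum_{(v)}v_{(0)}\otimes v_{(1)}$ in place of the indexed sum $\sum_j v_j\otimes\varphi_j$. Your forward map $\eta=(\varepsilon\otimes\id)$ likewise appears in the paper as the map $\CE$ of Lemma~\ref{lem:FG}. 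The verifications you outline are the standard Hopf-algebraic bookkeeping and are not spelled out in the paper either.
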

For $g\in \Hom_{U^L_R(\Gc)}(V,M)$ the corresponding 
$g'\in \Hom_{U^L_R(\Ga)}(V,\Ind^{\Ga,\Gc}_{R}(M))$
is given by
\[
g'(v)=\sum_j\varphi_j\otimes g(v_j)
\qquad(v\in V),
\]
for $\varphi_j\in\CO_R(A)$, $v_j\in V$ such that
\[
uv=\sum_j\langle\varphi_j,u\rangle v_j
\qquad(u\in U^L_R(\Ga)).
\]

It follows from Proposition  \ref{prop:Frob} that the following transitivity holds
\begin{equation}
\Ind^{\Gg,\Gb^-}_{R}\circ
\Ind^{\Gb^-,\Gh}_{R}
=
\Ind^{\Gg,\Gh}_{R}.
\end{equation}
\begin{lemma}
\label{lem:FG}
For $V\in\Mod_\inte(U^L_R(\Ga))$
there exist canonical homomorphisms
\[
\CF\in
\Hom_{U^L_R(\Ga)}(V,\Ind^{\Ga,\Gc}_{R}(V)),
\qquad
\CE\in
\Hom_{U^L_R(\Gc)}(
\Ind^{\Ga,\Gc}_{R}(V),V)
\]
satisfying $\CE\circ\CF=\id$.
\end{lemma}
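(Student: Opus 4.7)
The plan is to identify $\CF$ and $\CE$ as the unit and counit of the adjunction $\text{Res}\dashv\Ind^{\Ga,\Gc}_R$ furnished by Frobenius reciprocity, and then invoke (or directly verify) the triangle identity.

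First, I would construct $\CF$ by applying Proposition \ref{prop:Frob} to the identity map $\id_V\in \Hom_{U^L_R(\Gc)}(V,V)$, where on the target $V$ is regarded as a $U^L_R(\Gc)$-module via restriction. The explicit description recalled after Proposition \ref{prop:Frob} gives
\[
\CF(v) = \sum_j \varphi_j\otimes v_j,
\]
where $\varphi_j\in \CO_R(A)$ and $v_j\in V$ are chosen to satisfy $uv=\sum_j \langle\varphi_j,u\rangle v_j$ for all $u\in U^L_R(\Ga)$. Such a finite expansion exists for each $v$ because the integrability of $V$ makes it a right $\CO_R(A)$-comodule; the map $\CF$ is essentially this comodule structure (up to a swap). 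By Proposition \ref{prop:Frob} it is automatically a $U^L_R(\Ga)$-homomorphism.

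Next, I would construct $\CE$ by applying Proposition \ref{prop:Frob} in the opposite direction to $\id\in\Hom_{U^L_R(\Ga)}(\Ind^{\Ga,\Gc}_R(V),\Ind^{\Ga,\Gc}_R(V))$, transporting it to an element of $\Hom_{U^L_R(\Gc)}(\Ind^{\Ga,\Gc}_R(V),V)$. Unwinding the construction, $\CE$ coincides with the restriction of $\varepsilon_{\CO_R(A)}\otimes\id_V:\CO_R(A)\otimes V\to V$ to the subspace $\Ind^{\Ga,\Gc}_R(V)$, where $\varepsilon_{\CO_R(A)}$ denotes the counit of $\CO_R(A)$; equivalently, for $z=\sum_i\varphi_i\otimes v_i\in \Ind^{\Ga,\Gc}_R(V)$ one has $\CE(z)=\sum_i\langle\varphi_i,1\rangle v_i$.

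Finally, the identity $\CE\circ\CF=\id_V$ is the triangle identity of the adjunction, and can also be checked at once from the formulas:
\[
\CE(\CF(v)) = \sum_j \langle\varphi_j,1\rangle v_j = 1\cdot v = v.
\]

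The only point that demands scrutiny is the $U^L_R(\Gc)$-linearity of $\CE$. This is automatic from the adjunction perspective, but a direct verification would use the defining invariance $y\star z=\varepsilon(y)z$ of elements $z\in\Ind^{\Ga,\Gc}_R(V)$ by applying $\varepsilon_{\CO_R(A)}\otimes\id_V$, substituting $y\mapsto Sy$ and exploiting $\Delta(Sy)=\sum Sy_{(1)}\otimes Sy_{(0)}$ together with the antipode axioms; the bookkeeping is routine but slightly delicate, and is essentially the content of \cite{APW}.
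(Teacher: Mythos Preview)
Your proof is correct and follows essentially the same approach as the paper: $\CF$ is the Frobenius-reciprocity correspondent of $\id_V$, and $\CE$ is the restriction of $\varepsilon\otimes\id$ to $\Ind^{\Ga,\Gc}_R(V)$. The paper's proof is terser (it simply names the two maps without writing out the verification of $\CE\circ\CF=\id$ or the $U^L_R(\Gc)$-linearity of $\CE$), but your explicit computation and your remark that $\CE$ is also the counit of the adjunction are accurate elaborations of the same argument.
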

\begin{proof}
We can take $\CF$ and $\CE$ to be 
the homomorphism corresponding to 
$\id\in\Hom_{U_R(\Gh)}(V,V)$ under the Frobenius reciprocity, and 
the restriction of 
$
\varepsilon\otimes\id:\CO_R(A)\otimes V\to V
$
to $\Ind^{\Ga,\Gc}_{R}(V)$ respectively.
\end{proof}

\subsection{}
Let $\Ga=\Gg$ or $\Gb^-$.
\begin{lemma}
\label{lem:from h}
\begin{itemize}
\item[(i)]
$\Ind^{\Ga,\Gh}_{R}$ is an exact functor.
\item[(ii)]
Let $M\in\Mod_\inte(U^L_R(\Gh))$.
If any weight space of $M$ is a free $($resp.\ projective, resp.\ flat$)$ $R$-module, then any weight space of 
$\Ind^{\Ga,\Gh}_{R}(M)$ is a
free $($resp.\ projective, resp.\ flat$)$ $R$-module.
\end{itemize}
\end{lemma}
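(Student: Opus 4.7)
The plan is to compute $\Ind^{\Ga,\Gh}_R(M)$ explicitly as an $R$-module by exploiting the weight decompositions on both tensor factors, and then deduce both claims formally. Since $\CO_R(A)$ is an integrable right $U^L_R(\Gh)$-module (condition (b) in the definition of $\CO_R(A)$) and $M$ is a weight module, the tensor product decomposes as
\[
\CO_R(A) \otimes_R M = \bigoplus_{\lambda,\mu \in P} \CO_R(A)^r_\lambda \otimes_R M_\mu.
\]
A direct computation using $\Delta(k_\gamma) = k_\gamma \otimes k_\gamma$ and $S^{-1}k_\gamma = k_{-\gamma}$ shows that $k_\gamma \in U^L_R(\Gh)$ acts on $\CO_R(A)^r_\lambda \otimes_R M_\mu$ through $\star$ by the scalar $q^{(\mu-\lambda,\gamma)}$, and because the characters $\chi_\nu : U^L_R(\Gh) \to R$ separate elements of $P$, the $U^L_R(\Gh)$-invariants form the ``diagonal''
\[
\Ind^{\Ga,\Gh}_R(M) = \bigoplus_{\mu \in P} \CO_R(A)^r_\mu \otimes_R M_\mu.
\]
Refining by the \emph{left} $U^L_R(\Gh)$-weight decomposition $\CO_R(A)^r_\mu = \bigoplus_\nu \CO_R(A)^{l,r}_{\nu,\mu}$ yields the weight-$\nu$ subspace
\[
\Ind^{\Ga,\Gh}_R(M)_\nu = \bigoplus_{\mu \in P} \CO_R(A)^{l,r}_{\nu,\mu} \otimes_R M_\mu.
\]

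The main substantive task is to establish that each bi-weight space $\CO_R(A)^{l,r}_{\nu,\mu}$ is a free $R$-module. For $A = G$, I would restrict the filtration of Proposition \ref{prop:dualWeyl0} to the $(\nu,\mu)$-bi-weight piece: the successive quotients become $\nabla_R(\lambda_m)_\nu \otimes_R \nabla^r_R(\lambda_m)_\mu$, and these are free of finite rank over $R$ because the weight spaces of $\Delta_\BA(\lambda_m)$ and $\Delta^r_\BA(\lambda_m)$ are free of finite rank over $\BA$ by the theory of canonical bases. Since free finite-rank modules are projective, every short exact sequence $0 \to (C_{m-1})^{l,r}_{\nu,\mu} \to (C_m)^{l,r}_{\nu,\mu} \to (C_m/C_{m-1})^{l,r}_{\nu,\mu} \to 0$ splits, and passing to the union gives $\CO_R(G)^{l,r}_{\nu,\mu} \cong \bigoplus_m \nabla_R(\lambda_m)_\nu \otimes_R \nabla^r_R(\lambda_m)_\mu$, a free $R$-module. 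For $A = B^-$ the argument is more direct via Proposition \ref{prop:BHN}: the bi-weight space has the form $(\tilde{U}^L_R(\Gn^-)_{-\gamma})^* \cdot \chi_\mu$ for the $\gamma \in Q^+$ determined by $\nu$ and $\mu$, hence is free of finite rank over $R$ since $\tilde{U}^L_R(\Gn^-)_{-\gamma}$ is.

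Both parts of the lemma then follow formally. For (i), a complex in $\Mod_\inte(U^L_R(\Gh))$ is exact iff it is exact in each weight, so the displayed $\mu$-decomposition shows $\Ind^{\Ga,\Gh}_R$ is exact because tensoring with the free (hence flat) $R$-module $\CO_R(A)^r_\mu$ preserves exactness and direct sums do as well. For (ii), under the freeness (resp.\ projectivity, resp.\ flatness) hypothesis on each $M_\mu$, each tensor product $\CO_R(A)^{l,r}_{\nu,\mu} \otimes_R M_\mu$ inherits the corresponding property (since $\CO_R(A)^{l,r}_{\nu,\mu}$ is free over $R$), and an arbitrary direct sum preserves each of freeness, projectivity, and flatness. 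The main obstacle is the bookkeeping required to establish the freeness of the bi-weight spaces---specifically, verifying that the bimodule filtration of Proposition \ref{prop:dualWeyl0} induces an honest filtration on each bi-weight piece whose successive quotients are the claimed free finite-rank $R$-modules, and that the resulting splittings assemble into an $R$-module direct sum decomposition of $\CO_R(G)^{l,r}_{\nu,\mu}$; the $B^-$ case is by contrast essentially immediate from Proposition \ref{prop:BHN}.
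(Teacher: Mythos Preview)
Your argument is correct and follows the same overall strategy as the paper: compute $\Ind^{\Ga,\Gh}_R(M)=\bigoplus_{\mu}\CO_R(A)^r_\mu\otimes_R M_\mu$ and reduce everything to freeness of the relevant pieces of $\CO_R(A)$. The one genuine difference is in how you handle $A=G$. The paper simply invokes Proposition~\ref{prop:bcO}, which already asserts that $\CO_R(G)^r_\lambda$ is free over $R$ (its proof via canonical bases in fact gives an $\BA$-basis compatible with the left weight decomposition, so the bi-weight pieces are free too). You instead rebuild this freeness from Proposition~\ref{prop:dualWeyl0} by restricting the bimodule filtration to bi-weight spaces, observing that the successive quotients $\nabla_R(\lambda_m)_\nu\otimes\nabla^r_R(\lambda_m)_\mu$ are free of finite rank, splitting, and taking the union. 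That works, but it is a longer path to a fact the paper has already packaged. Your explicit passage to bi-weight spaces does make the ``free'' case of (ii) more transparent than the paper's terse ``it is sufficient to show that $\CO_R(A)^r_\lambda$ is free''; the paper is implicitly relying on the canonical-basis input behind Proposition~\ref{prop:bcO} to get freeness weight-by-weight. For $A=B^-$ you and the paper both appeal to Proposition~\ref{prop:BHN}, and your identification of the bi-weight piece as $(\tilde{U}^L_R(\Gn^-)_{-\gamma})^*\chi_\mu$ is exactly right.
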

\begin{proof}
By definition we have
\[
\Ind^{\Ga,\Gh}_{R}(M)
=
\bigoplus_{\lambda\in P}
\CO_R(A)^r_\lambda\otimes M_\lambda,
\]
where 
\[
\CO_R(A)^r_\lambda=
\{\varphi\in\CO_R(A)\mid
\varphi \cdotp h=\chi_\lambda(h)\varphi
\;\;(h\in U_R(\Gh))\}.
\]
Hence it is sufficient to show that $\CO_R(A)^r_\lambda$ is a free $R$-module for any $\lambda\in P$.
In the case $A=G$ this can be seen from Proposition \ref{prop:bcO}.
In the case $A=B^-$ this is an easy consequence of  Proposition \ref{prop:BHN}.
\end{proof}

Let $M\in\Mod_\inte(U^L_R(\Ga))$.
In \cite{APW}  a canonical resolution 
\begin{equation}
M\to Q^\bullet
\end{equation}
in $\Mod_\inte(U^L_R(\Ga))$
called the standard resolution
was introduced.
It will play a crucial role in our argument below.
Let us recall its definition.
Define  $\CF\in \Hom_{U_R(\Ga)}(M,\Ind^{\Ga,\Gh}_{R}(M))$ as in Lemma \ref{lem:FG}.
By Lemma  \ref{lem:FG} $\CF$ is injective, and we obtain 
\[
M\hookrightarrow Q^0:=\Ind^{\Ga,\Gh}_{R}(M).
\]
Applying the above argument to 
$\tilde{Q}^1=\Cok(M\to Q^0)\in\Mod_\inte(U^L_R(\Ga))$ 
we obtain 
\[
\tilde{Q}^1\hookrightarrow Q^1:=\Ind^{\Ga,\Gh}_{R}(\tilde{Q}^1).
\]
Repeating this we obtain an exact sequence 
\[
0\to M\to Q^0\to Q^1\to\cdots
\]
in $\Mod_\inte(U^L_R(\Ga))$ 
such that 
\[
Q^j=\Ind^{\Ga,\Gh}_{R}(\tilde{Q}^j)\in \Mod_\inte(U^L_R(\Ga)),
\quad
\tilde{Q}^j=\Cok(\tilde{Q}^{j-1}\to{Q}^{j-1})
\in \Mod_\inte(U^L_R(\Ga))
\]
with $\tilde{Q}^0=M$.

\begin{lemma}
\label{lem:STD}
Let $M\in\Mod_\inte(U^L_R(\Ga))$, and let 
$M\to Q^\bullet$ be its standard resolution
with $Q^j=\Ind^{\Ga,\Gh}_{R}(\tilde{Q}^j)$.
If any weight space of $M$ is a  projective
$($resp.\ flat$)$
$R$-module, then any weight space of $\tilde{Q}^j$ and ${Q}^j$ is a
projective 
$($resp.\ flat$)$
$R$-module.
\end{lemma}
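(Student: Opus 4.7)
The plan is to prove both statements simultaneously by induction on $j$. The base case $j=0$ is immediate: $\tilde{Q}^0 = M$ has projective (resp.\ flat) weight spaces by hypothesis, and then $Q^0 = \Ind^{\Ga,\Gh}_R(\tilde{Q}^0)$ has the desired property by Lemma \ref{lem:from h}(ii). So it suffices to show that if $\tilde{Q}^j$ has projective (flat) weight spaces, then so does $\tilde{Q}^{j+1}$; combined with another application of Lemma \ref{lem:from h}(ii), this then propagates the property to $Q^{j+1}$ as well.

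The crucial observation is that although the sequence
\[
0 \to \tilde{Q}^j \xrightarrow{\CF} Q^j \to \tilde{Q}^{j+1} \to 0
\]
need not split in $\Mod_\inte(U^L_R(\Ga))$, the map $\CE \in \Hom_{U^L_R(\Gh)}(Q^j,\tilde{Q}^j)$ supplied by Lemma \ref{lem:FG} (applied with $\Gc=\Gh$) satisfies $\CE\circ\CF=\id$ and is in particular a $U^L_R(\Gh)$-module homomorphism. Since $U^L_R(\Gh)$-homomorphisms between integrable $U^L_R(\Gh)$-modules preserve the weight decomposition and are $R$-linear, restricting to any $\lambda\in P$ gives a short exact sequence of $R$-modules
\[
0\to \tilde{Q}^j_\lambda \xrightarrow{\CF_\lambda} Q^j_\lambda \to \tilde{Q}^{j+1}_\lambda\to 0
\]
which is split by $\CE_\lambda$.

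Consequently $\tilde{Q}^{j+1}_\lambda$ is isomorphic to a direct summand of $Q^j_\lambda$. Since $\tilde{Q}^j$ has projective (resp.\ flat) weight spaces by the inductive hypothesis, Lemma \ref{lem:from h}(ii) ensures that each $Q^j_\lambda$ is a projective (resp.\ flat) $R$-module. A direct summand of a projective (resp.\ flat) module is projective (resp.\ flat), so each $\tilde{Q}^{j+1}_\lambda$ inherits the property. Applying Lemma \ref{lem:from h}(ii) once more to $\tilde{Q}^{j+1}$ yields the conclusion for $Q^{j+1}$, completing the induction.

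There is no serious obstacle here; the only point requiring care is to remember that the splitting $\CE$ need only be $R$-linear (which it is, as a $U^L_R(\Gh)$-homomorphism) in order to transfer projectivity/flatness from the middle term to the quotient weight space by weight space.
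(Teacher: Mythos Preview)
Your proof is correct and follows essentially the same approach as the paper: induction on $j$, using Lemma~\ref{lem:from h}(ii) to pass from $\tilde{Q}^j$ to $Q^j$, and using the $U^L_R(\Gh)$-linear retraction $\CE$ from Lemma~\ref{lem:FG} to exhibit each $\tilde{Q}^{j+1}_\lambda$ as a direct summand of $Q^j_\lambda$. The paper's version is terser but the argument is the same.
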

\begin{proof}
By Lemma \ref{lem:from h} any weight space of $Q^0$ is projective
(resp.\ flat).
By Lemma \ref{lem:FG} $M\hookrightarrow Q^0$ is a split homomorphism of $U_R(\Gh)$-modules, and hence any weight space of $\tilde{Q}^1$ is projective
(resp.\ flat).
It follows from Lemma \ref{lem:from h} that 
any weight space of ${Q}^1$ is projective
(resp.\ flat).
Using the same argument we deduce the same property for any $j>1$ by induction on $j$.
\end{proof}

\subsection{}
By the Frobenius reciprocity we see easily the following.
\begin{lemma}
\label{lem:II}
Let $(\Ga,\Gc)$ be as in \eqref{eq:ac}.
Then $\Ind^{\Ga,\Gc}_{R}$ sends injective objects to injective objects.
\end{lemma}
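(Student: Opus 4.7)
The plan is to deduce this from the Frobenius reciprocity (Proposition \ref{prop:Frob}) by the standard adjoint-functor argument that a right adjoint of an exact functor preserves injectives.

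First I would check that there is a well-defined restriction functor
\[
\Res:\Mod_\inte(U^L_R(\Ga))\to\Mod_\inte(U^L_R(\Gc)),
\]
and that it is exact. Since $U^L_R(\Gc)\subset U^L_R(\Ga)$ in each of the three cases in \eqref{eq:ac}, any left $U^L_R(\Ga)$-module $V$ becomes a $U^L_R(\Gc)$-module by restriction of scalars. Moreover, a weight decomposition with respect to $U^L_R(\Gh)\subset U^L_R(\Ga)$ is a fortiori a weight decomposition with respect to $U^L_R(\Gh)\subset U^L_R(\Gc)$, and the integrability condition for $U^L_R(\Ga)$ (vanishing of $e_i^{(n)}m$ and/or $f_i^{(n)}m$ for $n$ large) implies the corresponding condition for $U^L_R(\Gc)$, so $\Res$ lands in $\Mod_\inte(U^L_R(\Gc))$. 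Exactness of $\Res$ is then immediate since it is the identity on underlying $R$-modules.

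Next I would reread Proposition \ref{prop:Frob} as an adjunction: for every $V\in\Mod_\inte(U^L_R(\Ga))$ and $M\in\Mod_\inte(U^L_R(\Gc))$,
\[
\Hom_{U^L_R(\Ga)}(V,\Ind^{\Ga,\Gc}_R(M))\cong\Hom_{U^L_R(\Gc)}(\Res V,M).
\]
Now fix an injective object $I\in\Mod_\inte(U^L_R(\Gc))$. Then the functor
\[
V\mapsto\Hom_{U^L_R(\Ga)}(V,\Ind^{\Ga,\Gc}_R(I))\cong\Hom_{U^L_R(\Gc)}(\Res V,I)
\]
is the composition of the exact functor $\Res$ with the exact functor $\Hom_{U^L_R(\Gc)}(-,I)$, hence is exact. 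This shows that $\Ind^{\Ga,\Gc}_R(I)$ is injective in $\Mod_\inte(U^L_R(\Ga))$, which is the claim.

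There is essentially no obstacle here; the only thing to be slightly careful about is the check that the integrability hypotheses are preserved by restriction in each of the three cases $(\Gg,\Gb^-)$, $(\Gb^-,\Gh)$, $(\Gg,\Gh)$, but this is immediate from the definitions recalled just before Proposition \ref{prop:bcO}. All the categorical content is packaged in Proposition \ref{prop:Frob}.
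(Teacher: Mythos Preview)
Your argument is correct and is precisely the one the paper has in mind: the paper's proof is simply ``By the Frobenius reciprocity we see easily the following,'' and you have spelled out the standard adjoint-functor argument behind that sentence. The only implicit point is the naturality of the Frobenius isomorphism in $V$, but this is clear from the explicit formula given after Proposition~\ref{prop:Frob}.
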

\begin{lemma}
\label{lem:EI}
Let $\Ga$ be one of $\Gg$, $\Gb^-$, $\Gh$.
Then $\Mod_\inte(U^L_R(\Ga))$ has enough injectives.
\end{lemma}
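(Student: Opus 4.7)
The plan is to treat the case $\Ga=\Gh$ directly, then bootstrap to $\Ga=\Gg$ and $\Ga=\Gb^-$ by pushing forward along the induction functor $\Ind^{\Ga,\Gh}_{R}$, using the three properties already collected in Lemmas \ref{lem:from h}, \ref{lem:FG}, and \ref{lem:II}.

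For $\Ga=\Gh$, the category $\Mod_\inte(U^L_R(\Gh))$ is nothing but the category of $P$-graded $R$-modules, and is equivalent to the product category $\prod_{\lambda\in P}\Mod(R)$. Since $\Mod(R)$ has enough injectives for any ring $R$, I would just work componentwise: given $M=\bigoplus_{\lambda\in P}M_\lambda$, embed each weight space $M_\lambda\hookrightarrow I_\lambda$ into an injective $R$-module, and set $I=\bigoplus_{\lambda\in P}I_\lambda\in\Mod_\inte(U^L_R(\Gh))$. Injectivity of $I$ in this category follows from the decomposition $\Hom_{U^L_R(\Gh)}(-,I)\cong\prod_{\lambda\in P}\Hom_R((-)_\lambda,I_\lambda)$, which is exact because each factor is.

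For $\Ga=\Gg$ or $\Ga=\Gb^-$ and $M\in\Mod_\inte(U^L_R(\Ga))$, I first restrict along the inclusion $U^L_R(\Gh)\subset U^L_R(\Ga)$ to view $M$ as an object of $\Mod_\inte(U^L_R(\Gh))$, and embed it there into an injective $I$ by the previous step. Applying $\Ind^{\Ga,\Gh}_{R}$, which is exact by Lemma \ref{lem:from h}(i) and preserves injectivity by Lemma \ref{lem:II}, produces an injection $\Ind^{\Ga,\Gh}_{R}(M)\hookrightarrow\Ind^{\Ga,\Gh}_{R}(I)$ in $\Mod_\inte(U^L_R(\Ga))$ with injective target. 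Precomposing with the canonical map $\CF:M\to\Ind^{\Ga,\Gh}_{R}(M)$ from Lemma \ref{lem:FG}, which is injective thanks to the identity $\CE\circ\CF=\id$, supplies the desired embedding of $M$ into an injective object of $\Mod_\inte(U^L_R(\Ga))$.

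There is no real obstacle here: the entire argument is a routine transfer along the adjunction between restriction and induction, once the $\Gh$-case is in hand. The only step deserving a brief verification is that the direct sum $\bigoplus_{\lambda\in P}I_\lambda$ really is injective in $\Mod_\inte(U^L_R(\Gh))$, which is where it matters that morphisms in this category preserve the $P$-grading so that $\Hom$ splits into a product indexed by $P$.
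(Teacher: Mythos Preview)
Your argument is correct and matches the paper's own proof essentially line for line: the $\Gh$-case is handled directly via the identification with $P$-graded $R$-modules, and the cases $\Ga=\Gg,\Gb^-$ are deduced by composing the unit $\CF:M\hookrightarrow\Ind^{\Ga,\Gh}_R(M)$ of Lemma~\ref{lem:FG} with $\Ind^{\Ga,\Gh}_R$ applied to an injective envelope in the $\Gh$-category, invoking Lemma~\ref{lem:II} for injectivity of the target. You even make explicit the use of exactness of $\Ind^{\Ga,\Gh}_R$ (Lemma~\ref{lem:from h}(i)) to preserve the embedding $M\hookrightarrow I$, which the paper leaves implicit.
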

\begin{proof}
In the case $\Ga=\Gh$ this follows easily from the fact that 
$\Mod(R)$ has enough injectives.
Assume $\Ga=\Gg$ or $\Gb^-$.
Let $M\in\Mod_\inte(U^L_R(\Ga))$.
There exists an injective object $I$ of $\Mod_\inte(U^L_R(\Gh))$ and an embedding 
$M\hookrightarrow I$ in $\Mod_\inte(U^L_R(\Gh))$.
By Lemma \ref{lem:FG} we obtain an embedding
\[
M\hookrightarrow
\Ind^{\Ga,\Gh}_{R}(M)\hookrightarrow
\Ind^{\Ga,\Gh}_{R}(I)
\]
in  $\Mod_\inte(U^L_R(\Ga))$.
Then $\Ind^{\Ga,\Gh}_{R}(I)$ is an injective object of 
$\Mod_\inte(U^L_R(\Ga))$ by Lemma \ref{lem:II}.
\end{proof}

It follows that we have right derived functors
\begin{equation}
R^j\Ind^{\Ga,\Gc}_{R}:\Mod_\inte(U^L_R(\Gc))
\to
\Mod_\inte(U^L_R(\Ga))
\qquad(j\geqq0)
\end{equation}
for $(\Ga,\Gc)$ as in \eqref{eq:ac}.
By Lemma \ref{lem:from h} (i) we have
$R^j\Ind^{\Ga,\Gh}_{R}=0$ for $j>0$ and $\Ga=\Gg$ or $\Gb^-$.
\begin{lemma}
Let $M\in\Mod_\inte(U^L_R(\Gb^-))$ and let $M\to Q^\bullet$ be its standard resolution.
Then we have
\[
R^j\Ind^{\Gg,\Gb^-}_{R}(M)=
H^j(\Ind^{\Gg,\Gb^-}_R(Q^\bullet))
\qquad (j\geqq0).
\]
\end{lemma}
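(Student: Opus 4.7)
The plan is to exhibit $M\to Q^\bullet$ as an acyclic resolution of $M$ with respect to the left exact functor $\Ind^{\Gg,\Gb^-}_R$, so that the derived functors of $\Ind^{\Gg,\Gb^-}_R$ can be computed by applying the functor term-wise and taking cohomology. The key structural input is that each $Q^j$ is induced from $\Gh$, together with the vanishing $R^j\Ind^{\Ga,\Gh}_R=0$ for $j>0$ (already noted just after Lemma \ref{lem:EI}) for $\Ga=\Gg$ and $\Ga=\Gb^-$.

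First I would record that, by construction of the standard resolution, each term has the form $Q^j=\Ind^{\Gb^-,\Gh}_R(\tilde{Q}^j)$ for some $\tilde{Q}^j\in\Mod_\inte(U^L_R(\Gh))$. Next I would invoke the Grothendieck spectral sequence associated to the composition
\[
\Ind^{\Gg,\Gb^-}_R\circ\Ind^{\Gb^-,\Gh}_R=\Ind^{\Gg,\Gh}_R,
\]
which is available since $\Ind^{\Gb^-,\Gh}_R$ sends injectives to injectives by Lemma \ref{lem:II}. For any $N\in\Mod_\inte(U^L_R(\Gh))$ this yields
\[
E_2^{p,q}=R^p\Ind^{\Gg,\Gb^-}_R\bigl(R^q\Ind^{\Gb^-,\Gh}_R(N)\bigr)\Longrightarrow R^{p+q}\Ind^{\Gg,\Gh}_R(N).
\]
Since $R^q\Ind^{\Gb^-,\Gh}_R=0$ for $q>0$, the spectral sequence degenerates to $R^p\Ind^{\Gg,\Gb^-}_R(\Ind^{\Gb^-,\Gh}_R(N))=R^p\Ind^{\Gg,\Gh}_R(N)$, and the right-hand side vanishes for $p>0$ as well. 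Applying this with $N=\tilde{Q}^j$ shows $R^p\Ind^{\Gg,\Gb^-}_R(Q^j)=0$ for $p>0$; in other words, each $Q^j$ is $\Ind^{\Gg,\Gb^-}_R$-acyclic.

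Finally, I would invoke the standard homological algebra fact that the right derived functors of a left exact functor $F$ can be computed by any $F$-acyclic resolution: if $M\to Q^\bullet$ is exact with each $Q^j$ being $F$-acyclic, then $R^jF(M)\cong H^j(F(Q^\bullet))$. Specializing to $F=\Ind^{\Gg,\Gb^-}_R$ and the standard resolution $M\to Q^\bullet$ of $M\in\Mod_\inte(U^L_R(\Gb^-))$ gives the conclusion
\[
R^j\Ind^{\Gg,\Gb^-}_R(M)=H^j\bigl(\Ind^{\Gg,\Gb^-}_R(Q^\bullet)\bigr).
\]

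There is no real obstacle here; the statement is a formal consequence of the acyclic resolution principle, and the only substantive point is the acyclicity of the $Q^j$, which is delivered cleanly by the Grothendieck spectral sequence together with the previously established vanishing of $R^j\Ind^{\Ga,\Gh}_R$ for $j>0$. The mild subtlety is ensuring the hypotheses of the Grothendieck spectral sequence are met, i.e.\ that $\Ind^{\Gb^-,\Gh}_R$ preserves injectives, but this is precisely the content of Lemma \ref{lem:II}.
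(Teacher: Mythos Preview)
Your proposal is correct and follows essentially the same approach as the paper: show that each $Q^j$ is $\Ind^{\Gg,\Gb^-}_R$-acyclic and invoke the acyclic resolution principle. The paper's proof is just more terse, writing the single line $R^j\Ind^{\Gg,\Gb^-}_{R}(Q^k)=R^j\Ind^{\Gg,\Gh}_{R}(\tilde{Q}^k)=0$ without explicitly naming the Grothendieck spectral sequence; your version simply unpacks why that equality holds.
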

\begin{proof}
It is sufficient to show
\[
R^j\Ind^{\Gg,\Gb^-}_{R}(Q^k)=0
\qquad(j>0, \;k\geqq0).
\]
Recall $Q^k=\Ind^{\Gb^-,\Gh}_{R}(\tilde{Q}^k)$
for some $\tilde{Q}^k\in\Mod_\inte(U_R(\Gh))$.
Hence we have
\[
R^j\Ind^{\Gg,\Gb^-}_{R}(Q^k)
=
R^j\Ind^{\Gg,\Gh}_{R}(\tilde{Q}^k)=0.
\]
\end{proof}
\subsection{}
Let $R\to S$ be a homomorphism of commutative $\BA$-algebras.

Let $\Ga$ be one of $\Gg$, $\Gb^-$, $\Gh$.
We denote by
\begin{equation}
\For:
\Mod_\inte(U^L_S(\Ga))\to
\Mod_\inte(U^L_R(\Ga))
\end{equation}
the forgetful functor.
We have also a right exact functor
\begin{equation}
S\otimes_R(\bullet):
\Mod_\inte(U^L_R(\Ga))\to
\Mod_\inte(U^L_S(\Ga)).
\end{equation}

We have
\begin{equation}
\label{eq:BCO}
S\otimes_R\CO_R(A)\cong \CO_S(A)
\end{equation}
by  \eqref{eq:OH}, Proposition \ref{prop:BHN}, Proposition \ref{prop:bcO}.
The following is easily checked from this and from the definition of $\Ind$.
\begin{lemma}
\label{lem:INDfor}
Let $(\Ga,\Gc)$ be as in \eqref{eq:ac}.
Then for $M\in \Mod_\inte(U^L_S(\Gc))$ we have
\[
\For(\Ind^{\Ga,\Gc}_{S}(M))
\cong
\Ind^{\Ga,\Gc}_{R}(\For(M)).
\]
\end{lemma}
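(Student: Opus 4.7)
The plan is to construct an explicit $U^L_R(\Ga)$-linear isomorphism between $\For(\Ind^{\Ga,\Gc}_{S}(M))$ and $\Ind^{\Ga,\Gc}_{R}(\For(M))$, using the base change identification recorded in \eqref{eq:BCO} applied to the $S$-module $M$.

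First, since $M$ is an $S$-module, one has the natural identification of $R$-modules
\[
\CO_R(A)\otimes_R M \;\cong\; \bigl(\CO_R(A)\otimes_R S\bigr)\otimes_S M \;\cong\; \CO_S(A)\otimes_S M,
\]
where the second isomorphism is \eqref{eq:BCO}. Explicitly, $\varphi\otimes m\mapsto(1\otimes\varphi)\otimes m$. This identification is $U^L_R(\Ga)$-equivariant for the left multiplication action on the first tensor factor, because $\CO_R(A)\to\CO_S(A)$ is a homomorphism of $U^L_R(\Ga)$-bimodules (the $U^L_R(\Ga)$-action on $\CO_S(A)$ being the one inherited from the $U^L_S(\Ga)$-action via $U^L_R(\Ga)\to U^L_S(\Ga)$).

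Second, I would verify that this identification intertwines the two relevant $\star$-actions: the $\star$-action of $y\in U^L_R(\Gc)$ on the left-hand side, and the $\star$-action of the image $1\otimes y\in U^L_S(\Gc)$ on the right-hand side. This is a direct check from the defining formula $y\star(\varphi\otimes m)=\sum_{(y)}\varphi\cdotp S^{-1}y_{(0)}\otimes y_{(1)}m$: the comultiplication and antipode of $U^L_\BA(\Gc)$ commute with base change, and the right $U^L_R(\Gc)$-action on $\CO_R(A)$ is pulled back from the right $U^L_S(\Gc)$-action on $\CO_S(A)$.

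Third, I claim the two invariant subspaces coincide inside $\CO_S(A)\otimes_S M$. An element $z$ invariant under all of $U^L_S(\Gc)$ is a fortiori invariant under the image of $U^L_R(\Gc)$. Conversely, since the $\star$-action is $S$-linear and $U^L_S(\Gc)=S\otimes_R U^L_R(\Gc)$ is generated over $S$ by the image of $U^L_R(\Gc)$, invariance under $U^L_R(\Gc)$ automatically upgrades to invariance under $U^L_S(\Gc)$. Combined with the first step, the identification restricts to a $U^L_R(\Ga)$-linear isomorphism $\Ind^{\Ga,\Gc}_{R}(\For(M))\cong\For(\Ind^{\Ga,\Gc}_{S}(M))$, as desired.

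The main obstacle is purely bookkeeping: one has to track carefully that the base-change isomorphism $\CO_R(A)\otimes_R M\cong\CO_S(A)\otimes_S M$ requires no flatness hypothesis (it is automatic once $M$ is an $S$-module), and that every compatibility invoked — between $\CO$, $U^L$, the Hopf structure, and the two actions — is a consequence of the fact that the Hopf algebras $U^L_\BA(\Ga)$, $U^L_\BA(\Gc)$, and $\CO_\BA(A)$ are defined over $\BA$ and their defining structures commute with the scalar extension $R\to S$.
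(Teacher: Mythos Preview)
Your proposal is correct and follows exactly the approach the paper indicates: the paper simply states that the lemma is ``easily checked from \eqref{eq:BCO} and from the definition of $\Ind$'', and you have written out precisely that check in detail. There is nothing to add.
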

Hence we have the following.
\begin{lemma}
\label{lem:STDfor}
Let $\Ga=\Gg$ or $\Gb^-$.
Let $M\in \Mod_\inte(U^L_S(\Ga))$, and 
let $M\to Q^\bullet$ be its standard resolution.
Then $\For(M)\to \For(Q^\bullet)$ is the standard resolution of 
$\For(M)\in \Mod_\inte(U^L_R(\Ga))$.
\end{lemma}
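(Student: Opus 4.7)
The plan is to proceed by induction on the cohomological degree $j$, showing that $\For$ carries each stage of the $S$-level standard resolution to the corresponding stage of the $R$-level standard resolution. The overall structure is essentially formal, with the one substantive point being the naturality of the Frobenius reciprocity unit $\CF$ under restriction of scalars.

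For the base case $j=0$, we have $Q^0 = \Ind^{\Ga,\Gh}_{S}(M)$, and Lemma \ref{lem:INDfor} furnishes a canonical isomorphism $\For(Q^0) \cong \Ind^{\Ga,\Gh}_{R}(\For(M))$. The injection $\CF_S \colon M \hookrightarrow Q^0$ from Lemma \ref{lem:FG} is described explicitly by the formula displayed after Proposition \ref{prop:Frob}: writing $um = \sum_j \langle \varphi_j, u\rangle m_j$ for $u\in U^L_S(\Ga)$ and $\varphi_j\in\CO_S(A)$, one has $\CF_S(m)=\sum_j \varphi_j\otimes m_j$. The same formula, applied to the restricted action of $U^L_R(\Ga)$ on $\For(M)$ and with $\varphi_j$ viewed through the canonical map $\CO_R(A)\to\CO_S(A)$ of \eqref{eq:BCO}, defines the analogous map $\CF_R\colon \For(M)\hookrightarrow \Ind^{\Ga,\Gh}_{R}(\For(M))$. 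Hence the identification of Lemma \ref{lem:INDfor} sends $\For(\CF_S)$ to $\CF_R$, so the $j=0$ terms and their embeddings agree.

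For the inductive step, note that $\For$ is exact — it merely changes scalars from $S$ back to $R$ — and therefore commutes with cokernels. Assuming by induction that $\For(\tilde Q^{j}) \hookrightarrow \For(Q^{j})$ has been identified with the $j$-th stage of the standard resolution of $\For(M)$, we obtain
\[
\For(\tilde Q^{j+1}) \;=\; \For\bigl(\Cok(\tilde Q^{j}\to Q^{j})\bigr) \;\cong\; \Cok\bigl(\For(\tilde Q^{j})\to \For(Q^{j})\bigr),
\]
which is by definition the next cokernel term in the $R$-level resolution. Applying Lemma \ref{lem:INDfor} to $\tilde Q^{j+1}$ yields $\For(Q^{j+1}) \cong \Ind^{\Ga,\Gh}_{R}(\For(\tilde Q^{j+1}))$, and the naturality argument of the base case identifies $\For(\CF_S)$ at level $j+1$ with the corresponding $\CF_R$.

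The main — and in fact only — conceptual point is the naturality of $\CF$ under the forgetful functor, which I expect to be the step that requires the most care to formulate precisely. All remaining ingredients (exactness of $\For$, its compatibility with cokernels, and the commutation $\For\circ\Ind \cong \Ind\circ\For$) are either immediate or already recorded in Lemmas \ref{lem:from h}, \ref{lem:FG}, and \ref{lem:INDfor}.
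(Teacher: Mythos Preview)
Your proof is correct and follows the same line as the paper, which simply records the lemma as an immediate consequence of Lemma~\ref{lem:INDfor} without spelling out the inductive argument or the naturality of $\CF$. Your expansion of these details is accurate; the only slightly imprecise phrase is ``$\varphi_j$ viewed through the canonical map $\CO_R(A)\to\CO_S(A)$'', since really one uses the $R$-module identification $\CO_R(A)\otimes_R M \cong \CO_S(A)\otimes_S M$ coming from \eqref{eq:BCO}, under which the two coaction maps $\CF_R$ and $\CF_S$ coincide tautologically.
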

From this we obtain the following.
\begin{proposition}
Let $(\Ga,\Gc)$ be as in \eqref{eq:ac}.
Then for $M\in \Mod_\inte(U^L_S(\Gc))$ we have
\[
\For(R^i\Ind^{\Ga,\Gc}_{S}(M))
\cong
R^i\Ind^{\Ga,\Gc}_{R}(\For(M)).
\]
\end{proposition}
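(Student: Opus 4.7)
The plan is to reduce to the case $(\Ga,\Gc)=(\Gg,\Gb^-)$ and then transport everything through the standard resolution. For $\Gc=\Gh$, Lemma \ref{lem:from h}(i) says $\Ind^{\Ga,\Gh}$ is exact over both $R$ and $S$, so the higher derived functors vanish on either side, and in degree zero the claimed isomorphism is precisely Lemma \ref{lem:INDfor}. This disposes of the pairs $(\Gb^-,\Gh)$ and $(\Gg,\Gh)$ at once.

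For $(\Ga,\Gc)=(\Gg,\Gb^-)$, let $M\to Q^\bullet$ be the standard resolution of $M$ in $\Mod_\inte(U^L_S(\Gb^-))$. The preceding lemma (the computation of $R^j\Ind^{\Gg,\Gb^-}_S$ via the standard resolution) gives
\[
R^i\Ind^{\Gg,\Gb^-}_S(M)\cong H^i\bigl(\Ind^{\Gg,\Gb^-}_S(Q^\bullet)\bigr).
\]
By Lemma \ref{lem:STDfor}, the complex $\For(M)\to\For(Q^\bullet)$ is the standard resolution of $\For(M)$ viewed as an object of $\Mod_\inte(U^L_R(\Gb^-))$, so the same lemma applied over $R$ gives
\[
R^i\Ind^{\Gg,\Gb^-}_R(\For M)\cong H^i\bigl(\Ind^{\Gg,\Gb^-}_R(\For Q^\bullet)\bigr).
\]

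To finish, apply $\For$ to the first of these isomorphisms. Since $\For$ is exact (it merely restricts scalars along $R\to S$), it commutes with the formation of cohomology of a complex, so
\[
\For\bigl(R^i\Ind^{\Gg,\Gb^-}_S(M)\bigr)\cong H^i\bigl(\For\,\Ind^{\Gg,\Gb^-}_S(Q^\bullet)\bigr).
\]
Lemma \ref{lem:INDfor} applied termwise identifies the complex $\For\,\Ind^{\Gg,\Gb^-}_S(Q^\bullet)$ with $\Ind^{\Gg,\Gb^-}_R(\For Q^\bullet)$, and comparing with the second displayed formula yields the assertion. There is essentially no serious obstacle: every ingredient has been set up in the preceding lemmas. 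The one point worth emphasizing is that this argument genuinely requires the standard-resolution machinery rather than an arbitrary injective resolution, because $\For$ need not preserve injectivity, whereas Lemmas \ref{lem:INDfor} and \ref{lem:STDfor} guarantee that it does preserve the standard resolution termwise.
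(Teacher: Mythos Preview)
Your argument is correct and is exactly the approach the paper intends: the paper states the proposition immediately after Lemma \ref{lem:STDfor} with the remark ``From this we obtain the following,'' and your write-up simply spells out the details (case split on $\Gc=\Gh$ via exactness, and the standard-resolution computation for $(\Gg,\Gb^-)$ using Lemmas \ref{lem:INDfor} and \ref{lem:STDfor}).
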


The following is easily checked from the definition of $\Ind$ (see the proof of Lemma \ref{lem:from h}).
\begin{lemma}
\label{lem:INDbc}
Let $\Ga=\Gg$ of $\Gb^-$.
Then for $M\in \Mod_\inte(U^L_R(\Gh))$ we have
\[
S\otimes_R\Ind^{\Ga,\Gh}_{R}(M)
\cong
\Ind^{\Ga,\Gh}_{S}(S\otimes_RM).
\]
\end{lemma}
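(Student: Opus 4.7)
The plan is to exploit the explicit weight-space description of $\Ind^{\Ga,\Gh}_{R}(M)$ already derived in the proof of Lemma \ref{lem:from h}, and then to commute $S\otimes_R(\bullet)$ past the direct sum by invoking the base change results for $\CO_R(A)^r_\lambda$.

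First I would recall from the proof of Lemma \ref{lem:from h} the canonical weight-space decomposition
\[
\Ind^{\Ga,\Gh}_{R}(M)
=\bigoplus_{\lambda\in P}\CO_R(A)^r_\lambda\otimes_R M_\lambda,
\]
which is valid for both $A=G$ and $A=B^-$. Applying $S\otimes_R(\bullet)$ and using that the tensor product commutes with direct sums, one obtains
\[
S\otimes_R\Ind^{\Ga,\Gh}_{R}(M)
\cong
\bigoplus_{\lambda\in P}\bigl(S\otimes_R\CO_R(A)^r_\lambda\bigr)\otimes_S\bigl(S\otimes_R M_\lambda\bigr).
\]
Since $M$ decomposes as a direct sum of its weight spaces, the factor $S\otimes_R M_\lambda$ is canonically identified with $(S\otimes_R M)_\lambda$, so I am reduced to producing a natural isomorphism $S\otimes_R\CO_R(A)^r_\lambda\cong\CO_S(A)^r_\lambda$.

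For $A=G$ this is exactly Proposition \ref{prop:bcO}. For $A=B^-$ I would use Proposition \ref{prop:BHN}: the formula in part (ii) shows that $\CO_R(B^-)^r_\lambda=\tilde{U}^L_R(\Gn^-)^\bigstar\otimes_R R\chi_\lambda$ as $R$-modules, because the right action of $U^L_R(\Gh)$ acts by $\chi_\lambda$ on $\chi_\lambda\in\CO_R(H)$ and trivially on the $\tilde{U}^L_R(\Gn^-)^\bigstar$ factor. Each graded piece $\tilde{U}^L_R(\Gn^-)_{-\gamma}$ is a finitely generated free $R$-module by PBW, so forming the graded dual commutes with base change, yielding $S\otimes_R\tilde{U}^L_R(\Gn^-)^\bigstar\cong\tilde{U}^L_S(\Gn^-)^\bigstar$, and hence the desired isomorphism $S\otimes_R\CO_R(B^-)^r_\lambda\cong\CO_S(B^-)^r_\lambda$.

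Putting everything together yields
\[
S\otimes_R\Ind^{\Ga,\Gh}_{R}(M)
\cong
\bigoplus_{\lambda\in P}\CO_S(A)^r_\lambda\otimes_S(S\otimes_R M)_\lambda
=\Ind^{\Ga,\Gh}_{S}(S\otimes_R M).
\]
There is no real obstacle here; the only point requiring care is the base change of $\CO_R(A)^r_\lambda$, and that is already subsumed under Propositions \ref{prop:BHN} and \ref{prop:bcO}. Naturality of all of the identifications above is clear from their construction, so the isomorphism is functorial in $M$.
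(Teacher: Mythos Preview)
Your proposal is correct and follows exactly the approach the paper indicates: the paper's ``proof'' consists solely of the pointer ``see the proof of Lemma~\ref{lem:from h}'', which is precisely the weight-space decomposition $\Ind^{\Ga,\Gh}_{R}(M)=\bigoplus_{\lambda\in P}\CO_R(A)^r_\lambda\otimes_R M_\lambda$ that you unpack. The only content beyond that decomposition is the base change $S\otimes_R\CO_R(A)^r_\lambda\cong\CO_S(A)^r_\lambda$, and you correctly locate this in Proposition~\ref{prop:bcO} for $A=G$ and in Proposition~\ref{prop:BHN} (via the identification $\CO_R(B^-)^r_\lambda=\tilde{U}^L_R(\Gn^-)^\bigstar\chi_\lambda$, which is also spelled out later in the proof of Lemma~\ref{lem:FG1}) for $A=B^-$.
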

Hence we have the following.
\begin{lemma}
\label{lem:STDbc}
Let $\Ga=\Gg$ of $\Gb^-$.
Let $M\in \Mod_\inte(U^L_R(\Ga))$, and 
let $M\to Q^\bullet$ be its standard resolution.
Then $S\otimes_R M\to S\otimes_RQ^\bullet$ is the standard resolution of 
$S\otimes_R M\in \Mod_\inte(U^L_S(\Ga))$.
\end{lemma}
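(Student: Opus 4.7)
The plan is to run an induction on $j$ that parallels the iterative construction of the standard resolution, with the main input being the base-change compatibility of $\Ind^{\Ga,\Gh}$ (Lemma \ref{lem:INDbc}) and the fact that the canonical embedding $\CF$ of Lemma \ref{lem:FG} is split as a map of $U^L_R(\Gh)$-modules.

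First I would establish the base-change naturality of $\CF$: for any $N\in\Mod_\inte(U^L_R(\Ga))$, under the isomorphism $S\otimes_R\Ind^{\Ga,\Gh}_R(N)\cong\Ind^{\Ga,\Gh}_S(S\otimes_R N)$ of Lemma \ref{lem:INDbc}, the map $S\otimes_R\CF_N$ coincides with the canonical embedding $\CF_{S\otimes_R N}$ for the base-changed module. This can be checked directly from the explicit description of $\CF$ following Proposition \ref{prop:Frob}: if $uv=\sum_j\langle\varphi_j,u\rangle v_j$ in $N$ for $u\in U^L_R(\Ga)$, then the same relation with coefficients $\varphi_j$ transported via $\CO_R(A)\to\CO_S(A)$ of \eqref{eq:BCO} holds for $1\otimes v\in S\otimes_R N$ and extends to all $u\in U^L_S(\Ga)=S\otimes_R U^L_R(\Ga)$ by $S$-linearity. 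Equivalently, this is the statement that the unit of the adjunction in Proposition \ref{prop:Frob} is compatible with the base-change functor.

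Next I would carry out the induction. Assume inductively that $S\otimes_R\tilde{Q}^j$ and $S\otimes_R Q^j$ together with the tensored embedding $S\otimes_R\CF_j$ have been identified with the corresponding data $\tilde{Q}^j_S$, $Q^j_S$, $\CF_{j,S}$ of the standard resolution of $S\otimes_R M$. Consider the short exact sequence
\[
0\to\tilde{Q}^j\xrightarrow{\CF_j} Q^j\to\tilde{Q}^{j+1}\to 0
\]
in $\Mod_\inte(U^L_R(\Ga))$. By Lemma \ref{lem:FG}, $\CF_j$ is split by $\CE_j$ as a map of $U^L_R(\Gh)$-modules, and in particular as a map of $R$-modules. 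Therefore the sequence remains exact after applying $S\otimes_R-$, and the three terms are identified respectively with $\tilde{Q}^j_S$, with $\Ind^{\Ga,\Gh}_S(\tilde{Q}^j_S)=Q^j_S$ via Lemma \ref{lem:INDbc}, and hence with $\tilde{Q}^{j+1}_S=\Cok(\CF_{j,S})$; moreover the embedding matches $\CF_{j,S}$ by the preceding naturality step. The base case $j=0$ is precisely the statement $\tilde{Q}^0=M$ combined with Lemma \ref{lem:INDbc}, so the induction closes and $S\otimes_R M\to S\otimes_R Q^\bullet$ is the standard resolution of $S\otimes_R M$.

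The main (and really only) non-formal step is verifying the naturality of $\CF$ with respect to base change; once that is in hand, the rest reduces to the right-exactness of $S\otimes_R-$ applied to a split short exact sequence, together with Lemma \ref{lem:INDbc}. I do not expect any serious obstacle.
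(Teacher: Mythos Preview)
Your proposal is correct and is precisely the argument the paper has in mind: in the paper this lemma is stated immediately after Lemma~\ref{lem:INDbc} with the single phrase ``Hence we have the following,'' and your inductive argument unpacking the naturality of $\CF$ under base change together with the $U^L_R(\Gh)$-splitting from Lemma~\ref{lem:FG} is exactly what is needed to make that sentence rigorous. One minor remark: the identification $S\otimes_R\tilde{Q}^{j+1}\cong\Cok(S\otimes_R\CF_j)$ already follows from right-exactness of $S\otimes_R(-)$ alone; the splitting is only needed to see that the resulting complex is still exact (equivalently, that $S\otimes_R\CF_j$ remains injective), but since a complex built by the standard-resolution recipe is automatically exact, this is not a separate verification once the recipe has been matched.
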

From this we obtain the following.
\begin{proposition}
\label{prop:APW0}
Assume that $S$ is flat over $R$.
Then for $M\in \Mod_\inte(U^L_R(\Gb^-))$ we have
\[
S\otimes_RR^j\Ind^{\Gg,\Gb^-}_{R}(M)
\cong
R^j\Ind^{\Gg,\Gb^-}_{S}(S\otimes_RM)
\qquad(j\geqq0).
\]
\end{proposition}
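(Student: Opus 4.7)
The plan is to compute both sides via the standard resolution and use flatness of $S$ over $R$ only at the final step to commute base change with cohomology. Let $M\to Q^\bullet$ be the standard resolution of $M$. By the lemma expressing $R^j\Ind^{\Gg,\Gb^-}_R$ in terms of the standard resolution we have
\[
R^j\Ind^{\Gg,\Gb^-}_R(M)\cong H^j\bigl(\Ind^{\Gg,\Gb^-}_R(Q^\bullet)\bigr).
\]
By Lemma \ref{lem:STDbc}, the complex $S\otimes_R M\to S\otimes_R Q^\bullet$ is the standard resolution of $S\otimes_R M\in\Mod_\inte(U^L_S(\Gb^-))$, so applying the same lemma over $S$ gives
\[
R^j\Ind^{\Gg,\Gb^-}_S(S\otimes_R M)\cong H^j\bigl(\Ind^{\Gg,\Gb^-}_S(S\otimes_R Q^\bullet)\bigr).
\]

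Next I would identify the two complexes $\Ind^{\Gg,\Gb^-}_S(S\otimes_R Q^\bullet)$ and $S\otimes_R\Ind^{\Gg,\Gb^-}_R(Q^\bullet)$ termwise. By construction each $Q^k$ is of the form $\Ind^{\Gb^-,\Gh}_R(\tilde Q^k)$, so transitivity of induction yields $\Ind^{\Gg,\Gb^-}_R(Q^k)=\Ind^{\Gg,\Gh}_R(\tilde Q^k)$, and Lemma \ref{lem:INDbc} applied with $\Ga=\Gb^-$ gives $S\otimes_R Q^k\cong\Ind^{\Gb^-,\Gh}_S(S\otimes_R\tilde Q^k)$, whence $\Ind^{\Gg,\Gb^-}_S(S\otimes_R Q^k)\cong\Ind^{\Gg,\Gh}_S(S\otimes_R\tilde Q^k)$. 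A second application of Lemma \ref{lem:INDbc}, this time with $\Ga=\Gg$, produces a natural isomorphism
\[
\Ind^{\Gg,\Gb^-}_S(S\otimes_R Q^\bullet)\cong S\otimes_R\Ind^{\Gg,\Gb^-}_R(Q^\bullet)
\]
of complexes of $U^L_S(\Gg)$-modules.

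The flatness hypothesis then enters only at the final step: it guarantees that $S\otimes_R(-)$ is exact, so it commutes with $H^j$, giving
\[
H^j\bigl(S\otimes_R\Ind^{\Gg,\Gb^-}_R(Q^\bullet)\bigr)\cong S\otimes_R H^j\bigl(\Ind^{\Gg,\Gb^-}_R(Q^\bullet)\bigr)\cong S\otimes_R R^j\Ind^{\Gg,\Gb^-}_R(M).
\]
Chaining these isomorphisms with the expression for $R^j\Ind^{\Gg,\Gb^-}_S(S\otimes_R M)$ above yields the claim. There is no serious obstacle here once the standard-resolution machinery and the base-change compatibility of $\Ind^{-,\Gh}$ are granted; the proof is a formal assembly of those inputs, with the only essential use of flatness being the exactness needed to pull $S\otimes_R(-)$ past $H^j$.
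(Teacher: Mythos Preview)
Your proof is correct and follows essentially the same approach as the paper. The paper merely writes ``From this we obtain the following'' after Lemma~\ref{lem:STDbc}; the details you supply---identifying $\Ind^{\Gg,\Gb^-}_S(S\otimes_R Q^k)\cong S\otimes_R\Ind^{\Gg,\Gb^-}_R(Q^k)$ via transitivity and Lemma~\ref{lem:INDbc}, then using flatness to commute $S\otimes_R(-)$ with $H^j$---are exactly those spelled out in the paper's proof of the closely related Proposition~\ref{prop:APW1}.
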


We will also need the following (see \cite[3.5]{APW}).
\begin{proposition}
\label{prop:APW1}
Assume 
\begin{equation}
\label{eq:APW1}
\Tor^R_k(S,E)=0\qquad(k\geqq2)
\end{equation}
for any $R$-module $E$.
Let 
$M\in\Mod_\inte(U_R(\Gb^-))$.
Assume that 
any weight space of $M$ is a flat $R$-module.
Then  the canonical homomorphism
\[
S\otimes_R R^j\Ind^{\Gg,\Gb^-}_{R}(M)
\to
R^j\Ind^{\Gg,\Gb^-}_{S}(S\otimes_RM)
\qquad(j\geqq0)
\]
in $\Mod_\inte(U^L_S(\Gg))$ is injective.
\end{proposition}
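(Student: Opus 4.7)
The plan is to compute both sides via the standard resolution of $M$ and then invoke a universal-coefficient argument, which becomes available since the hypothesis $\Tor^R_k(S,-)=0$ for $k\geqq2$ says precisely that $S$ has Tor-dimension at most one over $R$. I will let $M \to Q^\bullet$ be the standard resolution in $\Mod_\inte(U^L_R(\Gb^-))$ with $Q^k = \Ind^{\Gb^-,\Gh}_R(\tilde{Q}^k)$, and set $C^\bullet := \Ind^{\Gg,\Gb^-}_R(Q^\bullet)$, which by transitivity of induction equals $\Ind^{\Gg,\Gh}_R(\tilde{Q}^\bullet)$. The lemma stating that the standard resolution computes $R^j\Ind^{\Gg,\Gb^-}_R$ then gives $R^j\Ind^{\Gg,\Gb^-}_R(M) \cong H^j(C^\bullet)$. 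By Lemma \ref{lem:STDbc} the complex $S\otimes_R M \to S\otimes_R Q^\bullet$ is the standard resolution of $S\otimes_R M$, while Lemma \ref{lem:INDbc} together with transitivity gives $\Ind^{\Gg,\Gb^-}_S(S\otimes_R Q^k) \cong S\otimes_R C^k$ termwise; hence $R^j\Ind^{\Gg,\Gb^-}_S(S\otimes_R M) \cong H^j(S\otimes_R C^\bullet)$, and the canonical homomorphism of the statement is identified with the natural map $S\otimes_R H^j(C^\bullet) \to H^j(S\otimes_R C^\bullet)$.

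Next I will check that each $C^k$ is flat over $R$. By Lemma \ref{lem:STD} every weight space of $\tilde{Q}^k$ is flat, and by Lemma \ref{lem:from h}(ii) this flatness propagates to every weight space of $C^k = \Ind^{\Gg,\Gh}_R(\tilde{Q}^k)$; viewed as an ungraded $R$-module, $C^k$ is then a direct sum of flat modules, hence itself flat.

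With $C^\bullet$ a complex of flat $R$-modules and $S$ of Tor-dimension at most one, the standard hyper-Tor (universal coefficient) argument will produce short exact sequences
\[
0 \to S\otimes_R H^j(C^\bullet) \to H^j(S\otimes_R C^\bullet) \to \Tor^R_1\bigl(S,H^{j+1}(C^\bullet)\bigr) \to 0
\]
whose leftmost arrow is the canonical homomorphism, and injectivity is then immediate. The one point I anticipate needing care with is the verification that the two identifications above really do intertwine the naturally defined canonical homomorphism with the edge map of this exact sequence; this should fall out of the construction, but it is the main piece of bookkeeping in the argument.
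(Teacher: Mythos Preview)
Your proposal is correct and follows essentially the same approach as the paper. Both arguments identify the two sides with $H^j(N^\bullet)$ and $H^j(S\otimes_R N^\bullet)$ for the complex $N^\bullet=\Ind^{\Gg,\Gb^-}_R(Q^\bullet)$ of $R$-flat terms, and then exploit the Tor-dimension $\leqq1$ hypothesis; the only difference is packaging. The paper carries out the comparison by hand, introducing $B^j=\Image(d^j)$ and $C^j=\Cok(d^j)$, deducing $\Tor_k^R(S,B^j)=0$ for $k\geqq1$ from the flatness of $N^{j+1}$ and the hypothesis, and then comparing the two short exact sequences for $H^j$ directly. Your version names this chase as the universal-coefficient/hyper-Tor short exact sequence, which is exactly what the paper's computation establishes in situ. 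Your caveat about the edge map is well taken but routine: the map $S\otimes_R H^j(N^\bullet)\to H^j(S\otimes_R N^\bullet)$ appearing in the UCT is by construction the natural one, so it does coincide with the canonical homomorphism of the statement.
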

\begin{proof}
Let $M\to Q^\bullet$ be the standard resolution of $M$, and set
$
N^\bullet=\Ind^{\Gg,\Gb^-}_{R}(Q^\bullet)
$.
Then 
$N^\bullet$
is a complex in 
$\Mod_\inte(U^L_R(\Gg))$ satisfying 
\[
H^j(N^\bullet)=R^j\Ind^{\Gg,\Gb^-}_{R}(M).
\]
Denote by $d^j:N^j\to N^{j+1}$ the differential  of 
$N^\bullet$ and set
\[
B^j=\Image(d^j),
\qquad
C^j=\Cok(d^j).
\]
Then we have exact sequences
\begin{align}
\label{eq:ex1}
&0\to B^j\to N^{j+1}\to C^j\to0
\qquad(j\geqq -1),
\\
\label{eq:ex2}
&0\to H^j(N^\bullet)\to C^{j-1}\to B^j\to 0 
\qquad(j\geqq0).
\end{align}
Here, $B^{-1}=0$, $C^{-1}=N^0$.
By
\[
N^j=
\Ind^{\Gg,\Gb^-}_{R}(Q^j)
\cong
\Ind^{\Gg,\Gh}_{R}(\tilde{Q}^j)
\]
we see from Lemma \ref{lem:from h} and 
Lemma \ref{lem:STD} that
any weight space of $N^j$ is a flat $R$-module.
Hence by \eqref{eq:ex1}
we obtain
\begin{align*}
&\Tor_k^R(S,C^j)\cong
\Tor_{k-1}^R(S,B^j)\qquad(k\geqq2,\;\;j\geqq0).
\end{align*}
This together with our assumption \eqref{eq:APW1} implies 
\[
\Tor_{k}^R(S,B^j)=0\quad(j\geqq0,\;\;k\geqq1).
\]
Therefore, it follows from 
\eqref{eq:ex2} that we have an exact sequence
\begin{align}
\label{eq:ex2a}
&0\to S\otimes_RH^j(N^\bullet)
\to S\otimes_RC^{j-1}
\to S\otimes_RB^j
\to 0.
\end{align}

On the other hand by Lemma \ref{lem:STDbc}
$S\otimes_RM\to S\otimes_RQ^\bullet$
is the standard resolution of 
$S\otimes_RM$, and hence
\[
R^j\Ind^{\Gg,\Gb^-}_{S}(S\otimes_RM)
\cong
H^j(\Ind^{\Gg,\Gb^-}_{S}(S\otimes_RQ^\bullet)).
\]
Moreover, by Lemma \ref{lem:INDbc} we have
\begin{align*}
\Ind^{\Gg,\Gb^-}_{S}(S\otimes_RQ^j)
\cong&
\Ind^{\Gg,\Gh}_{S}(S\otimes_R\tilde{Q}^j)
\cong
S\otimes_R\Ind^{\Gg,\Gh}_{R}(\tilde{Q}^j)
\cong
S\otimes_R\Ind^{\Gg,\Gb^-}_{R}({Q}^j)
\\
=&
S\otimes_RN^j.
\end{align*}
Hence we have
\[
R^j\Ind^{\Gg,\Gb^-}_{S}(S\otimes_RM)=
H^j(S\otimes_RN^\bullet).
\]
Now we apply 
the above argument for $N^\bullet$ to $S\otimes_RN^\bullet$.
Let $\tilde{d}^j:S\otimes_RN^j\to S\otimes_RN^{j+1}$ be the differential of 
$S\otimes_RN^\bullet$, and set
\[
\tilde{B}^j=\Image(\tilde{d}^j),
\qquad
\tilde{C}^j=\Cok(\tilde{d}^j).
\]
Then we have
\begin{equation}
\label{eq:Eex2}
0\to H^j(S\otimes_RN^\bullet)\to \tilde{C}^{j-1}\to\tilde{B}^j\to 0.
\end{equation}
Note that
\[
\tilde{C}^j=\Cok(\tilde{d}^j)\cong
S\otimes_R\Cok(d^j)=
S\otimes_RC^j.
\]
Note also that the canonical homomorphism
\[
S\otimes_RB^j
=
S\otimes_R\Image(d^j)
\to
\Image(\tilde{d}^j)
=
\tilde{B}_j
\]
is surjective.
Hence our assertion follows from 
\eqref{eq:ex2a}, \eqref{eq:Eex2}.
\end{proof}

\begin{proposition}
\label{prop:sp}
Assume that there exists some $N>0$ such that 
\begin{equation}
\label{eq:sp}
\Tor^R_k(S,E)=0\qquad(k>N)
\end{equation}
for any $R$-module $E$.
Let 
$M\in\Mod_\inte(U^L_R(\Gb^-))$.
Assume that 
any weight space of $M$ is a flat $R$-module
and
we have
$R^j\Ind^{\Gg,\Gb^-}_{R}(M)=0$ for any $j>0$.
Then  we have
$R^j\Ind^{\Gg,\Gb^-}_{S}(S\otimes_RM)=0$ for any $j>0$, and 
the canonical homomorphism
\begin{equation}
\label{eq:sp2}
S\otimes_R\Ind^{\Gg,\Gb^-}_{R}(M)
\to
\Ind^{\Gg,\Gb^-}_{S}(S\otimes_RM)
\end{equation}
is an isomorphism
in $\Mod_\inte(U^L_S(\Gg))$.
\end{proposition}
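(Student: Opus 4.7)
The plan is to revisit the proof of Proposition \ref{prop:APW1} and combine the vanishing hypothesis on $R^j\Ind^{\Gg,\Gb^-}_R(M)$ with the finite $\Tor$-dimension of $S$ over $R$ in order to dimension-shift. Let $M\to Q^\bullet$ be the standard resolution of $M$ and set $N^\bullet=\Ind^{\Gg,\Gb^-}_R(Q^\bullet)$. The hypothesis $R^j\Ind^{\Gg,\Gb^-}_R(M)=0$ for $j>0$ says exactly that $N^\bullet$ is a resolution of $\Ind^{\Gg,\Gb^-}_R(M)$ concentrated in cohomological degree zero, so setting $B^j=\Image(d^j)\subset N^{j+1}$, the two families of short exact sequences exhibited in the proof of Proposition \ref{prop:APW1} collapse to
\[
0\to B^j\to N^{j+1}\to B^{j+1}\to 0\quad(j\geqq 0),\qquad 0\to \Ind^{\Gg,\Gb^-}_R(M)\to N^0\to B^0\to 0.
\]

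The key step is to propagate $\Tor$-vanishing through these sequences. By Lemma \ref{lem:STD} and Lemma \ref{lem:from h}, every weight space of $N^j$ is flat over $R$, so $\Tor^R_k(S,N^j)=0$ for all $k\geqq 1$ and all $j\geqq 0$. The long exact $\Tor$-sequence attached to the first family then yields isomorphisms
\[
\Tor^R_k(S,B^j)\cong \Tor^R_{k+1}(S,B^{j+1})\qquad(k\geqq 1,\; j\geqq 0).
\]
Iterating $m$ times and choosing $m$ so that $k+m>N$, hypothesis \eqref{eq:sp} forces $\Tor^R_k(S,B^j)=0$ for every $k\geqq 1$ and every $j\geqq 0$. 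Feeding this into the $\Tor$-sequence of the second exact sequence above then gives $\Tor^R_k(S,\Ind^{\Gg,\Gb^-}_R(M))=0$ for $k\geqq 1$ as well.

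With these vanishings in hand, $S\otimes_R(-)$ preserves the exactness of each short exact sequence above, and splicing them back together shows that $S\otimes_R N^\bullet$ is a resolution of $S\otimes_R\Ind^{\Gg,\Gb^-}_R(M)$ concentrated in degree zero. On the other hand, exactly as in the proof of Proposition \ref{prop:APW1}, Lemmas \ref{lem:STDbc} and \ref{lem:INDbc} identify $S\otimes_R N^\bullet$ with $\Ind^{\Gg,\Gb^-}_S(S\otimes_R Q^\bullet)$, the complex computing $R^j\Ind^{\Gg,\Gb^-}_S(S\otimes_R M)$ from the standard resolution of $S\otimes_R M$. Taking cohomology gives both the vanishing of $R^j\Ind^{\Gg,\Gb^-}_S(S\otimes_R M)$ for $j>0$ and, in degree zero, an isomorphism $S\otimes_R\Ind^{\Gg,\Gb^-}_R(M)\cong \Ind^{\Gg,\Gb^-}_S(S\otimes_R M)$; tracing through the identifications shows this is the canonical comparison map \eqref{eq:sp2}. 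The only delicate point is making sure the $\Tor$-shift terminates after finitely many steps, which is precisely what hypothesis \eqref{eq:sp} guarantees.
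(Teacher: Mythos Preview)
Your argument is correct. Both proofs rest on the same two ingredients --- the flatness of the terms $N^j=\Ind^{\Gg,\Gb^-}_R(Q^j)\cong\Ind^{\Gg,\Gh}_R(\tilde Q^j)$ of the standard resolution and the finite $\Tor$-dimension hypothesis \eqref{eq:sp} --- and carry out the same dimension-shifting, but they package it differently. The paper works one step at a time: it replaces $M$ by the cokernel of $M\hookrightarrow\tilde M:=\Ind^{\Gb^-,\Gh}_R(M)=Q^0$, checks that $\tilde M/M$ again satisfies the hypotheses, and then runs separate backward and forward inductions (on the $\Tor$-degree for the isomorphism, on $j$ for the higher vanishing), finishing with a five-lemma style diagram chase. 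You instead keep the whole complex $N^\bullet$ in play, observe that the acyclicity hypothesis turns it into a genuine resolution of $\Ind^{\Gg,\Gb^-}_R(M)$, and shift $\Tor$ along the successive images $B^j$ all at once; since $\tilde M/M\cong\tilde Q^1$ and $\Ind^{\Gg,\Gb^-}_R(\tilde M/M)\cong B^0$, your $B^j$'s are exactly the objects the paper produces by iterating its one-step construction. Your version is a bit more streamlined and makes the role of \eqref{eq:sp} transparent; the paper's version is more explicit about why the degree-zero isomorphism it obtains is the canonical map \eqref{eq:sp2}, a point you only sketch.
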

\begin{proof}
Set $\tilde{M}=\Ind^{\Gb^-,\Gh}_{R}(M)$, and 
let $M\hookrightarrow \tilde{M}$ be the canonical embedding in $\Mod_\inte(U^L_R(\Gb^-))$.
We first show that $\tilde{M}/M$ satisfies the same assumption as $M$.
By Lemma \ref{lem:from h} any weight space of $\tilde{M}$ is a flat $R$-module.
By Lemma \ref{lem:FG} 
$M\hookrightarrow \tilde{M}$ splits as a homomorphism of $U^L_R(\Gh)$-modules, and hence any weight space of $\tilde{M}/M$ is also a flat $R$-module.
Moreover, by Lemma \ref{lem:from h} we have
\[
R^j\Ind^{\Gg,\Gb^-}_{R}(\tilde{M})
=R^j\Ind^{\Gg,\Gh}_{R}(M)=0
\qquad(j>0).
\]
Hence 
we obtain
\[
R^j\Ind^{\Gg,\Gb^-}_{R}(\tilde{M}/M)
=0
\qquad(j>0)
\]
from the exact sequence 
\begin{equation}
\label{eq:exI}
0\to M\to\tilde{M}\to \tilde{M}/M\to 0
\end{equation}
and the assumption on $M$.
We have proved that  $\tilde{M}/M$ satisfies the same assumption as $M$.

By the above argument we also obtain an exact sequence
\begin{equation}
\label{eq:exII}
0\to \Ind^{\Gg,\Gb^-}_{R}(M)
\to \Ind^{\Gg,\Gb^-}_{R}(\tilde{M})
\to \Ind^{\Gg,\Gb^-}_{R}(\tilde{M}/M)\to 0.
\end{equation}
Since \eqref{eq:exI} is a split exact sequence of $R$-modules, we have an exact sequence
\begin{equation}
\label{eq:exIII}
0\to S\otimes_RM\to
S\otimes_R\tilde{M}\to 
S\otimes_R\tilde{M}/M\to 0
\end{equation}
in $\Mod_\inte(U^L_S(\Gb^-))$.

Let us show that \eqref{eq:sp2} is an isomorphism.
By $\Ind^{\Gg,\Gb^-}_{R}(\tilde{M})\cong
\Ind^{\Gg,\Gh}_{R}({M})$ any weight space 
of 
$\Ind^{\Gg,\Gb^-}_{R}(\tilde{M})$ is a flat $R$-module.
Hence from \eqref{eq:exII} we obtain
\begin{equation}
\Tor_{k+1}^R(S, \Ind^{\Gg,\Gb^-}_{R}(\tilde{M}/M))
\cong
\Tor_{k}^R(S, \Ind^{\Gg,\Gb^-}_{R}(M))
\qquad(k\geqq1).
\end{equation}
Since $\tilde{M}/M$ satisfies the same assumption as $M$, we see by the backward induction on $k$ that 
$\Tor_{k}^R(S, \Ind^{\Gg,\Gb^-}_{R}(M))=0$ for any $k>0$ (here, we used \eqref{eq:sp}).
We obtain an exact sequence 
\begin{equation}
0\to
S\otimes_R\Ind^{\Gg,\Gb^-}_{R}(M)
\to 
S\otimes_R\Ind^{\Gg,\Gb^-}_{R}(\tilde{M})
\to 
S\otimes_R\Ind^{\Gg,\Gb^-}_{R}(\tilde{M}/M)\to 0.
\end{equation}
Now consider the following commutative diagram whose rows are exact
\[
\xymatrix@C=17pt{
0
\ar[r]
&
S\otimes_R\Ind^{\Gg,\Gb^-}_{R}(M)
\ar[r]
\ar[d]
&
S\otimes_R\Ind^{\Gg,\Gb^-}_{R}(\tilde{M})
\ar[r]
\ar[d]
&
S\otimes_R\Ind^{\Gg,\Gb^-}_{R}(\tilde{M}/M)
\ar[r]
\ar[d]
&
0
\\
0
\ar[r]
&
\Ind^{\Gg,\Gb^-}_{S}(S\otimes_RM)
\ar[r]
&
\Ind^{\Gg,\Gb^-}_{S}(S\otimes_R\tilde{M})
\ar[r]
&
\Ind^{\Gg,\Gb^-}_{S}(S\otimes_R\tilde{M}/M).
}
\]
%
The middle vertical arrow is an isomorphism by
\[
S\otimes_R\Ind^{\Gg,\Gb^-}_{R}(\tilde{M})
\cong
S\otimes_R\Ind^{\Gg,\Gh}_{R}({M})
\cong
\Ind^{\Gg,\Gh}_{S}(S\otimes_R{M})
\cong
\Ind^{\Gg,\Gb^-}_{S}(S\otimes_R\tilde{M}).
\]
Hence the leftmost arrow is injective.
Since $\tilde{M}/M$ satisfies the same assumption as $M$, the rightmost arrow is also injective.
It follows that the leftmost arrow is bijective.
Namely, \eqref{eq:sp2} is an isomorphism.
Note that the rightmost arrow is an isomorphism by the same reason, and hence we have also shown that 
$\Ind^{\Gg,\Gb^-}_{S}(S\otimes_R\tilde{M})
\to
\Ind^{\Gg,\Gb^-}_{S}(S\otimes_R\tilde{M}/M)
$
is surjective.

It remains to show 
$R^j\Ind^{\Gg,\Gb^-}_{S}(S\otimes_R{M})=0$ for $j>0$.
By 
\[
R^j\Ind^{\Gg,\Gb^-}_{S}(S\otimes_R\tilde{M})
=
R^j\Ind^{\Gg,\Gh}_{S}(S\otimes_R{M})=0
\qquad(j>0)
\]
we obtain from \eqref{eq:exIII} and the surjectivity of 
$\Ind^{\Gg,\Gb^-}_{S}(S\otimes_R\tilde{M})
\to
\Ind^{\Gg,\Gb^-}_{S}(S\otimes_R\tilde{M}/M)
$ that
\begin{equation}
R^1\Ind^{\Gg,\Gb^-}_{S}(S\otimes_R{M})=0,
\end{equation}
\begin{equation}
R^{j+1}\Ind^{\Gg,\Gb^-}_{S}(S\otimes_R{M})
\cong
R^{j}\Ind^{\Gg,\Gb^-}_{S}(S\otimes_R\tilde{M}/M)
\qquad(j>0).
\end{equation}
Since $\tilde{M}/M$ satisfies the same assumption as $M$, we obtain the desired result by induction on $j$.
\end{proof}

\section{$\Ext$ functor}
\subsection{}
Let $\Ga=\Gg$ or $\Gb^-$.
For $V\in \Mod_\inte(U^L_R(\Ga))$  we denote the $j$-th right derived functor of the left exact functor
\[
\Hom_{R,\Ga}(V,\bullet)
:=
\Hom_{U_R(\Ga)}(V,\bullet)
: \Mod_\inte(U^L_R(\Ga))
\to
\Mod(R)
\]
by
\[
\Ext^j_{R,\Ga}(V,\bullet)
: \Mod_\inte(U^L_R(\Ga))
\to
\Mod(R).
\]

Let $R\to S$ be a homomorphism of commutative $\BA$-algebras.
We can prove the following results 
similarly to Proposition \ref{prop:APW0} and 
Proposition \ref{prop:APW1}.
\begin{proposition}
\label{prop:APW2}
Assume that $S$ is flat over $R$.
Let $V, M\in \Mod_\inte(U^L_R(\Ga))$.
Assume that $V$ is a finitely generated projective $R$-module.
Then for any $j\geqq0$ we have
\[
S\otimes_R
\Ext^j_{R,\Ga}(V,M)
\cong
\Ext^j_{S,\Ga}(S\otimes_RV,S\otimes_RM).
\]
\end{proposition}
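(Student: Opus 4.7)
The plan is to follow the strategy of Propositions \ref{prop:APW0} and \ref{prop:APW1}: compute both sides of the desired isomorphism via the standard resolution of $M$, exploit the exactness properties of induction from $\Gh$, and then invoke flatness of $S$ together with a termwise Hom base-change isomorphism tailored to the hypothesis that $V$ is finitely generated projective over $R$.

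First, I would verify that for every $W\in\Mod_\inte(U^L_R(\Gh))$ the induced module $\Ind^{\Ga,\Gh}_R(W)$ is acyclic for the functor $\Hom_{R,\Ga}(V,\bullet)$. Since $\Ind^{\Ga,\Gh}_R$ is exact by Lemma \ref{lem:from h}(i) and preserves injectives by Lemma \ref{lem:II}, Frobenius reciprocity (Proposition \ref{prop:Frob}) extends to derived functors to give $\Ext^j_{R,\Ga}(V,\Ind^{\Ga,\Gh}_R(W))\cong\Ext^j_{R,\Gh}(\For(V),W)$. Because $V$ is finitely generated projective over $R$, its weight decomposition involves only finitely many nonzero summands $V_\lambda$, each of which is a direct summand of $V$ and is therefore finitely generated projective over $R$. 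Since $\Mod_\inte(U^L_R(\Gh))$ decomposes as a $P$-indexed copy of $\Mod(R)$, the right hand side equals $\bigoplus_\lambda \Ext^j_R(V_\lambda,W_\lambda)$, which vanishes for $j>0$. Applying this to the standard resolution $M\to Q^\bullet$ with $Q^k=\Ind^{\Ga,\Gh}_R(\tilde Q^k)$ yields
\[
\Ext^j_{R,\Ga}(V,M)\cong H^j(\Hom_{R,\Ga}(V,Q^\bullet)).
\]

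Next, by flatness of $S$ and Lemma \ref{lem:STDbc}, the complex $S\otimes_R M\to S\otimes_R Q^\bullet$ is the standard resolution of $S\otimes_R M$; since $S\otimes_R V$ is finitely generated projective over $S$, applying the same argument inside the category over $S$ gives
\[
\Ext^j_{S,\Ga}(S\otimes_R V,S\otimes_R M)\cong H^j(\Hom_{S,\Ga}(S\otimes_R V,S\otimes_R Q^\bullet)).
\]
It therefore suffices to establish a termwise Hom base-change isomorphism
\[
S\otimes_R\Hom_{R,\Ga}(V,N)\cong\Hom_{S,\Ga}(S\otimes_R V,S\otimes_R N)
\]
for any $N\in\Mod_\inte(U^L_R(\Ga))$, after which flatness of $S$ (so that $S\otimes_R-$ commutes with cohomology) finishes the argument.

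For this termwise isomorphism, finite generation and projectivity of $V$ give $\Hom_R(V,N)\cong V^*\otimes_R N$ with its natural $U^L_R(\Ga)$-module structure, and this identification is compatible with $S\otimes_R-$ because $V^*$ is also finitely generated projective. The subspace $\Hom_{R,\Ga}(V,N)$ is then the subspace of $U^L_R(\Ga)$-invariants in $V^*\otimes_R N$; by integrability of $V$ and $N$ these invariants are cut out on each weight space by the vanishing of finitely many operators $e_i^{(n)},f_i^{(n)}$, and such a finite intersection of kernels commutes with the flat functor $S\otimes_R-$. The main obstacle I anticipate is precisely this last step: verifying rigorously that $(-)^{U^L_R(\Ga)}$ commutes with flat base change on integrable modules. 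The argument is formal once integrability is used, but it requires carefully tracking the Hopf algebra action on $V^*\otimes_R N$ and confirming that the integrability bounds on $V$ and $N$ transfer so that the invariance condition reduces to a finite system of linear conditions per weight component.
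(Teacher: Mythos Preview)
Your overall strategy—compute both sides with the standard resolution, establish acyclicity of the $Q^k$ for $\Hom_{R,\Ga}(V,-)$, and finish with a termwise Hom base change plus flatness—is exactly the route the paper has in mind when it says the proof is ``similar to Proposition~\ref{prop:APW0}''. The acyclicity argument via Frobenius reciprocity and the decomposition $\Ext^j_{R,\Gh}(V,W)\cong\bigoplus_\lambda\Ext^j_R(V_\lambda,W_\lambda)$ is clean and correct.

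The weak point is your last paragraph. You try to prove the termwise isomorphism $S\otimes_R\Hom_{R,\Ga}(V,N)\cong\Hom_{S,\Ga}(S\otimes_R V,S\otimes_R N)$ for \emph{arbitrary} integrable $N$ by identifying $\Hom_{R,\Ga}(V,N)$ with $(V^*\otimes_R N)^{U^L_R(\Ga)}$ and asserting that invariance is cut out by finitely many operators $e_i^{(n)},f_i^{(n)}$. That finiteness claim is not justified: over a general base ring $[n]_{q_i}!$ need not be invertible, so $e_i m=0$ does not force $e_i^{(n)}m=0$, and the integrability bound for elements of $N$ is not uniform on a weight space. Thus the invariance condition on $(V^*\otimes_R N)_0$ is genuinely an infinite intersection of kernels, and flat base change does not commute with such intersections in general.

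The fix is simply to reuse the idea from your first step. You only need the termwise isomorphism for $N=Q^k=\Ind^{\Ga,\Gh}_R(\tilde Q^k)$, and there Frobenius reciprocity gives
\[
\Hom_{R,\Ga}(V,Q^k)\;\cong\;\Hom_{R,\Gh}(V,\tilde Q^k)\;=\;\bigoplus_{\mu}\Hom_R(V_\mu,\tilde Q^k_\mu),
\]
a finite direct sum since $V$ has only finitely many nonzero weight spaces. Each $V_\mu$ is finitely generated projective, so $S\otimes_R\Hom_R(V_\mu,\tilde Q^k_\mu)\cong\Hom_S(S\otimes_R V_\mu,S\otimes_R\tilde Q^k_\mu)$, and combining this with Lemma~\ref{lem:INDbc} and Frobenius reciprocity over $S$ yields the desired termwise identification. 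This is precisely the analogue of how Proposition~\ref{prop:APW0} reduces the termwise step for $\Ind$ to Lemma~\ref{lem:INDbc}, and it avoids the invariants issue entirely.
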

\begin{proposition}
\label{prop:APW3}
Assume \eqref{eq:APW1} holds for any $R$-module $E$.
Let $V, M\in \Mod_\inte(U^L_R(\Ga))$.
Assume that $V$ is a finitely generated projective $R$-module.
Assume also that 
any weight space of $M$ is a flat $R$-module.
Then the canonical homomorphism
\[
S\otimes_R
\Ext^j_{R,\Ga}(V,M)
\to
\Ext^j_{S,\Ga}(S\otimes_RV,S\otimes_RM)
\]
is injective for any $j\geqq0$.
\end{proposition}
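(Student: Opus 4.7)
The strategy is to mimic the proof of Proposition \ref{prop:APW1}, replacing the induction complex by a complex computing $\Ext^j_{R,\Ga}(V,M)$ whose terms are $R$-flat. First I take the standard resolution $M\to Q^\bullet$, with $Q^j=\Ind^{\Ga,\Gh}_{R}(\tilde Q^j)$ and each weight space of $\tilde Q^j$ flat over $R$ by Lemma \ref{lem:STD}. Since $V=\bigoplus_{\lambda\in P}V_\lambda$ has only finitely many nonzero weight spaces and each $V_\lambda$ is a summand of the finitely generated projective $R$-module $V$, $V$ is itself a projective object of $\Mod_\inte(U^L_R(\Gh))$. Combining this with the derived form of Frobenius reciprocity
\[
\Ext^p_{R,\Ga}(V,\Ind^{\Ga,\Gh}_{R}(N))\cong\Ext^p_{R,\Gh}(V,N),
\]
which is immediate from Lemma \ref{lem:II}, the exactness of $\Ind^{\Ga,\Gh}_R$ in Lemma \ref{lem:from h}\,(i), and Proposition \ref{prop:Frob}, I conclude that each $Q^j$ is acyclic for $\Hom_{R,\Ga}(V,\cdot)$. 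Hence $\Ext^j_{R,\Ga}(V,M)\cong H^j(C^\bullet)$ with $C^j:=\Hom_{R,\Ga}(V,Q^j)$.

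Next I identify $C^j$ explicitly. Frobenius reciprocity and the weight decomposition give
\[
C^j\cong\Hom_{U^L_R(\Gh)}(V,\tilde Q^j)\cong\bigoplus_{\lambda\in P}\Hom_R(V_\lambda,\tilde Q^j_\lambda)\cong\bigoplus_{\lambda\in P}V_\lambda^*\otimes_R\tilde Q^j_\lambda,
\]
the sum being finite and the last isomorphism using finite generation and projectivity of each $V_\lambda$. Each summand is $R$-flat, so every $C^j$ is flat. By Lemma \ref{lem:STDbc} the sequence $S\otimes_R M\to S\otimes_R Q^\bullet$ is the standard resolution of $S\otimes_R M$ over $S$, so the parallel computation yields $\Ext^j_{S,\Ga}(S\otimes_R V,S\otimes_R M)\cong H^j(D^\bullet)$ with $D^j:=\Hom_{S,\Ga}(S\otimes_R V,S\otimes_R Q^j)$; finite generation plus projectivity of $V$ then makes the natural map $S\otimes_R C^j\to D^j$ a weight-by-weight isomorphism. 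Under these identifications the canonical map of the statement becomes the base-change comparison $S\otimes_R H^j(C^\bullet)\to H^j(S\otimes_R C^\bullet)$ for a complex $C^\bullet$ of flat $R$-modules.

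It remains to establish injectivity of this comparison, and here I would copy the end of the proof of Proposition \ref{prop:APW1} verbatim. Writing $B^j=\Image(d^j)$ and $\overline C^j=\Cok(d^j)$ for the differentials of $C^\bullet$, the short exact sequences $0\to B^j\to C^{j+1}\to\overline C^j\to 0$ combined with $R$-flatness of $C^{j+1}$ yield $\Tor^R_k(S,\overline C^j)\cong\Tor^R_{k-1}(S,B^j)$ for $k\ge 2$; together with the hypothesis \eqref{eq:APW1} this forces $\Tor^R_1(S,B^j)=0$ for every $j\ge 0$. Consequently applying $S\otimes_R$ to $0\to H^j(C^\bullet)\to\overline C^{j-1}\to B^j\to 0$ keeps the sequence short exact, embedding $S\otimes_R H^j(C^\bullet)$ into $S\otimes_R\overline C^{j-1}$; the analogous (merely right-exact) sequences for $S\otimes_R C^\bullet$ embed $H^j(S\otimes_R C^\bullet)$ into the same object, and the required injectivity follows from the compatibility of the two factorizations. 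The one non-formal step is the acyclicity of the $Q^j$ for $\Hom_{R,\Ga}(V,\cdot)$, which is the place where finite generation and projectivity of $V$ must be invoked decisively; everything after that is essentially a transcription of the induction case.
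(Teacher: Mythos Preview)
Your proof is correct and is precisely the adaptation of the proof of Proposition~\ref{prop:APW1} that the paper has in mind; indeed the paper gives no separate argument, stating only that the result is proved ``similarly to Proposition~\ref{prop:APW0} and Proposition~\ref{prop:APW1}.'' The ingredients you single out---derived Frobenius reciprocity to see that each $Q^j=\Ind^{\Ga,\Gh}_R(\tilde Q^j)$ is $\Hom_{R,\Ga}(V,\cdot)$-acyclic, and the finite projectivity of $V$ to obtain $C^j\cong\bigoplus_\lambda V_\lambda^*\otimes_R\tilde Q^j_\lambda$ as a flat $R$-module compatible with base change---are exactly what is needed to reduce to the homological tail of Proposition~\ref{prop:APW1}.
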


\subsection{}
\begin{proposition}
\label{prop:dualWeyl2}
\begin{itemize}
\item[(i)]
We have
\[
\Ext^j_{R,\Gg}(\Delta_R(\lambda),
\CO_R(G)_\ad)=0
\qquad
(\lambda\in P^+,\; j>0).
\]
\item[(ii)]
Let $R\to S$ be a homomorphism of commutative $\BA$-algebras.
Then we have
\[
S\otimes_R
\Hom_{R,\Gg}(\Delta_R(\lambda),\CO_R(G)_\ad)
\cong
\Hom_{S,\Gg}(\Delta_S(\lambda),\CO_S(G)_\ad)
\qquad
(\lambda\in P^+).
\]
\end{itemize}
\end{proposition}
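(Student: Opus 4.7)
The plan is to reduce the problem to the fundamental pairing between Weyl and dual Weyl modules, and then to propagate the conclusion through the good filtration of $\CO_R(G)_\ad$ provided by Proposition \ref{prop:dualWeyl1}.

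The key input I would establish first is the quantum analogue of the standard $(\Delta,\nabla)$-pairing in a highest weight category:
\[
\Ext^j_{R,\Gg}(\Delta_R(\lambda),\nabla_R(\mu))
\cong
\begin{cases} R & \text{if } j=0 \text{ and } \lambda=\mu,\\ 0 & \text{otherwise}, \end{cases}
\]
for all $\lambda,\mu\in P^+$. For $R=\BA$ this is established in \cite{APW}, and it extends to an arbitrary commutative $\BA$-algebra $R$ thanks to the fact that $\Delta_\BA(\lambda)$ and $\nabla_\BA(\lambda)$ are free of finite rank over $\BA$ (by canonical basis theory) with compatible duality; alternatively one can resolve $\Delta_R(\lambda)$ by a BGG-style complex built from objects of the form $\Ind^{\Gg,\Gb^-}_R(-)$ and compute directly using Frobenius reciprocity. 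This step is the technical heart of the whole proof.

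Granting this vanishing, I would prove (i) by induction on $m$ along the filtration $0=L_{-1}\subset L_0\subset L_1\subset\cdots\subset\CO_R(G)_\ad$ of Proposition \ref{prop:dualWeyl1}. The long exact sequence associated to $0\to L_{m-1}\to L_m\to\nabla_R(\mu_m)\to 0$, together with the vanishing above, yields $\Ext^j_{R,\Gg}(\Delta_R(\lambda),L_m)=0$ for all $j>0$ and simultaneously shows that $\Hom_{R,\Gg}(\Delta_R(\lambda),L_{m-1})\hookrightarrow\Hom_{R,\Gg}(\Delta_R(\lambda),L_m)$ is a split inclusion of free $R$-modules whose cokernel is $R$ or $0$ according to whether $\mu_m=\lambda$. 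Since $\Delta_R(\lambda)$ is finitely generated as a $U^L_R(\Gg)$-module and the filtration has split transitions on $\Hom$, passage to the filtered colimit $\CO_R(G)_\ad=\bigcup_m L_m$ preserves both the vanishing of higher $\Ext$ and the freeness, which gives (i) and shows that $\Hom_{R,\Gg}(\Delta_R(\lambda),\CO_R(G)_\ad)$ is a free $R$-module of rank $\#\{m:\mu_m=\lambda\}$.

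For (ii) the crucial observation is that the filtration of Proposition \ref{prop:dualWeyl1} descends from a corresponding $\BA$-level filtration constructed via Propositions \ref{prop:bcO} and \ref{prop:dualWeyl0}; in particular $S\otimes_R L_m$ realizes the $m$-th stage of the analogous filtration of $\CO_S(G)_\ad$. Running the inductive argument of the previous paragraph over both $R$ and $S$ produces free modules of equal rank, and the canonical map $S\otimes_R\Hom_{R,\Gg}(\Delta_R(\lambda),\CO_R(G)_\ad)\to\Hom_{S,\Gg}(\Delta_S(\lambda),\CO_S(G)_\ad)$ is compatible with the two filtrations and with the base-change identification $\Delta_S(\lambda)\cong S\otimes_R\Delta_R(\lambda)$. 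A level-by-level comparison then forces this map to be an isomorphism, giving (ii). As noted, the main obstacle throughout is the opening $\Ext$-vanishing; once it is in place, both parts follow by organized bookkeeping with the filtration.
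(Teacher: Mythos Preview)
Your proposal is correct and follows essentially the same route as the paper: cite the $(\Delta,\nabla)$ Ext-pairing (the paper attributes this to \cite{And} rather than \cite{APW}), then run induction on $m$ along the good filtration of Proposition~\ref{prop:dualWeyl1} for both (i) and (ii). The paper's write-up is terser and does not spell out the passage to the colimit or the freeness of the Hom groups, but your additional remarks on those points are accurate and helpful.
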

\begin{proof}
For $\lambda, \mu\in P^+$ and $j\geqq0$ we have
\begin{equation}
\label{eq:CPSK}
\Ext^j_{R,\Gg}(\Delta_R(\lambda),\nabla_R(\mu))
=
\begin{cases}
R\qquad&(\lambda=\mu,\;j=0)
\\
0&(\text{otherwise})
\end{cases}
\end{equation}
(see \cite{And}).
Take a filtration \eqref{eq:filt} as in Proposition \ref{prop:dualWeyl1}.

(i) Let $\lambda\in P^+$ and $j>0$.
It is sufficient to show 
\[
\Ext^j_{R,\Gg}(\Delta_R(\lambda),L_m)=0
\]
for any $m$.
This follows from \eqref{eq:CPSK} and the exact sequence 
\[
\Ext^j_{R,\Gg}(\Delta_R(\lambda),L_{m-1})
\to
\Ext^j_{R,\Gg}(\Delta_R(\lambda),L_m)
\to
\Ext^j_{R,\Gg}(\Delta_R(\lambda),L_m/L_{m-1})
\]
using induction on $m$.

(ii) Let $\lambda\in X^+$.
It is sufficient to show 
\[
S\otimes_R\Hom_{R,\Gg}(\Delta_R(\lambda),L_m)
\cong
\Hom_{S,\Gg}(\Delta_S(\lambda),S\otimes_RL_m)
\]
for any $m$.
By \eqref{eq:CPSK} we have
\[
S\otimes_R\Hom_{R,\Gg}(\Delta_R(\lambda),L_m/L_{m-1})
\cong
\Hom_{S,\Gg}(\Delta_S(\lambda),S\otimes_RL_m/S\otimes_RL_{m-1}),
\]
and hence our assertion follows by induction on $m$.
\end{proof}
\subsection{}
Recall that the surjective Hopf algebra homomorphism
$\pi:U^L_R(\Gb^-)\to U^L_R(\Gh)$ induces an embedding
$\CO_R(H)\to \CO_R(B^-)$ of Hopf algebras.
We will regard $\CO_R(H)$ as a Hopf subalgebra of $\CO_R(B^-)$ in the following.
We also regard $\CO_R(B^-)$ as an $\CO_R(H)$-module via the right multiplication.
\begin{lemma}
\label{lem:seB}
The adjoint action of $U^L_R(\Gb^-)$ on $\CO_R(B^-)$ is 
$\CO_R(H)$-linear.
\end{lemma}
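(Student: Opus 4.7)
The plan is to establish $\ad(u)(\varphi\chi)=\ad(u)(\varphi)\cdot\chi$ for $u\in U^L_R(\Gb^-)$, $\varphi\in\CO_R(B^-)$, and $\chi\in\CO_R(H)\subset\CO_R(B^-)$, by reducing the assertion to the ad-invariance of $\chi$. The three ingredients will be a twisted Leibniz rule for the adjoint action, the triviality of $\ad$ on the Hopf subalgebra $\CO_R(H)$, and the counit axiom.

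First I would derive the Leibniz-type formula. Starting from $\ad(u)(\varphi\psi)=\sum_{(u)}u_{(0)}\cdot(\varphi\psi)\cdot S^{-1}u_{(1)}$, I expand $u_{(0)}\cdot(\varphi\psi)$ and the subsequent right action by $S^{-1}u_{(1)}$ using the $U^L_R(\Gb^-)$-bimodule algebra structure of $\CO_R(B^-)$, together with $\Delta(S^{-1}x)=\sum S^{-1}x_{(1)}\otimes S^{-1}x_{(0)}$ and coassociativity. The outcome, after suitable renaming of Sweedler indices, is
\[
\ad(u)(\varphi\psi)=\sum_{(u)}\bigl(u_{(0)}\cdot\varphi\cdot S^{-1}u_{(2)}\bigr)\cdot\ad(u_{(1)})(\psi),
\]
where $\Delta^{(2)}(u)=\sum u_{(0)}\otimes u_{(1)}\otimes u_{(2)}$.

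Next I would show that every $\chi\in\CO_R(H)$ is ad-invariant in the sense $\ad(v)(\chi)=\varepsilon(v)\chi$. Since $\chi=\chi_0\circ\pi$ for some $\chi_0\in\CO_R(H)$ and $\pi\colon U^L_R(\Gb^-)\to U^L_R(\Gh)$ is a Hopf algebra homomorphism, evaluation at $w\in U^L_R(\Gb^-)$ gives
\[
\langle\ad(v)(\chi),w\rangle=\chi_0\Bigl(\sum_{(v)} S^{-1}(\pi v_{(1)})\,\pi(w)\,\pi v_{(0)}\Bigr).
\]
The Cartan part $U^L_R(\Gh)$ is commutative (it is generated by the group-like elements $k_\gamma$), so the argument rearranges to $\pi(w)\cdot\sum_{(v)}S^{-1}(\pi v_{(1)})\,\pi v_{(0)}$. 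The antipode identity $\sum S^{-1}(h_{(1)})h_{(0)}=\varepsilon(h)$, applied to $h=\pi(v)\in U^L_R(\Gh)$, collapses this to $\varepsilon(v)\pi(w)$, yielding $\ad(v)(\chi)=\varepsilon(v)\chi$.

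Finally I substitute $\psi=\chi$ into the Leibniz formula and apply the ad-invariance of $\chi$ to get
\[
\ad(u)(\varphi\chi)=\Bigl(\sum_{(u)}\varepsilon(u_{(1)})\,u_{(0)}\cdot\varphi\cdot S^{-1}u_{(2)}\Bigr)\chi=\ad(u)(\varphi)\cdot\chi,
\]
where the last equality uses the counit identity $(\id\otimes\varepsilon\otimes\id)\circ\Delta^{(2)}=\Delta$. The main bookkeeping burden is in the first step, where the nested Sweedler indices must be tracked carefully through the anti-homomorphism $S^{-1}$; the remaining steps become short once the commutativity of $U^L_R(\Gh)$ is exploited.
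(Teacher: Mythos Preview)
Your proof is correct and follows essentially the same approach as the paper: derive the twisted Leibniz formula for $\ad(u)(\varphi\psi)$, observe that $\ad(v)(\chi)=\varepsilon(v)\chi$ for $\chi\in\CO_R(H)$, and combine via the counit axiom. The paper simply states the Leibniz identity and the ad-invariance of $\CO_R(H)$ without justification, whereas you spell out both (the latter via commutativity of $U^L_R(\Gh)$ and the Hopf map $\pi$); but the structure is identical.
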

\begin{proof}
For $\varphi, \psi\in\CO_R(B^-)$ and $u\in U^L_R(\Gb^-)$  we have
\[
\ad(u)(\varphi\psi)=
\sum_{(u)_2}
(u_{(0)}\cdotp \varphi\cdotp S^{-1}u_{(2)})
(\ad(u_{(1)})(\psi)).
\]
If $\psi\in\CO_R(H)$, we have $\ad(u)(\psi)=\varepsilon(u)\psi$ for any $u\in  U^L_R(\Gb^-)$.
Hence for $\varphi\in\CO_R(G)$,  $\psi\in\CO_R(H)$,  $u\in  U^L_R(\Gb^-)$ we have
\[
\ad(u)(\varphi\psi)=
\sum_{(u)}
\varepsilon(u_{(1)})
(u_{(0)}\cdotp \varphi\cdotp S^{-1}u_{(2)})
\psi
=(\ad(u)(\varphi))\psi.
\]
\end{proof}
Hence we can regard $\CO_R(B^-)_\ad$ as an object of
$\Mod_\inte(U^L_{\CO_R(H)}(\Gb^-))$.

The aim of this subsection is to show the following.
\begin{proposition}
\label{prop:FG}
Assume that $R$ is a commutative Noetherian $\BA$-algebra.
Then for $\lambda\in P^+$ and $j\geqq0$
the $\CO_R(H)$-module 
$
\Ext^j_{R,\Gb^-}
(\Delta_R(\lambda),\CO_R(B^-)_{\ad})
$
is finitely generated.
\end{proposition}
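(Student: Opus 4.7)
The plan is to use the standard resolution of $\CO_R(B^-)_\ad$ from Section 4 together with Frobenius reciprocity to reduce the $\Ext$ computation to cohomology of a complex whose weight spaces are visibly finitely generated over $\CO_R(H)$, then conclude by Noetherianity of $\CO_R(H)\cong R[P]$.

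First I would analyse the adjoint $\Gh$-weight decomposition of $\CO_R(B^-)_\ad$. Using Proposition \ref{prop:BHN} together with a direct left/right weight computation showing that $\psi\in(\tilde{U}^L_R(\Gn^-)_{-\gamma})^*$ viewed inside $\CO_R(B^-)$ has left $\Gh$-weight $\gamma$ and right $\Gh$-weight $0$, one obtains the identification of $\CO_R(H)$-modules
\[
\CO_R(B^-)_{\ad,\gamma}\cong \tilde{U}^L_R(\Gn^-)^\bigstar_\gamma\otimes_R\CO_R(H)\qquad(\gamma\in Q^+),
\]
which vanishes for $\gamma\notin Q^+$. Since each $\tilde{U}^L_R(\Gn^-)_{-\gamma}$ is a free $R$-module of finite rank, every adjoint weight space is finitely generated free over $\CO_R(H)$. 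Next, form the standard resolution $\CO_R(B^-)_\ad\to Q^\bullet$ with $Q^p=\Ind^{\Gb^-,\Gh}_R(\tilde{Q}^p)$; it inherits a compatible $\CO_R(H)$-module structure by Lemma \ref{lem:seB}. Using Lemma \ref{lem:from h}(i), Lemma \ref{lem:II} and Frobenius reciprocity,
\[
\Ext^q_{R,\Gb^-}(\Delta_R(\lambda),Q^p)\cong\Ext^q_{R,\Gh}(\Delta_R(\lambda)|_\Gh,\tilde{Q}^p)=\bigoplus_\mu\Ext^q_R(\Delta_R(\lambda)_\mu,(\tilde{Q}^p)_\mu),
\]
which vanishes for $q>0$ because each $\Delta_R(\lambda)_\mu$ is free of finite rank over $R$. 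The hyper-$\Ext$ spectral sequence therefore degenerates, giving
\[
\Ext^j_{R,\Gb^-}(\Delta_R(\lambda),\CO_R(B^-)_\ad)\cong H^j\Bigl(\bigoplus_{\mu}\Hom_R(\Delta_R(\lambda)_\mu,(\tilde{Q}^\bullet)_\mu)\Bigr),
\]
where the direct sum ranges over the finite set of $\Gh$-weights of $\Delta_R(\lambda)$.

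It remains to show by induction on $p$ that $(\tilde{Q}^p)_\nu$ is finitely generated over $\CO_R(H)$ for each $\nu\in Q^+$ (and vanishes for $\nu\notin Q^+$). The base case is the weight analysis above. For the induction step, using $\CO_R(B^-)\cong \tilde{U}^L_R(\Gn^-)^\bigstar\otimes\CO_R(H)$ one sees that the $\nu$-weight space of $Q^p=\Ind^{\Gb^-,\Gh}_R(\tilde{Q}^p)$ decomposes as
\[
(Q^p)_\nu\cong\bigoplus_{0\leqq\sigma\leqq\nu}\tilde{U}^L_R(\Gn^-)^\bigstar_{\nu-\sigma}\otimes_R(\tilde{Q}^p)_\sigma,
\]
a \emph{finite} direct sum (the condition $0\leqq\sigma\leqq\nu$ in $Q^+$ allowing only finitely many $\sigma$) of finitely generated $\CO_R(H)$-modules by the inductive hypothesis, so $(\tilde{Q}^{p+1})_\nu$, being a quotient of $(Q^p)_\nu$, is also finitely generated. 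Since $\CO_R(H)\cong R[P]$ is Noetherian whenever $R$ is, the $\Ext$ group is a subquotient of a finite direct sum of finitely generated $\CO_R(H)$-modules, hence finitely generated. The chief bookkeeping obstacle is tracking left and right $\Gh$-weights through the various identifications; the essential structural point that propagates finite generation through the resolution is the finiteness of $\{\sigma\in Q^+:\sigma\leqq\nu\}$.
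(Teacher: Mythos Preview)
Your proof is correct and takes essentially the same approach as the paper's: both use the standard resolution and show by induction that the weight spaces of $\tilde{Q}^p$ (and hence $Q^p$) are finitely generated over $\CO_R(H)$, your support condition ``weights lie in $Q^+$'' being precisely the specific form of the paper's condition (f2) in this case. The only variation is that the paper embeds $\Hom_{R,\Gb^-}(\Delta_R(\lambda),Q^j)\hookrightarrow Q^j_\lambda$ via evaluation at the highest weight vector, whereas you use Frobenius reciprocity to identify it with $\bigoplus_\mu\Hom_R(\Delta_R(\lambda)_\mu,(\tilde{Q}^j)_\mu)$ and in the process make explicit the acyclicity of the $Q^j$ for $\Hom_{R,\Gb^-}(\Delta_R(\lambda),-)$, which the paper simply asserts.
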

We first show the following.
\begin{lemma}
\label{lem:FG1}
Let $R$ be a commutative $\BA$-algebra.
For $M\in\Mod_{\inte}(U^{L}_R(\Gh))$ and $\xi\in P$ we have
\[
\Ind^{\Gb^-,\Gh}_{R}(M)_\xi
\cong
\bigoplus_{\mu\in P, \gamma\in Q^+,\mu+\gamma=\xi}
(\tilde{U}^{L}_{R}(\Gn^-)_{-\gamma})^*
\otimes
M_\mu.
\]
\end{lemma}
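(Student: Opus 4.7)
The plan is to unwind the definition of $\Ind^{\Gb^-,\Gh}_R$ using the explicit decomposition of $\CO_R(B^-)$ given in Proposition \ref{prop:BHN}, and then track the left and right $U^L_R(\Gh)$-weights carefully.

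\textbf{Step 1: Decomposition of $\CO_R(B^-)$.} By Proposition \ref{prop:BHN}(ii) together with \eqref{eq:OH},
\[
\CO_R(B^-) \cong \tilde{U}^L_R(\Gn^-)^\bigstar \otimes \CO_R(H) \cong \bigoplus_{\gamma\in Q^+,\,\nu\in P} (\tilde{U}^L_R(\Gn^-)_{-\gamma})^*\,\chi_\nu.
\]

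\textbf{Step 2: Right weight.} I would compute how $U^L_R(\Gh)$ acts on the right on $\psi\chi_\nu$ for $\psi\in(\tilde{U}^L_R(\Gn^-)_{-\gamma})^*$. Using the pairing formula in Proposition \ref{prop:BHN}(ii) and the fact that $k_\delta h \in U^L_R(\Gh)$ whenever $h\in U^L_R(\Gh)$, one gets
$\langle \psi\chi_\nu\cdot k_\delta,\,hy\rangle = \langle\psi,y\rangle\chi_\nu(k_\delta h)$, so $\psi\chi_\nu\cdot k_\delta = q^{(\nu,\delta)}\psi\chi_\nu$, i.e.\ $\CO_R(B^-)^r_\nu = \tilde{U}^L_R(\Gn^-)^\bigstar\cdot\chi_\nu$.

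\textbf{Step 3: Applying the induction condition.} For $z = \varphi\otimes m \in \CO_R(B^-)\otimes M$ decomposed via Step 1 and the weight decomposition of $M$, the condition $k_\delta\star z = z$ reads $\varphi\cdot S^{-1}k_\delta\otimes k_\delta m = \varphi\otimes m$, which by Step 2 becomes $q^{(\mu-\nu,\delta)}\varphi\otimes m = \varphi\otimes m$ when $\varphi\in\CO_R(B^-)^r_\nu$ and $m\in M_\mu$. This forces $\nu = \mu$, giving
\[
\Ind^{\Gb^-,\Gh}_R(M) \cong \bigoplus_{\mu\in P,\,\gamma\in Q^+} (\tilde{U}^L_R(\Gn^-)_{-\gamma})^*\chi_\mu \otimes M_\mu.
\]

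\textbf{Step 4: Left weight.} The remaining task is to show that the summand $(\tilde{U}^L_R(\Gn^-)_{-\gamma})^*\chi_\mu\otimes M_\mu$ sits in the weight space of weight $\gamma+\mu$. The left action of $k_\delta$ on $\varphi\otimes m$ is $(k_\delta\cdot\varphi)\otimes m$, so I only need the left weight of $\psi\chi_\mu$. Using $\langle k_\delta\cdot\varphi,u\rangle=\langle\varphi,uk_\delta\rangle$ together with the commutation relation $y_{-\gamma}k_\delta = q^{(\gamma,\delta)}k_\delta y_{-\gamma}$ for $y_{-\gamma}\in\tilde{U}^L_R(\Gn^-)_{-\gamma}$ (which follows from the defining relations $k_\delta f_i = q^{-(\alpha_i,\delta)}f_ik_\delta$ and the definition $\tilde U^L_R(\Gn^-)=S(U^L_R(\Gn^-))$), one obtains $k_\delta\cdot(\psi\chi_\mu) = q^{(\gamma+\mu,\delta)}\psi\chi_\mu$. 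Collecting summands with $\gamma+\mu=\xi$ gives the claimed description of $\Ind^{\Gb^-,\Gh}_R(M)_\xi$.

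This is essentially a bookkeeping exercise in Hopf-algebra dualities; there is no serious obstacle. The only subtle point is making sure the sign/direction conventions of $S$, $S^{-1}$, and the left vs.\ right $\CO_R(B^-)$-module structures are kept straight in Steps 2 and 4, since each of these contributes a factor $q^{\pm(\cdot,\cdot)}$ whose cancellation is what produces the condition $\mu=\nu$ and the final weight $\gamma+\mu$.
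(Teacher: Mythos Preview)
Your proposal is correct and follows essentially the same approach as the paper: use Proposition~\ref{prop:BHN} to identify $\CO_R(B^-)^r_\mu=\tilde U^L_R(\Gn^-)^\bigstar\chi_\mu$, obtain $\Ind^{\Gb^-,\Gh}_R(M)=\bigoplus_\mu \tilde U^L_R(\Gn^-)^\bigstar\chi_\mu\otimes M_\mu$, and then read off the left weight of each summand. The paper simply says ``by definition'' and ``we see easily'' where you have written out Steps~2--4 explicitly, so your version is a more detailed rendering of the same argument rather than a different one.
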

\begin{proof}
By definition we have
\[
\Ind^{\Gb^-,\Gh}_{R}(M)
=
\bigoplus_{\mu\in P}
\CO_R(B^-)^r_\mu\otimes
M_\mu,
\]
where 
\begin{align*}
\CO_R(B^-)^r_\mu
=&
\{\varphi\in \CO_R(B^-)\mid 
\varphi \cdotp h=\chi_\mu(h)\varphi
\;\;(h\in U^{L}_R(\Gh))\}.
\end{align*}
Under the identification 
\[
\CO_R(B^-)=\tilde{U}^{L}_{R}(\Gn^-)^\bigstar
\otimes \CO_R(H)
=\bigoplus_{\mu\in P}
\tilde{U}^{L}_{R}(\Gn^-)^\bigstar\chi_\mu
\]
of Proposition \ref{prop:BHN}
we have 
$
\CO_R(B^-)^r_\mu
=
\tilde{U}^{L}_{R}(\Gn^-)^\bigstar\chi_\mu,
$
and hence 
\[
\Ind^{\Gb^-,\Gh}_{R}(M)
=
\bigoplus_{\mu\in P}
\tilde{U}^{L}_{R}(\Gn^-)^\bigstar\chi_\mu\otimes
M_\mu.
\]
Then we see easily that 
\[
\Ind^{\Gb^-,\Gh}_{R}(M)_\xi
=
\bigoplus_{\mu+\gamma=\xi}
(\tilde{U}^{L}_{R}(\Gn^-)_{-\gamma})^*\chi_\mu\otimes
M_\mu
\cong
\bigoplus_{\mu+\gamma=\xi}
(\tilde{U}^{L}_{R}(\Gn^-)_{-\gamma})^*\otimes
M_\mu.
\]
\end{proof}
Consider the following two conditions on 
$M\in\Mod_{\inte}(U^{L}_R(\Gh))$:
\begin{itemize}
\item[(f1){$_M$}]
for any $\mu\in P$ the weight space $M_\mu$ is a finitely generated $R$-module;
\item[(f2){$_M$}]
for any $\mu\in P$ we have 
\[
\#\{
\delta\in Q^+\mid M_{\mu-\delta}\ne0\}<\infty.
\]
\end{itemize}
\begin{lemma}
\label{lem:FG2}
Let $R$ be a commutative $\BA$-algebra.
Assume 
$M\in\Mod_{\inte}(U^{L}_R(\Gh))$
satisfies $({\rm{f1}})_M$, 
$({\rm{f2}})_M$.
Then 
$N=
\Ind^{\Gb^-,\Gh}_{R}(M)
$
regarded as an object of $\Mod_{\inte}(U^{L}_R(\Gh))$
satisfies
$({\rm{f1}})_N$, 
$({\rm{f2}})_N$.
\end{lemma}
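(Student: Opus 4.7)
The plan is to reduce everything to the explicit weight-space description provided by Lemma \ref{lem:FG1}, and then verify (f1)$_N$ and (f2)$_N$ via two elementary finiteness arguments. Throughout, the crucial auxiliary fact I would invoke is that each graded piece $\tilde U^{L}_{\BA}(\Gn^-)_{-\gamma}$ is a free $\BA$-module of finite rank (by the PBW theorem for the De Concini--Procesi--Lusztig form), so that after base change $\tilde U^{L}_R(\Gn^-)_{-\gamma}$ is a finitely generated free $R$-module, and therefore $(\tilde U^{L}_R(\Gn^-)_{-\gamma})^{*}$ is also finitely generated free over $R$.

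Fix $\xi \in P$. By Lemma \ref{lem:FG1},
\[
N_\xi \;\cong\; \bigoplus_{\substack{\mu \in P,\ \gamma \in Q^+\\ \mu+\gamma = \xi}} (\tilde U^{L}_R(\Gn^-)_{-\gamma})^{*} \otimes_R M_\mu.
\]
To check (f1)$_N$, I reindex by $\gamma \in Q^+$ (so that $\mu = \xi - \gamma$). A summand is nonzero only when $M_{\xi-\gamma} \neq 0$, and the set of such $\gamma$ is finite by (f2)$_M$ applied to $\mu = \xi$. Each nonzero summand is a tensor product over $R$ of two finitely generated $R$-modules (the first by the auxiliary fact above, the second by (f1)$_M$), hence finitely generated. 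A finite direct sum of finitely generated $R$-modules is finitely generated, which proves (f1)$_N$.

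For (f2)$_N$, fix $\xi \in P$ and consider
\[
S \;=\; \{\delta \in Q^+ \mid N_{\xi-\delta}\neq 0\}.
\]
If $\delta \in S$, then again by Lemma \ref{lem:FG1} there exist $\mu \in P$ and $\gamma \in Q^+$ with $\mu + \gamma = \xi - \delta$ and $M_\mu \neq 0$. Setting $\eta = \delta + \gamma \in Q^+$, we get $M_{\xi - \eta}\neq 0$. By (f2)$_M$ the set of possible $\eta$ is finite. For each such $\eta$, the $\delta \in Q^+$ arising must satisfy $\eta - \delta = \gamma \in Q^+$; writing everything in the basis of simple roots, this forces componentwise bounds $0 \leqq \delta \leqq \eta$, so only finitely many $\delta$ can appear for each $\eta$. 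Hence $S$ is finite, proving (f2)$_N$.

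I do not anticipate any genuine obstacle: once Lemma \ref{lem:FG1} is in hand, everything is combinatorics in $Q^+$ combined with the PBW-type finiteness of $\tilde U^{L}_R(\Gn^-)_{-\gamma}$. The only point requiring a little care is the second step, where one must avoid confusing the indices $\delta$ (labelling the weight one is probing) and $\gamma$ (coming from the $\Gn^-$-side of the induction); writing $\eta = \delta+\gamma$ and invoking (f2)$_M$ at the shifted weight resolves this cleanly.
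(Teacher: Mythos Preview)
Your proof is correct and follows essentially the same approach as the paper: both arguments invoke Lemma~\ref{lem:FG1}, then deduce $({\rm f1})_N$ from the PBW-type finiteness of $(\tilde U^L_R(\Gn^-)_{-\gamma})^*$ together with $({\rm f1})_M$ and $({\rm f2})_M$, and deduce $({\rm f2})_N$ by setting $\eta=\delta+\gamma$ (the paper writes $\gamma'$ in place of your $\eta$) and bounding $\delta$ below $\eta$. Your write-up is in fact more explicit than the paper's, which compresses the verification of $({\rm f1})_N$ into a single clause.
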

\begin{proof}
By Lemma \ref{lem:FG1} we have
\[
N_{\xi}
\cong
\bigoplus_{\gamma\in Q^+}
(\tilde{U}^{L}_{R}(\Gn^-)_{-\gamma})^*
\otimes
M_{\xi-\gamma},
\]
and hence$({\rm{f1}})_N$ holds.
Assume $N_{\xi-\delta}\ne0$ for $\xi\in P$ and $\delta\in Q^+$.
Then we have $M_{\xi-\delta-\gamma}\ne0$ for some $\gamma\in Q^+$.
Namely, there exists 
$\gamma'\in Q^+$ such that 
$M_{\xi-\gamma'}\ne0$ and $\gamma'-\delta\in Q^+$.
For each $\xi$ there exist only finitely many $\gamma'\in Q^+$ satisfying $M_{\xi-\gamma'}\ne0$.
For such $\gamma'$ there exist only finitely many $\delta\in Q^+$ satisfying $\gamma'-\delta\in Q^+$.
Hence $({\rm{f2}})_N$  holds.
\end{proof}
\begin{lemma}
\label{lem:FG3}
The conditions $({\rm{f1}})_M$ and $({\rm{f2}})_M$ hold for 
$M=\CO_R(B^-)_{\ad}$ 
regarded as an object of $\Mod_\inte(U^L_{\CO_R(H)}(\Gh))$.
\end{lemma}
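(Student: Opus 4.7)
The plan is to make the weight decomposition of $M = \CO_R(B^-)_{\ad}$ as an $\CO_R(H)$-module completely explicit and to read off both conditions from it. The central input is the triangular decomposition
\[
\CO_R(B^-) \cong \tilde{U}^L_R(\Gn^-)^\bigstar \otimes_R \CO_R(H)
\]
of Proposition \ref{prop:BHN}(ii), combined with a short direct computation of the adjoint action of $U^L_R(\Gh)$ on such a decomposition.

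First I would compute the adjoint weight of a typical element $\psi\chi_\lambda$ with $\psi \in (\tilde{U}^L_R(\Gn^-)_{-\gamma})^*$ homogeneous. By definition of the adjoint action one has
\[
\langle \ad(k_\nu)(\psi\chi_\lambda),\, hy\rangle = \langle \psi\chi_\lambda,\, k_{-\nu}hyk_\nu\rangle
\]
for $h \in U^L_R(\Gh)$ and $y \in \tilde{U}^L_R(\Gn^-)_{-\gamma'}$. Using $k_{-\nu}h = hk_{-\nu}$ and the commutation relation $k_{-\nu}yk_\nu = q^{(\gamma',\nu)}y$, the right-hand side reduces to $q^{(\gamma',\nu)}\langle \psi\chi_\lambda, hy\rangle$, and this vanishes unless $\gamma' = \gamma$ by Proposition \ref{prop:BHN}. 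Hence $\psi\chi_\lambda$ is a weight vector of adjoint weight $\gamma \in Q^+$, independently of $\lambda$, and one obtains the $\CO_R(H)$-module identification
\[
M_\mu \cong (\tilde{U}^L_R(\Gn^-)_{-\mu})^* \otimes_R \CO_R(H)
\]
for $\mu \in Q^+$, while $M_\mu = 0$ for $\mu \not\in Q^+$.

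Given this explicit form, both conditions follow immediately. For $({\rm{f1}})_M$, the PBW theorem implies that $\tilde{U}^L_\BA(\Gn^-)_{-\mu}$ is a free $\BA$-module of finite rank, so $(\tilde{U}^L_R(\Gn^-)_{-\mu})^*$ is a finitely generated free $R$-module and the displayed formula shows that $M_\mu$ is in fact a free $\CO_R(H)$-module of finite rank. For $({\rm{f2}})_M$, the nonvanishing of $M_{\mu-\delta}$ forces $\mu - \delta \in Q^+$, so for fixed $\mu \in P$ the set of relevant $\delta$ is either empty (when $\mu \not\in Q^+$) or contained in the finite set $\{\delta \in Q^+ \mid \mu - \delta \in Q^+\}$.

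No serious obstacle is expected. The only point requiring care is verifying that the adjoint weight of $\psi\chi_\lambda$ depends only on the homogeneity degree $-\gamma$ of $\psi$ and not on $\lambda$; this independence is exactly what promotes each $M_\mu$ from a finitely generated $R$-module to a free $\CO_R(H)$-module of finite rank, and it is the reason $\CO_R(H)$ (rather than $R$ alone) is the natural coefficient ring for this lemma.
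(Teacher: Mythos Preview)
Your proposal is correct and follows essentially the same approach as the paper: the paper's proof consists of the single assertion
\[
(\CO_R(B^-)_{\ad})_\mu = (\tilde{U}^L_R(\Gn^-)_{-\mu})^*\,\CO_R(H)\qquad(\mu\in P),
\]
which is exactly the identification $M_\mu \cong (\tilde{U}^L_R(\Gn^-)_{-\mu})^* \otimes_R \CO_R(H)$ you derive via the explicit adjoint-weight computation. From this formula both $(\mathrm{f1})_M$ and $(\mathrm{f2})_M$ follow just as you indicate.
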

\begin{proof}
This is a consequence of 
\[
(\CO_R(B^-)_{\ad})_\mu
=
(\tilde{U}^L_R(\Gn^-)_{-\mu})^*\CO_R(H)
\qquad(\mu\in P).
\]
\end{proof}

Now let us give a proof of Proposition \ref{prop:FG}.
Let $\CO_R(B^-)_\ad\to Q^\bullet$ be the standard resolution of $\CO_R(B^-)_\ad
\in\Mod_\inte(U^L_R(\Gb^-))$.
By 
\[
\Ext^j_{R,\Gb^-}
(\Delta_R(\lambda),\CO_R(B^-)_{\ad})
=H^j(
\Hom_{R,\Gb^-}
(\Delta_R(\lambda),Q^\bullet)
)
\]
it is sufficient to show that 
$\Hom_{R,\Gb^-}
(\Delta_R(\lambda),Q^j)$ is a finitely generated $\CO_R(H)$-module.
Since $\Delta_R(\lambda)$ is generated by the highest weight vector as a $U^L_R(\Gb^-)$-module, we have an embedding 
\[
\Hom_{R,\Gb^-}
(\Delta_R(\lambda),Q^j)
\hookrightarrow
Q^j_\lambda
\]
of $\CO_R(H)$-modules.
Hence it is sufficient to show that 
$Q^j_\lambda$ is a finitely generated $\CO_R(H)$-module.
We will show 
$({\rm{f1}})_{Q^j}$ together with $({\rm{f2}})_{Q^j}$ by induction on $j$.
Recall 
\[
Q^j=\Ind^{\Gb^-,\Gh}_{\CO_R(H)}(\tilde{Q}^j),
\qquad
\tilde{Q}^{j+1}=\Cok(\tilde{Q}^{j}\to Q^{j})
\]
for $j\geqq0$, where $\tilde{Q}^0=\CO_R(B^-)_\ad$.
By Lemma \ref{lem:FG3} the conditions 
$({\rm{f1}})_{\tilde{Q}^0}$, $({\rm{f2}})_{\tilde{Q}^0}$ are satisfied.
Hence $({\rm{f1}})_{{Q}^0}$, $({\rm{f2}})_{{Q}^0}$ hold by Lemma \ref{lem:FG2}.
Assume 
$({\rm{f1}})_{{Q}^j}$, $({\rm{f2}})_{{Q}^j}$ hold.
Since $\tilde{Q}^{j+1}$ is a quotient of $Q^j$, the conditions 
$({\rm{f1}})_{\tilde{Q}^{j+1}}$, $({\rm{f2}})_{\tilde{Q}^{j+1}}$ are satisfied.
Hence $({\rm{f1}})_{{Q}^{j+1}}$, $({\rm{f2}})_{{Q}^{j+1}}$ hold by Lemma \ref{lem:FG2}.

The proof of Proposition \ref{prop:FG} is now complete.

\section{Adjoint action and the center}
\subsection{}
We have the adjoint action of $U_\BF(\Gg;P^\vee)$ on $U_\BF(\Gg;P^\vee)$ given by
\[
\ad(u)(u')=
\sum_{(u)}u_{(0)}u'(Su_{(1)})
\qquad
(u, u'\in U_\BF(\Gg;P^\vee)).
\]
Assume $\Gamma$ and $\Gamma'$ are $\BZ$-lattices of $\Gh^*$ such that $Q\subset \Gamma, \Gamma'\subset P^\vee$.
Then we have
\[
\ad(U_\BF(\Gg;\Gamma))(U_\BF(\Gg;\Gamma'))
\subset
U_\BF(\Gg;\Gamma').
\]
Moreover, we have
\begin{align*}
\ad(U^L_\BA(\Gg;\Gamma))(U_\BA(\Gg;\Gamma'))
\subset&
U_\BA(\Gg;\Gamma')
\qquad
(\Gamma\supset Q^\vee),
\\
\ad(U^L_\BA(\Gg;\Gamma))(U^L_\BA(\Gg;\Gamma'))
\subset&
U^L_\BA(\Gg;\Gamma')
\qquad
(\Gamma,\Gamma'\supset Q^\vee).
\end{align*}
In particular, for a commutative $\BA$-algebra $R$ 
we have the adjoint actions
\begin{align}
\label{eq:ad1}
&U^L_R(\Gg)\times U^L_R(\Gg)
\to
U^L_R(\Gg)
\qquad((u,u')\mapsto \ad(u)(u')),
\\
\label{eq:ad2}
&U^L_R(\Gg)\times U_R(\Gg)
\to
U_R(\Gg)
\qquad((u,u')\mapsto \ad(u)(u')).
\end{align}
Note that the adjoint action \eqref{eq:adO} of $U^L_R(\Gg)$ on $\CO_R(G)$  is related to \eqref{eq:ad1} by 
\begin{equation}
\label{eq:tad}
\langle\ad(u)(\varphi),u'\rangle
=
\langle\varphi,\ad(S^{-1}u)(u')\rangle
\qquad
(u, u'\in U^L_R(\Gg),\;\varphi\in \CO_R(G)).
\end{equation}
\subsection{}
For a commutative $\BA$-algebra $R$ 
we set
\[
{}^eU_R(\Gh)=\bigoplus_{\lambda\in P}
Rk_{2\lambda}
\subset U_R(\Gh), 
\]
and denote by
${}^eU_R(\Gg)$ the subalgebra of $U_R(\Gg)$ generated by ${}^eU_R(\Gh)$, $U_R(\Gn^+)$, $\tilde{U}_R(\Gn^-)$.
Then the multiplication of ${}^eU_R(\Gg)$ induces the isomorphism 
\[
{}^eU_R(\Gg)
\cong
\tilde{U}_R(\Gn^-)\otimes
{}^eU_R(\Gh)\otimes
U_R(\Gn^+)
\]
or $R$-modules.
We have
\[
\ad(U^L_R(\Gg))({}^eU_R(\Gg))
\subset
{}^eU_R(\Gg).
\]
\subsection{}
We set 
\[
{}^fU_\BF(\Gg)
=\{
u\in U_\BF(\Gg)\mid
\dim\ad(U^L_\BF(\Gg))(u)<\infty\}.
\]
It is a subalgebra of $U_\BF(\Gg)$.
By \cite{JL} we have
\[
{}^fU_\BF(\Gg)
=\bigoplus_{\lambda\in P^+}
\ad(U^L_\BF(\Gg))(k_{-2\lambda}).
\]
In particular, we have
$
{}^fU_\BF(\Gg)\subset {}^eU_\BF(\Gg).
$
Set ${}^fU_\BA(\Gg)={}^fU_\BF(\Gg)\cap U_\BA(\Gg)$.
For a commutative $\BA$-algebra $R$ 
we set
\[
{}^fU_R(\Gg)=\Image(
R\otimes_\BA{}^fU_\BA(\Gg)
\to U_R(\Gg))
\subset
{}^eU_R(\Gg).
\]
\subsection{}
Take a positive integer $N$ such that $(P^\vee, P^\vee)\subset\frac1N\BZ$ with respect to
\eqref{eq:bilin}.
We denote by 
\[
\tau:U_\BF(\Gb^+; P^\vee)\times U_\BF(\Gb^-; P^\vee)\to\BQ(q^{\pm{1/N}})
\]
the Drinfeld pairing.
Namely, $\tau$ is the unique $\BF$-bilinear map satisfying 
\begin{align}
&\tau(x,y_1y_2)
=(\tau\otimes\tau)(\Delta(x),y_1\otimes y_2)
\qquad
&(x\in U_\BF(\Gb^+),\;
y_1, y_2\in U_\BF(\Gb^-)),
\\
&\tau(x_1x_2,y)
=(\tau\otimes\tau)(x_2\otimes x_1,\Delta(y))
\qquad
&(x_1, x_2\in U_\BF(\Gb^+),\;
y\in U_\BF(\Gb^-)),
\\
&\tau(k_\lambda, k_\mu)=q^{-(\lambda,\mu)}
&(\lambda, \mu\in P^\vee),
\\
&\tau(k_\lambda, f_i)=\tau(e_i,k_\lambda)=0
&(\lambda\in P^\vee, \; i\in I),
\\
&\tau(e_i,f_j)=-\delta_{ij}(q_i-q_i^{-1})^{-1}
&(i, j\in I).
\end{align}
It restricts to $\BA$-bilinear maps
\begin{align}
\tau^L_\BA:
U_\BA(\Gb^+)\times U^L_\BA(\Gb^-)\to\BA,
\\
{}^L\tau_\BA:
U^L_\BA(\Gb^+)\times U_\BA(\Gb^-)\to\BA.
\end{align}
For a commutative $\BA$-algebra $R$ we denote their base changes by
\begin{align}
\tau^L_R:
U_R(\Gb^+)\times U^L_R(\Gb^-)\to R,
\\
{}^L\tau_R:
U^L_R(\Gb^+)\times U_R(\Gb^-)\to R.
\end{align}

\subsection{}
Let $R$ be a commutative $\BA$-algebra.
We define a bilinear form
\begin{equation}
\kappa_R:U^L_R(\Gg)\times {}^eU_R(\Gg)\to R
\end{equation}
by
\begin{align*}
&\kappa_R(yhx,y'k_{2\lambda}x')
=
\tau^L_R(x,y'){}^L\tau_R(x',S^2y)\chi_{-\lambda}(h)
\\
&\qquad
(x\in {U}^{L}_R(\Gn^+),\;
h\in {U}^{L}_R(\Gh),\;
y\in \tilde{U}^{L}_R(\Gn^-), \;
x'\in U_R(\Gn^+), y'\in \tilde{U}_R(\Gn^-),
\lambda\in P).
\end{align*}
It satisfies
\begin{equation}
\label{eq:tad2}
\kappa_R(\ad(z)(v),u)=\kappa_R(v,\ad(Sz)(u))
\qquad(z, v\in U^L_R(\Gg), \; u\in{}^eU_R(\Gg))
\end{equation}
(see \cite{T92}).
Define
\begin{equation}
\jmath_R: {}^eU_R(\Gg)\to
(U^L_R(\Gg))^*
\end{equation}
by
\[
\langle\jmath_R(u),v\rangle=\kappa_R(v,u)
\qquad
(u\in{}^eU_R(\Gg), \; v\in U^L_R(\Gg)).
\]
It follows from the explicit calculation of $\tau$ in terms of the PBW-type basis (\cite[38.2]{Lbook})  that $\jmath_R$ is injective.
Moreover, it preserves the adjoint action of $U^L_R(\Gg)$
by \eqref{eq:tad} and \eqref{eq:tad2}.
Namely, we have
\[
\jmath_R(\ad(u)(u'))=\ad(u)(\jmath_R(u'))
\qquad
(u\in U^L_R(\Gg),\; u'\in {}^eU_R(\Gg)).
\]
By Caldero \cite{Cal} $\jmath_\BF$ induces an isomorphism
$
{}^fU_\BF(\Gg)\cong\CO_\BF(G)_\ad
$
of $U^L_\BF(\Gg)$-modules.
Hence by $\CO_\BA(G)=\CO_\BF(G)\cap(U^L_\BA(\Gg))^*$ we obtain 
$
{}^fU_\BA(\Gg)\cong\CO_\BA(G)_\ad
$.
\begin{lemma}
For a commutative $\BA$-algebra $R$ we have
\begin{align}
\label{eq:Cal1}
&{}^fU_R(\Gg)
\cong
R\otimes_\BA{}^fU_\BA(\Gg),
\\
\label{eq:Cal2}
&{}^fU_R(\Gg)
\cong\CO_R(G)_\ad.
\end{align}
\end{lemma}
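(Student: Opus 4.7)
The plan is to upgrade the integral isomorphism ${}^fU_\BA(\Gg)\cong\CO_\BA(G)_\ad$ (recorded just before the lemma as a consequence of Caldero's theorem over $\BF$ combined with $\CO_\BA(G)=\CO_\BF(G)\cap(U^L_\BA(\Gg))^*$) to an arbitrary commutative $\BA$-algebra $R$ by a direct base-change argument, using $\jmath_R$ as a witness for the required injectivity.

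First I would observe that $\CO_\BA(G)$ is a free $\BA$-module: by the weight decomposition $\CO_\BA(G)=\bigoplus_{\lambda\in P}\CO_\BA(G)^r_\lambda$ this is immediate from Proposition \ref{prop:bcO}, and combined with \eqref{eq:BCO} it gives $R\otimes_\BA\CO_\BA(G)\cong\CO_R(G)$. Tensoring the integral isomorphism ${}^fU_\BA(\Gg)\cong\CO_\BA(G)_\ad$ with $R$ over $\BA$ therefore yields a canonical isomorphism
\[
\alpha_R:R\otimes_\BA{}^fU_\BA(\Gg)\xrightarrow{\ \sim\ }\CO_R(G)_\ad.
\]
Since ${}^fU_R(\Gg)$ is defined as the image of the natural map $\beta_R:R\otimes_\BA{}^fU_\BA(\Gg)\to U_R(\Gg)$, both \eqref{eq:Cal1} and \eqref{eq:Cal2} will follow from $\alpha_R$ the moment we establish that $\beta_R$ is injective.

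For the injectivity of $\beta_R$, I would factor it through $\jmath_R$. The bilinear form $\kappa_R$ used to define $\jmath_R$ is the $R$-linear extension of $\kappa_\BA$, since the Drinfeld pairings $\tau^L_R$ and ${}^L\tau_R$ are by construction the base changes of $\tau^L_\BA$ and ${}^L\tau_\BA$. This means the composite $\jmath_R\circ\beta_R$ agrees with the composite
\[
R\otimes_\BA{}^fU_\BA(\Gg)\xrightarrow{\ \alpha_R\ }\CO_R(G)_\ad\hookrightarrow(U^L_R(\Gg))^*,
\]
where the second arrow is the tautological inclusion. The right-hand composite is injective because $\alpha_R$ is an isomorphism and $\CO_R(G)$ embeds in $(U^L_R(\Gg))^*$ by definition; hence $\beta_R$ is injective. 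Combining this with $\alpha_R$ identifies ${}^fU_R(\Gg)\cong R\otimes_\BA{}^fU_\BA(\Gg)\cong\CO_R(G)_\ad$, giving both \eqref{eq:Cal1} and \eqref{eq:Cal2} at once.

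The only genuinely delicate point is the identification $\jmath_R\circ\beta_R=(\text{inclusion})\circ\alpha_R$, since $(U^L_R(\Gg))^*$ is not literally the base change of $(U^L_\BA(\Gg))^*$. This compatibility is, however, a routine unwinding of the definitions of $\kappa$, the Drinfeld pairings and the identification $\CO_R(G)=R\otimes_\BA\CO_\BA(G)$ from \eqref{eq:BCO}: the subtlety is conceptual rather than computational, and I expect it to be the only place in the proof that requires more than a line of checking.
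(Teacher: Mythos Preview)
Your argument is correct and is essentially the same as the paper's: the paper packages precisely your maps $\alpha_R$, $\beta_R$, $\jmath_R$ and the inclusion $\CO_R(G)_\ad\hookrightarrow(U^L_R(\Gg))^*$ into a single commutative square, and deduces both statements from the fact that three of the four arrows are injective while the fourth is surjective. The only difference is presentational---you spell out the commutativity check (your ``delicate point'') which the paper leaves implicit.
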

\begin{proof}
We have
\[
R\otimes_\BA{}^fU_\BA(\Gg)
\cong
R\otimes_\BA\CO_\BA(G)_\ad
\cong
\CO_R(G)_\ad
\]
by Proposition \ref{prop:bcO}.
Hence the desired results follow from the following commutative diagram:
\[
\xymatrix{
R\otimes_\BA{}^fU_\BA(\Gg)
\ar@{->>}[d]
\ar[r]^{\;\;\sim}
&
\CO_R(G)_\ad
\ar@{^{(}-_>}[d]
\\
{}^fU_R(\Gg)
\ar@{^{(}-_>}[r]^{\jmath_R}
&
(U^L_R(\Gg))^*.
}
\]
\end{proof}

\subsection{}
For a commutative ring $R$ we denote by $R[P]$ the group algebra of $P$.
Namely, $R[P]$ is a free $R$-module with basis $\{e(\lambda)\mid\lambda\in P\}$, and its multiplication  is given by
\[
e(\lambda) e(\mu)=e(\lambda+\mu)
\qquad(\lambda, \mu\in P).
\]
In the case $R$ is a commutative $\BA$-algebra
we define an $R$-linear $W$-action 
\[
W\times R[P]\to R[P]
\qquad
((w,f)\mapsto w\circ f)
\]
on $R[P]$ by
\begin{equation}
w\circ e(\lambda)=q^{(w\lambda-\lambda,\rho)}e(w\lambda)
\qquad
(w\in W, \lambda\in P),
\end{equation}
where 
\[
\rho=\frac12\sum_{\alpha\in\Delta^+}\alpha\in P.
\]

\subsection{}
We see easily that 
\[
Z(U_\BF(\Gg))
=
U_\BF(\Gg)^{\ad(U^L_\BF(\Gg))}
=:
\{
z\in U_\BF(\Gg)
\mid
\ad(u)(z)=\varepsilon(u)z\;(u\in U^L_\BF(\Gg))\}
\subset
{}^fU_\BF(\Gg).
\]
Hence by $Z(U_\BA(\Gg))=Z(U_\BF(\Gg))\cap U_\BA(\Gg)$ we have also
\begin{equation}
\label{eq:ad-inv}
Z(U_\BA(\Gg))
=
U_\BA(\Gg)^{\ad(U_\BA^L(\Gg))}
\subset
{}^fU_\BA(\Gg).
\end{equation}

Define
\begin{equation}
\iota:Z(U_\BF(\Gg))\to\BF[P]
\end{equation}
as the composite of 
\[
Z(U_\BF(\Gg))
\subset
U_\BF(\Gg)
\cong
\tilde{U}_\BF(\Gn^-)\otimes U_\BF(\Gh)\otimes
U_\BF(\Gn^+)
\xrightarrow{\varepsilon\otimes1\otimes\varepsilon}
U_\BF(\Gh)
\cong\BF[P].
\]
Here, the isomorphism
${U}_\BF(\Gg)
\cong
\tilde{U}_\BF(\Gn^-)\otimes U_\BF(\Gh)\otimes
U_\BF(\Gn^+)$ is given by the multuplication of $U_\BF(\Gg)$, and 
the identification 
$U_\BF(\Gh)
\cong\BF[P]$ is given by
$
k_\lambda\leftrightarrow e(\lambda)$.

\begin{proposition}
\label{prop:HC}
\begin{itemize}
\item[(i)]
The linear map $\iota:Z(U_\BF(\Gg))\to\BF[P]$ is an injective algebra homomorphism. 
Its image coincides with 
\[
\BF[2P]^{W\circ}
=
\{f\in\BF[2P]\mid w\circ f=f\;\;(w\in W)\}.
\]
In particular, we have $Z(U_\BF(\Gg))\cong\BF[2P]^{W\circ}$.
\item[(ii)]
The restriction of $\iota$ to $Z(U_\BA(\Gg))$ gives the isomorphism 
$Z(U_\BA(\Gg))\cong\BA[2P]^{W\circ}$.
\end{itemize}
\end{proposition}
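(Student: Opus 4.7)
The plan is to identify $\iota$ as the quantum Harish-Chandra projection, deduce the injective-algebra-homomorphism property from the action on Verma modules, and obtain surjectivity together with the $\BA$-refinement simultaneously by combining the Caldero isomorphism \eqref{eq:Cal2} with the good filtration of Proposition \ref{prop:dualWeyl1}. The key observation underpinning part (i) is that for central $z\in Z(U_\BF(\Gg))$ and the highest weight vector $v_\lambda$ of a Verma module $M(\lambda)$, one has $z\cdot v_\lambda=\chi_\lambda(\iota(z))v_\lambda$: in the PBW expansion of $z$ through $\tilde{U}_\BF(\Gn^-)\otimes U_\BF(\Gh)\otimes U_\BF(\Gn^+)$, terms carrying a non-trivial $U_\BF(\Gn^+)_{>0}$-factor annihilate $v_\lambda$, while terms carrying a non-trivial $\tilde{U}_\BF(\Gn^-)_{<0}$-factor land in strictly lower weight spaces and therefore cannot contribute to the $v_\lambda$-eigenvalue. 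Multiplicativity of $\iota$ then follows from multiplicativity of central characters, and injectivity follows because Verma modules of sufficiently large highest weight collectively detect non-zero central elements. The containment $\iota(Z(U_\BF(\Gg)))\subset\BF[2P]$ is immediate from $Z(U_\BF(\Gg))\subset{}^fU_\BF(\Gg)\subset{}^eU_\BF(\Gg)$ together with the definition ${}^eU_R(\Gh)=\bigoplus_\lambda R k_{2\lambda}$, while the $W\circ$-invariance is the quantum linkage principle: $M(\lambda)$ and $M(w(\lambda+\rho)-\rho)$ share a central character, and the equality $\chi_\lambda\circ\iota=\chi_{w(\lambda+\rho)-\rho}\circ\iota$ translates, via $k_\mu\leftrightarrow e(\mu)$ and the $\rho$-shift, precisely into $w\circ\iota(z)=\iota(z)$.

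For surjectivity in (i) and the $\BA$-form in (ii), the Caldero isomorphism $\jmath_\BA:{}^fU_\BA(\Gg)\xrightarrow{\sim}\CO_\BA(G)_\ad$ is $U^L_\BA(\Gg)$-equivariant for the adjoint action, so by \eqref{eq:ad-inv} one identifies $Z(U_\BA(\Gg))$ with $\CO_\BA(G)_\ad^{U^L_\BA(\Gg)}$. Applying the good filtration of Proposition \ref{prop:dualWeyl1} whose subquotients are the dual Weyl modules $\nabla_\BA(\mu_m)$, and invoking \eqref{eq:CPSK} at $\lambda=0$ (noting $\Delta_\BA(0)=\BA$) to get $\Hom_{\BA,\Gg}(\BA,\nabla_\BA(\mu))=\delta_{\mu,0}\BA$ and $\Ext^1_{\BA,\Gg}(\BA,\nabla_\BA(\mu))=0$, the invariants $\CO_\BA(G)_\ad^{U^L_\BA(\Gg)}$ assemble into a free $\BA$-module with one basis element $z_\mu$ for each $\mu\in P^+$. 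A direct weight computation inside $\nabla_\BA(\mu)$ identifies $\iota(z_\mu)$ with $\sum_{w\in W}w\circ e(2\mu)$ up to lower-order terms in $\BA[2P]^{W\circ}$, and these Weyl-orbit sums form an $\BA$-basis of $\BA[2P]^{W\circ}$; hence $\iota$ restricts to an isomorphism $Z(U_\BA(\Gg))\xrightarrow{\sim}\BA[2P]^{W\circ}$, and tensoring with $\BF$ delivers the surjectivity assertion of (i).

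The main obstacle is the joint integrality claim: one must verify that the basis elements $z_\mu$ produced by the good filtration genuinely lie in the De Concini-Procesi form $U_\BA(\Gg)$ (not merely in the Lusztig form $U^L_\BA(\Gg)$), and that their Harish-Chandra images carry no $q$-denominators beyond those already visible in $\BA[2P]^{W\circ}$. Both points rest on the integrality properties of canonical bases combined with Proposition \ref{prop:dualWeyl1}; once these are in hand, both parts of the proposition fall out in parallel.
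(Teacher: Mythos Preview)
Your outline is viable and considerably more explicit than the paper's own proof, which for (i) simply cites \cite{T92} and for (ii) asserts that the same surjectivity argument carries over to $\BA$. Your route through the Caldero isomorphism and the good filtration of Proposition~\ref{prop:dualWeyl1} gives a clean structural reason why $Z(U_\BA(\Gg))$ is free over $\BA$ with one generator per $\lambda\in P^+$, whereas \cite{T92} constructs those central elements directly as quantum traces; both approaches then finish with the same triangularity argument on Harish-Chandra images.

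That said, you have misidentified the obstacle. The two integrality worries you raise are both automatic: by definition ${}^fU_\BA(\Gg)={}^fU_\BF(\Gg)\cap U_\BA(\Gg)$, so anything produced via $\jmath_\BA^{-1}$ from $\CO_\BA(G)_\ad$ already lies in the De~Concini--Procesi form; and $\iota$ is nothing but the triangular projection $U_\BA(\Gg)\to U_\BA(\Gh)\cong\BA[P]$, so it cannot introduce denominators. The genuine gap is the step you dismiss as ``a direct weight computation inside $\nabla_\BA(\mu)$''. Your $z_\mu$ is constructed on the $\CO_\BA(G)$ side and then transported to $U_\BA(\Gg)$ via $\jmath_\BA^{-1}$, but $\iota$ is defined on the $U_\BA(\Gg)$ side; to compute $\iota(z_\mu)$ you must first check that $\iota\circ\jmath_\BA^{-1}$ agrees (under $e(2\lambda)\leftrightarrow\chi_{-\lambda}$) with the restriction map $r:\CO_\BA(G)^{\ad(U^L_\BA(\Gg))}\to\CO_\BA(H)$. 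This is exactly the commutative square the paper records immediately after the proposition, and together with the quantum-trace identification of Lemma~\ref{lem:theta} it gives $\iota(z_\mu)$ the leading term you want. Neither of these depends logically on Proposition~\ref{prop:HC}, so you may invoke them, but you should say so; without them the triangularity claim is unsupported, and rank-matching alone over $\BA$ does not yield surjectivity.
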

\begin{proof}
(i) is well-known (see for example \cite{T92}).
From this we obtain  $Z(U_\BA(\Gg))\hookrightarrow\BA[2P]^{W\circ}$.
Its surjectivity is proved similarly to the proof of the surjectivity of $Z(U_\BF(\Gg))\to\BF[2P]^{W\circ}$ given in \cite{T92}.
\end{proof}
For a commutative $\BA$-algebra $R$
we set
\[
Z_{\Har}(U_R(\Gg))=\Image(R\otimes_\BA Z(U_\BA(\Gg))\to U_R(\Gg)).
\]
It is a central subalgebra of $U_R(\Gg)$.

\subsection{}

Although the isomorphism \eqref{eq:Cal2} does not respect the multiplication, we have the following.

\begin{lemma}
\label{lem:jmath}
For $u\in {}^eU_R(\Gg)$ and $z\in Z_\Har(U_R(\Gg))$
we have
\[
\jmath_R(uz)=\jmath_R(u)\jmath_R(z).
\]
\end{lemma}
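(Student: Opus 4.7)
The plan is to recast the identity $\jmath_R(uz)=\jmath_R(u)\jmath_R(z)$, via the definition of the convolution multiplication on the coalgebra dual $(U^L_R(\Gg))^*$, as
\[
\kappa_R(v, uz) = \sum_{(v)} \kappa_R(v_{(0)}, u)\,\kappa_R(v_{(1)}, z) \qquad (v \in U^L_R(\Gg)),
\]
and then to verify this by a triangular-decomposition computation exploiting the Hopf pairing axioms satisfied by the Drinfeld pairing $\tau$ together with the centrality of $z$. By $R$-linearity of $\kappa_R$, the compatibility of everything with base change from $\BA$, and the definition $Z_\Har(U_R(\Gg))=\Image(R\otimes_\BA Z(U_\BA(\Gg))\to U_R(\Gg))$, it suffices to treat the case $R=\BF$ (and then intersect with the $\BA$-form and base change).

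The next step is to exploit $\ad$-equivariance. Centrality of $z$ gives
\[
\ad(v)(uz)=\sum_{(v)} v_{(0)}\,u\,z\,Sv_{(1)}=\sum_{(v)}v_{(0)}\,u\,Sv_{(1)}\cdot z = \ad(v)(u)\cdot z,
\]
while the $\ad$-equivariance of $\jmath_R$ and $\ad(v)(z)=\varepsilon(v) z$ imply $\jmath_R(z)\in\CO_R(G)^{\ad}$, so that multiplication by $\jmath_R(z)$ is an $\ad$-equivariant operation on the subspace of $(U^L_R(\Gg))^*$ where it is defined. Hence the defect $\Phi(u):=\jmath_R(uz)-\jmath_R(u)\jmath_R(z)$ is an $\ad$-equivariant map ${}^eU_\BF(\Gg)\to(U^L_\BF(\Gg))^*$. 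Restricted to the ad-locally finite subalgebra ${}^fU_\BF(\Gg)$, where Caldero's isomorphism $\jmath_\BF\colon{}^fU_\BF(\Gg)\xrightarrow{\sim}\CO_\BF(G)_\ad$ is available, the right multiplication by $z$ acts on the highest-weight cyclic generator of each $\ad$-submodule by the central character of $z$ at weight $\lambda$; by the explicit formula for $\kappa_\BF$ on the Cartan (which contributes the factor $\chi_{-\lambda}(h)$) and the Harish-Chandra description of $Z(U_\BF(\Gg))$ in Proposition \ref{prop:HC}, the same scalar governs the multiplication by $\jmath_\BF(z)$ on the corresponding generator of $\CO_\BF(G)_\ad$. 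This gives $\Phi\equiv 0$ on ${}^fU_\BF(\Gg)$.

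The main technical obstacle is extending the vanishing of $\Phi$ from ${}^fU_\BF(\Gg)$ to the full algebra ${}^eU_\BF(\Gg)$. Here I would carry out a direct expansion: write $u = y'k_{2\lambda}x'$ and $v = yhx$ triangularly, and decompose $z = \sum_{\gamma\in Q^+}\sum_i y'_{\gamma,i} k_{2\mu_{\gamma,i}} x'_{\gamma,i}$ with $y'_{\gamma,i}\in\tilde{U}_\BF(\Gn^-)_{-\gamma}$ and $x'_{\gamma,i}\in U_\BF(\Gn^+)_{\gamma}$ (weight matching forced by $z$ having weight zero). Using the Hopf pairing identities $\tau(x, y_1 y_2)=\sum_{(x)}\tau(x_{(0)}, y_1)\,\tau(x_{(1)}, y_2)$ and $\tau(x_1 x_2, y)=\sum_{(y)}\tau(x_2, y_{(0)})\,\tau(x_1, y_{(1)})$ for $\tau^L_R$ and ${}^L\tau_R$, together with the explicit formula for $\kappa_R$, one expands $\kappa_R(v, uz)$ and matches it term by term against $\sum_{(v)} \kappa_R(v_{(0)}, u)\,\kappa_R(v_{(1)}, z)$. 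The contribution of the Harish-Chandra part $z_0$ matches directly from the Hopf pairing axioms, and the off-diagonal contributions cancel by virtue of centrality of $z$ combined with the $\ad$-equivariant vanishing already established on ${}^fU_\BF(\Gg)$.
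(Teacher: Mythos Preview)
Your setup is correct: the identity to prove is indeed
\[
\kappa_R(v,uz)=\sum_{(v)}\kappa_R(v_{(0)},u)\,\kappa_R(v_{(1)},z),
\]
and the reduction to $R=\BA$ (or $\BF$) is fine. The $\ad$-equivariance of the defect $\Phi$ is also correct. But from there the argument does not close.

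First, the step on ${}^fU_\BF(\Gg)$ is not a proof. You assert that on the generator $k_{-2\lambda}$ both sides are governed by ``the same scalar'' coming from the Harish-Chandra description of the center. But $\jmath_\BF$ is \emph{not} an algebra homomorphism (the paper says so explicitly just before the lemma), so knowing that $z$ acts by a scalar on a highest-weight module does not tell you what $\jmath_\BF(k_{-2\lambda}z)$ or $\jmath_\BF(k_{-2\lambda})\jmath_\BF(z)$ are as elements of $(U^L_\BF(\Gg))^*$. To check $\Phi(k_{-2\lambda})=0$ you would have to pair against a general $\tilde{y}t\tilde{x}$ and unwind both sides using $\Delta$ and the explicit formula for $\kappa$; that is already the full direct computation, just with $u=k_{-2\lambda}$.

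Second, the extension from ${}^fU_\BF(\Gg)$ to ${}^eU_\BF(\Gg)$ is not explained. There is no mechanism by which ``the $\ad$-equivariant vanishing on ${}^fU_\BF(\Gg)$'' forces cancellation of the ``off-diagonal'' terms for general $u\in{}^eU_\BF(\Gg)$: the subspace ${}^fU_\BF(\Gg)$ does not generate ${}^eU_\BF(\Gg)$ as an $\ad$-module, and the direct expansion you sketch does not refer back to the ${}^fU$ case in any way.

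What the paper actually does is carry out the direct computation in full, for arbitrary $u=yk_{2\mu}x$. One writes $z=\sum_i y_ik_{2\lambda_i}x_i$ with $y_i\in\tilde U_\BA(\Gn^-)_{-\gamma_i}$, $x_i\in U_\BA(\Gn^+)_{\gamma_i}$, expands $\langle\jmath_\BA(u)\jmath_\BA(z),\tilde y t\tilde x\rangle$ via $\Delta$, and uses the shape of $\Delta$ on $U^L_\BA(\Gn^\pm)$ to see that only the Sweedler components with $\tilde y_{(1)}\in\tilde U^L_\BA(\Gn^-)_{-\gamma_i}$, $\tilde x_{(1)}\in U^L_\BA(\Gn^+)_{\gamma_i}$ contribute. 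Tracking the resulting $k_{\gamma_i}$-shifts through $\tau$ and recombining via the Hopf-pairing identities $\tau(x,y_1y_2)=(\tau\otimes\tau)(\Delta x,y_1\otimes y_2)$ and $\tau(x_1x_2,y)=(\tau\otimes\tau)(x_2\otimes x_1,\Delta y)$ yields exactly $\kappa_\BA(\tilde y t\tilde x,uz)$. The centrality of $z$ is used only at the very end, to commute $z$ past $k_{2\mu}x$. No $\ad$-equivariance or reduction to ${}^fU$ is needed; the computation is uniform in $u$. Your proposal gestures at this expansion but never carries it out, and the conceptual shortcuts you invoke do not substitute for it.
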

\begin{proof}
We may assume $R=\BA$.
We can write
\begin{align*}
z=\sum_iy_ik_{2\lambda_i}x_i
\qquad
(y_i\in \tilde{U}_\BA(\Gn^-)_{-\gamma_i},\;
\lambda_i\in P,\;
x_i\in U_\BA(\Gn^+)_{\gamma_i},\; 
\gamma_i\in Q^+).
\end{align*}
We may assume 
\begin{align*}
u=yk_{2\mu}x
\qquad
(y\in \tilde{U}_\BA(\Gn^-)_{-\delta},\;
\mu\in P,\;
x\in U_\BA(\Gn^+), \;
\delta\in Q^+).
\end{align*}
Then for 
$\tilde{y}\in \tilde{U}^{L}_\BA(\Gn^-),\; 
t\in U_\BA^{L}(\Gh), \; 
\tilde{x}\in U_\BA^{L}(\Gn^+)$
we have
\begin{align*}
\langle
\jmath_\BA(u)\jmath_\BA(z),
\tilde{y}t\tilde{x}
\rangle
=&
\sum_{(\tilde{y}), (t), (\tilde{x})}
\langle
\jmath_\BA(u),
\tilde{y}_{(0)}t_{(0)}\tilde{x}_{(0)}
\rangle
\langle
\jmath_\BA(z),
\tilde{y}_{(1)}t_{(1)}\tilde{x}_{(1)}
\rangle
\\
=&
\sum_{i, (\tilde{y}), (t), (\tilde{x})}
\kappa_\BA(
\tilde{y}_{(0)}t_{(0)}\tilde{x}_{(0)},
yk_{2\mu}x
)
\kappa_\BA(
\tilde{y}_{(1)}t_{(1)}\tilde{x}_{(1)},
y_ik_{2\lambda_i}x_i
).
\end{align*}
Note
\begin{align*}
\Delta(U_\BA^{L}(\Gn^+))
\subset&
\sum_{\gamma\in Q^+}
U_\BA^{L}(\Gn^+)k_\gamma\otimes U_\BA^{L}(\Gn^+)_\gamma,
\\
\Delta(\tilde{U}_\BA^{L}(\Gn^-))
\subset&
\sum_{\gamma\in Q^+}
\tilde{U}_\BA^{L}(\Gn^-)k_\gamma\otimes 
\tilde{U}_\BA^{L}(\Gn^-)_{-\gamma}.
\end{align*}
By the definition of $\kappa_\BA$ 
we only need to consider the terms satisfying
\[
\tilde{y}_{(1)}\in 
\tilde{U}_\BA^{L}(\Gn^-)_{-\gamma_i},
\qquad
\tilde{x}_{(1)}\in
U_\BA^{L}(\Gn^+)_{\gamma_i}.
\]
In this case we have
\[
\tilde{y}_{(0)}\in 
\tilde{U}_\BA^{L}(\Gn^-)k_{\gamma_i},
\qquad
\tilde{x}_{(0)}\in
k_{\gamma_i}U_\BA^{L}(\Gn^+).
\]
By
\[
\tilde{y}_{(0)}t_{(0)}\tilde{x}_{(0)}
=
(\tilde{y}_{(0)}k_{-\gamma_i})
(t_{(0)}k_{2\gamma_i})
(k_{-\gamma_i}\tilde{x}_{(0)}).
\]
we have
\begin{align*}
&\langle
\jmath_\BA(u)\jmath_\BA(z),
\tilde{y}t\tilde{x}
\rangle
\\
=&
\sum_{i, (\tilde{y}), (t), (\tilde{x})}
\kappa_\BA(
(\tilde{y}_{(0)}k_{-\gamma_i})
(t_{(0)}k_{2\gamma_i})
(k_{-\gamma_i}\tilde{x}_{(0)}),
yk_{2\mu}x
)
\times
\kappa_\BA(
\tilde{y}_{(1)}t_{(1)}\tilde{x}_{(1)},
y_ik_{2\lambda_i}x_i
)
\\
=&
\sum_{i, (\tilde{y}), (t), (\tilde{x})}
\left(
{}^L\tau_\BA(k_{-\gamma_i}\tilde{x}_{(0)}, y)
\tau^L_\BA(x,S^2(\tilde{y}_{(0)}k_{-\gamma_i}))
\chi_{-\mu}(t_{(0)}k_{2\gamma_i})
\right)
\\
&\qquad\qquad\qquad
\times
\left(
{}^L\tau_\BA(\tilde{x}_{(1)},y_i)
\tau^L_\BA(x_i,S^2\tilde{y}_{(1)})
\chi_{-\lambda_i}(t_{(1)})
\right).
\end{align*}
By $y\in \tilde{U}_\BA^{L}(\Gn^-)_{-\delta}={U}_\BA^{L}(\Gn^-)_{-\delta}k_{\delta}$ 
we only need to consider the terms satisfying
$\tilde{x}_{(0)}\in {U}_\BA^{L}(\Gn^+)_\delta k_{\gamma_i}$.
Then we have
\begin{align*}
{}^L\tau_\BA(\tilde{x}_{(0)}, y)
=&
{}^L\tau_\BA((\tilde{x}_{(0)}k_{-\gamma_i})k_{\gamma_i}, 
(yk_{-\delta})k_\delta
)
=
q^{-(\gamma_i,\delta)}
{}^L\tau_\BA(\tilde{x}_{(0)}k_{-\gamma_i}, 
yk_{-\delta})
\\
=&
q^{-(\gamma_i,\delta)}
{}^L\tau_\BA(\tilde{x}_{(0)}k_{-\gamma_i}, 
y)
=
{}^L\tau_\BA(k_{-\gamma_i}\tilde{x}_{(0)}, y).
\end{align*}
Hence we have
\begin{align*}
&\langle
\jmath_\BA(u)\jmath_\BA(z),
\tilde{y}t\tilde{x}
\rangle
\\
=&
\sum_{i, (\tilde{y}), (\tilde{x})}
{}^L\tau_\BA(\tilde{x}_{(0)}, y)
\tau_\BA^L(x,S^2\tilde{y}_{(0)})
{}^L\tau_\BA(\tilde{x}_{(1)},y_i)
\tau_\BA^L(x_i,S^2\tilde{y}_{(1)})
\chi_{-(\mu+\lambda_i)}(t)
q^{-2(\mu,\gamma_i)}
\\
=&
\sum_{i}
{}^L\tau_\BA(\tilde{x},yy_i)
\tau^L_\BA(x_ix,S^2\tilde{y})
\chi_{-(\mu+\lambda_i)}(t)
q^{-2(\mu,\gamma_i)}
\\
=&
\sum_i
q^{-2(\mu,\gamma_i)}
\kappa_\BA(\tilde{y}t\tilde{x},(yy_i)k_{2(\mu+\lambda_i)}(x_ix))
\\
=&
\sum_i
\kappa_\BA(\tilde{y}t\tilde{x},yy_ik_{2\lambda_i}x_ik_{2\mu}x)
\\
=&
\kappa_\BA(\tilde{y}t\tilde{x},yzk_{2\mu}x)
=
\kappa_\BA(\tilde{y}t\tilde{x},yk_{2\mu}xz)
=
\kappa_\BA(\tilde{y}t\tilde{x},uz)
=
\langle
\jmath_\BA(uz),
\tilde{y}t\tilde{x}
\rangle.
\end{align*}
\end{proof}
\subsection{}
In general for a commutative $\BA$-algebra $R$, 
we define an $R$-linear $W$-action 
\begin{equation}
\label{eq:W-bul1}
W\times \CO_R(H)\to \CO_R(H)
\qquad
((w,\chi)\mapsto w\bullet\chi)
\end{equation}
by
\[
w\bullet\chi_\lambda=
q^{-2(w\lambda-\lambda,\rho)}\chi_{w\lambda}
\qquad
(w\in W, \lambda\in P).
\]
This induces a $W$-action
\begin{equation}
\label{eq:W-bul2}
W\times H(R)\to H(R)
\qquad
((w,t)\mapsto w\bullet t)
\end{equation}
on $H(R)$ given by 
\[
w\bullet t=w(tt_{-2\rho})t_{2\rho}
\qquad
(w\in W, \; t\in H(R)),
\]
where $t_{\pm2\rho}\in H(R)$ is defined by $\langle\chi_\lambda,t_{\pm2\rho}\rangle
=
q^{\pm2(\lambda,\rho)}$ for any $\lambda\in P$.

Then identifying $R[2P]$ with $\CO_R(H)$ via
$
e(2\lambda)\leftrightarrow\chi_{-\lambda}$ ($\lambda\in P$) 
we have 
\begin{equation}
\label{eq:bw}
R[2P]^{W\circ}\cong \CO_R(H)^{W\bullet}.
\end{equation}

By Lemma \ref{lem:jmath}
$\CO_\BA(G)$ contains a subalgebra 
$\jmath_\BA(Z(U_\BA(\Gg)))$, which is isomorphic to
$Z(U_\BA(\Gg))$.
By \eqref{eq:ad-inv}
we have
\[
\jmath_\BA(Z(U_\BA(\Gg)))
=\CO_\BA(G)^{\ad(U_\BA^L(\Gg))}
:=
\{\varphi\in\CO_\BA(G)\mid
\ad(u)(\varphi)=\varepsilon(u)\varphi\;(u\in U_\BA^L(\Gg))\}
.
\]
By the definition of $\jmath_\BA$ we have the following commutative diagram:
\[
\xymatrix{
Z(U_\BA(\Gg))
\ar[r]^{\jmath_\BA\quad}
\ar[d]_{\iota}
&
\CO_\BA(G)^{\ad(U_\BA^L(\Gg))}
\ar[d]^r
\\
\BA[2P]
\ar[r]_f
&
\CO_\BA(H).
}
\]
Here, $r$ is the restriction of the canonical Hopf algebra homomorphism $\CO_\BA(G)\to\CO_\BA(H)$ corresponding to $U^L_\BA(\Gh)\subset U^L_\BA(\Gg)$, and $f$ is given by $e(2\lambda)\mapsto \chi_{-\lambda}$.
Note that $\jmath_\BA$ and $f$ are isomorphisms.
Moreover $\iota$ induces $Z(U_\BA(\Gg))\cong\BA[2P]^{W\circ}$.
Hence $r$ induces the isomorphism 
\[
\CO_\BA(G)^{\ad(U_\BA^L(\Gg))}
\cong
\CO_\BA(H)^{W\bullet}.
\]

\begin{proposition}
\label{prop:HC2}
For a commutative $\BA$-algebra $R$ we have
\[
R\otimes_\BA
\CO_\BA(G)^{\ad(U_\BA^L(\Gg))}
\cong
\CO_R(G)^{\ad(U_R^L(\Gg))}
\cong
\CO_R(H)^{W\bullet}.
\]
Here, the second isomorphism is obtained by restricting the canonical Hopf algebra homomorphism $\CO_R(G)\to\CO_R(H)$.
\end{proposition}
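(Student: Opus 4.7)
My plan is to reduce both isomorphisms to the corresponding statements over $\BA$ (which have already been established in the paragraphs preceding the statement) by base change, using Proposition~\ref{prop:dualWeyl2}(ii) for the first and a direct orbit-by-orbit analysis for the second.

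The first isomorphism is the quickest. Since the trivial $U^L_R(\Gg)$-module is exactly $\Delta_R(0) = R$ (the defining ideal $I(0)$ being the augmentation ideal), we have
\[
\CO_R(G)^{\ad(U^L_R(\Gg))} = \Hom_{R,\Gg}(\Delta_R(0),\CO_R(G)_\ad).
\]
Applying Proposition~\ref{prop:dualWeyl2}(ii) with $\lambda = 0$ and the base change $\BA \to R$ yields
\[
R \otimes_\BA \CO_\BA(G)^{\ad(U^L_\BA(\Gg))} \cong \CO_R(G)^{\ad(U^L_R(\Gg))}.
\]

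For the second isomorphism, the restriction map $r_R : \CO_R(G) \to \CO_R(H)$ is the base change of $r_\BA$ via \eqref{eq:BCO}, and $r_\BA$ has already been shown to restrict to an isomorphism $\CO_\BA(G)^{\ad(U^L_\BA(\Gg))} \xrightarrow{\sim} \CO_\BA(H)^{W\bullet}$. Combining this with the first isomorphism, it suffices to establish the base-change identity
\[
R \otimes_\BA \CO_\BA(H)^{W\bullet} \cong \CO_R(H)^{W\bullet},
\]
which I would prove by decomposing $\CO_\BA(H) = \bigoplus_O M_O$ according to the $W$-orbits $O \subset P$, with $M_O = \bigoplus_{\lambda \in O}\BA\chi_\lambda$. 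Each $M_O$ is $W\bullet$-stable, and for any chosen representative $\mu \in O$ a direct computation shows that $M_O^{W\bullet}$ is free of rank one over $\BA$, generated by $\sum_{w \in W/W_\mu} q^{-2(w\mu-\mu,\rho)}\chi_{w\mu}$. Since this generator has coefficient $1$ at $\chi_\mu$, it extends to an $\BA$-basis of $M_O$, so $\CO_\BA(H)^{W\bullet}$ is an $\BA$-module direct summand of $\CO_\BA(H)$ and base change to $R$ is immediate, orbit by orbit.

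The main obstacle I anticipate is precisely this last base-change identity, because $|W|$ need not be invertible in $\BA$, so naive averaging is unavailable. The orbit-by-orbit argument above sidesteps this by producing an explicit $\BA$-splitting; once it is in hand, the fact that $r_R$ actually lands in $\CO_R(H)^{W\bullet}$ is a formal consequence of the corresponding fact over $\BA$, and the proposition follows.
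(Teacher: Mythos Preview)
Your proposal is correct and follows essentially the same route as the paper: both use Proposition~\ref{prop:dualWeyl2}(ii) at $\lambda=0$ for the first isomorphism and then reduce the second to the already-established case over $\BA$ together with the base-change identity $R\otimes_\BA\CO_\BA(H)^{W\bullet}\cong\CO_R(H)^{W\bullet}$. The paper simply asserts this last identity, whereas you supply the orbit-by-orbit splitting argument; that extra detail is sound and is the only difference.
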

\begin{proof}
We have
\[
R\otimes_\BA\CO_\BA(G)^{\ad(U_\BA^L(\Gg))}
\cong
R\otimes_\BA\CO_\BA(H)^{W\bullet}
\cong
\CO_R(H)^{W\bullet}.
\]
The first isomorphism is a consequence of Proposition \ref{prop:dualWeyl2} (ii) applied to $\lambda=0$.
\end{proof}
\begin{corollary}
\label{cor:HC2}
We have
\[
R\otimes_\BA Z(U_\BA(\Gg))
\cong
Z_{\Har}(U_R(\Gg))
=
{}^eU_R(\Gg)^{\ad(U^L_R(\Gg))}\
\cong
R[2P]^{W\circ}
.
\]
\end{corollary}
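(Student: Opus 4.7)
The strategy is to chain together the isomorphisms already established between $Z(U_\BA(\Gg))$, $\CO_\BA(G)^{\ad(U^L_\BA(\Gg))}$, and $\BA[2P]^{W\circ}$, base change via Proposition \ref{prop:HC2}, and then verify the middle equality via the Harish-Chandra projection on ${}^eU_R(\Gg)$.

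First, from the commutative diagram preceding Proposition \ref{prop:HC2}, $\jmath_\BA$ restricts to an isomorphism $Z(U_\BA(\Gg))\cong\CO_\BA(G)^{\ad(U^L_\BA(\Gg))}$. Base changing and applying Proposition \ref{prop:HC2} gives
\[
R\otimes_\BA Z(U_\BA(\Gg))\;\cong\;\CO_R(G)^{\ad(U^L_R(\Gg))}\;\cong\;\CO_R(H)^{W\bullet}\;\cong\;R[2P]^{W\circ},
\]
where the last isomorphism is the identification $e(2\lambda)\leftrightarrow\chi_{-\lambda}$. To identify this with $Z_\Har(U_R(\Gg))$, factor the natural map $R\otimes_\BA Z(U_\BA(\Gg))\to U_R(\Gg)$ as $R\otimes_\BA Z(U_\BA(\Gg))\to R\otimes_\BA{}^fU_\BA(\Gg)\cong{}^fU_R(\Gg)\hookrightarrow U_R(\Gg)$ using \eqref{eq:Cal1}, and apply $\jmath_R$. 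The composite becomes the established isomorphism onto $\CO_R(G)^{\ad(U^L_R(\Gg))}\hookrightarrow\CO_R(G)_\ad\cong{}^fU_R(\Gg)$, which is manifestly injective. Hence $R\otimes_\BA Z(U_\BA(\Gg))\cong Z_\Har(U_R(\Gg))$.

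For the equality $Z_\Har(U_R(\Gg))={}^eU_R(\Gg)^{\ad(U^L_R(\Gg))}$, the inclusion $\subset$ is immediate: elements of $Z_\Har$ are central, hence ad-invariant, and by \eqref{eq:ad-inv} they lie in ${}^fU_R(\Gg)\subset{}^eU_R(\Gg)$. For the reverse inclusion, I would use the Harish-Chandra projection $\iota_R:{}^eU_R(\Gg)\to R[2P]$ obtained from the triangular decomposition ${}^eU_R(\Gg)\cong\tilde{U}_R(\Gn^-)\otimes{}^eU_R(\Gh)\otimes U_R(\Gn^+)$ followed by $\varepsilon\otimes\id\otimes\varepsilon$ and the identification ${}^eU_R(\Gh)\cong R[2P]$. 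A direct $\ad(k_\mu)$-weight computation shows $\iota_R$ sends ${}^eU_R(\Gg)^{\ad(U^L_R(\Gg))}$ into $R[2P]^{W\circ}$, and by the first paragraph $\iota_R|_{Z_\Har}$ is a bijection onto $R[2P]^{W\circ}$. Thus if $\iota_R$ is also injective on ${}^eU_R(\Gg)^{\ad(U^L_R(\Gg))}$, then the inclusion $Z_\Har\subset{}^eU_R(\Gg)^{\ad(U^L_R(\Gg))}$ is forced to be equality.

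The main obstacle is the injectivity of $\iota_R$ on ${}^eU_R(\Gg)^{\ad(U^L_R(\Gg))}$. For $R=\BF$ this is part of the classical quantum Harish-Chandra theorem (cf.\ \cite{T92}), proved by inducting on the height of the PBW support of an ad-invariant element and using that $\ad(e_i)$ and $\ad(f_i)$ act on this support in a controlled way. For a general commutative $\BA$-algebra $R$, one reduces via base change, but care is needed with torsion: the cleanest route is to first verify injectivity at $R=\BA$ (using that ${}^eU_\BA(\Gg)$ is free over $\BA$ with its PBW basis), then deduce the general case using ${}^eU_R(\Gg)\cong R\otimes_\BA{}^eU_\BA(\Gg)$ and the fact that the ad-invariant subspace is compatible with the weight decomposition and hence with the flat extension.
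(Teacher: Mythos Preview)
Your argument for $R\otimes_\BA Z(U_\BA(\Gg))\cong Z_{\Har}(U_R(\Gg))\cong R[2P]^{W\circ}$ is correct and is exactly the route the paper sets up: apply Proposition~\ref{prop:HC2} and use that $\jmath_R$ is injective on ${}^eU_R(\Gg)$.

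The gaps are both in your treatment of the middle equality $Z_{\Har}(U_R(\Gg))={}^eU_R(\Gg)^{\ad(U^L_R(\Gg))}$. First, the sentence ``a direct $\ad(k_\mu)$-weight computation shows $\iota_R$ sends ${}^eU_R(\Gg)^{\ad}$ into $R[2P]^{W\circ}$'' is false as stated: $\ad(k_\mu)$-invariance only forces ad-weight zero, and every $k_{2\lambda}$ already has ad-weight zero, so no constraint whatsoever on the $W\circ$-orbit of the image is obtained this way. Over $\BF$ the $W\circ$-invariance of the Harish-Chandra image of an ad-invariant element is a genuine theorem that uses the action of the $e_i$ and $f_i$, not just the torus. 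Second, your reduction from general $R$ to $R=\BA$ assumes that taking ad-invariants commutes with $R\otimes_\BA(\cdot)$; since $R$ is an \emph{arbitrary} commutative $\BA$-algebra (not assumed flat), this is precisely the statement you are trying to prove, so the argument is circular.

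The tools already assembled in the paper suggest bypassing $\iota_R$ for this step. Since $\jmath_R:{}^eU_R(\Gg)\hookrightarrow(U^L_R(\Gg))^*$ is injective on all of ${}^eU_R(\Gg)$ and ad-equivariant, and restricts to an isomorphism ${}^fU_R(\Gg)\cong\CO_R(G)_{\ad}$, taking ad-invariants and invoking Proposition~\ref{prop:HC2} already yields ${}^fU_R(\Gg)^{\ad}=Z_{\Har}(U_R(\Gg))$ with no base-change issues. What then remains is the single inclusion ${}^eU_R(\Gg)^{\ad}\subset{}^fU_R(\Gg)$, equivalently $\jmath_R\bigl({}^eU_R(\Gg)^{\ad}\bigr)\subset\CO_R(G)$; once this is established the injectivity of $\jmath_R$ forces equality. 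This one containment replaces both of the problematic claims in your $\iota_R$-based argument, and it is what you should try to prove directly.
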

By \eqref{eq:bw} we have a natural isomorphism
\begin{equation}
\label{eq:bw2}
Z_\Har(U_R(\Gg))\cong \CO_R(H)^{W\bullet}
\end{equation}
of $R$-algebras.
For $t\in H(R)$ we define an algebra homomorphism
\begin{equation}
\label{eq:HC-ch}
\xi_{\Har,t}:Z_\Har(U_R(\Gg))\to R
\end{equation}
as the composition of \eqref{eq:bw2} and $
\CO_R(H)^{W\bullet}\hookrightarrow\CO_R(H)\xrightarrow{t} R$.

\subsection{}

We denote the composition 
\[
\CO_R(H)^{W\bullet}\
\cong
\CO_R(G)^{\ad(U_R^L(\Gg))}
\hookrightarrow
\CO_R(G)
\]
by
\begin{equation}
\vartheta:\CO_R(H)^{W\bullet}\to\CO_R(G).
\end{equation}
It is an embedding of $R$-algebras.

For $\lambda\in P^+$ we define 
$c(\lambda)\in\CO_R(H)^{W\bullet}$ by
\[
c(\lambda)
=
\sum_{\mu\in P}q^{-2(\rho,\mu)}\rank \,\Delta_R(\lambda)_\mu \,\chi_{\mu}.
\]
Note that $\{c(\lambda)\mid\lambda\in P^+\}$ is an $R$-basis of $\CO_R(H)^{W\bullet}$.
\begin{lemma}
\label{lem:theta}
For $\lambda\in P^+$ we have
\[
\langle\vartheta(c(\lambda)),u\rangle
=\Trace(uk_{-2\rho}:\Delta_R(\lambda)\to \Delta_R(\lambda))
\qquad(u\in U^L_\BA(\Gg)).
\]
\end{lemma}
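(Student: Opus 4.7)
The plan is to identify $\vartheta(c(\lambda))$ by constructing an explicit candidate in $\CO_R(G)$ and verifying two properties. Define $\phi$ by
\[
\langle\phi,u\rangle := \Trace(uk_{-2\rho}:\Delta_R(\lambda)\to\Delta_R(\lambda))\qquad(u\in U_R^L(\Gg)).
\]
This is a well-defined element of $\CO_R(G)$: since $\Delta_R(\lambda)$ is a finitely generated integrable $U_R^L(\Gg)$-module, a choice of basis $\{v_i\}$ with dual basis $\{v_i^*\}$ expresses $\phi=\sum_i\langle v_i^*,(\cdot)\,k_{-2\rho}v_i\rangle$ as a finite sum of matrix coefficients of an integrable module. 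The strategy is to show (a) $\phi\in\CO_R(G)^{\ad(U_R^L(\Gg))}$, and (b) the restriction of $\phi$ to $\CO_R(H)$ coincides with $c(\lambda)$. Since Proposition \ref{prop:HC2} exhibits the restriction $\CO_R(G)^{\ad(U_R^L(\Gg))}\xrightarrow{\sim}\CO_R(H)^{W\bullet}$ as an isomorphism and $\vartheta$ is defined as its inverse composed with the inclusion into $\CO_R(G)$, these two facts together force $\phi=\vartheta(c(\lambda))$.

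For (a), I would use \eqref{eq:tad} to reformulate adjoint invariance of $\phi$ as the identity $\langle\phi,\ad(v)(u)\rangle=\varepsilon(v)\langle\phi,u\rangle$ for all $v,u\in U_R^L(\Gg)$. Expanding $\ad(v)(u)=\sum_{(v)}v_{(0)}uS(v_{(1)})$, this becomes
\[
\sum_{(v)}\Trace(v_{(0)}uS(v_{(1)})k_{-2\rho})=\varepsilon(v)\Trace(uk_{-2\rho}).
\]
The key input is the pivotal identity $S^2(x)=k_{-2\rho}xk_{2\rho}$, which is straightforward to check on the generators $k_\gamma$, $e_i$, $f_i$. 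This gives $S(v_{(1)})k_{-2\rho}=k_{-2\rho}S^{-1}(v_{(1)})$, and the cyclicity of the trace then rewrites the left-hand side as $\sum_{(v)}\Trace(S^{-1}(v_{(1)})v_{(0)}\,uk_{-2\rho})$. The Hopf algebra identity $\sum_{(v)}S^{-1}(v_{(1)})v_{(0)}=\varepsilon(v)$ (the antipode axiom for the opposite multiplication) closes the computation.

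For (b), $k_{-2\rho}$ acts on $\Delta_R(\lambda)_\mu$ by the scalar $q^{-2(\rho,\mu)}$ and $h\in U_R^L(\Gh)$ acts by $\chi_\mu(h)$. Hence
\[
\langle\phi,h\rangle=\sum_\mu q^{-2(\rho,\mu)}\rank\,\Delta_R(\lambda)_\mu\,\chi_\mu(h)=\langle c(\lambda),h\rangle\qquad(h\in U_R^L(\Gh)),
\]
which is precisely the defining formula of $c(\lambda)$.

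The only nontrivial step is the adjoint invariance in (a), but as outlined this reduces cleanly to the pivotal identity $S^2=\Ad(k_{-2\rho})$ combined with the antipode axiom; no additional ingredient beyond what is established earlier in the paper is required. Once (a) and (b) are in hand, the desired formula follows from Proposition \ref{prop:HC2} by the uniqueness of the extension of $c(\lambda)$ along $\vartheta$.
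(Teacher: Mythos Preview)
Your proposal is correct and follows exactly the same approach as the paper's proof: define the candidate $\phi$ by the trace formula, verify it lies in $\CO_R(G)^{\ad(U_R^L(\Gg))}$, and check that its restriction to $U_R^L(\Gh)$ equals $c(\lambda)$, so that Proposition~\ref{prop:HC2} forces $\phi=\vartheta(c(\lambda))$. The paper states these two checks as ``easily checked'' without further detail; your argument via the pivotal identity $S^2=\Ad(k_{-2\rho})$ and the antipode relation $\sum_{(v)}S^{-1}(v_{(1)})v_{(0)}=\varepsilon(v)$ is the standard way to carry them out and is correct.
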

\begin{proof}
Define $\varphi\in\CO_R(G)$ by
\[
\langle\varphi,u\rangle
=\Trace(uk_{-2\rho}:\Delta_R(\lambda)\to \Delta_R(\lambda))
\qquad(u\in U^L_\BA(\Gg)).
\]
We can easily check that $\varphi\in\CO_R(G)^{\ad(U_R^L(\Gg))}$ and $\varphi|_{U_R^L(\Gh)}=c(\lambda)$.
\end{proof}

We will regard $\CO_R(G)$ as a right $\CO_R(H)^{W\bullet}$-module 
by
\[
\CO_R(G)\times \CO_R(H)^{W\bullet}
\to
\CO_R(H)^{W\bullet}
\qquad
((\varphi,f)\mapsto\varphi\,\vartheta(f)).
\]
The following fact is proved similarly to Lemma \ref{lem:seB}.
\begin{lemma}
\label{lem:seG}
The adjoint action of $U_R(\Gg)$ on $\CO_R(G)$ is 
$\CO_R(H)^{W\bullet}$-linear.
\end{lemma}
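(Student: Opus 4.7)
The plan is to replicate the proof of Lemma \ref{lem:seB} almost verbatim, substituting the role played there by $\CO_R(H)\subset\CO_R(B^-)$ with the role of $\vartheta(\CO_R(H)^{W\bullet})\subset\CO_R(G)$. The decisive structural input is that elements of $\vartheta(\CO_R(H)^{W\bullet})$ are ad-invariant in $\CO_R(G)$: this is precisely the content of Proposition \ref{prop:HC2}, which identifies $\CO_R(G)^{\ad(U^L_R(\Gg))}$ with $\CO_R(H)^{W\bullet}$ via $\vartheta$.

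Concretely, for $\varphi\in\CO_R(G)$, $f\in\CO_R(H)^{W\bullet}$ and $u$ in the acting algebra, coassociativity of $\Delta$ together with the definition \eqref{eq:adO} of the adjoint action on a bimodule yields the expansion
\[
\ad(u)(\varphi\,\vartheta(f))
=\sum_{(u)_2}\bigl(u_{(0)}\cdotp\varphi\cdotp S^{-1}u_{(2)}\bigr)\,\ad(u_{(1)})(\vartheta(f)),
\]
which is exactly the identity used in Lemma \ref{lem:seB}. Since $\vartheta(f)\in\CO_R(G)^{\ad(U^L_R(\Gg))}$ by construction, we have $\ad(u_{(1)})(\vartheta(f))=\varepsilon(u_{(1)})\vartheta(f)$; applying the counit axiom $\sum_{(u)}u_{(0)}\varepsilon(u_{(1)})\otimes u_{(2)}=\sum_{(u)}u_{(0)}\otimes u_{(1)}$ collapses the inner sum to
\[
\ad(u)(\varphi\,\vartheta(f))=\Bigl(\sum_{(u)}u_{(0)}\cdotp\varphi\cdotp S^{-1}u_{(1)}\Bigr)\vartheta(f)=\bigl(\ad(u)(\varphi)\bigr)\vartheta(f),
\]
which is the required $\CO_R(H)^{W\bullet}$-linearity.

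I do not foresee any serious obstacle: the argument is a single Hopf-algebraic identity combined with an invariance statement that has already been established. The only point requiring a brief remark is that Lemma \ref{lem:seG} is stated for the adjoint action of $U_R(\Gg)$, while Proposition \ref{prop:HC2} supplies invariance under $U^L_R(\Gg)$; but the expansion formula above is purely Hopf-theoretic and applies to either form, and the ad-invariance of $\vartheta(f)$ suffices in both settings because the image of $\vartheta$ corresponds under $\jmath_R$ to $Z_{\Har}(U_R(\Gg))\subset U_R(\Gg)$, i.e.\ to genuinely central elements on which ad acts trivially.
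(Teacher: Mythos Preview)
Your proposal is correct and follows precisely the approach indicated by the paper, which merely states that the lemma ``is proved similarly to Lemma \ref{lem:seB}.'' Your additional remark about $U_R(\Gg)$ versus $U^L_R(\Gg)$ is apt: the application immediately following the lemma (regarding $\CO_R(G)_\ad$ as an object of $\Mod_\inte(U^L_{\CO_R(H)^{W\bullet}}(\Gg))$) shows that $U^L_R(\Gg)$ is what is intended, and your argument handles this case directly via Proposition \ref{prop:HC2}.
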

Hence we can regard $\CO_R(G)_\ad$ as an object of
$\Mod_\inte(U^L_{\CO_R(H)^{W\bullet}}(\Gg))$.

\subsection{}
Now we  construct a homomorphism
\begin{equation}
\label{eq:F}
F:\CO_R(G)_\ad\otimes_{\CO_R(H)^{W\bullet}}\CO_R(H)
\to
\Ind^{\Gg,\Gb^-}_{R}(\CO_R(B^-)_\ad)
\end{equation}
in $\Mod_\inte(U^L_{\CO_R(H)}(\Gg))$.
Note that by
\[
\Ind^{\Gg,\Gb^-}_{R}(\CO_R(B^-)_\ad)
\cong
\Ind^{\Gg,\Gb^-}_{\CO_R(H)}(\CO_R(B^-)_\ad)
\]
we have
\[
\Ind^{\Gg,\Gb^-}_{R}(\CO_R(B^-)_\ad)
\in \Mod_\inte(U^L_{\CO_R(H)}(\Gg)).
\]

By the Frobenius reciprocity we have
\[
\Hom_{R,\Gg}(\CO_R(G)_\ad,
\Ind^{\Gg,\Gb^-}_{R}(\CO_R(B^-)_\ad))
\cong
\Hom_{R,\Gb^-}(\CO_R(G)_\ad,
\CO_R(B^-)_\ad).
\]
Define 
\[
F'\in \Hom_{R,\Gg}(\CO_R(G)_\ad,
\Ind^{\Gg,\Gb^-}_{R}(\CO_R(B^-)_\ad))
\]
to be the homomorphism corresponding to the canonical Hopf algebra homomorphism $\res:\CO_R(G)\to\CO_R(B^-)$ associated to the embedding 
$U^L_R(\Gb^-)\subset U^L_R(\Gg)$.
Let us show that $F'$ preserves the $\CO_R(H)^{W\bullet}$-module structure.
It is sufficient to show that 
$\res$ preserves  the $\CO_R(H)^{W\bullet}$-module structure.
Hence we have only to show 
$(\res\circ\vartheta)(f)=f$ for any $f\in \CO_R(H)^{W\bullet}$.
This follows easily from Lemma \ref{lem:theta}.
Since $F'$ is a homomorphism of $\CO_R(H)^{W\bullet}$-modules, we can define \eqref{eq:F}
by
\[
F(\varphi\otimes\chi)=F'(\varphi)\chi
\qquad(\varphi\in\CO_R(G)_\ad, \; \chi\in\CO_R(H)).
\]
\begin{remark}
For $\varphi\in\CO_R(G)_\ad$ we have
\[
F(\varphi)
=
\sum_{(\varphi)}
\varphi_{(2)}(S^{-1}\varphi_{(0)})
\otimes
\res(\varphi_{(1)})
\in
\Ind^{\Gg,\Gb^-}_{R}(\CO_R(B^-)_\ad)
\subset
\CO_R(G)\otimes\CO_R(B^-)_\ad.
\]
\end{remark}
\begin{conjecture}
\label{conj:IV}
$F$ is an isomorphism.
Moreover, 
we have
\[
R^j\Ind^{\Gg,\Gb^-}_{R}(\CO_R(B^-)_\ad)=0
\qquad(j>0).
\]
\end{conjecture}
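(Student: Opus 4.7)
The plan is to verify the statement in the classical limit and over the generic point, then use the base-change apparatus of Section 4 to propagate to an arbitrary commutative $\BA$-algebra $R$. The key structural inputs are: the canonical-basis filtration of $\CO_R(G)_{\ad}$ from Proposition \ref{prop:dualWeyl1}; the decomposition $\CO_R(B^-)\cong\tilde{U}^L_R(\Gn^-)^\bigstar\otimes\CO_R(H)$ of Proposition \ref{prop:BHN}; and the standard resolution, which reduces the computation of $\Ind^{\Gg,\Gb^-}_R$ to $\Ind^{\Gg,\Gh}_R$ (an exact functor by Lemma \ref{lem:from h}).

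For the classical reference point, specialize $\BA\to\BC$ at $q=1$: by Section \ref{subsec:AlgGp}, $\Ind^{\Gg,\Gb^-}_\BC$ becomes ordinary rational induction from $B^-$ to $G$, and $\CO_\BC(B^-)_{\ad}$ carries the conjugation action of $B^-$. Frobenius reciprocity identifies $\Ind^G_{B^-}(\CO(B^-)_{\ad})$ with the coordinate ring of the Grothendieck--Springer variety $\widetilde{G}=G\times^{B^-}B^-$, and Grothendieck's theorem gives $\widetilde{G}\cong G\times_{H/W\bullet}H$ (at $q=1$, $W\bullet$ reduces to the ordinary $W$-action, and the map $G\to H/W$ is Chevalley), so $F$ is an isomorphism at $q=1$; the affineness of the projection $\widetilde{G}\to G/B^-$ (fibers $\cong B^-$) gives the vanishing of $R^j\Ind$ for $j>0$.

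Over the generic point $\BF=\BQ(q)$ the analogous argument should go through with the quantum Harish-Chandra isomorphism (Proposition \ref{prop:HC}) in place of its classical counterpart, carried out via a quantum Kempf vanishing applied to the standard resolution (each term $Q^j=\Ind^{\Gb^-,\Gh}_R(\tilde Q^j)$ induces to $\Ind^{\Gg,\Gh}_R(\tilde Q^j)$ by transitivity, whose higher cohomology vanishes by Lemma \ref{lem:from h}). Once the conjecture is established over $\BA$ (deduced from the $\BF$-statement together with the canonical-basis $\BA$-integrality of $\CO_\BA(B^-)_{\ad}$ and $\CO_\BA(G)_{\ad}$ supplied by Propositions \ref{prop:bcO} and \ref{prop:BHN}), Proposition \ref{prop:APW0} gives it immediately for any $\BA$-flat $R$, and Proposition \ref{prop:sp} extends it to any $R$ for which $\BA\to R$ satisfies the Tor-vanishing hypothesis \eqref{eq:sp}.

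The main obstacle is the extension to non-flat specializations at roots of unity $q=\zeta$, where $\BA\to\BC$ has infinite Tor dimension so that Propositions \ref{prop:APW0} and \ref{prop:sp} become inapplicable. This is precisely why the present paper settles for only a weak form of the conjecture under the prime-power assumption on $\ell$: in that range one can interpolate $\BA\to\BC$ through a Dedekind localization of $\BZ[\zeta,\zeta^{-1}]$ where the relevant Tor hypothesis becomes tractable. A proof of the conjecture at an arbitrary root of unity appears to demand genuinely new input, presumably combining the filtration of Proposition \ref{prop:dualWeyl1} directly with a refined Kempf-type vanishing at roots of unity rather than a reduction to a regular fiber.
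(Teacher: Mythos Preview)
This statement is a \emph{conjecture} in the paper and is not proved there in full generality. The paper establishes only the classical specialization $q\mapsto 1$ over a ring containing a field (Proposition \ref{prop:Fp0}) and, for the closely related Conjecture \ref{conj:IVt}, the localized form of Theorem \ref{thm:main1} under the hypotheses of Theorem \ref{thm:main}. Your final paragraph correctly recognizes that a proof for arbitrary $R$ lies beyond what the paper achieves, so what you have written is a strategy sketch, not a proof.

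Several of the steps you present as settled are in fact not. First, your argument for the $\BF$-case conflates acyclicity of the standard resolution with vanishing of the derived functor: that each $Q^j=\Ind^{\Gb^-,\Gh}_\BF(\tilde{Q}^j)$ satisfies $R^{>0}\Ind^{\Gg,\Gb^-}_\BF(Q^j)=0$ only yields $R^j\Ind^{\Gg,\Gb^-}_\BF(\CO_\BF(B^-)_{\ad})\cong H^j(\Ind^{\Gg,\Gb^-}_\BF(Q^\bullet))$, not that this cohomology vanishes for $j>0$. Second, the descent from $\BF$ to $\BA$ is unjustified: by Proposition \ref{prop:APW0} vanishing over $\BF$ shows only that the $\BA$-cohomology is $\BA$-torsion, and the ``canonical-basis $\BA$-integrality'' you invoke does not close that gap. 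Third, your claim that $\BA\to\BC_\zeta$ has ``infinite Tor dimension'' is false: $\BA=\BZ[q,q^{-1}]$ is regular of global dimension $2$, so \eqref{eq:sp} holds with $N=2$ and Proposition \ref{prop:sp} \emph{is} applicable. In fact this last observation shows that the conjecture for $R=\BA$ would already imply it for every commutative $\BA$-algebra $R$; the whole difficulty is therefore the single case $R=\BA$, which your proposal does not settle. The paper sidesteps this by working instead with Conjecture \ref{conj:IVt} and comparing with $q=1$ through an auxiliary ring $\CR$ (Theorem \ref{thm:main}), at the cost of the prime-power hypothesis on $\ell$.
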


Let $t\in H(R)$, and set
\begin{align*}
\CO_{R}(G)_{\ad,t}=&
\CO_{R}(G)_{\ad}\otimes_{\CO_R(H)^{W\bullet}}
R\in\Mod_\inte(U^L_R(\Gg)),
\\
\CO_{R}(B^-)_{\ad,t}=&
\CO_{R}(B^-)_{\ad}\otimes_{\CO_R(H)}
R
\in\Mod_\inte(U^L_R(\Gb^-)),
\end{align*}
where the module structure on $R$ in the formulas above 
is defined by the homomorphisms $\CO_R(H)^{W\bullet}
\hookrightarrow
\CO_R(H)
\xrightarrow{t}R$ and 
$\CO_R(H)
\xrightarrow{t}R$, respectively.

Since $\res:\CO_R(G)\to\CO_R(B^-)$ is a homomorphism of $\CO_R(H)^{W\bullet}$-modules, we have
\[
\res\in
\Hom_{\CO_R(H)^{W\bullet},\Gb^-}(\CO_R(G)_\ad,
\CO_R(B^-)_\ad).
\]
Applying $(\bullet)\otimes_{\CO_R(H)^{W\bullet}}R$
we obtain 
\[
r_t=\res\otimes1
\in
\Hom_{R,\Gb^-}(\CO_R(G)_{\ad,t},
\CO_R(B^-)_{\ad,t}).
\]
We denote by
\[
F_t\in
\Hom_{R,\Gg}(\CO_R(G)_{\ad,t},
\Ind^{\Gg,\Gb^-}_{R}(\CO_R(B^-)_{\ad,t}))
\]
the homomorphism corresponding to $r_t$ under the Frobenius reciprocity.

\begin{conjecture}
\label{conj:IVt}
$F_t$ is an isomorphism.
Moreover, 
we have
\[
R^j\Ind^{\Gg,\Gb^-}_{R}(\CO_R(B^-)_{\ad,t})=0
\qquad(j>0).
\]
\end{conjecture}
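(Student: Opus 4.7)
The plan is to derive Conjecture \ref{conj:IVt} from Conjecture \ref{conj:IV} by base change along the homomorphism $\CO_R(H)\to R$ corresponding to the point $t\in H(R)$, using the flat-base-change and Tor-bounded-base-change machinery of Section~4. By Lemma \ref{lem:seB}, the $\CO_R(H)$-module structure on $\CO_R(B^-)_\ad$ given by right multiplication commutes with the adjoint action of $U^L_R(\Gb^-)$, so $\CO_R(B^-)_\ad$ is an object of $\Mod_\inte(U^L_{\CO_R(H)}(\Gb^-))$ and one has the identifications $\CO_R(B^-)_{\ad,t}\cong R\otimes_{\CO_R(H)}\CO_R(B^-)_\ad$ and $\CO_R(G)_{\ad,t}\cong R\otimes_{\CO_R(H)}\bigl(\CO_R(G)_\ad\otimes_{\CO_R(H)^{W\bullet}}\CO_R(H)\bigr)$.

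Next I would verify the hypotheses of Proposition \ref{prop:sp}, applied with its ``$R$'' taken to be $\CO_R(H)$, its ``$S$'' taken to be the present $R$, and $M=\CO_R(B^-)_\ad$. For the Tor hypothesis \eqref{eq:sp}: choosing a $\BZ$-basis $\lambda_1,\dots,\lambda_r$ of $P$, the isomorphism $\CO_R(H)\cong R[\chi_{\lambda_1}^{\pm1},\dots,\chi_{\lambda_r}^{\pm1}]$ identifies the kernel of $t$ with the ideal generated by the regular sequence $\chi_{\lambda_i}-\chi_{\lambda_i}(t)$, so the Koszul resolution gives $\Tor^{\CO_R(H)}_k(R,E)=0$ for $k>r$ and any $\CO_R(H)$-module $E$. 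For weight-space flatness: by Lemma \ref{lem:FG3} (or directly from Proposition \ref{prop:BHN}(ii)) each weight space satisfies $(\CO_R(B^-)_\ad)_\mu=(\tilde U^L_R(\Gn^-)_{-\mu})^*\,\CO_R(H)$, which is a free $\CO_R(H)$-module. The remaining hypothesis $R^j\Ind^{\Gg,\Gb^-}_{\CO_R(H)}(\CO_R(B^-)_\ad)=0$ for $j>0$ is exactly the second assertion of Conjecture \ref{conj:IV}.

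Proposition \ref{prop:sp} then yields the vanishing $R^j\Ind^{\Gg,\Gb^-}_R(\CO_R(B^-)_{\ad,t})=0$ for $j>0$ together with the canonical isomorphism
\[
R\otimes_{\CO_R(H)}\Ind^{\Gg,\Gb^-}_{\CO_R(H)}(\CO_R(B^-)_\ad)\xrightarrow{\sim}\Ind^{\Gg,\Gb^-}_R(\CO_R(B^-)_{\ad,t}).
\]
Tensoring the isomorphism $F$ of Conjecture \ref{conj:IV} with $R$ over $\CO_R(H)$ then produces an isomorphism whose source is $\CO_R(G)_{\ad,t}$ and whose target is $\Ind^{\Gg,\Gb^-}_R(\CO_R(B^-)_{\ad,t})$. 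Chasing through the Frobenius-reciprocity construction of $F_t$ from $r_t=\res\otimes 1$ shows that this map coincides with $F_t$, since both arise by base-changing the canonical restriction $\res:\CO_R(G)\to\CO_R(B^-)$.

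The principal obstacle in this plan is of course the input Conjecture \ref{conj:IV}, which is the genuine content; the passage from \ref{conj:IV} to \ref{conj:IVt} sketched above is essentially formal once the tools of Section~4 are in hand. According to the abstract and introduction, the real work consists in establishing (a weak form of) \ref{conj:IV} by reduction from the root-of-unity case to the classical $q=1$ situation, using the standard resolution of Section~4 together with canonical-basis input such as Proposition \ref{prop:dualWeyl1}; that reduction, rather than the base-change argument above, is where I expect the main difficulty to lie.
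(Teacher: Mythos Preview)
Your reduction of Conjecture~\ref{conj:IVt} to Conjecture~\ref{conj:IV} via Proposition~\ref{prop:sp} is correct, and it is precisely the mechanism the paper uses. However, note that the statement you were asked about is labeled a \emph{conjecture}: the paper does not prove it in general, and in particular does not prove Conjecture~\ref{conj:IV} either. What the paper actually establishes is the special case Theorem~\ref{thm:main} (a version of~\ref{conj:IVt} for certain $\CK$ and $t$), and it does so by first proving Theorem~\ref{thm:main1}, which is not Conjecture~\ref{conj:IV} itself but only its \emph{localization} at the maximal ideal $\Ker(t)$ of $\CO_\CK(H)$. The passage from Theorem~\ref{thm:main1} to Theorem~\ref{thm:main} is then exactly your argument, with Proposition~\ref{prop:sp} applied to $M=\CO_\CK(B^-)_\ad\otimes_{\CO_\CK(H)}\CO_\CK(H)_t$, $R=\CO_\CK(H)_t$, $S=\CK$.

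The difference between your write-up and the paper, then, is that you assume the full Conjecture~\ref{conj:IV} and base-change directly along $\CO_R(H)\to R$, whereas the paper can only afford the localized statement and base-changes along the local ring $\CO_\CK(H)_t\to\CK$. Your version is cleaner when~\ref{conj:IV} is available; the paper's version is what survives under the weaker input it can actually prove. Your closing paragraph correctly identifies that the substantive content lies in the reduction-to-$q=1$ argument for (the localized form of)~\ref{conj:IV}, carried out in \S8.2--8.4.
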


\subsection{}
We set 
\begin{equation}
V_R=
{}^eU_R(\Gg)/
(\Ker(\varepsilon)\cap \tilde{U}_R(\Gn^-))
{}^eU_R(\Gg).
\end{equation}
The adjoint action of $U^L_R(\Gb^-)$ on ${}^eU_R(\Gg)$ induces  a left $U^L_R(\Gb^-)$-module structure of $V_R$.
By the definition of $\kappa_R$ we see easily that 
$\jmath_R$ induces an injective $R$-homomorphism
\begin{equation}
\overline{\jmath}_R:V_R\to U^L_R(\Gb^-)^*.
\end{equation}
Moreover, it is easily seen that its image coincides with $\CO_R(B^-)$.
Hence we obtain an isomorphism
\begin{equation}
\label{eq:VO}
V_R\cong\CO_R(B^-)_\ad
\end{equation}
in $\Mod_\inte(U^L_R(\Gb^-))$.
Regard $V_R$ as an $\CO_R(H)$-module by
\[
\overline{u}\chi_\lambda
=\overline{uk_{-2\lambda}}
\qquad
(u\in {}^eU_R(\Gg),\; \lambda\in P).
\]
Then \eqref{eq:VO} turns out to be an isomorphism in 
$\Mod_\inte(U^L_{\CO_R(H)}(\Gb^-))$.

For $t\in H(R)$ we set
\begin{equation}
V_{R,t}=V_R\otimes_{\CO_R(H)}R
\in
\Mod_\inte(U^L_{R}(\Gb^-)),
\end{equation}
where the module structure on $R$ is defined by the homomorphism $t:\CO_R(H)\to R$.
By \eqref{eq:VO} we have
\begin{equation}
\label{eq:VOt}
V_{R,t}\cong\CO_R(B^-)_{\ad,t}.
\end{equation}

\section{Review of \cite{T2}}
\label{sec:review}
\subsection{}
When we regard $\BC$ as an $\BA$-algebra via $q\mapsto\zeta\in\BC^\times$, we denote it as $\BC_\zeta$.
For simplicity we set
$
U_\zeta=U_{\BC_\zeta}(\Gg).
$
Moreover, for $t\in H=H(\BC)$ we set
$
U_{\zeta,t}
=
U_\zeta\otimes_{Z_\Har(U_\zeta)}\BC
$ with respect to the specialization  $\xi_{\Har,t}:Z_\Har(U_\zeta)\to\BC$
(see \eqref{eq:HC-ch}).
\subsection{}
For $\zeta\in\BC^\times$
we can construct a non-commutative projective scheme 
$\CB_\zeta$ over $\BC$, which is called the quantized flag manifold, 
and a sheaf $\DD_{\CB_\zeta}$ of rings on it.
We do not recall the definition here (see \cite{T1}, \cite{T2}).
The sheaf $\DD_{\CB_\zeta}$ contains $\CO_\BC(H)$ as a central subring, 
and for $t\in H$ we can consider the specialization
\[
\DD_{\CB_\zeta, t}:=
\DD_{\CB_\zeta}\otimes_{\CO_\BC(H)}\BC,
\]
where the action of $\CO_\BC(H)$ on $\BC$ is defined by the homomorphism $t:\CO_\BC(H)\to\BC$.
Note that $\CB_\zeta$ is a non-commutative  analogue of the ordinary flag manifold $\CB=B^-\backslash G$, and $\DD_{\CB_\zeta, t}$
is an analogue of the sheaf of twisted differential operators on $\CB$.
We have natural algebra homomorphisms
\begin{align}
\label{eq:canon0}
U_{\zeta}\otimes_{Z_{\Har}(U_\zeta)}\CO_\BC(H)
\to
\Gamma(\CB_\zeta,\DD_{\CB_\zeta}),
\\
\label{eq:canont}
U_{\zeta,t}
\to
\Gamma(\CB_\zeta,\DD_{\CB_\zeta,t})
\qquad(t\in H).
\end{align}
\begin{conjecture}
\label{conj:D}
For $\zeta\in\BC^\times$ and $t\in H$
the algebra homomorphism
\eqref{eq:canont} 
is an isomorphism and we have
\[
H^j(\CB_\zeta,\DD_{\CB_\zeta,t})=0
\qquad(j>0).
\]
\end{conjecture}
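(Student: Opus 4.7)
The plan is to reduce Conjecture \ref{conj:D} to Conjecture \ref{conj:IVt}, then to Conjecture \ref{conj:IV} by base change, and finally to derive Conjecture \ref{conj:IV} by comparison with the classical case $q=1$. For the first reduction, I would invoke the machinery of \cite{T2}, which identifies $R\Gamma(\CB_\zeta, \DD_{\CB_\zeta,t})$ with $R\Ind^{\Gg,\Gb^-}_{\BC_\zeta}(\CO_{\BC_\zeta}(B^-)_{\ad,t})$ as an $U_{\zeta,t}$-module (after going through the quantized coordinate algebra side using the isomorphism \eqref{eq:VOt} and the embedding $U_\zeta \hookrightarrow {}^eU_{\BC_\zeta}(\Gg)$). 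Under Frobenius reciprocity (Proposition \ref{prop:Frob}), the algebra homomorphism \eqref{eq:canont} corresponds to the map $r_t = \res\otimes 1 : \CO_{\BC_\zeta}(G)_{\ad,t} \to \CO_{\BC_\zeta}(B^-)_{\ad,t}$ whose adjoint is precisely $F_t$. Thus Conjecture \ref{conj:D} becomes Conjecture \ref{conj:IVt} for $R=\BC_\zeta$.

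Next, I would pass from Conjecture \ref{conj:IVt} to Conjecture \ref{conj:IV} by specialization along the character $t : \CO_R(H) \to \BC_\zeta$. The tool is Proposition \ref{prop:sp}: by Lemma \ref{lem:FG3} and Proposition \ref{prop:BHN}, each weight space of $\CO_R(B^-)_\ad$ is a free $\CO_R(H)$-module, so the flatness hypothesis is satisfied. Provided the vanishing
\[
R^j\Ind^{\Gg,\Gb^-}_{R}(\CO_R(B^-)_\ad) = 0 \qquad (j>0)
\]
has been established and provided we work over a base $R$ for which $R\to\BC_\zeta$ has finite Tor dimension, Proposition \ref{prop:sp} yields both the isomorphism $F_t$ and the vanishing of the higher derived inductions after specialization.

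For Conjecture \ref{conj:IV} itself, the strategy is to pick an $\BA$-algebra $R_0$ (e.g.\ a suitable discrete valuation ring) which admits both a specialization to $\BC$ at $q=1$ and to $\BC_\zeta$, then verify the statement at $q=1$ and propagate upward. At $q=1$ the Hopf algebras $\CO_{\BC}(G), \CO_{\BC}(B^-)$ become the classical coordinate rings (Section \ref{subsec:AlgGp}), so $\Ind^{\Gg,\Gb^-}$ is the standard sheaf-theoretic induction from $B^-$-modules to equivariant sheaves on $\CB = B^-\backslash G$. Proposition \ref{prop:dualWeyl1} equips $\CO_{\BC}(G)_\ad$ (and hence $\CO_{\BC}(B^-)_\ad$ by restriction) with a good filtration, and Kempf's vanishing theorem gives $H^j(\CB, \CL_{\CO(B^-)_\ad}) = 0$ for $j>0$, while a direct classical calculation identifies the degree zero part with $\CO_\BC(G)\otimes_{\CO_\BC(H)^{W\bullet}}\CO_\BC(H)$ (this is essentially the Chevalley restriction theorem on the group level, reflecting that the Harish-Chandra center is $\CO(H)^{W\bullet}$). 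Then Proposition \ref{prop:APW1} (combined with the standard resolution built in Section \ref{sec:review}) allows us to transport both the isomorphism $F$ and the vanishing of higher $R\Ind$ from $q=1$ to $q=\zeta$.

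The hard part will be verifying the finite Tor dimension hypothesis in the lifting step. When $\ell$ is a prime power, one can work over a DVR $R_0$ whose residue field at the closed point is $\BC$ and whose generic fiber sees $\BC_\zeta$, giving Tor dimension at most $1$ so that Propositions \ref{prop:APW1} and \ref{prop:sp} apply directly; this is essentially the route taken to obtain Theorems \ref{thm:Int1}--\ref{thm:Int2}. For general $\zeta$ and for $t$ without the order restriction imposed in Theorem \ref{thm:Int1}, the natural integral model does not have bounded Tor dimension with respect to the specialization map, and the two-step base change above leaks higher Tor contributions that must be controlled separately. A plausible route would be to refine the filtration of Proposition \ref{prop:dualWeyl1} so that each graded piece $\nabla_R(\mu)$ can be handled by the $\Ext$-vanishing of Proposition \ref{prop:dualWeyl2}, and then to assemble the pieces using a spectral sequence argument; an alternative is to bypass base change entirely and carry out the computation intrinsically at $q=\zeta$, which would likely require new input such as a quantum analogue of Kempf's vanishing valid uniformly in $\ell$.
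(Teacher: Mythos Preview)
Your reduction of Conjecture~\ref{conj:D} to Conjecture~\ref{conj:IVt} via Proposition~\ref{prop:conj-eq} and Lemma~\ref{lem:imp} is correct and matches the paper, as does the overall plan of comparing with $q=1$. But the bridge you build between $q=\zeta$ and $q=1$ has two genuine gaps. First, the DVR you describe cannot exist. You ask for an $\BA$-algebra DVR $R_0$ whose closed point has residue field $\BC$ with $q\mapsto 1$ and whose ``generic fiber sees $\BC_\zeta$''; but a DVR has a single closed point, and any $\BA$-algebra map $R_0\to\BC_\zeta$ sends $q\mapsto\zeta$, forcing $q-1\mapsto\zeta-1\ne 0$, so $q-1$ cannot lie in the maximal ideal. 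The paper's crucial observation is that one must pass to \emph{positive characteristic} at $q=1$: with $\CR=\BZ[\zeta_{\ell m}]\subset\BC_\zeta$ one has $\overline{\CR}=\CR/(\zeta-1)\cong\BF_p[x]/(f_m(x))$, which is nonzero and contains $\BF_p$ precisely because $\ell$ is a power of $p$ (Lemma~\ref{lem:zen}). The classical input is then Proposition~\ref{prop:Fp0}, proved over an arbitrary field via normality of $G_k\times_{H_k/W}H_k$ and Serre's criterion (the Grothendieck--Springer picture), not via Kempf vanishing applied to a good filtration of $\CO(B^-)_{\ad}$.

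Second, Proposition~\ref{prop:APW1} gives only \emph{injectivity} of the base-change map on $R^j\Ind$, so it cannot by itself propagate an isomorphism from $\overline{\CR}$ to $\CR$ or to $\BC_\zeta$. The paper replaces this naive base-change with a test against Weyl modules: setting $N^j_R(\lambda)=\Ext^j_{R,\Gg}(\Delta_R(\lambda),\CO_R(G)_{\ad}\otimes_{\CO_R(H)^{W\bullet}}\CO_R(H))$ and $M^j_R(\lambda)=\Ext^j_{R,\Gb^-}(\Delta_R(\lambda),\CO_R(B^-)_{\ad})$, Proposition~\ref{prop:equiv} shows the desired statement is equivalent to $N^j\cong M^j$ for all $\lambda\in P^+$. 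These $\Ext$ groups are finitely generated over $\CO_\CR(H)$ (Proposition~\ref{prop:FG}), so after localizing at a maximal ideal $\Gm$ over $t$ one can apply Nakayama's lemma to pull the isomorphism back from $\overline{\CR}$; the injectivity from Proposition~\ref{prop:APW3} (the $\Ext$ analogue of Proposition~\ref{prop:APW1}) enters only as one ingredient, and the separate hypothesis $\bigcap_n(q-1)^n\CR=\{0\}$ is needed to conclude injectivity of $N_\CR\to M_\CR$ in degree~$0$. None of this apparatus appears in your sketch. Note finally that even with all of this the paper does not prove Conjecture~\ref{conj:D} in full: it obtains only the localized statement (Theorem~\ref{thm:main1}) and hence Theorem~\ref{thm:main} for those $t$ admitting such an $\CR$; your closing paragraph is right that the general case remains open.
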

Denote the category of coherent $\DD_{\CB_\zeta,t}$-modules by $\Mod_\coh(\DD_{\CB_\zeta,t})$ and that of finitely generated $U_{\zeta,t}$-modules by $\Mod_f(U_{\zeta,t})$.

When $\zeta$ is transcendental and $t$ is dominant integral, we proved Conjecture \ref{conj:D} and established the 
Beilinson-Bernstein type equivalence 
\[
\Mod_{\coh}(\DD_{\CB_\zeta,t})
\cong
\Mod_{f}(U_{\zeta,t})
\]
of abelian categories in \cite{T0}.

Now consider the case when $\zeta$ is a root of 1.
For a positive integer $\ell$ we set
\[
\zeta_\ell=\exp\left(\frac{2\pi\sqrt{-1}}{\ell}\right)\in\BC^\times.
\]
We assume that the integer $\ell>1$ satisfies
\begin{itemize}
\item[(a1)]
$\ell$ is odd;
\item[(a2)]
$\ell$ is prime to  $|P/Q|$;
\item[(a3)]
$\ell$ is prime to  $3$ if $\Delta$ is of type $G_2$
\end{itemize}
in the following.
We say that $t\in H$ is regular if $|W\bullet t|=|W|$ with respect to \eqref{eq:W-bul2}.

\begin{proposition}[\cite{T2}]
\label{prop:D-eq}
For $\ell>1$ satisfying 
$({\rm{a1}})$, $({\rm{a2}})$, $({\rm{a3}})$ 
we set $\zeta=\zeta_\ell$.
We assume that $t\in H(\BC)$ is regular.
If Conjecture \ref{conj:D} is valid for $\zeta$ and $t$, 
then we have an equivalence 
\[
D^b(\Mod_{\coh}(\DD_{\CB_\zeta,t}))
\cong
D^b(\Mod_{f}(U_{\zeta,t}))
\]
of triangulated  categories.
\end{proposition}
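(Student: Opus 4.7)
The plan is to build the equivalence through an adjoint pair $(L\Delta,R\Gamma)$ consisting of a derived localisation functor and derived global sections, following the Beilinson--Bernstein / Bezrukavnikov--Mirkovi\'c--Rumynin strategy \cite{BMR}. The hypothesis that Conjecture \ref{conj:D} holds for $(\zeta,t)$ is the essential starting point: it gives $R\Gamma(\CB_\zeta,\DD_{\CB_\zeta,t})\cong U_{\zeta,t}$ concentrated in degree zero, so global sections promotes to a triangulated functor
\[
R\Gamma\colon D^b(\Mod_\coh(\DD_{\CB_\zeta,t}))\to D^b(\Mod_f(U_{\zeta,t})),
\]
and I would propose the obvious candidate quasi-inverse $L\Delta(M)=\DD_{\CB_\zeta,t}\otimes^L_{U_{\zeta,t}}M$.

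First I would check that $L\Delta$ has finite Tor-amplitude, so that it restricts to the bounded derived categories. The essential input here is the split Azumaya property of $\DD_{\CB_\zeta,t}$ from \cite{T1}: \'etale-locally this sheaf becomes a matrix algebra over a commutative regular ring whose Krull dimension is controlled by $\dim\CB$, bounding the global homological dimension. The adjunction $(L\Delta,R\Gamma)$ is then standard from the tensor--Hom adjunction combined with the degree-zero identification $R\Gamma(\DD_{\CB_\zeta,t})\cong U_{\zeta,t}$.

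Next I would verify that both the unit $\id\to R\Gamma\circ L\Delta$ and counit $L\Delta\circ R\Gamma\to\id$ are quasi-isomorphisms. Via the Azumaya splitting, $\Mod_\coh(\DD_{\CB_\zeta,t})$ becomes locally equivalent to coherent sheaves on a smooth scheme (a quantum analogue of a twisted cotangent bundle over $\CB$), and the verification reduces to the projection formula combined with vanishing of higher cohomology of line bundles on $\CB_\zeta$. The regularity hypothesis $|W\bullet t|=|W|$ is indispensable here: it ensures that $t$ sits over a smooth point of $H/W\bullet$ and that the Azumaya algebra is genuinely split in a neighborhood of $t$, so that the local computation glues to a global equivalence.

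The principal obstacle will be this last step. Controlling the cohomological amplitude of $L\Delta$ globally, and checking the adjunction isomorphisms on an appropriate generating set of objects, both hinge on the delicate interplay between the Azumaya structure of \cite{T1} and the regularity of $t$. One must identify the Springer-type resolution that arises after Azumaya splitting and prove the correct higher direct image vanishing along its projection to $\CB_\zeta$; this is the technical heart of the BMR-style argument transported to our quantum setting, and all other steps are essentially formal once it is in place.
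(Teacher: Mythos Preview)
The paper does not prove this proposition here; it is quoted from \cite{T2}. Your outline follows the BMR template and is the correct strategy: once Conjecture~\ref{conj:D} gives $R\Gamma(\CB_\zeta,\DD_{\CB_\zeta,t})\cong U_{\zeta,t}$ concentrated in degree zero, the unit $\id\to R\Gamma\circ L\Delta$ is an isomorphism by the projection formula, $R\Gamma$ has bounded amplitude because $\CB_\zeta$ is a separated noetherian (noncommutative) scheme and $\DD_{\CB_\zeta,t}$ is locally projective over it, and the remaining work is to show that the essential image of $L\Delta$ generates, which one does as in \cite{BMR} by twisting with the line bundles $\CO_{\CB_\zeta}(\mu)$ for $\mu\in P$.

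Two places where your sketch drifts from the actual argument. First, you invoke the \emph{split} Azumaya property of \cite{T1} to bound Tor-amplitude, but that property is recorded in the present paper only under the additional hypotheses (b1), (b2), which are not assumed in this proposition; and in any case splitting is irrelevant to the derived equivalence itself. It enters only afterwards, in passing from Theorem~\ref{thm:D-equiv} to the $K$-group statement Theorem~\ref{thm:K-gp}. The finiteness of cohomological dimension that you need comes instead from the structure of $\CB_\zeta$ and the local projectivity of $\DD_{\CB_\zeta,t}$. Second, your reading of the regularity hypothesis is off target. Regularity of $t$ has nothing to do with Azumaya splitting or with smoothness of $H/W\bullet$; its role, as in \cite{BMR}, is in the generation step. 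One needs that for sufficiently dominant $\mu$ the twist $\DD_{\CB_\zeta,t}(\mu)$ is globally generated with vanishing higher cohomology, and the translation argument that transports this back to $\DD_{\CB_\zeta,t}$ requires $|W\bullet t|=|W|$ so that the relevant translation functors are equivalences.
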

Here, for an abelian category $\CA$ we denote by $D^b(\CA)$ its bounded derived category.

\subsection{}
We have also the following conjecture for $\DD_{\CB_\zeta}$.
\begin{conjecture}
\label{conj:D0}
For $\zeta\in\BC^\times$ 
the algebra homomorphism
\eqref{eq:canon0} 
is an isomorphism and we have
\[
H^j(\CB_\zeta,\DD_{\CB_\zeta})=0
\qquad(j>0).
\]
\end{conjecture}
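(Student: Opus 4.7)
\textbf{Proof plan for Conjecture \ref{conj:D0}.}
The plan is to reduce Conjecture \ref{conj:D0} to Conjecture \ref{conj:IV} for $R=\BC_\zeta$ and then attack the latter by specializing the classical ($q=1$) computation. By the construction of $\DD_{\CB_\zeta}$ recalled in Section \ref{sec:review} (with details in \cite{T1}, \cite{T2}), the global sections and higher cohomology of $\DD_{\CB_\zeta}$ are computed by the induction functor $R\Ind^{\Gg,\Gb^-}_{\BC_\zeta}$ applied to the $U^L_{\BC_\zeta}(\Gb^-)$-module underlying $\CO_{\BC_\zeta}(B^-)_\ad$. Under the identification $V_{\BC_\zeta}\cong\CO_{\BC_\zeta}(B^-)_\ad$ of \eqref{eq:VO}, the canonical map \eqref{eq:canon0} is intertwined with the map $F$ of \eqref{eq:F}, and the vanishing of $H^j(\CB_\zeta,\DD_{\CB_\zeta})$ for $j>0$ translates to the vanishing of $R^j\Ind^{\Gg,\Gb^-}_{\BC_\zeta}(\CO_{\BC_\zeta}(B^-)_\ad)$. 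Hence Conjecture \ref{conj:D0} is equivalent to Conjecture \ref{conj:IV} for $R=\BC_\zeta$, and I would focus on the latter.

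The next step is to verify Conjecture \ref{conj:IV} in the classical limit $q\mapsto 1$. There $U^L_\BC(\Gg)$ is the algebra of distributions on $G_\BC$ (\S\ref{subsec:AlgGp}) and $\CO_\BC(B^-)_\ad$ becomes the coordinate ring of $B^-$ with its conjugation action, while $\CO_\BC(H)^{W\bullet}$ reduces to the usual invariants $\CO(H)^W$. The induction $\Ind^{G}_{B^-}(\CO(B^-)_\ad)$ is then the space of sections of the $G$-equivariant coherent sheaf on $G/B^-$ built from $\CO(B^-)_\ad$, i.e.\ the coordinate ring of the group version of the Grothendieck-Springer resolution $\tilde{G}=G\times^{B^-}B^-$. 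Classical algebraic geometry identifies this with $\CO(G)\otimes_{\CO(H)^{W}}\CO(H)$ (Chevalley restriction plus the fact that $\tilde G\to G$ is birational with trivial fibre over the regular semisimple locus), and the vanishing of higher induction follows from affineness of the Grothendieck-Springer map. This gives Conjecture \ref{conj:IV} at $q=1$.

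To transport this classical statement to $q=\zeta$, I would work integrally over $\BA$ and use the base-change machinery of Section 4. Combining Proposition \ref{prop:BHN} and Proposition \ref{prop:bcO}, every weight space of $\CO_\BA(B^-)_\ad$, and of the proposed target $\CO_\BA(G)_\ad\otimes_{\CO_\BA(H)^{W\bullet}}\CO_\BA(H)$ (via the filtration of Proposition \ref{prop:dualWeyl1} and the $\Ext$-vanishing of Proposition \ref{prop:dualWeyl2}), is free over $\BA$, and by Lemma \ref{lem:STD} the same holds for every term of the standard resolution. With this flatness in hand, Proposition \ref{prop:APW0} permits base change along flat $\BA$-algebras, Proposition \ref{prop:APW1} provides injectivity of the specialization maps to $\BC_\zeta$, and Proposition \ref{prop:sp} upgrades injectivity to isomorphism as soon as the global Tor-bound \eqref{eq:sp} holds — which is the case when the specialization $\BA\to\BC_\zeta$ is factored through an appropriate regular local ring (for instance the localization of $\BC[q,q^{-1}]$ at $(q-\zeta)$).

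The main obstacle is the final specialization. Since $q=1$ and $q=\zeta$ are distinct closed points of $\Spec\BA$, there is no direct $\BA$-algebra map between the two fibres: the $q=1$ input must first be propagated to the generic point (or to a suitable $\BA$-localization) and only then pushed to $\zeta$. Making this propagation rigorous — in particular controlling the $\BA$-integral structure of $\CO_\BA(G)_\ad$, $\CO_\BA(B^-)_\ad$, and the standard resolution finely enough to apply Propositions \ref{prop:APW0}--\ref{prop:sp} — requires nontrivial input from the theory of canonical bases in the spirit of \cite[Proposition 23.3.6, Theorem 25.2.1]{Lbook}. At present this input is sharp enough only under the prime-power assumption on $\ell$ underlying Theorems \ref{thm:Int1}--\ref{thm:Int3}; removing that assumption (outside type $A$, where the different global method announced in the introduction is expected to apply) is where the argument genuinely breaks down.
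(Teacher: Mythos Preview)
The first thing to flag is that Conjecture~\ref{conj:D0} is genuinely left open in the paper: there is no proof to compare against. What the paper actually establishes is the \emph{specialized} statement, Conjecture~\ref{conj:D} (equivalently Conjecture~\ref{conj:IVt}), for those $t\in H$ admitting an intermediate Noetherian $\BA$-algebra $\CR\subset\BC_\zeta$ with a nontrivial reduction $\CR/(q-1)$ containing a field. The unspecialized Conjecture~\ref{conj:IV} (hence Conjecture~\ref{conj:Df0} and Conjecture~\ref{conj:D0}) is not obtained, even under the prime-power hypothesis; only its localization at each such $t$ is (Theorem~\ref{thm:main1}). Your plan therefore aims beyond what the paper proves, and your final paragraph correctly identifies where it fails.

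Two further points of comparison. First, your reduction ``$\DD_{\CB_\zeta}$ has cohomology computed by $R\Ind^{\Gg,\Gb^-}$'' is not quite right: the equivariance needed for Proposition~\ref{prop:rest1} applies to ${}^f\DD_{\CB_\zeta}$, not to $\DD_{\CB_\zeta}$ itself. The paper's chain is Conjecture~\ref{conj:IV}$\Rightarrow$Conjecture~\ref{conj:Df0}$\Rightarrow$Conjecture~\ref{conj:D0} (the last step via Lemma~\ref{lem:imp}), so the equivalence you assert is really a one-way implication through the ${}^f$-version. Second, the paper's mechanism for transporting the $q=1$ result is rather different from the flat-base-change scheme you sketch. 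Instead of working over $\BA$ and specializing, the paper chooses $\CR$ to be an $\BA$-subalgebra of $\BC_\zeta$ (for instance $\BZ[\zeta_{\ell m}]$) whose quotient by $(q-1)$ is a nonzero ring containing $\BF_p$; it then compares the $\CO_\CR(H)$-modules $N^j_\CR(\lambda)$ and $M^j_\CR(\lambda)$ via Nakayama's lemma at a maximal ideal lying over both $t$ and the $q=1$ fibre, using the finiteness of Proposition~\ref{prop:FG} and the injectivity of Lemma~\ref{lem:M-inj}. This is why the prime-power condition on $\ell$ enters (it guarantees $\CR/(q-1)\ne0$), and why only the localized statement, not Conjecture~\ref{conj:IV} itself, falls out.
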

In \cite{T1} we proved that the localization 
$\tilde{\DD}_{\CB_\zeta}$ of $\DD_{\CB_\zeta}$ on a certain (commutative) scheme $\CV$ is an Azumaya algebra
 if $\zeta=\zeta_\ell$, where $\ell>1$ is an integer satisfying 
the conditions 
\begin{itemize}
\item[(b1)]
$\ell$ is prime to $3$ if $\Delta$ is of type $F_4$, $E_6$, $E_7$, $E_8$;
\item[(b2)]
$\ell$  is prime to
$5$ if $\Delta$ is of type $E_8$
\end{itemize}
in addition to (a1), (a2), (a3).

\begin{remark}
The condition (b2) was mistakenly dropped in \cite{T1}.
It is necessary in the proof of \cite[Lemma 6.6]{T1}.
\end{remark}
In particular, $\tilde{\DD}_{\CB_\zeta}$ is a locally free $\CO_\CV$-module if $\zeta=\zeta_\ell$ for $\ell$ satisfying (a1), (a2), (a3), (b1), (b2).
Using this we proved the following result in \cite{T2}.
\begin{proposition}
\label{prop:red}
Assume that $\zeta=\zeta_\ell$ for $\ell$ satisfying 
$({\rm{a1}})$, $({\rm{a2}})$, $({\rm{a3}})$, 
$({\rm{b1}})$, $({\rm{b2}})$.
Then Conjecture \ref{conj:D0} implies Conjecture \ref{conj:D}.
\end{proposition}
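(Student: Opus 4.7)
The plan is to derive Conjecture \ref{conj:D} from Conjecture \ref{conj:D0} by central specialization along the embedding $\CO_\BC(H)\subset Z(\DD_{\CB_\zeta})$ at the character $t:\CO_\BC(H)\to\BC$, using the split Azumaya property of $\tilde{\DD}_{\CB_\zeta}$ from \cite{T1} to provide the flatness needed for the base change. Under the stated hypotheses on $\ell$, the localization $\tilde{\DD}_{\CB_\zeta}$ is locally free of finite rank over $\CO_\CV$, and $\CV$ is finite flat over $\Spec\CO_\BC(H)$ through the twisted Harish-Chandra projection (the quantum analogue of the cover $\tilde{\CB}^{(1)}\times_{\Gh^{(1)}/W}\Gh\to\Gh$ in \cite{BMR}). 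Pushing this local freeness down gives that $\DD_{\CB_\zeta}$ is flat as a sheaf of $\CO_\BC(H)$-modules.

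With this flatness the specialization becomes non-derived, $\DD_{\CB_\zeta,t}=\DD_{\CB_\zeta}\otimes^L_{\CO_\BC(H)}\BC$, and flat base change for $R\Gamma(\CB_\zeta,-)$ (applied after replacing the right-hand side by a complex whose terms are $\CO_\BC(H)$-flat, which the previous step makes available from a standard cover of $\CB_\zeta$) yields
\[
R\Gamma(\CB_\zeta,\DD_{\CB_\zeta,t})\;\cong\;R\Gamma(\CB_\zeta,\DD_{\CB_\zeta})\otimes^L_{\CO_\BC(H)}\BC.
\]
Substituting the isomorphism of Conjecture \ref{conj:D0} gives
\[
R\Gamma(\CB_\zeta,\DD_{\CB_\zeta,t})\;\cong\;\bigl(U_\zeta\otimes_{Z_\Har(U_\zeta)}\CO_\BC(H)\bigr)\otimes^L_{\CO_\BC(H)}\BC.
\]

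To finish, I would use the identification $Z_\Har(U_\zeta)\cong\CO_\BC(H)^{W\bullet}$ from \eqref{eq:bw2} together with the Chevalley--Shephard--Todd theorem applied to the $W\bullet$-action to conclude that $\CO_\BC(H)$ is a free $Z_\Har(U_\zeta)$-module of rank $|W|$. Combined with the standard flatness of the De Concini--Procesi form $U_\zeta$ over its Harish-Chandra center (coming from the triangular decomposition and the quantum coadjoint action), this collapses the two tensor products to $U_\zeta\otimes_{Z_\Har(U_\zeta)}\BC=U_{\zeta,t}$, concentrated in degree zero; both the isomorphism and the vanishing $H^j(\CB_\zeta,\DD_{\CB_\zeta,t})=0$ for $j>0$ then follow. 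The principal obstacle is the base-change step itself: because $\CB_\zeta$ is a non-commutative projective scheme, the usual cohomology-and-base-change formalism must be set up anew, and the Azumaya hypothesis from \cite{T1} is precisely what lets one transfer the problem onto the commutative scheme $\CV$, where the classical statement applies. Without split Azumaya one would have no systematic way to control flatness of $\DD_{\CB_\zeta}$ over its central subring $\CO_\BC(H)$, and the deduction would collapse.
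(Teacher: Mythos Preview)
The paper does not prove this proposition here; it merely records it as a result already established in \cite{T2}, preceded by the remark that the local freeness of $\tilde{\DD}_{\CB_\zeta}$ over $\CO_\CV$ (a consequence of the Azumaya property under (a1)--(a3), (b1), (b2)) is the input used there. So there is no detailed argument in the present paper to compare against; your outline is consistent with that single hint.

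That said, your sketch has two soft spots you should be aware of. First, the assertion that ``$\CV$ is finite flat over $\Spec\CO_\BC(H)$'' and hence that $\DD_{\CB_\zeta}$ is flat over $\CO_\BC(H)$ is doing real work and is not automatic; in \cite{T1}, \cite{T2} the structure of $\CV$ and of the central subalgebras of $\DD_{\CB_\zeta}$ is analyzed carefully, and one must actually verify the relevant flatness (or, what is used in practice, local freeness) rather than invoke it by analogy with \cite{BMR}. Second, your final reduction appeals to ``standard flatness of $U_\zeta$ over its Harish-Chandra center''; this is not a standard fact and is not how the argument goes. What one actually needs is that, once $\DD_{\CB_\zeta}$ is represented on the affine cover of $\CB_\zeta$ by modules that are locally free over $\CO_\CV$ (hence flat over $\CO_\BC(H)$), the \v{C}ech complex computing $R\Gamma(\CB_\zeta,\DD_{\CB_\zeta})$ consists of flat $\CO_\BC(H)$-modules, so that specializing at $t$ commutes with taking cohomology; the identification with $U_{\zeta,t}$ then follows directly from Conjecture~\ref{conj:D0} and $(\,\cdot\,)\otimes_{\CO_\BC(H)}\BC$, without ever invoking flatness of $U_\zeta$ over $Z_{\Har}(U_\zeta)$. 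In short: right idea and right ingredient (local freeness from the Azumaya property), but the flatness statements you lean on should be the sheaf-level ones over $\CO_\BC(H)$, not module-level ones over $Z_{\Har}(U_\zeta)$.
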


\subsection{}

Set ${}^fU_\zeta={}^fU_{\BC_\zeta}(\Gg)$.
For $t\in H$ we also set 
${}^fU_{\zeta,t}=
{}^fU_\zeta\otimes_{Z_\Har(U_\zeta)}\BC$, 
where the action of $Z_\Har(U_\zeta)$ on $\BC$ is defined by the homomorphism $\xi_{\Har,t}:Z_\Har(U_\zeta)\to\BC$.

Using ${}^fU_\zeta$ 
(resp.\ ${}^fU_{\zeta,t}$)
instead of $U_\zeta$ 
(resp.\ $U_{\zeta,t}$)
we obtain a subsheaf
${}^f\DD_{\CB_\zeta}$ 
(resp.\ ${}^f\DD_{\CB_\zeta,t}$)
of $\DD_{\CB_\zeta}$
(resp.\ ${}\DD_{\CB_\zeta,t}$).
We have natural algebra homomorphisms
\begin{align}
\label{eq:canonf0}
{}^fU_{\zeta}\otimes_{Z_{\Har}(U_\zeta)}\CO_{\BC}(H)
\to\Gamma(\CB_\zeta,{}^f\DD_{\CB_\zeta}),
\\
\label{eq:canonft}
{}^fU_{\zeta,t}\to
\Gamma(\CB_\zeta,{}^f\DD_{\CB_\zeta,t})
\qquad(t\in H).
\end{align}
We have also the following conjectures for 
${}^f\DD_{\CB_\zeta}$ and 
${}^f\DD_{\CB_\zeta,t}$.
\begin{conjecture}
\label{conj:Df0}
For $\zeta\in\BC^\times$
the algebra homomorphism
\eqref{eq:canonf0} 
is an isomorphism and we have
\[
H^j(\CB_\zeta,{}^f\DD_{\CB_\zeta})=0
\qquad(j>0).
\]
\end{conjecture}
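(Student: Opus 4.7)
The plan is to reduce Conjecture \ref{conj:Df0} to the induction statement \eqref{eq:RI} (Conjecture \ref{conj:IV}) via the dictionary between cohomology on $\CB_\zeta$ and the derived functors of $\Ind^{\Gg,\Gb^-}_{\BC_\zeta}$, combined with Caldero's isomorphism ${}^fU_\zeta\cong\CO_\zeta(G)_\ad$ of \eqref{eq:Cal2}.

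The first step is to establish an isomorphism
\[
R^j\Gamma(\CB_\zeta,{}^f\DD_{\CB_\zeta})\cong R^j\Ind^{\Gg,\Gb^-}_{\BC_\zeta}(\CO_\zeta(B^-)_\ad)\qquad(j\geqq0)
\]
of $U_\zeta$-modules. This should follow from the construction of ${}^f\DD_{\CB_\zeta}$ as a subsheaf of $\DD_{\CB_\zeta}$ in \cite{T1,T2}: the ``$\Gb^-$-slice'' of ${}^f\DD_{\CB_\zeta}$ is $V_{\BC_\zeta}\cong\CO_\zeta(B^-)_\ad$ (see \eqref{eq:VO}) endowed with its adjoint $U_\zeta^L(\Gb^-)$-action, and the global sections functor on the associated sheaf is by construction $\Ind^{\Gg,\Gb^-}_{\BC_\zeta}$, mirroring the computation for $\Gamma(\CB_\zeta,\DD_{\CB_\zeta})$ sketched in Section \ref{sec:review}. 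I would then invoke Conjecture \ref{conj:IV} at $R=\BC_\zeta$, which gives
\[
\Ind^{\Gg,\Gb^-}_{\BC_\zeta}(\CO_\zeta(B^-)_\ad)\cong\CO_\zeta(G)_\ad\otimes_{\CO_{\BC_\zeta}(H)^{W\bullet}}\CO_{\BC_\zeta}(H)
\]
together with the vanishing of higher $R^j\Ind^{\Gg,\Gb^-}_{\BC_\zeta}$. Applying \eqref{eq:Cal2} and the identification $Z_{\Har}(U_\zeta)\cong\CO_{\BC_\zeta}(H)^{W\bullet}$ from Corollary \ref{cor:HC2} and \eqref{eq:bw2} converts the right-hand side to ${}^fU_\zeta\otimes_{Z_{\Har}(U_\zeta)}\CO_{\BC_\zeta}(H)$, yielding both the claimed isomorphism and the cohomology vanishing.

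The hard part will be establishing Conjecture \ref{conj:IV} itself. Under the hypothesis that $\ell$ is a prime power, as announced in the introduction, I would attempt to deduce it from the corresponding positive-characteristic statement of \cite{BMR} by a base-change argument over $\BA$. Concretely, I would apply Propositions \ref{prop:APW0}, \ref{prop:APW1}, and \ref{prop:sp} to an appropriate chain of $\BA$-algebras connecting the specializations $q=1$ and $q=\zeta$; the flatness of weight spaces required in those propositions is supplied by the filtration of $\CO_R(G)_\ad$ by $\nabla$-modules from Proposition \ref{prop:dualWeyl1} together with the standard resolution of \cite{APW} exploited throughout Section 4. Propositions \ref{prop:dualWeyl2} and \ref{prop:FG} then furnish the Ext-vanishing and finite-generation inputs needed to transfer the $\Ind^{\Gg,\Gb^-}$-statement from $q=1$ to $q=\zeta$, along with the control of $\CO_R(H)^{W\bullet}$-module structures on both sides, and thus to close the argument.
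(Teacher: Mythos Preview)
The statement you are attempting to prove is labeled a \emph{Conjecture} in the paper, and the paper does not supply a proof of it. What the paper actually establishes is only the equivalence of Conjecture \ref{conj:Df0} with Conjecture \ref{conj:IV} at $R=\BC_\zeta$ (Proposition \ref{prop:conj-eq}), together with the $t$-specialized and localized versions: Theorem \ref{thm:main} (a case of Conjecture \ref{conj:IVt}) and Theorem \ref{thm:main1}. Your first paragraph, which reduces the cohomology of ${}^f\DD_{\CB_\zeta}$ to $R\Ind^{\Gg,\Gb^-}_{\BC_\zeta}(\CO_\zeta(B^-)_\ad)$ via $\CL({}^f\DD_{\CB_\zeta})=V_{\BC_\zeta}\cong\CO_\zeta(B^-)_\ad$, is exactly Propositions \ref{prop:rest1}, \ref{prop:rest2} and \eqref{eq:VO}, so that step is fine and matches the paper.

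The gap is in your second paragraph. The base-change machinery of Section 4 (Propositions \ref{prop:APW0}, \ref{prop:APW1}, \ref{prop:sp}) combined with Propositions \ref{prop:dualWeyl2} and \ref{prop:FG}, as actually executed in Section 8, is intrinsically local in $t$: one chooses a Noetherian $\BA$-subalgebra $\CR\subset\BC_\zeta$ adapted to a fixed $t$ of finite order (Lemma \ref{lem:zen}), passes to the localization $\CO_\CR(H)_\Gm$ at a maximal ideal lying over $t$, and then applies Nakayama's lemma to the finitely generated $\CO_\CR(H)_\Gm$-modules $M^j_\CR(\lambda)\otimes\CO_\CR(H)_\Gm$. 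This yields only \eqref{eq:NMR}, hence only Theorems \ref{thm:main1} and \ref{thm:main} --- the statements after localizing at, or specializing to, a single $t$. Nothing in the paper produces the global isomorphism $F$ of \eqref{eq:F} over all of $\CO_{\BC_\zeta}(H)$, and your sketch does not indicate how the argument would avoid the localization step or patch the local statements together. So Conjecture \ref{conj:Df0} (equivalently Conjecture \ref{conj:IV} at $R=\BC_\zeta$) remains open in the paper, and your proposal does not close that gap.
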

\begin{conjecture}
\label{conj:Dft}
For $\zeta\in\BC^\times$ and $t\in H$ 
the algebra homomorphism
\eqref{eq:canonft} 
is an isomorphism and we have
\[
H^j(\CB_\zeta,{}^f\DD_{\CB_\zeta,t})=0
\qquad(j>0).
\]
\end{conjecture}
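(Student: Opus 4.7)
The plan is to prove Conjecture \ref{conj:Dft} by reducing it to Conjecture \ref{conj:IVt}, paralleling how \eqref{eq:RG} (that is, Conjecture \ref{conj:D}) was reduced to \eqref{eq:RI} in \cite{T2}.

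The first step is to construct a natural isomorphism
$R\Gamma(\CB_\zeta,{}^f\DD_{\CB_\zeta,t}) \cong R\Ind^{\Gg,\Gb^-}_{\BC_\zeta}(\CO_\zeta(B^-)_{\ad,t})$
in $D^+(\Mod_\inte(U^L_{\BC_\zeta}(\Gg)))$. Since $\CB_\zeta$ is a non-commutative Proj, cohomology of sheaves on it can be computed in terms of graded modules over the relevant homogeneous coordinate ring. By the construction of ${}^f\DD_{\CB_\zeta,t}$, the resulting graded module is, via Proposition \ref{prop:BHN} and \eqref{eq:VOt}, nothing but $V_{\BC_\zeta,t} \cong \CO_\zeta(B^-)_{\ad,t}$, and the passage from this module to sheaf cohomology on $\CB_\zeta$ is realized by the induction functor $\Ind^{\Gg,\Gb^-}_{\BC_\zeta}$, exactly as in the argument of \cite{T2} adapted to the ${}^f$-context.

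The second step is to identify ${}^fU_{\zeta,t}$ with $\CO_\zeta(G)_{\ad,t}$. From Caldero's isomorphism \eqref{eq:Cal2} we have ${}^fU_\zeta \cong \CO_\zeta(G)_\ad$ in $\Mod_\inte(U^L_\zeta(\Gg))$; by Lemma \ref{lem:jmath}, Corollary \ref{cor:HC2}, and Proposition \ref{prop:HC2}, the central subalgebras $Z_\Har(U_\zeta)$ and $\CO_\zeta(H)^{W\bullet}$ correspond under $\jmath_\zeta$, which yields ${}^fU_{\zeta,t} \cong \CO_\zeta(G)_{\ad,t}$ after specialization at $t$. Under this identification and the identification of step one, the canonical homomorphism \eqref{eq:canonft} becomes precisely the map $F_t$ of Conjecture \ref{conj:IVt}. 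Hence the validity of Conjecture \ref{conj:IVt} implies both assertions of Conjecture \ref{conj:Dft}: the isomorphism of \eqref{eq:canonft} on the one hand, and the vanishing $H^j(\CB_\zeta, {}^f\DD_{\CB_\zeta,t})=0$ for $j>0$ on the other.

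The main obstacle is therefore Conjecture \ref{conj:IVt} itself. Under the hypothesis that $\ell$ is a prime power, the strategy (foreshadowed in the introduction) is to deduce a weak form of \eqref{eq:RI} from its $q=1$ counterpart by base change from $\BA$ (or an intermediate local ring at $q=\zeta$) to $\BC_\zeta$. One takes the standard resolution of $\CO_\BA(B^-)_\ad$ constructed in Section 4, applies $\Ind^{\Gg,\Gb^-}_\BA$, and analyzes the result via the good filtration of Proposition \ref{prop:dualWeyl1} together with the $\Ext$-vanishing and base-change statements \eqref{eq:CPSK}, Proposition \ref{prop:dualWeyl2}, and Proposition \ref{prop:FG}. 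The transport of vanishing and of the isomorphism $F$ to $q=\zeta$ is then carried out by Propositions \ref{prop:APW0}, \ref{prop:APW1}, and \ref{prop:sp}; the prime-power hypothesis on $\ell$ is precisely what guarantees the Tor-vanishing condition \eqref{eq:sp} that these propositions demand, while the hypothesis on $t$ (via the adjoint-quotient structure underlying $\CO_R(B^-)_\ad$) is what enables the final specialization $\otimes_{\CO_R(H)}\BC$ to preserve the vanishing and isomorphism statements.
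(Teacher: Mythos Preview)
The statement you are addressing is labeled a \emph{conjecture} in the paper, and the paper does not prove it in the stated generality (arbitrary $\zeta\in\BC^\times$ and $t\in H$). What the paper actually establishes for this statement is Proposition~\ref{prop:conj-eq}: Conjecture~\ref{conj:Dft} is \emph{equivalent} to Conjecture~\ref{conj:IVt} for $R=\BC_\zeta$, via Propositions~\ref{prop:rest1} and~\ref{prop:rest2} together with \eqref{eq:VOt}. Your first two steps correctly reproduce this reduction and agree with the paper's argument.

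Your third paragraph, however, does not yield a proof of the conjecture as stated. You acknowledge that Conjecture~\ref{conj:IVt} is the remaining obstacle and then pass to the prime-power case; but that only gives the special case recorded as Theorem~\ref{thm:main} (hence Theorem~\ref{thm:main2}), not the full statement for all $\zeta$ and $t$. Even as a sketch of that special case, two points are off. First, the prime-power hypothesis is not used to force a Tor-vanishing for $\BA\to\BC_\zeta$; its role (Lemma~\ref{lem:zen}) is to guarantee an intermediate Noetherian $\BA$-subalgebra $\CR\subset\BC_\zeta$ with $\overline{\CR}=\CR/(q-1)$ nonzero and containing a field, so that one can compare with the $q=1$ result of Proposition~\ref{prop:Fp0}. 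Second, the paper does not transport the $q=1$ statement by the good-filtration/standard-resolution argument you describe; it recasts the problem via Proposition~\ref{prop:equiv} as an isomorphism of $\CO_R(H)$-modules $N^j_R(\lambda)\to M^j_R(\lambda)$, and then runs a Nakayama argument at a maximal ideal over $t$, using the finiteness of Proposition~\ref{prop:FG} and the injectivity of Lemma~\ref{lem:M-inj}. So your proposal matches the paper on the reduction step but overclaims the conclusion and misidentifies the mechanism of the special-case proof.
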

We see easily the following
\begin{lemma}
\label{lem:imp}
Conjecture \ref{conj:Df0} implies Conjecture \ref{conj:D0}, and
Conjecture \ref{conj:Dft} implies Conjecture \ref{conj:D}.
\end{lemma}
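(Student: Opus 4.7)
The lemma should follow from the parallel constructions of the sheaves $\DD_{\CB_\zeta}$ and ${}^f\DD_{\CB_\zeta}$ (resp.\ $\DD_{\CB_\zeta,t}$ and ${}^f\DD_{\CB_\zeta,t}$) given in \cite{T1}, which simply replace $U_\zeta$ by ${}^fU_\zeta$ throughout the local construction. The key algebraic input should be a relation of the form $U_\zeta \cong {}^fU_\zeta \otimes_A B$ for a flat inclusion $A \hookrightarrow B$ of central subrings, which one expects to extract from the PBW decomposition together with Caldero's isomorphism ${}^fU_\zeta \cong \CO_{\BC_\zeta}(G)_\ad$ in \eqref{eq:Cal2} and the embedding of the Harish-Chandra center $Z_\Har(U_\zeta) \cong \CO_\BC(H)^{W\bullet}$.

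Assuming this algebraic base change, the parallel nature of the local constructions in \cite{T1} should lift it to a sheaf-theoretic identification
\[
\DD_{\CB_\zeta} \cong {}^f\DD_{\CB_\zeta} \otimes_A B.
\]
Since $B$ is flat over $A$, the functor $-\otimes_A B$ commutes with sheaf cohomology on $\CB_\zeta$. Conjecture \ref{conj:Df0} then yields
\[
H^j(\CB_\zeta, \DD_{\CB_\zeta}) \cong H^j(\CB_\zeta, {}^f\DD_{\CB_\zeta}) \otimes_A B = 0 \quad (j > 0),
\]
and for $j = 0$,
\[
\Gamma(\CB_\zeta, \DD_{\CB_\zeta}) \cong \bigl({}^fU_\zeta \otimes_{Z_\Har(U_\zeta)} \CO_\BC(H)\bigr) \otimes_A B \cong U_\zeta \otimes_{Z_\Har(U_\zeta)} \CO_\BC(H),
\]
with compatibility of the canonical homomorphism \eqref{eq:canon0} automatic by functoriality of the construction. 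This establishes Conjecture \ref{conj:D0}, and the implication Conjecture \ref{conj:Dft} $\Rightarrow$ Conjecture \ref{conj:D} is obtained by specializing further along the character $t : \CO_\BC(H) \to \BC$, i.e.\ by applying $-\otimes_{\CO_\BC(H)}\BC$ throughout.

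The main obstacle is pinning down the precise flat base change $A \hookrightarrow B$ — once this is accomplished and shown to lift to the sheaves via the explicit local construction of $\DD_{\CB_\zeta}$ in \cite{T1}, the rest of the argument is formal. Since the author qualifies the lemma as ``easy'', I suspect that in the setting of \cite{T1} the sheaf $\DD_{\CB_\zeta}$ is manifestly obtained from ${}^f\DD_{\CB_\zeta}$ by such a base change on the Cartan part, making the lifting in the middle step essentially tautological.
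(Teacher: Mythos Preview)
The paper gives no proof of this lemma at all --- it is simply introduced with ``We see easily the following'' and left unproved. So there is no argument in the paper to compare against; the author is relying on the reader to supply the details from the explicit constructions in \cite{T1}, \cite{T2}.

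Your outline has the right shape, and you correctly locate the crux in the explicit local construction of the sheaves from \cite{T1}, \cite{T2}. However, your specific algebraic input --- an isomorphism $U_\zeta \cong {}^fU_\zeta \otimes_A B$ for a flat extension $A \hookrightarrow B$ of \emph{central} subrings --- is doubtful as stated. The subalgebra ${}^fU_\zeta$ is the $\ad$-finite part $\bigoplus_{\lambda\in P^+}\ad(U^L_\zeta(\Gg))(k_{-2\lambda})$, which is strictly smaller than ${}^eU_\zeta$ (the subalgebra obtained by restricting the Cartan part to $k_{2\lambda}$), and neither $U_\zeta(\Gh)$ nor ${}^eU_\zeta(\Gh)$ is central, so a tensor product over a central subring will not straightforwardly recover $U_\zeta$ from ${}^fU_\zeta$. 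What one expects instead is that the base change you describe happens \emph{at the level of the graded algebras} used to build $\DD_{\CB_\zeta}$ and ${}^f\DD_{\CB_\zeta}$ in \cite{T1}, \cite{T2}: after the localizations inherent in the $\mathrm{Proj}$ construction, the discrepancy between ${}^fU_\zeta$ and $U_\zeta$ should reduce to a free extension on the Cartan side (essentially $\BC[2P]\hookrightarrow\BC[P]$), and then your cohomological argument goes through verbatim. In short, your strategy is sound, but the key identity must be formulated at the sheaf or graded-algebra level rather than as a relation between $U_\zeta$ and ${}^fU_\zeta$ themselves; that reformulation is presumably what the author regards as ``easy'' given \cite{T1}, \cite{T2}.
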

An advantage of using ${}^f\DD_{\CB_\zeta}$
and ${}^f\DD_{\CB_\zeta,t}$
 instead of 
$\DD_{\CB_\zeta}$ 
and $\DD_{\CB_\zeta,t}$
is that 
${}^f\DD_{\CB_\zeta}$ and ${}^f\DD_{\CB_\zeta,t}$ are
``$G_\zeta$-equivariant 
$\CO_{\CB_\zeta}$-modules'', where
$G_\zeta$ denotes the virtual algebraic group 
with coordinate algebra $\CO_{\BC_\zeta}(G)$.
We denote by $\Mod^{\eq}(\CO_{\CB_\zeta})$ the category of ``$G_\zeta$-equivariant 
$\CO_{\CB_\zeta}$-modules'' (see \cite[Section 4]{T2}).
Then the global section functor
\[
\Gamma(\CB_\zeta,\bullet):\Mod(\CO_{\CB_\zeta})\to\Mod(\BC)
\]
induces
\[
\Gamma(\CB_\zeta,\bullet):\Mod^\eq(\CO_{\CB_\zeta})\to\Mod_\inte(U^L_{\BC_\zeta}(\Gg)),
\]
and their higher derived functors satisfy
\[
\xymatrix@C=70pt{
\Mod^\eq(\CO_{\CB_\zeta})
\ar[r]^{H^j(\CB_\zeta,\bullet)}
\ar[d]_{\For}
&
\Mod_\inte(U^L_{\BC_\zeta}(\Gg))
\ar[d]^{\For}
\\
\Mod(\CO_{\CB_\zeta})
\ar[r]_{H^j(\CB_\zeta,\bullet)}
&
\Mod(\BC),
}
\]
where $\For$ denotes the forgetful functor.

\begin{proposition}[\cite{T2}]
\label{prop:rest1}
There exists an equivalence 
\[
\CL:\Mod^\eq(\CO_{\CB_\zeta})
\to\Mod_{\inte}(U^L_{\BC_\zeta}(\Gb^-))
\]
of abelian categories taking 
``the fiber at the origin''.
It satisfies
\[
H^j(\CB_\zeta,M)=R^j
\Ind^{\Gg,\Gb^-}_{\BC_\zeta}(\CL(M))
\qquad
(M\in\Mod^\eq(\CO_{\CB_\zeta})).
\]
\end{proposition}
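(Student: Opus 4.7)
The plan is to realize $\CL$ as the quantum analogue of the classical fiber-at-origin functor for equivariant sheaves on $G/B^-$. Since $\CB_\zeta$ is constructed as the non-commutative Proj of a $P^+$-graded algebra built from $\CO_{\BC_\zeta}(G)$, a $G_\zeta$-equivariant $\CO_{\CB_\zeta}$-module unwinds to a $P$-graded $\CO_{\BC_\zeta}(G)$-module carrying a compatible left $U^L_{\BC_\zeta}(\Gg)$-action, and ``the origin'' corresponds to the counit of $\CO_{\BC_\zeta}(B^-)$. So I would define $\CL(M)$ as the specialization of the underlying graded module at this counit, equipped with the residual $U^L_{\BC_\zeta}(\Gb^-)$-action coming from the fact that $\Gb^-$ is the infinitesimal stabilizer of the base point.

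Next I would construct a quasi-inverse by inverting the induction functor of Section~4. Given $V\in\Mod_\inte(U^L_{\BC_\zeta}(\Gb^-))$, the object $\Ind^{\Gg,\Gb^-}_{\BC_\zeta}(V)\subset \CO_{\BC_\zeta}(G)\otimes V$ inherits a $P$-grading from the right $\CO_{\BC_\zeta}(H)$-action on $\CO_{\BC_\zeta}(G)$ and a $U^L_{\BC_\zeta}(\Gg)$-action by left translation, producing a $G_\zeta$-equivariant graded module; passing through the Proj construction I obtain $\CL^{-1}(V)$. That $\CL$ and $\CL^{-1}$ are mutually inverse is a descent statement across the canonical ``morphism'' $B^-_\zeta\to\CB_\zeta$, for which the required flatness is supplied by Proposition~\ref{prop:bcO} and Proposition~\ref{prop:BHN}; the identity $\CL\circ\CL^{-1}\cong\id$ is immediate from Frobenius reciprocity (Proposition~\ref{prop:Frob}), while $\CL^{-1}\circ\CL\cong\id$ follows because an equivariant sheaf is determined by its fiber at the base point together with the equivariant structure.

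For the cohomology identification I would observe directly from the construction that $\Gamma(\CB_\zeta,\CL^{-1}(V))=\Ind^{\Gg,\Gb^-}_{\BC_\zeta}(V)$: the degree-zero global sections on the Proj side are literally the subspace cut out by the $U^L_{\BC_\zeta}(\Gb^-)$-invariance condition in the definition of induction. Since $\CL$ is an equivalence, $\Mod^\eq(\CO_{\CB_\zeta})$ has enough injectives together with $\Mod_\inte(U^L_{\BC_\zeta}(\Gb^-))$ (Lemma~\ref{lem:EI}), and both $H^j(\CB_\zeta,\cdot)$ and $R^j\Ind^{\Gg,\Gb^-}_{\BC_\zeta}(\cdot)$ are universal $\delta$-functors extending left exact functors that agree in degree zero via $\CL$; hence they are canonically isomorphic for all $j\geqq 0$. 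The main technical obstacle is the careful handling of the non-commutative Proj construction---verifying that the torsion modules quotiented out in forming $\CB_\zeta$ match up correctly under the equivalence and that $G_\zeta$-equivariance interacts properly with the $P^+$-grading---and this is the substance of the construction carried out in \cite{T2}.
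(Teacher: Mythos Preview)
The paper does not prove this proposition; it is quoted from \cite{T2}, so there is no argument here to compare against in detail. Your outline captures the correct classical picture---fiber-at-the-base-point versus associated-bundle, with global sections matching induction---and the $\delta$-functor argument for the higher cohomology is sound once the equivalence and the degree-zero identification are in place.

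There is, however, a genuine confusion in your description of the quasi-inverse. You write that $\Ind^{\Gg,\Gb^-}_{\BC_\zeta}(V)\subset\CO_{\BC_\zeta}(G)\otimes V$ inherits a $P$-grading from the right $\CO_{\BC_\zeta}(H)$-action on $\CO_{\BC_\zeta}(G)$, and that the associated object of $\Proj$ is $\CL^{-1}(V)$. But the right $U^L_{\BC_\zeta}(\Gh)$-grading on $\CO_{\BC_\zeta}(G)$ is exactly what is consumed by the $\star$-invariance condition defining $\Ind$: for $h\in U^L_{\BC_\zeta}(\Gh)$ the condition $h\star(\varphi\otimes m)=\varepsilon(h)(\varphi\otimes m)$ forces the right $\Gh$-weight of $\varphi$ to match the $\Gh$-weight of $m$, so no residual $P$-grading survives on $\Ind(V)$. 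Moreover, if $\Ind(V)$ itself were the graded module, then $\Gamma(\CB_\zeta,\CL^{-1}(V))$ would be only its degree-zero piece, not all of $\Ind(V)$ as you later claim. The correct graded module underlying $\CL^{-1}(V)$ is rather of the form $\bigoplus_{\lambda\in P^+}\Ind^{\Gg,\Gb^-}_{\BC_\zeta}(\BC_\lambda\otimes V)$ (with the evident multiplication by $A_\mu\cong\nabla_{\BC_\zeta}(\mu)^*$), so that the degree-zero component is $\Ind(V)$ and your identification $\Gamma(\CB_\zeta,\CL^{-1}(V))=\Ind^{\Gg,\Gb^-}_{\BC_\zeta}(V)$ then holds. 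This is not a fatal gap in the strategy, but the construction as you wrote it does not produce an object of the $\Proj$ category, and fixing it is precisely where the work in \cite{T2} lies.
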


\begin{proposition}[\cite{T2}]
\label{prop:rest2}
We have
\[
\CL(\DD^f_{\CB_\zeta})
=V_{\BC_\zeta},\qquad
\CL(\DD^f_{\CB_\zeta,t})
=V_{\BC_\zeta,t}.
\]
\end{proposition}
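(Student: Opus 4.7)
The plan is to reduce the statement to a direct computation, once the presentation of $\DD^f_{\CB_\zeta}$ as a $G_\zeta$-equivariant sheaf and the explicit form of the equivalence $\CL$ of Proposition \ref{prop:rest1} are both in hand. First, I would use Caldero's isomorphism \eqref{eq:Cal2} to identify ${}^fU_\zeta$ with $\CO_{\BC_\zeta}(G)_\ad$ compatibly with the $\CO_{\BC_\zeta}(H)$-action, where on ${}^fU_\zeta$ the $\CO_{\BC_\zeta}(H)$-action is given via right multiplication by $k_{-2\lambda}$ (matching the rule $\overline{u}\chi_\lambda = \overline{uk_{-2\lambda}}$ on $V_R$). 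The sheaf $\DD^f_{\CB_\zeta}$ arises from ${}^fU_\zeta$ equipped with the $P$-grading induced by the $k_{2\lambda}$'s, and as such defines an object of $\Mod^\eq(\CO_{\CB_\zeta})$.

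Next, I would apply $\CL$, which "takes the fiber at the origin" of $\CB_\zeta$. Concretely, this amounts to passing from the graded equivariant sheaf on $\CB_\zeta = B_\zeta^-\backslash G_\zeta$ to a $U^L_{\BC_\zeta}(\Gb^-)$-module by quotienting out the left action of the augmentation ideal of the positive part — that is, by the left ideal generated by $\Ker\varepsilon\cap\tilde{U}_{\BC_\zeta}(\Gn^-)$ inside ${}^eU_{\BC_\zeta}(\Gg)$. This produces precisely
\[
V_{\BC_\zeta} = {}^eU_{\BC_\zeta}(\Gg)/(\Ker\varepsilon\cap\tilde{U}_{\BC_\zeta}(\Gn^-)){}^eU_{\BC_\zeta}(\Gg),
\]
with its natural residual $U^L_{\BC_\zeta}(\Gb^-)$-structure coming from the adjoint action. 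One checks that the isomorphism $V_{\BC_\zeta}\cong\CO_{\BC_\zeta}(B^-)_\ad$ obtained in \eqref{eq:VO} matches the restriction-to-fiber of $\jmath_{\BC_\zeta}$, so that $\CL(\DD^f_{\CB_\zeta}) = V_{\BC_\zeta}$ holds in $\Mod_\inte(U^L_{\CO_{\BC_\zeta}(H)}(\Gb^-))$.

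For the twisted identity, I would invoke compatibility of $\CL$ with base change along the specialization $t:\CO_{\BC_\zeta}(H)\to\BC$. By construction $\DD^f_{\CB_\zeta,t} = \DD^f_{\CB_\zeta}\otimes_{\CO_{\BC_\zeta}(H)}\BC$ and $V_{\BC_\zeta,t} = V_{\BC_\zeta}\otimes_{\CO_{\BC_\zeta}(H)}\BC$, and the $\CO_{\BC_\zeta}(H)$-linearity of $\CL$ (which is inherited from the $\CO_{\BC_\zeta}(H)$-central structure preserved by the equivalence) together with the first identity yields the second immediately.

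The main obstacle is making the "fiber at the origin" description of $\CL$ sufficiently explicit so that the quotient procedure matches the presentation of $V_R$ on the nose, with the correct $\CO_{\BC_\zeta}(H)$-grading. This hinges on a careful bookkeeping of how the $P$-grading on ${}^fU_\zeta$ (via $k_{2\lambda}$) interacts with the $G_\zeta$-equivariant structure on $\DD^f_{\CB_\zeta}$ and with the passage to the fiber; once this bookkeeping is done, the appearance of $\Ker\varepsilon\cap\tilde{U}_{\BC_\zeta}(\Gn^-)$ in the quotient is forced by the definition of $\CB_\zeta$ as a quotient by $B^-_\zeta$, and the rest is formal.
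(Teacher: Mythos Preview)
The paper does not give a proof of this proposition: it is stated with the citation \cite{T2} and no argument is supplied here. The construction of $\DD^f_{\CB_\zeta}$ and of the equivalence $\CL$ are both deferred to \cite{T1}, \cite{T2}, so there is nothing in the present paper to compare your proposal against beyond the definitions of $V_R$ and the isomorphism \eqref{eq:VO}.

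That said, your outline is in the right spirit. The equivalence $\CL$ of Proposition~\ref{prop:rest1} is indeed a ``fiber at the base point'' functor, and for a sheaf built out of ${}^fU_\zeta$ (or rather the ambient ${}^eU_\zeta$) this fiber is obtained by killing the ideal corresponding to the base point $eB^-_\zeta$, which is exactly the left ideal generated by $\Ker(\varepsilon)\cap\tilde{U}_{\BC_\zeta}(\Gn^-)$. The resulting quotient is $V_{\BC_\zeta}$ by definition, and the $\CO_{\BC_\zeta}(H)$-linearity then gives the $t$-specialized version. A couple of points to tighten: you call $\Ker(\varepsilon)\cap\tilde{U}_{\BC_\zeta}(\Gn^-)$ the ``augmentation ideal of the positive part'', but $\Gn^-$ is the negative nilpotent; the sign is dictated by $\CB_\zeta$ being modeled on $B^-\backslash G$. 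Also, the passage from ${}^fU_\zeta$ to the equivariant sheaf $\DD^f_{\CB_\zeta}$ is not merely a matter of imposing a $P$-grading via $k_{2\lambda}$; it goes through the homogeneous coordinate algebra of $\CB_\zeta$ and a Proj-type construction, so the bookkeeping you flag as the ``main obstacle'' genuinely requires the machinery of \cite{T2} (specifically the explicit description of $\CL$ there) rather than being purely formal.
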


Hence from \eqref{eq:VO} and 
\eqref{eq:VOt} we obtain the following proposition.
\begin{proposition}
\label{prop:conj-eq}
Conjecture \ref{conj:Df0} 
(resp.\ Conjecture \ref{conj:Dft}) 
is equivalent to  Conjecture \ref{conj:IV} 
(resp.\ Conjecture \ref{conj:IVt}) 
in the case of $R=\BC_\zeta$.
\end{proposition}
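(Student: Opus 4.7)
The plan is to reduce both conjectures to a common formulation by combining the equivalence $\CL$ of Proposition~\ref{prop:rest1}, the identification $\CL({}^f\DD_{\CB_\zeta})=V_{\BC_\zeta}$ of Proposition~\ref{prop:rest2}, the isomorphism $V_{\BC_\zeta}\cong\CO_{\BC_\zeta}(B^-)_\ad$ of \eqref{eq:VO}, and the Caldero-type isomorphism ${}^fU_R(\Gg)\cong\CO_R(G)_\ad$ of \eqref{eq:Cal2}. First I would treat the higher cohomology vanishing: by Proposition~\ref{prop:rest1} combined with Proposition~\ref{prop:rest2} and \eqref{eq:VO}, we have
\[
H^j(\CB_\zeta,{}^f\DD_{\CB_\zeta})
\cong R^j\Ind^{\Gg,\Gb^-}_{\BC_\zeta}(V_{\BC_\zeta})
\cong R^j\Ind^{\Gg,\Gb^-}_{\BC_\zeta}(\CO_{\BC_\zeta}(B^-)_\ad)
\qquad(j\geqq 0),
\]
so the vanishing $H^j(\CB_\zeta,{}^f\DD_{\CB_\zeta})=0$ ($j>0$) from Conjecture~\ref{conj:Df0} is literally the vanishing $R^j\Ind^{\Gg,\Gb^-}_{\BC_\zeta}(\CO_{\BC_\zeta}(B^-)_\ad)=0$ ($j>0$) from Conjecture~\ref{conj:IV}. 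Similarly for the specialized versions, using \eqref{eq:VOt} in place of \eqref{eq:VO}.

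Next I would identify the source of the two homomorphisms. By \eqref{eq:Cal2}, ${}^fU_\zeta\cong\CO_{\BC_\zeta}(G)_\ad$ as left $U^L_{\BC_\zeta}(\Gg)$-modules, and by Corollary~\ref{cor:HC2} together with \eqref{eq:bw2}, $Z_\Har(U_\zeta)\cong\CO_{\BC_\zeta}(H)^{W\bullet}$; moreover Lemma~\ref{lem:jmath} ensures that the Caldero map $\jmath_{\BC_\zeta}$ intertwines multiplication by elements of $Z_\Har(U_\zeta)$ with multiplication by their images in $\CO_{\BC_\zeta}(H)^{W\bullet}\hookrightarrow\CO_{\BC_\zeta}(G)$ via $\vartheta$. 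Hence
\[
{}^fU_\zeta\otimes_{Z_\Har(U_\zeta)}\CO_{\BC_\zeta}(H)
\cong
\CO_{\BC_\zeta}(G)_\ad\otimes_{\CO_{\BC_\zeta}(H)^{W\bullet}}\CO_{\BC_\zeta}(H),
\]
which is precisely the source of $F$ in Conjecture~\ref{conj:IV}. The analogous statement for $(-)\otimes\BC$ via $t$ gives the source of $F_t$.

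Then comes the step I expect to be the main obstacle: checking that, under the identifications above, the canonical algebra homomorphism \eqref{eq:canonf0} coincides with $F$ (and \eqref{eq:canonft} with $F_t$). Both are $U^L_{\BC_\zeta}(\Gg)$-linear maps from $\CO_{\BC_\zeta}(G)_\ad\otimes_{\CO_{\BC_\zeta}(H)^{W\bullet}}\CO_{\BC_\zeta}(H)$ to $\Ind^{\Gg,\Gb^-}_{\BC_\zeta}(\CO_{\BC_\zeta}(B^-)_\ad)$, so by the Frobenius reciprocity (Proposition~\ref{prop:Frob}) it suffices to compare the corresponding $U^L_{\BC_\zeta}(\Gb^-)$-homomorphisms into $\CO_{\BC_\zeta}(B^-)_\ad$. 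On the $F$-side this adjoint is, by construction, the restriction map $\res\otimes 1$ arising from $\CO_R(G)\to\CO_R(B^-)$. On the geometric side it is the composition ${}^fU_\zeta\otimes_{Z_\Har}\CO_\BC(H)\to\Gamma(\CB_\zeta,{}^f\DD_{\CB_\zeta})\to{}^f\DD_{\CB_\zeta}|_{\mathrm{origin}}$, which after applying $\CL^{-1}$ at the fiber over the origin (Proposition~\ref{prop:rest1}) reduces via Proposition~\ref{prop:rest2} to the natural projection $V_{\BC_\zeta}\cong\CO_{\BC_\zeta}(B^-)_\ad$ induced from $\overline{\jmath}_{\BC_\zeta}$. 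The content of this step is therefore the commutativity of the diagram relating the Caldero embedding $\jmath$, the restriction $\res:\CO_R(G)\to\CO_R(B^-)$, and the quotient $V_R=\overline{\jmath}_R^{-1}(\CO_R(B^-))$; this is essentially built into the construction of $\overline{\jmath}_R$ as the map induced by $\jmath_R$ modulo $(\Ker\varepsilon\cap\tilde U_R(\Gn^-)){}^eU_R(\Gg)$, so the verification is a diagram chase once the pieces are set up, but the bookkeeping with the twisted $W$-action and the identification $\CO_R(H)^{W\bullet}\cong Z_\Har(U_R(\Gg))$ is where care is needed.

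Finally, the passage from Conjecture~\ref{conj:Df0} to Conjecture~\ref{conj:Dft} (and conversely) is parallel: tensoring with $\BC$ over $\CO_\BC(H)$ via $t$ turns $\CO_{\BC_\zeta}(B^-)_\ad$ into $\CO_{\BC_\zeta}(B^-)_{\ad,t}$ and $\CO_{\BC_\zeta}(H)^{W\bullet}\to\CO_{\BC_\zeta}(H)\to\BC$ turns the source of $F$ into the source of $F_t$; the map $F_t$ is defined by Frobenius reciprocity from $r_t=\res\otimes 1$, so the same diagram chase as above yields the specialized version. This gives both equivalences.
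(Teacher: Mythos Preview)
Your proposal is correct and follows essentially the same route as the paper. The paper's own proof is a one-line deduction: it cites Propositions~\ref{prop:rest1} and~\ref{prop:rest2} (both attributed to \cite{T2}) together with \eqref{eq:VO} and \eqref{eq:VOt}, and regards the identification of the homomorphisms \eqref{eq:canonf0}, \eqref{eq:canonft} with $F$, $F_t$ as already packaged into those results from \cite{T2}. What you have written is a more explicit unpacking of that same argument, in particular the Frobenius-reciprocity check that the geometric map and $F$ have the same adjoint; this is exactly the content the paper is importing from \cite{T2} rather than reproving.
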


\section{Main result}
\subsection{}
The main result of this paper is the following special case of Conjecture \ref{conj:IVt}.
\begin{theorem}
\label{thm:main}
Let $\CK$ be a commutative $\BA$-algebra which is a field, and let $t\in H(\CK)$.
Assume that there exists a Noetherian $\BA$-subalgebra $\CR$ of $\CK$ satisfying the following conditions:
\begin{itemize}
\item[(1)] $t\in H(\CR)$;
\item[(2)] the ring $\overline{\CR}=\CR/(q-1)$ is non-zero and contains a field;
\item[(3)] $\bigcap_n(q-1)^n\CR=\{0\}$.
\end{itemize}
Then the natural homomorphism
\begin{equation}
\CO_{\CK}(G)_{\ad,t}
\to 
\Ind^{\Gg,\Gb^-}_{\CK}
(\CO_\CK(B^-)_{\ad,t})
\end{equation}
is an isomorphism, and we have
\begin{equation}
R^j\Ind^{\Gg,\Gb^-}_{\CK}(\CO_\CK(B^-)_{\ad,t})
=0
\qquad(j>0).
\end{equation}
\end{theorem}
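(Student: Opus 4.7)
The plan is to first establish the unspecialized Conjecture \ref{conj:IV} over $\CR$ by $(q-1)$-adic lifting from the classical case at $q = 1$, then derive Theorem \ref{thm:main} via two successive base changes. First, specialization at $t$: taking $R = \CO_\CR(H)$ and $S = \CR$ (viewed as an $\CO_\CR(H)$-algebra via $t$), note that $\CO_\CR(H) = \CR[P]$ is a Laurent polynomial ring over $\CR$ and $t$ corresponds to a closed point cut out by a regular sequence, so $S$ has finite Tor-dimension over $R$; Proposition \ref{prop:sp} then transports Conjecture \ref{conj:IV} over $\CR$ to the specialized version over $\CR$. Second, since $\CR$ is a subring of the field $\CK$ it is a domain, so $\CK$ is flat over $\CR$ through $\mathrm{Frac}(\CR) \subset \CK$, and Proposition \ref{prop:APW0} gives Theorem \ref{thm:main}.

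To establish Conjecture \ref{conj:IV} over $\CR$, I would use the standard resolution. Set $M = \CO_\CR(B^-)_\ad$, viewed as an object of $\Mod_\inte(U^L_{\CO_\CR(H)}(\Gb^-))$, let $M \to Q^\bullet$ be its standard resolution, and define $K^\bullet = \Ind^{\Gg,\Gb^-}_\CR(Q^\bullet)$, so that $R^j\Ind^{\Gg,\Gb^-}_\CR(M) = H^j(K^\bullet)$. By Lemmas \ref{lem:FG2}, \ref{lem:FG3}, \ref{lem:STD} and \ref{lem:from h}, each weight space of $K^j$ is flat and finitely generated over $\CO_\CR(H)$. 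By Lemmas \ref{lem:STDbc} and \ref{lem:INDbc}, the reduction $\overline{\CR} \otimes_\CR K^\bullet$ is the analogous complex at $q = 1$; via \S\ref{subsec:AlgGp} this becomes the classical complex computing $R^j\Ind_{B^-}^G \CO_{\overline{\CR}}(B^-)_\ad$, whose acyclicity in positive degrees together with the description of $H^0$ realizing $F$ is to be obtained from classical algebraic group theory (the underlying geometry being the Grothendieck--Springer map $G \times^{B^-} B^- \to G$).

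The lifting from $\overline{\CR}$ to $\CR$ proceeds weight by weight. Fix $\mu$; then $K^\bullet_\mu$ is a complex of flat, finitely generated $\CO_\CR(H)$-modules whose mod-$(q-1)$ reduction has vanishing positive cohomology. Induction on $n$ via the short exact sequences $0 \to \overline{K^\bullet_\mu} \xrightarrow{(q-1)^{n-1}} K^\bullet_\mu/(q-1)^n K^\bullet_\mu \to K^\bullet_\mu/(q-1)^{n-1} K^\bullet_\mu \to 0$ yields $H^j(K^\bullet_\mu/(q-1)^n K^\bullet_\mu) = 0$ for all $j > 0$ and $n \geq 1$. Combining with the long exact sequence of $0 \to K^\bullet_\mu \xrightarrow{(q-1)^n} K^\bullet_\mu \to K^\bullet_\mu/(q-1)^n K^\bullet_\mu \to 0$ gives $(q-1)^n H^j(K^\bullet_\mu) = H^j(K^\bullet_\mu)$ for all $n \geq 1$ and $j > 0$. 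Since $\CO_\CR(H) = \CR[P]$ is a free $\CR$-module, condition (3) forces $\bigcap_n (q-1)^n \CO_\CR(H) = 0$; combined with finite generation of $H^j(K^\bullet_\mu)$ over the Noetherian ring $\CO_\CR(H)$, Krull's intersection theorem forces $H^j(K^\bullet_\mu) = 0$ for $j > 0$. A parallel Nakayama argument handles $H^0$ and shows $F$ is an isomorphism.

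The main obstacle is twofold. First, the classical acyclicity at $q = 1$ must hold for $\bar{t}$ possibly non-regular and $\overline{\CR}$ possibly of positive characteristic, requiring careful use of Kempf vanishing or rationality of Grothendieck--Springer singularities together with the filtration from Proposition \ref{prop:dualWeyl1}. Second, the $(q-1)$-adic lifting requires finite generation of $H^j(K^\bullet_\mu)$ over $\CO_\CR(H)$ for Krull's intersection theorem; while subquotients of f.g. modules are f.g. over Noetherian rings, one must verify that each $K^j_\mu$ itself is finitely generated over $\CO_\CR(H)$, which follows from the finite generation of $M_\lambda$ and the ($\mathrm{f}2$)-type constraints controlling which weights $\lambda$ contribute to $(\Ind^{\Gg,\Gh}_\CR(\tilde Q^j))_\mu$.
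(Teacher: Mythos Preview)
Your overall strategy---reduce to the classical case at $q=1$, then lift via a $(q-1)$-adic argument---matches the paper's, but the implementation has two genuine gaps.

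\textbf{Finite generation fails.} You claim that each weight space $(K^j)_\mu$ is finitely generated over $\CO_\CR(H)$, citing Lemmas \ref{lem:FG2}, \ref{lem:FG3}, \ref{lem:STD}, \ref{lem:from h}. But Lemma \ref{lem:FG2} concerns $\Ind^{\Gb^-,\Gh}$, not $\Ind^{\Gg,\Gh}$. In fact $\Ind^{\Gg,\Gh}$ does \emph{not} preserve the condition (f1): writing $K^j=\Ind^{\Gg,\Gh}_\CR(\tilde Q^j)$, the $\mu$-weight space is
\[
(K^j)_\mu=\bigoplus_{\lambda}\bigl({}_\mu\CO_\CR(G)^r_\lambda\bigr)\otimes_\CR(\tilde Q^j)_\lambda,
\]
and already a single summand ${}_\mu\CO_\CR(G)^r_\lambda$ has infinite $\CR$-rank (it receives contributions from $\nabla_\CR(\nu)_\mu\otimes\nabla^r_\CR(\nu)_\lambda$ for infinitely many $\nu\in P^+$). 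So $(K^j)_\mu$ is not finitely generated over $\CO_\CR(H)$, and your Nakayama/Krull step cannot even be attempted.

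\textbf{The intersection argument is misapplied.} Even granting finite generation, from $(q-1)H^j=H^j$ and $\bigcap_n(q-1)^n\CO_\CR(H)=0$ you cannot conclude $H^j=0$: take $R=\BZ$, $I=(2)$, $M=\BZ/3\BZ$ for a counterexample. Nakayama only applies when $(q-1)$ lies in the Jacobson radical, which it does not in $\CO_\CR(H)$.

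The paper avoids both problems by replacing the raw complex $K^\bullet$ with the $\Ext$-groups $M^j_\CR(\lambda)=\Ext^j_{\CR,\Gb^-}(\Delta_\CR(\lambda),\CO_\CR(B^-)_\ad)$, which \emph{are} finitely generated over $\CO_\CR(H)$ (Proposition \ref{prop:FG}: the point is that $\Hom_{\Gb^-}(\Delta(\lambda),Q^j)\hookrightarrow(Q^j)_\lambda$, and $Q^j$ lives on the $B^-$ side where (f1) holds). It then localizes not at $t$ directly but at a maximal ideal $\Gm\subset\CO_\CR(H)$ containing $(q-1)$ and lying over $\ker(\overline t)$; in the local ring $\CO_\CR(H)_\Gm$ Nakayama applies, and $\CO_\CK(H)_t$ is a further localization of $\CO_\CR(H)_\Gm$, so the result passes to $t$. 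The passage between the $\Ext$-groups and the original statement is Proposition \ref{prop:equiv}, which uses that the $\Delta_R(\lambda)$ detect vanishing in $\Mod_\inte(U^L_R(\Gg))$.
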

Denote by $\CO_\CK(H)_t$  the localization of 
$\CO_\CK(H)$ at the maximal ideal 
$\Ker(t:\CO_\CK(H)\to \CK)$.
\begin{theorem}
\label{thm:main1}
Let $\CK$, $t$, $\CR$ be as in Theorem \ref{thm:main}.
Then the natural homomorphism
\begin{equation}
\CO_{\CK}(G)_{\ad}
\otimes_{\CO_\CK(H)^{W\bullet}}
\CO_\CK(H)_t
\to 
\Ind^{\Gg,\Gb^-}_{\CK}(\CO_\CK(B^-)_{\ad})
\otimes_{\CO_\CK(H)}
\CO_\CK(H)_t
\end{equation}
is an isomorphism, and we have
\begin{equation}
R^j\Ind^{\Gg,\Gb^-}_{\CK}(\CO_\CK(B^-)_{\ad})
\otimes_{\CO_\CK(H)}
\CO_\CK(H)_t
=0
\qquad(j>0).
\end{equation}
\end{theorem}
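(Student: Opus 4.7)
The plan is to derive Theorem \ref{thm:main1} from Theorem \ref{thm:main} by a Nakayama-type argument on the regular local ring $S := \CO_\CK(H)_t$, using the base-change results of Section 4. First, since the localization $\CO_\CK(H) \to S$ is flat, Proposition \ref{prop:APW0} provides the identification
\[
R^j\Ind^{\Gg,\Gb^-}_\CK(\CO_\CK(B^-)_\ad) \otimes_{\CO_\CK(H)} S \;\cong\; R^j\Ind^{\Gg,\Gb^-}_{S}(\CO_S(B^-)_\ad)
\]
(using \eqref{eq:BCO} to identify $\CO_\CK(B^-)_\ad \otimes_{\CO_\CK(H)} S$ with $\CO_S(B^-)_\ad$), together with a parallel identification on the $\CO(G)_\ad$-side. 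Hence the statement of Theorem \ref{thm:main1} is equivalent to the assertion that the analog $F_S$ of $F$ over $S$ is an isomorphism and that $R^j\Ind^{\Gg,\Gb^-}_S(\CO_S(B^-)_\ad) = 0$ for $j > 0$.

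Next, as $S$ is a regular Noetherian local ring of Krull dimension $d = \rank P$ with residue field $\CK$, I choose a regular system of parameters $x_1, \ldots, x_d$ generating its maximal ideal and set $S_i = S/(x_1, \ldots, x_i)$, so that $S_0 = S$ and $S_d = \CK$. Each $S_{i+1}$ admits the length-one projective resolution $0 \to S_i \xrightarrow{x_{i+1}} S_i \to S_{i+1} \to 0$ over $S_i$, so the Tor-vanishing condition \eqref{eq:APW1} is satisfied at each step. I then argue by reverse induction on $i$: the base case $i = d$ is exactly Theorem \ref{thm:main}. For the inductive step, Lemma \ref{lem:FG3} ensures that each weight space of $\CO_{S_i}(B^-)_\ad$ is free over $\CO_{S_i}(H)$, hence flat over $S_i$, so Proposition \ref{prop:APW1} yields the injection
\[
S_{i+1} \otimes_{S_i} R^j\Ind^{\Gg,\Gb^-}_{S_i}(\CO_{S_i}(B^-)_\ad) \;\hookrightarrow\; R^j\Ind^{\Gg,\Gb^-}_{S_{i+1}}(\CO_{S_{i+1}}(B^-)_\ad),
\]
whose right-hand side vanishes for $j > 0$ by the inductive hypothesis. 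An analogous application of Proposition \ref{prop:APW3} with $V = \Delta_{S_i}(\lambda)$, together with Proposition \ref{prop:dualWeyl2}, establishes the same vanishing modulo $x_{i+1}$ for the kernel and cokernel of $F_{S_i}$.

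The final Nakayama step concludes the induction. The above injectivity shows that $x_{i+1}$ acts surjectively on $R^j\Ind^{\Gg,\Gb^-}_{S_i}(\CO_{S_i}(B^-)_\ad)$ and on $\Ker(F_{S_i})$, $\Coker(F_{S_i})$. Each weight component of these modules is a finitely generated $\CO_{S_i}(H)$-module, an assertion proved in parallel with Proposition \ref{prop:FG}: one runs the standard resolution of $\CO_{S_i}(B^-)_\ad$ and iterates the analog of Lemmas \ref{lem:FG1} and \ref{lem:FG2}, while for the $\CO(G)_\ad$-side one uses the good filtration of Proposition \ref{prop:dualWeyl1}. Localizing $\CO_{S_i}(H)$ at the maximal ideal corresponding to $t$ produces a local ring in which $x_{i+1}$ lies in the maximal ideal; Nakayama's lemma then forces each weight space, and hence the whole module, to vanish. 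This completes the induction and establishes Theorem \ref{thm:main1}.

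The principal obstacle is the last finite-generation step. Proposition \ref{prop:FG} gives finite generation of $\Ext^j_{S_i,\Gb^-}(\Delta_{S_i}(\lambda), \CO_{S_i}(B^-)_\ad)$ over $\CO_{S_i}(H)$, but this is not literally the weight component of $R^j\Ind^{\Gg,\Gb^-}_{S_i}(\CO_{S_i}(B^-)_\ad)$, nor of the kernel/cokernel of $F_{S_i}$. Translating between the two—so that the Nakayama conclusion can be applied—requires a careful analysis via the Frobenius reciprocity spectral sequence together with the good-filtration structure on $\CO(G)_\ad$; this is where the bulk of the technical work in the proof must be concentrated.
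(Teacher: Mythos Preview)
Your proposal has a fundamental circularity problem: you take Theorem~\ref{thm:main} as input to prove Theorem~\ref{thm:main1}, but in the paper the logical direction is the reverse. Immediately after stating Theorem~\ref{thm:main1}, the paper shows that Theorem~\ref{thm:main1} implies Theorem~\ref{thm:main} (via Proposition~\ref{prop:sp} applied to $M=\CO_\CK(B^-)_{\ad}\otimes_{\CO_\CK(H)}\CO_\CK(H)_t$), and then proves Theorem~\ref{thm:main1} directly. So your ``base case $i=d$ is exactly Theorem~\ref{thm:main}'' is not available.

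The paper's actual proof of Theorem~\ref{thm:main1} does use a Nakayama argument, but over a different ring and with a different specialization. One introduces the $\CO_R(H)$-modules
\[
N^j_R(\lambda)=\Ext^j_{R,\Gg}(\Delta_R(\lambda),\CO_R(G)_\ad)\otimes_{\CO_R(H)^{W\bullet}}\CO_R(H),
\qquad
M^j_R(\lambda)=\Ext^j_{R,\Gb^-}(\Delta_R(\lambda),\CO_R(B^-)_\ad),
\]
and shows (Proposition~\ref{prop:equiv}) that the statement of Theorem~\ref{thm:main1} is equivalent to $N^j_\CK(\lambda)\otimes\CO_\CK(H)_t\cong M^j_\CK(\lambda)\otimes\CO_\CK(H)_t$ for all $\lambda,j$. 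These $\Ext$-modules are finitely generated over $\CO_R(H)$ by Proposition~\ref{prop:FG}, which makes Nakayama applicable. The key specialization is then $q\mapsto 1$: one works over the Noetherian local ring $\CO_\CR(H)_\Gm$ (with $\Gm$ lying over the image of $t$ in $\CO_{\overline{\CR}}(H)$), uses Lemma~\ref{lem:M-inj} and Lemma~\ref{lem:N-Weyl} to pass to $\overline{\CR}=\CR/(q-1)$, and invokes the classical $q=1$ result (Proposition~\ref{prop:Fp0}) as the genuine base case. The hypotheses (2) and (3) on $\CR$ are used precisely here.

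Your own diagnosis of the ``principal obstacle'' is accurate and in fact points to why the paper's route is chosen: working directly with weight spaces of $R^j\Ind$ or of $\Ker/\Coker(F_{S_i})$ does not give objects to which Nakayama cleanly applies, whereas the $\Ext$-groups $M^j_R(\lambda)$ are finitely generated by construction. A smaller issue: your identification $\CO_\CK(B^-)_\ad\otimes_{\CO_\CK(H)}S\cong\CO_S(B^-)_\ad$ via \eqref{eq:BCO} conflates base change along $\CK\to S$ with the $\CO_\CK(H)$-module localization; these are different operations, and the object you need is the latter, not the quantized coordinate algebra over $S$.
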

Let us show that Theorem \ref{thm:main1} implies 
Theorem \ref{thm:main}.
By Lemma \ref{lem:INDfor} and Proposition \ref{prop:APW0} we have
\begin{align*}
&R^j\Ind^{\Gg,\Gb^-}_{\CK}(\CO_\CK(B^-)_{\ad})
\otimes_{\CO_\CK(H)}
\CO_\CK(H)_t
\\
\cong&R^j\Ind^{\Gg,\Gb^-}_{\CO_\CK(H)}(\CO_\CK(B^-)_{\ad})
\otimes_{\CO_\CK(H)}
\CO_\CK(H)_t
\\
\cong&
R^j\Ind^{\Gg,\Gb^-}_{\CO_\CK(H)_t}(\CO_\CK(B^-)_{\ad}
\otimes_{\CO_\CK(H)}
\CO_\CK(H)_t).
\end{align*}
Hence applying
Proposition \ref{prop:sp} to 
\[
M=\CO_\CK(B^-)_{\ad}
\otimes_{\CO_\CK(H)}
\CO_\CK(H)_t,
\quad
R=\CO_\CK(H)_t,
\quad
S=\CK
\]
we obtain Theorem \ref{thm:main} assuming 
Theorem \ref{thm:main1}.
The rest of this section is devoted the proof of Theorem \ref{thm:main1}.

\subsection{}
For 
$\lambda\in P^+$, $j\geqq0$ and a commutative $\BA$-algebra $R$ we set
\begin{align}
N^j_R(\lambda)=&
\Ext^j_{R,\Gg}(\Delta_R(\lambda),
\CO_R(G)_\ad
\otimes_{\CO_R(H)^{W\bullet}}\CO_R(H)),
\\
M_R^j(\lambda)
=&
\Ext^j_{R,\Gb^-}(\Delta_R(\lambda),
\CO_R(B^-)_\ad).
\end{align}
By Lemma \ref{lem:seG} we have 
$
\CO_R(G)_\ad
\in\Mod_{\inte}(U^L_{\CO_R(H)^{W\bullet}}(\Gg))
$, 
and hence
\[
\CO_R(G)_\ad
\otimes_{\CO_R(H)^{W\bullet}}\CO_R(H)
\in
\Mod_{\inte}(U^L_{\CO_R(H)}(\Gg)).
\]
Since $\CO_R(H)$ is a free 
$\CO_R(H)^{W\bullet}$-module (see \cite{St2}), 
we have
\begin{align*}
N^j_R(\lambda)
\cong&
\Ext^j_{\CO_R(H),\Gg}(\Delta_{\CO_R(H)}(\lambda),
\CO_R(G)_\ad
\otimes_{\CO_R(H)^{W\bullet}}\CO_R(H))
\\
\cong&
\Ext^j_{\CO_R(H)^{W\bullet},\Gg}
(\Delta_{\CO_R(H)^{W\bullet}}(\lambda),
\CO_R(G)_\ad)
\otimes_{\CO_R(H)^{W\bullet}}\CO_R(H)
\\
\cong&
\Ext^j_{R,\Gg}
(\Delta_{R}(\lambda),
\CO_R(G)_\ad)
\otimes_{\CO_R(H)^{W\bullet}}\CO_R(H)
\end{align*}
by Lemma \ref{lem:INDfor} and 
Proposition \ref{prop:APW2}.
Similarly, by Lemma \ref{lem:seB} we have 
\begin{align*}
M^j_R(\lambda)
\cong&
\Ext^j_{\CO_R(H),\Gb^-}(\Delta_{\CO_R(H)}(\lambda),
\CO_R(B^-)_\ad).
\end{align*}
Note that the canonical homomorphism
\[
\Ext^j_{R,\Gg}
(\Delta_{R}(\lambda),
\CO_R(G)_\ad)
\to
\Ext^j_{R,\Gb^-}(\Delta_{R}(\lambda),\CO_R(B^-)_\ad)
\]
is $\CO_R(H)^{W\bullet}$-linear.
Hence it induces a canonical homomorphism
\begin{equation}
\label{eq:NM}
N^j_R(\lambda)\to M^j_R(\lambda)
\end{equation}
of $\CO_R(H)$-modules.
By Proposition \ref{prop:dualWeyl2} we have the following.

\begin{lemma}
\label{lem:N-Weyl}
\begin{itemize}
\item[(i)]
Let $R$ be a commutative $\BA$-algebra.
Then for any $\lambda\in P^+$ and $j>0$  we have
$N^j_R(\lambda)=0$.
\item[(ii)]
Let $R\to S$ be a homomorphism of commutative $\BA$-algebras.
Then for any $\lambda\in P^+$ we have
$S\otimes_RN^0_R(\lambda)\cong N^0_S(\lambda)$.
\end{itemize}
\end{lemma}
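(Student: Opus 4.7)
The plan is to deduce this directly from the identification
\[
N^j_R(\lambda) \cong \Ext^j_{R,\Gg}(\Delta_R(\lambda),\CO_R(G)_\ad) \otimes_{\CO_R(H)^{W\bullet}} \CO_R(H)
\]
already established in the paragraph immediately preceding the lemma statement, together with Proposition \ref{prop:dualWeyl2}. Part (i) then follows at once: for $j > 0$ the $\Ext$-group on the right vanishes by Proposition \ref{prop:dualWeyl2}(i), so tensoring with $\CO_R(H)$ yields $N^j_R(\lambda) = 0$.

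For part (ii), I would apply $S \otimes_R (\bullet)$ to the $j = 0$ case of the above identification and commute this functor past the relative tensor product using the standard isomorphism $S \otimes_R (A \otimes_B C) \cong (S \otimes_R A) \otimes_{S \otimes_R B} (S \otimes_R C)$, valid whenever $B$ is an $R$-algebra and $A, C$ are $B$-modules. Three base-change identifications then finish the job: first, $S \otimes_R \Hom_{R,\Gg}(\Delta_R(\lambda), \CO_R(G)_\ad) \cong \Hom_{S,\Gg}(\Delta_S(\lambda), \CO_S(G)_\ad)$, which is exactly Proposition \ref{prop:dualWeyl2}(ii); second, $S \otimes_R \CO_R(H) \cong \CO_S(H)$, which is \eqref{eq:BCO}; and third, $S \otimes_R \CO_R(H)^{W\bullet} \cong \CO_S(H)^{W\bullet}$, which follows from Corollary \ref{cor:HC2} via the identification $\CO_R(H)^{W\bullet} \cong R[2P]^{W\circ}$ together with the fact that forming $W\circ$-invariants in $\BA[2P]$ commutes with base change from $\BA$. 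Splicing these three isomorphisms into the commuted tensor product delivers $S \otimes_R N^0_R(\lambda) \cong N^0_S(\lambda)$.

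The whole argument is essentially formal once the preceding identification and Proposition \ref{prop:dualWeyl2} are in hand, so I anticipate no serious obstacle. The only point that genuinely requires attention is the base-change compatibility of the invariant subring $\CO_R(H)^{W\bullet}$ under the map $R \to S$, which is precisely what Corollary \ref{cor:HC2} (combined with Steinberg's freeness) supplies.
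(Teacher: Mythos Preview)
Your proposal is correct and matches the paper's approach exactly: the paper simply records that the lemma follows from Proposition \ref{prop:dualWeyl2}, relying implicitly on the identification $N^j_R(\lambda)\cong \Ext^j_{R,\Gg}(\Delta_R(\lambda),\CO_R(G)_\ad)\otimes_{\CO_R(H)^{W\bullet}}\CO_R(H)$ established immediately before. You have merely made explicit the routine base-change manipulations (including the compatibility $S\otimes_R\CO_R(H)^{W\bullet}\cong\CO_S(H)^{W\bullet}$ furnished by Proposition \ref{prop:HC2}/Corollary \ref{cor:HC2}) that the paper leaves to the reader.
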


\begin{lemma}
\label{lem:M-inj}
Let $\CR$ be as in Theorem \ref{thm:main}.
For any $\lambda\in P^+$, $j\geqq0$ the canonical homomorphism
\[
\overline{\CR}\otimes_\CR M^j_\CR(\lambda)\to 
M^j_{\overline{\CR}}(\lambda)
\]
is injective.
\end{lemma}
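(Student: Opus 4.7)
The plan is to deduce this from Proposition~\ref{prop:APW3}, applied with $R = \CR$, $S = \overline{\CR}$, $\Ga = \Gb^-$, $V = \Delta_\CR(\lambda)$ and $M = \CO_\CR(B^-)_\ad$. That proposition will yield an injection
\[
\overline{\CR}\otimes_\CR M^j_\CR(\lambda)
\hookrightarrow
\Ext^j_{\overline{\CR},\Gb^-}\!\bigl(\overline{\CR}\otimes_\CR\Delta_\CR(\lambda),\;\overline{\CR}\otimes_\CR\CO_\CR(B^-)_\ad\bigr),
\]
and the right-hand side is canonically identified with $M^j_{\overline{\CR}}(\lambda)$: the first tensor factor gives $\Delta_{\overline{\CR}}(\lambda)$ by the definition of the base change, while $\overline{\CR}\otimes_\CR\CO_\CR(B^-)_\ad\cong\CO_{\overline{\CR}}(B^-)_\ad$ follows from \eqref{eq:BCO}, the adjoint action transporting correctly under this isomorphism because it is defined purely in terms of the Hopf-algebra structure via \eqref{eq:adO}.

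Checking the hypotheses of Proposition~\ref{prop:APW3} on $V$ and $M$ is routine. First, $\Delta_\CR(\lambda)=\CR\otimes_\BA\Delta_\BA(\lambda)$ is free of finite rank over $\CR$, since $\Delta_\BA(\lambda)$ is free of finite rank over $\BA$ by the theory of canonical bases. Second, each weight space $(\CO_\CR(B^-)_\ad)_\mu = (\tilde{U}^L_\CR(\Gn^-)_{-\mu})^*\,\CO_\CR(H)$ (as in the proof of Lemma~\ref{lem:FG3}) is free over $\CR$, because $\tilde{U}^L_\BA(\Gn^-)_{-\mu}$ is free of finite rank over $\BA$ and $\CO_\BA(H)$ is free over $\BA$ by \eqref{eq:OH}.

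The only substantive point is verifying the Tor-vanishing hypothesis \eqref{eq:APW1}, namely $\Tor^\CR_k(\overline{\CR},E)=0$ for every $\CR$-module $E$ and every $k\geq 2$. Here I would use that $\CR$ is a subring of the field $\CK$, so $\CR$ is an integral domain. If $q-1=0$ in $\CR$, then $\overline{\CR}=\CR$ and the statement is trivial. Otherwise $q-1$ is a non-zero-divisor in $\CR$, and
\[
0\to \CR \xrightarrow{\,\cdot(q-1)\,}\CR\to\overline{\CR}\to 0
\]
is a free resolution of $\overline{\CR}$ of length one, giving the desired Tor-vanishing in the required range. The main (indeed essentially the only) obstacle is this last point, and it dissolves once one notices that conditions (2) and (3) of Theorem~\ref{thm:main} are not needed for the present lemma; only the inclusion $\CR\subset\CK$, which forces $\CR$ to be a domain, is used.
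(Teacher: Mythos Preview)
Your proof is correct and follows essentially the same approach as the paper: verify the Tor-vanishing hypothesis \eqref{eq:APW1} via the length-one free resolution $0\to\CR\xrightarrow{q-1}\CR\to\overline{\CR}\to0$, and then invoke Proposition~\ref{prop:APW3}. Your write-up is in fact more careful than the paper's, since you explicitly justify why $q-1$ is a non-zero-divisor (using that $\CR\subset\CK$ is a domain), handle the degenerate case $q-1=0$, and spell out the verification of the hypotheses on $V$ and $M$.
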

\begin{proof}
Using the exact sequence
\[
0\to \CR
\xrightarrow{q-1}
\CR
\to{\overline{\CR}}
\to 0
\]
we see that $\Tor_k^{\CR}
({\overline{\CR}},E)=0$ 
for any $\CR$-module $E$ and $k\geqq2$.
Hence our assertion is a consequence of 
Proposition \ref{prop:APW3}.
\end{proof}
\begin{proposition}
\label{prop:equiv}
Let $R$ be a commutative $\BA$-algebra, and let $T$ be a commutative flat $\CO_R(H)$-algebra.
Then the following two conditions are equivalent to each other.
\begin{itemize}
\item[(A)]
The natural homomorphism
\[
\CO_{R}(G)_{\ad}
\otimes_{\CO_R(H)^{W\bullet}}
T
\to 
\Ind^{\Gg,\Gb^-}_{R}
(\CO_R(B^-)_{\ad})
\otimes_{\CO_R(H)}
T
\]
is an isomorphism, and 
\[
R^j\Ind^{\Gg,\Gb^-}_{R}(\CO_R(B^-)_{\ad})
\otimes_{\CO_R(H)}
T
=0
\qquad(j>0).
\]
\item[(B)]
The natural homomorphism
\[
N^j_R(\lambda)\otimes_{\CO_R(H)}T
\to
M^j_R(\lambda)\otimes_{\CO_R(H)}T
\]
is an isomorphism 
for any $\lambda\in P^+$, $j\geqq0$.
\end{itemize}
\end{proposition}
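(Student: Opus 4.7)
The plan is to deduce both implications simultaneously from a single Grothendieck-type spectral sequence. Since $\Ind^{\Gg,\Gb^-}_R$ sends injectives to injectives by Lemma \ref{lem:II}, and the Frobenius reciprocity isomorphism of Proposition \ref{prop:Frob} identifies the functor $\Hom_{R,\Gb^-}(\Delta_R(\lambda),-)$ with the composition $\Hom_{R,\Gg}(\Delta_R(\lambda),-)\circ \Ind^{\Gg,\Gb^-}_R$, one obtains a spectral sequence
\begin{equation*}
E_2^{p,q}(\lambda)=\Ext^p_{R,\Gg}(\Delta_R(\lambda), R^q\Ind^{\Gg,\Gb^-}_R(\CO_R(B^-)_\ad))\;\Rightarrow\; M^{p+q}_R(\lambda),
\end{equation*}
whose bottom-row edge morphism, composed with the map induced by $F$, is exactly the natural homomorphism $N^p_R(\lambda)\to M^p_R(\lambda)$ appearing in (B). Since $\Delta_R(\lambda)$ is finitely generated free over $R$ and $T$ is flat over $\CO_R(H)$, Propositions \ref{prop:APW0} and \ref{prop:APW2} allow us to tensor the whole spectral sequence by $T$, yielding $E_2^{p,q}(\lambda)\otimes_{\CO_R(H)}T\cong \Ext^p_{T,\Gg}(\Delta_T(\lambda), R^q\Ind^{\Gg,\Gb^-}_T(\CO_R(B^-)_\ad\otimes_{\CO_R(H)} T))$ converging to $M^{p+q}_R(\lambda)\otimes_{\CO_R(H)}T$.

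The implication (A)$\Rightarrow$(B) is then immediate: under (A) the higher $R^q\Ind\otimes T$ vanish for $q>0$, so the tensored spectral sequence collapses to its bottom row, and the assumed isomorphism $F\otimes T$ identifies $\Ext^j_{T,\Gg}(\Delta_T(\lambda),\Ind^{\Gg,\Gb^-}_R(\CO_R(B^-)_\ad)\otimes T)$ with $N^j_R(\lambda)\otimes T$.

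The reverse implication (B)$\Rightarrow$(A) will be the main obstacle. Lemma \ref{lem:N-Weyl}(i) gives $N^j_R(\lambda)=0$ for $j>0$, so (B) yields $M^j_R(\lambda)\otimes T=0$ for every $\lambda\in P^+$ and $j>0$, together with an iso $N^0_R(\lambda)\otimes T\cong M^0_R(\lambda)\otimes T$ induced by $F$. The difficulty is that vanishing of the abutment of the tensored spectral sequence does not automatically force the $E_2^{p,q}$ to vanish for $q>0$. To circumvent this I would pass to the standard resolution $\CO_R(B^-)_\ad\to Q^\bullet$ of Lemma \ref{lem:STD}, whose terms $Q^j=\Ind^{\Gb^-,\Gh}_R(\tilde Q^j)$ satisfy $\Ind^{\Gg,\Gb^-}_R(Q^j)\cong \Ind^{\Gg,\Gh}_R(\tilde Q^j)$ by transitivity, so that $\mathbf{R}\Ind^{\Gg,\Gb^-}_R(\CO_R(B^-)_\ad)$ is represented by the complex $\Ind^{\Gg,\Gh}_R(\tilde Q^\bullet)$ whose exactness behavior is controlled by Lemma \ref{lem:from h}. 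Combined with the good $\nabla$-filtration of $\CO_R(G)_\ad$ from Proposition \ref{prop:dualWeyl1} and the Ext-vanishing \eqref{eq:CPSK}, one then performs a dévissage: the assumed isomorphisms $N^j\otimes T\cong M^j\otimes T$ for all $\lambda\in P^+$ and $j$ force the mapping cone of $F\otimes T$ in the derived category of integrable $U_T^L(\Gg)$-modules to have vanishing $R\Hom$ against every Weyl module, and the explicit filtration structure of $A$ together with the hyper-Ext spectral sequence for the cone must then force it to be acyclic, yielding (A).
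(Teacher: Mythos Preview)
Your overall framework is sound and close to the paper's own argument: both directions hinge on the identification
\[
M^j_R(\lambda)\otimes_{\CO_R(H)}T
\cong
H^j\bigl(R\Hom_{R,\Gg}(\Delta_R(\lambda),\,R\Ind^{\Gg,\Gb^-}_R(\CO_R(B^-)_\ad\otimes_{\CO_R(H)}T))\bigr),
\]
which is exactly what your Grothendieck spectral sequence encodes.  The implication (A)$\Rightarrow$(B) is correct as you wrote it and matches the paper.

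For (B)$\Rightarrow$(A), however, you are overcomplicating the endgame.  The detour through the standard resolution $Q^\bullet$ and the appeal to the ``explicit filtration structure of $A$'' (presumably $\CO_R(G)_\ad$?) together with a hyper-Ext spectral sequence are unnecessary, and as stated your dévissage is too vague to be convincing.  The paper's argument is shorter and cleaner: let $L$ be the cone of
\[
\CO_R(G)_\ad\otimes_{\CO_R(H)^{W\bullet}}T
\;\longrightarrow\;
R\Ind^{\Gg,\Gb^-}_R(\CO_R(B^-)_\ad)\otimes_{\CO_R(H)}T
\]
in $D^+(\Mod_\inte(U^L_R(\Gg)))$.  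Condition~(B) combined with the identifications above gives $R\Hom_{R,\Gg}(\Delta_R(\lambda),L)=0$ for every $\lambda\in P^+$, as you say.  Now suppose $L\ne 0$ and pick the smallest $j$ with $H^j(L)\ne 0$.  Truncating via the triangle $H^j(L)[-j]\to L\to L'\xrightarrow{+1}$ with $H^k(L')=0$ for $k\leqq j$ yields
\[
\Hom_{R,\Gg}(\Delta_R(\lambda),H^j(L))=H^j\bigl(R\Hom_{R,\Gg}(\Delta_R(\lambda),L)\bigr)=0
\qquad(\lambda\in P^+).
\]
This is already a contradiction: $H^j(L)$ is a nonzero integrable $U^L_R(\Gg)$-module, hence has a nonzero weight space $H^j(L)_\mu$ with $\mu$ maximal, and any nonzero vector there is highest weight, giving a nonzero map from $\Delta_R(\mu)$.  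No filtration of $\CO_R(G)_\ad$ and no further spectral sequence is needed.  I would recommend replacing your last paragraph with this direct argument; the rest of your proof then goes through.
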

\begin{proof}
Note that we have
\begin{align*}
N^j_R(\lambda)\otimes_{\CO_R(H)}T
=&
(\Ext^j_{R,\Gg}(\Delta_R(\lambda),
\CO_R(G)_\ad)\otimes_{\CO_R(H)^{W\bullet}}
{\CO_R(H)})
\otimes_{\CO_R(H)}T
\\
\cong&
\Ext^j_{R,\Gg}(\Delta_R(\lambda),
\CO_R(G)_\ad\otimes_{\CO_R(H)^{W\bullet}}T)
\\
=&
H^j(R\Hom_{R,\Gg}(
\Delta_R(\lambda),
\CO_R(G)_\ad\otimes_{\CO_R(H)^{W\bullet}}T)),
\end{align*}
and 
\begin{align*}
M^j_R(\lambda)\otimes_{\CO_R(H)}T
=&
\Ext^j_{R,\Gb^-}(
\Delta_R(\lambda),
\CO_R(B^-)_{\ad})
\otimes_{\CO_R(H)}
T
\\
\cong&
\Ext^j_{R,\Gb^-}(
\Delta_R(\lambda),
\CO_R(B^-)_{\ad}
\otimes_{\CO_R(H)}
T)
\\
=&
H^j(R\Hom_{R,\Gb^-}(
\Delta_R(\lambda),
\CO_R(B^-)_{\ad}
\otimes_{\CO_R(H)}
T))
\\
\cong&
H^j(R\Hom_{R,\Gg}(
\Delta_R(\lambda),
\Ind^{\Gg,\Gb^-}_{R}
(\CO_R(B^-)_{\ad}
\otimes_{\CO_R(H)}
T))).
\end{align*}

Assume (A).
Then we have
\begin{align*}
\CO_{R}(G)_{\ad}
\otimes_{\CO_R(H)^{W\bullet}}
T
\cong&
\Ind^{\Gg,\Gb^-}_{R}
(\CO_R(B^-)_{\ad})
\otimes_{\CO_R(H)}
T
\\
\cong&
\Ind^{\Gg,\Gb^-}_{R}
(\CO_R(B^-)_{\ad}
\otimes_{\CO_R(H)}
T),
\end{align*}
and hence (B) holds.

Assume (B).
Then we have
\begin{align}
\label{eq:NMKcon}
&R\Hom_{R,\Gg}(
\Delta_R(\lambda),
\CO_R(G)_\ad\otimes_{\CO_R(H)^{W\bullet}}T)
\\
\nonumber
\cong&
R\Hom_{R,\Gg}(
\Delta_R(\lambda),
R\Ind^{\Gg,\Gb^-}_{R}
(\CO_R(B^-)_{\ad}
\otimes_{\CO_R(H)}
T))
\end{align}
for any $\lambda\in P^+$.
Now define $L$ by the distinguished triangle
\[
\CO_R(G)_\ad\otimes_{\CO_R(H)^{W\bullet}}T
\to
R\Ind^{\Gg,\Gb^-}_{R}(\CO_R(B^-)_\ad)\otimes_{\CO_R(H)}
T
\to
L\xrightarrow{+1}.
\]
Then by \eqref{eq:NMKcon}
we have
\[
R\Hom_{R,\Gg}(\Delta_R(\lambda),L)=0
\]
for any $\lambda\in P^+$.
Assume $L\ne0$, and take the smallest $j$ satisfying
$H^j(L)\ne0$. Define $L'$ by the distinguished triangle 
\[
H^j(L)[-j]\to L\to L'\xrightarrow{+1}.
\]
Then for $k\leqq j$ we have $H^k(L')=0$, and hence
\[
\Hom_{R,\Gg}(\Delta_R(\lambda),H^j(L))=
H^j(R\Hom_{R,\Gg}(\Delta_R(\lambda),L))
=0
\qquad(\forall \lambda\in P^+).
\]
This contradicts $H^j(L)\ne0$ since $\Delta_R(\lambda)$ is the universal highest weight module with highest weight $\lambda$.
It follows that $L=0$.
Namely, we obtain
\begin{equation}
\CO_R(G)_\ad\otimes_{\CO_R(H)^{W\bullet}}T
\cong
R\Ind^{\Gg,\Gb^-}_{R}(\CO_R(B^-)_\ad)\otimes_{\CO_R(H)}T.
\end{equation}
Hence (A) holds.
\end{proof}
\subsection{}
Our strategy in proving Theorem \ref{thm:main1} is to reduce it to  the following.
\begin{proposition}
\label{prop:Fp0}
Let $R$ be a commutative ring containing a field $k$ as a subring.
We regard $R$ as an $\BA$-algebra via $q\mapsto1$.
Then the
natural homomorphism
\begin{equation}
\CO_{R}(G)_{\ad}
\otimes_{\CO_{R}(H)^{W}}
\CO_{R}(H)
\to 
\Ind^{\Gg,\Gb^-}_{R}(\CO_{R}(B^-)_{\ad})
\end{equation}
is an isomorphism, and we have
\begin{equation}
R^j\Ind^{\Gg,\Gb^-}_{R}(\CO_{R}(B^-)_{\ad})
=0
\qquad(j>0).
\end{equation}
\end{proposition}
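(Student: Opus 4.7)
The plan is to invoke Proposition \ref{prop:equiv} with $T=\CO_R(H)$, which converts the desired conclusion into the assertion that the natural map $N_R^j(\lambda)\to M_R^j(\lambda)$ is an isomorphism for every $\lambda\in P^+$ and every $j\geq 0$. By Lemma \ref{lem:N-Weyl}(i) we already have $N_R^j(\lambda)=0$ for $j>0$, and $N_R^0(\lambda)$ is controlled by Proposition \ref{prop:dualWeyl2}. What remains is the vanishing
\[
M_R^j(\lambda)=\Ext^j_{R,\Gb^-}(\Delta_R(\lambda),\CO_R(B^-)_\ad)=0 \qquad (j>0),
\]
together with the canonical isomorphism $M_R^0(\lambda)\cong N_R^0(\lambda)$ induced by the Hopf-algebra restriction $\res:\CO_R(G)\to\CO_R(B^-)$.

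To verify both statements it is natural to pass to classical algebraic geometry. At $q=1$, by the identification in subsection 3.2, the categories $\Mod_\inte(U^L_R(\Ga))$ coincide with the rational module categories of the affine group schemes $G_R$, $B^-_R$, $H_R$, and $\Ind^{\Gg,\Gb^-}_R$ coincides with the classical algebraic induction functor. Consequently $R^j\Ind^{\Gg,\Gb^-}_R(\CO_R(B^-)_\ad)\cong H^j(\tilde G_R,\CO_{\tilde G_R})$, where $\tilde G_R := G_R\times^{B^-_R}B^-_R$ is the Grothendieck--Springer scheme (with $B^-_R$ acting on itself by conjugation), while $\CO_R(G)_\ad\otimes_{\CO_R(H)^W}\CO_R(H)$ is the coordinate ring of the fibre product $G_R\times_{H_R/W}H_R$ along the Chevalley quotient. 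The proposition is therefore equivalent to the classical statement that the Grothendieck--Springer morphism $\mu_R:\tilde G_R\to G_R\times_{H_R/W}H_R$ induces an isomorphism on global sections of the structure sheaf and that $H^i(\tilde G_R,\CO_{\tilde G_R})=0$ for $i>0$. Flat base change (Proposition \ref{prop:APW0}) along $k\hookrightarrow R$ together with faithfully flat descent along $k\hookrightarrow\overline{k}$ reduces this to the case of an algebraically closed base field.

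Over $\overline{k}$, the isomorphism on global sections follows from Steinberg's theorem that for the simply connected simple group $G_{\overline k}$ the Chevalley quotient map is flat with irreducible normal complete-intersection fibres, so that $G_{\overline k}\times_{H_{\overline k}/W}H_{\overline k}$ is normal; combined with Zariski's main theorem applied to the proper birational morphism $\mu_{\overline k}$ from the smooth scheme $\tilde G_{\overline k}$, this yields the identification on global sections. The higher cohomology vanishing is the classical rational-singularities statement for the simultaneous resolution $\tilde G_{\overline k}$; alternatively, since $\tilde G_{\overline k}\to\CB$ is an affine fibration over the flag variety, one may compute cohomology on $\CB$ via the associated $B^-_{\overline k}$-equivariant sheaf and appeal to a Kempf-type vanishing that makes essential use of the Frobenius-split structure of $\CB$ to handle positive characteristic. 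The most delicate step will be this higher cohomology vanishing in positive characteristic: in characteristic zero it is immediate from Grauert--Riemenschneider, but a uniform treatment across all characteristics requires Frobenius splitting of flag varieties or an equivalent good-filtration analysis.
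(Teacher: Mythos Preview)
Your route is essentially the paper's: reduce to an algebraically closed base field via Proposition~\ref{prop:APW0}, translate the $q=1$ statement into the classical assertion that the Grothendieck--Springer map $\theta\colon\CX\to Y:=G_k\times_{H_k/W}H_k$ satisfies $\theta_*\CO_\CX\cong\CO_Y$ and $R^j\theta_*\CO_\CX=0$ for $j>0$, and then argue geometrically.

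Two remarks. First, your opening paragraph is a detour that you do not actually use: you propose to pass through Proposition~\ref{prop:equiv} to reduce to the isomorphism $N_R^j(\lambda)\cong M_R^j(\lambda)$, but in the next paragraph you abandon this and argue for the original statement directly via geometry. The paper does the latter from the start and only afterwards records the $N^j\cong M^j$ consequence (Proposition~\ref{prop:Fp}). Second, for the $j=0$ part you and the paper agree: the paper shows $Y$ is normal by checking Serre's criterion (it is a complete intersection in $G_k\times H_k$ by Steinberg, and a dimension count on the non-regular locus gives smoothness in codimension one), and then proper birationality of $\theta$ yields $\theta_*\CO_\CX\cong\CO_Y$. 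For the higher direct images the paper, having established that $\theta$ is proper birational with normal image (and $Y$ a complete intersection), invokes a general algebraic-geometry fact with a reference to \cite{HO}; you instead flag this step as the delicate one in positive characteristic and gesture at Frobenius splitting or good-filtration arguments without carrying them out. So your proposal is a correct outline matching the paper's strategy, but it stops short of actually settling the vanishing $R^j\theta_*\CO_\CX=0$ that you yourself identify as the crux.
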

\begin{proof}
By Proposition \ref{prop:APW0} we may assume $R=k$.
We are further reduced to the case when $k$ is an algebraically closed field by 
Proposition \ref{prop:APW0} again.
In this case the assertion is 
equivalent to a geometric statement regarding the ordinary algebraic groups as explained below.

Set $\CB_k=B_k^-\backslash G_k$ (see \ref{subsec:AlgGp} for the notation).
It is the flag manifold of $G_k$.
Denote by $\Mod^{G_k}(\CO_{\CB_k})$ the category of 
$G_k$-equivariant quasi-coherent $\CO_{\CB_k}$-modules.
This category is equivalent to the category $\Mod(B_k^-)$ of rational $B_k^-$-modules via the functor 
$\Mod^{G_k}(\CO_{\CB_k})\to\Mod(B_k^-)$ taking the fiber at $eB_k^-\in\CB_k$ (see \cite[Part I, 5.8]{Ja} for the construction of the inverse functor).
Hence by \ref{subsec:AlgGp} we have equivalences
\begin{equation}
\label{eq:ce}
\Mod_{\inte}(U^L_k(\Gg))\cong\Mod(G_k),
\qquad
\Mod_{\inte}(U^L_k(\Gb^-))\cong\Mod^{G_k}(\CO_{\CB_k})
\end{equation}
of abelian categories.
By \cite[Part I, 5.12]{Ja}
the (derived) induction functor
\[
R^j\Ind^{\Gg,\Gb^-}_{k}:
\Mod_{\inte}(U^L_k(\Gb^-))
\to
\Mod_{\inte}(U^L_k(\Gg))
\]
corresponds to
the (derived) global section functor
\[
H^j(\CB_k,\bullet):
\Mod^{G_k}(\CO_{\CB_k})
\to
\Mod(G_k)
\]
under the identification \eqref{eq:ce}.

Denote by $\CX$ the algebraic variety consisting of pairs $(B,g)$, where $B$ is a Borel subgroup of $G_{k}$ and $g$ is an element of $B$.
Let $p:\CX\to\CB_k$ be the projection $(B,g)
\mapsto B$.
Then the object of 
$\Mod^{G_k}(\CO_{\CB_k})$
corresponding to 
$\CO_k(B^-)_\ad\in\Mod_{\inte}(U^L_k(\Gb^-))$
is $p_*\CO_\CX$.
For $g\in G_{k}$ let $g_s$ be its semisimple part.
Then there exists $x\in G_{k}$ such that $xg_sx^{-1}\in H_{k}$.
Moreover, $xg_sx^{-1}$ is uniquely determined up to the action of the Weyl group $W$ on $H_{k}$.
This gives a natural morphism $G_{k}\to H_{k}/W$ of algebraic varieties.
Then the $G_k$-module corresponding to 
$\CO_{k}(G)_{\ad}
\otimes_{\CO_{k}(H)^{W}}
\CO_{k}(H)
\in
\Mod_{\inte}(U^L_k(\Gg))
$ is the coordinate algebra 
of the affine algebraic variety 
$G_k\times_{H_k/W}H_k$.
Note that 
we have a morphism 
$\theta:\CX\to G_{k}
\times_{H_{k}/W}H_{k}$ 
of algebraic varieties given by $(B,g)\mapsto (g,xg_sx^{-1})$.
Here $x\in G_{k}$ satisfies
\[
xBx^{-1}=B_{k}^-,
\qquad
xg_sx^{-1}\in H_{k}.
\]
Then our assertion is equivalent to 
\begin{equation}
\label{eq:k}
R^j\theta_*\CO_{\CX}
=
\begin{cases}
\CO_{G_{k}
\times_{H_{k}/W}H_{k}}
\quad&(j=0)
\\
0&(j>0).
\end{cases}
\end{equation}
By a general fact in algebraic geometry (see for example \cite{HO}) it is sufficient to show that 
$\theta$ is a proper birational morphism with normal image.
The fact that $\theta$ is proper and birational is standard.
Let us show that 
$Y:=G_{k}\times_{H_{k}/W}H_{k}$
is a normal variety.
Note that 
the codimension of $Y$ in $G_{k}\times H_{k}$ is $\dim H$.
On the other hand as a subvariety of $G_{k}\times H_{k}$, 
$Y$ is defined 
 by $\dim H$-equations (see \cite[3.4, Corollary 3]{St}).
Hence by Serre's criterion it is sufficient to show that $Y$ is smooth in codimension one.
It is well-known that for $g\in G_k$ we have $\dim Z_{G_k}(g)\geqq\dim H$, where $Z_{G_k}(g)$ denotes the centralizer.
Denote by $G'_k$ 
(resp.\ $G''_k$) 
the set of $g\in G_k$ satisfying 
$\dim Z_{G_k}(g)=\dim H$
(resp.\  $\dim Z_{G_k}(g)>\dim H$), 
and set 
$Y'=G'_{k}\times_{H_{k}/W}H_{k}$
(resp.\
$Y''=G''_{k}\times_{H_{k}/W}H_{k}$).
Since $Y'$ is open in $Y$,  it is sufficient to show that $Y'$ 
is smooth and $\dim Y''\leqq \dim Y-2$.
By \cite[3.8, Theorem 3]{St} $G'_k\to H_k/W$ is a smooth map.
Hence its base change $Y'\to H_k$ is also a smooth map.
Since $H_k$ is smooth, $Y'$ is smooth.
It is well-known that the morphism $Y\to H_k$ is surjective and the dimension of any fiber of it is equal to $\dim G_k-\dim H_k$.
On the other hand 
the image of $Y''\to H_k$ is a proper closed subset of $H_k$, and the dimension of any fiber of it is strictly smaller than 
$\dim G_k-\dim H_k$.
Hence we have $\dim Y''\leqq\dim Y-2$.
\end{proof}

By Proposition \ref{prop:equiv} we have the following.
\begin{proposition}
\label{prop:Fp}
Let $R$ be as in Proposition \ref{prop:Fp0}.
Then
for any $\lambda\in P^+$ and $j\geqq0$ we have
\[
N^j_{R}(\lambda)
\cong
M^j_{R}(\lambda).
\]
\end{proposition}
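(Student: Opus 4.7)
The plan is to simply invoke Proposition \ref{prop:equiv} with the flat $\CO_R(H)$-algebra $T := \CO_R(H)$ itself (flat over itself trivially). Under this choice, condition (B) of Proposition \ref{prop:equiv} becomes the statement
\[
N^j_R(\lambda) \otimes_{\CO_R(H)} \CO_R(H) \;\cong\; M^j_R(\lambda) \otimes_{\CO_R(H)} \CO_R(H),
\]
which, since the tensor product is the identity functor, is exactly the assertion of Proposition \ref{prop:Fp}. So the work reduces to verifying condition (A) with this $T$.

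Condition (A) with $T = \CO_R(H)$ reads: the natural map
\[
\CO_R(G)_{\ad} \otimes_{\CO_R(H)^{W\bullet}} \CO_R(H) \to \Ind^{\Gg,\Gb^-}_R(\CO_R(B^-)_{\ad})
\]
is an isomorphism, and $R^j\Ind^{\Gg,\Gb^-}_R(\CO_R(B^-)_{\ad}) = 0$ for $j > 0$. Now observe that because $R$ is an $\BA$-algebra via $q \mapsto 1$, the twisted action $w \bullet \chi_\lambda = q^{-2(w\lambda - \lambda, \rho)}\chi_{w\lambda}$ degenerates to the ordinary Weyl group action, so $\CO_R(H)^{W\bullet} = \CO_R(H)^W$. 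Therefore the above statement is word-for-word what Proposition \ref{prop:Fp0} provides.

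Applying the implication (A) $\Rightarrow$ (B) from Proposition \ref{prop:equiv} and simplifying yields the desired isomorphism $N^j_R(\lambda) \cong M^j_R(\lambda)$ for every $\lambda \in P^+$ and every $j \geq 0$. There is no real obstacle here: all the content lies in Proposition \ref{prop:Fp0} (the classical geometric input about $\theta : \CX \to G_k \times_{H_k/W} H_k$) and in the formal Ext-versus-Ind dictionary encoded in Proposition \ref{prop:equiv}; Proposition \ref{prop:Fp} is merely their combination.
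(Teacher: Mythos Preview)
Your proof is correct and is exactly the argument the paper intends: the paper's one-line justification ``By Proposition \ref{prop:equiv} we have the following'' means precisely applying Proposition \ref{prop:equiv} with $T=\CO_R(H)$ so that condition (A) reduces to Proposition \ref{prop:Fp0} (using $\CO_R(H)^{W\bullet}=\CO_R(H)^W$ at $q=1$) and condition (B) becomes the desired isomorphism. You have simply spelled out these details.
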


\subsection{}
Now we give a proof of Theorem \ref{thm:main1}.

We denote by $\overline{t}\in H(\overline{\CR})$ the image of $t\in H(\CR)$.
Choose a maximal ideal $\overline{\Gm}$ of $\CO_{\overline{\CR}}(H)$ 
containing $\Ker(\overline{t}:\CO_{\overline{\CR}}(H)\to
\overline{\CR})$, and set
$\Bk
=\CO_{\overline{\CR}}(H)/\overline{\Gm}$.
We denote the preimage of $\overline{\Gm}$ in $\CO_\CR(H)$ by $\Gm$.
Then $\Gm$ is a maximal ideal of $\CO_\CR(H)$ such that
\[
\CO_\CR(H)/\Gm\cong
\CO_{\overline{\CR}}(H)/\overline{\Gm}
\cong\Bk.
\]
We denote the localization of $\CO_\CR(H)$ 
at the maximal ideal $\Gm$ 
by
$\CO_\CR(H)_\Gm$.
Then 
$\CO_\CR(H)_\Gm$ is a local ring with 
residue field $\Bk$.
Since $\CO_\CK(H)_t$ is a localization of $\CO_\CR(H)_\Gm$, we have
\begin{align*}
N_\CK^j(\lambda)\otimes_{\CO_\CK(H)}\CO_{\CK}(H)_t
\cong&
N_\CR^j(\lambda)\otimes_{\CO_\CR(H)}\CO_\CR(H)_\Gm
\otimes_{\CO_\CR(H)_\Gm}\CO_{\CK}(H)_t,
\\
M_\CK^j(\lambda)\otimes_{\CO_\CK(H)}\CO_{\CK}(H)_t
\cong&
M_\CR^j(\lambda)\otimes_{\CO_\CR(H)}\CO_\CR(H)_\Gm
\otimes_{\CO_\CR(H)_\Gm}\CO_{\CK}(H)_t.
\end{align*}
Hence in view of Proposition \ref{prop:equiv}, 
Theorem \ref{thm:main1}  is a consequence of the following statement:
\begin{equation}
\label{eq:NMR}
N_\CR^j(\lambda)\otimes_{\CO_\CR(H)}\CO_\CR(H)_\Gm
\cong
M_\CR^j(\lambda)\otimes_{\CO_\CR(H)}\CO_\CR(H)_\Gm.
\end{equation}

Let us show \eqref{eq:NMR}.
First assume $j>0$.
Then by Lemma \ref{lem:N-Weyl} (i) it is sufficient to show 
\[
M^j_\CR(\lambda)\otimes_{\CO_\CR(H)}
\CO_\CR(H)_\Gm
=0.
\]
Note that 
$M^j_\CR(\lambda)\otimes_{\CO_\CR(H)}
\CO_\CR(H)_\Gm$ is a finitely generated 
$\CO_\CR(H)_\Gm$-module by Proposition \ref{prop:FG}.
Note also that $\CO_\CR(H)_\Gm$ is a local ring with residue field $\Bk$.
Hence by Nakayama's lemma
it is sufficient to show 
\[
(M^j_\CR(\lambda)\otimes_{\CO_\CR(H)}
\CO_\CR(H)_\Gm)
\otimes_{\CO_\CR(H)_\Gm}\Bk=0.
\]
We have
\begin{align*}
&(M^j_\CR(\lambda)\otimes_{\CO_\CR(H)}
\CO_\CR(H)_\Gm)
\otimes_{\CO_\CR(H)_\Gm}\Bk
\\
\cong&
M^j_\CR(\lambda)\otimes_{\CO_\CR(H)}
\CO_{\overline{\CR}}(H)
\otimes_{\CO_{\overline{\CR}}(H)}
{\Bk}
\\
\cong&
(M^j_\CR(\lambda)\otimes_{\CR}
{\overline{\CR}})
\otimes_{\CO_{\overline{\CR}}(H)}
{\Bk}.
\end{align*}
Hence by Lemma \ref{lem:M-inj} it is sufficient to show 
$M^j_{\overline{\CR}}(\lambda)=0$.
This follows from Lemma \ref{lem:N-Weyl} (i) and Proposition \ref{prop:Fp}.
We obtain \eqref{eq:NMR} for $j>0$.

Now consider the case $j=0$.
We first show that the canonical homomorphism
\begin{equation}
\label{eq:NM0}
N^0_\CR(\lambda)
\otimes_{\CO_\CR(H)}
\CO_\CR(H)_\Gm
\to
M^0_\CR(\lambda)
\otimes_{\CO_\CR(H)}
\CO_\CR(H)_\Gm
\end{equation}
is surjective.
Define an $\CO_\CR(H)_\Gm$-module $C$ 
to be the cokernel of \eqref{eq:NM0}.
Applying $(\bullet)\otimes_{\CO_\CR(H)_\Gm}\Bk$
to the exact sequence 
\[
N^0_\CR(\lambda)
\otimes_{\CO_\CR(H)}
\CO_\CR(H)_\Gm
\to
M^0_\CR(\lambda)
\otimes_{\CO_\CR(H)}
\CO_\CR(H)_\Gm
\to C\to 0
\]
we obtain 
\[
(N^0_\CR(\lambda)
\otimes_{\CR}\overline{\CR})
\otimes_{\CO_{\overline{\CR}}(H)}\Bk
\to
(M^0_\CR(\lambda)
\otimes_{\CR}\overline{\CR})
\otimes_{\CO_{\overline{\CR}}(H)}\Bk
\to C\otimes_{\CO_\CR(H)_\Gm}\Bk\to 0.
\]
By  Lemma \ref{lem:N-Weyl} (ii) we have
$N^0_\CR(\lambda)
\otimes_{\CR}\overline{\CR}\cong 
N^0_{\overline{\CR}}(\lambda)
$.
By Lemma \ref{lem:M-inj} the canonical homomorphism $M^0_\CR(\lambda)
\otimes_{\CR}\overline{\CR}\to
M^0_{\overline{\CR}}(\lambda)$ is injective.
Moreover, $N^0_{\overline{\CR}}(\lambda)\cong M^0_{\overline{\CR}}(\lambda)$ by Proposition \ref{prop:Fp}.
Hence by the commutative diagram 
\[
\xymatrix{
N^0_{\CR}(\lambda)\otimes_{\CR}\overline{\CR}
\ar[r]
\ar[d]_{\cong}
&
M^0_{\CR}(\lambda)\otimes_{\CR}\overline{\CR}
\ar@{^{(}-_>}[d]
\\
N^0_{\overline{\CR}}(\lambda)
\ar[r]^{\cong}
&
M^0_{\overline{\CR}}(\lambda)
}
\]
we obtain $N^0_{\CR}(\lambda)\otimes_{\CR}\overline{\CR}
\cong
M^0_{\CR}(\lambda)\otimes_{\CR}\overline{\CR}$.
Hence 
$C\otimes_{\CO_\CR(H)_\Gm}\Bk=0$.
Since $C$ is a finitely generated $\CO_\CR(H)_\Gm$-module by Proposition \ref{prop:FG}, we obtain $C=0$ by Nakayama's lemma.

It remains to show that \eqref{eq:NM0} is injective.
Since $\CO_\CR(H)_\Gm$ is a localization of $\CO_\CR(H)$, it is sufficient to show that 
$N_\CR^0(\lambda)\to M^0_\CR(\lambda)$ is injective.
In general for a commutative $\BA$-algebra $R$ we set
\[
N_{R}=\CO_{R}(G)_\ad\otimes_{\CO_{R}(H)^{W\bullet}}\CO_{R}(H),
\qquad
M_{R}=\Ind^{\Gg,\Gb^-}_{{R}}(\CO_{R}(B^-)_\ad).
\]
By
\[
N_\CR^0(\lambda)
=\Hom_{\CR,\Gg}(V_\CR(\lambda),N_\CR),
\qquad
M_\CR^0(\lambda)
=\Hom_{\CR,\Gg}(V_\CR(\lambda),M_\CR)
\]
it is sufficient to show that the canonical homomorphism $N_\CR\to M_\CR$ is injective.
Set
\begin{align*}
L=\Ker(N_\CR\to M_\CR),
\qquad
L'=\Image(N_\CR\to M_\CR).
\end{align*}
By the exact sequence
\[
0\to L\to N_\CR\to L'\to 0
\]
we obtain
\begin{equation}
\label{eq:ES}
\Tor_1^{\CR}(\overline{\CR},L')\to \overline{\CR}\otimes_{\CR}L\to 
\overline{\CR}\otimes_{\CR}N_\CR\to 
\overline{\CR}\otimes_{\CR}L'\to 0.
\end{equation}

Let us show
\begin{equation}
\label{eq:T0}
\Tor_1^{\CR}(\overline{\CR},L')=0.
\end{equation}
By the exact sequence 
\[
0\to\CR\xrightarrow{q-1}\CR\to \overline{\CR}\to0
\]
we have
\[
\Tor_1^{\CR}(\overline{\CR},L')
=\Ker(L'\xrightarrow{q-1} L').
\]
Note that $L'$ is an $\CR$-submodule of a free $\CR$-module by
\[
L'\subset \Ind^{\Gg,\Gb^-}_{\CR}(\CO_\CR(B^-)_\ad)
\subset \CO_\CR(G)\otimes_\CR\CO_\CR(B^-)_\ad.
\]
Hence we have $\Ker(L'\xrightarrow{q-1} L')=0$.
We have shown \eqref{eq:T0}.

Next let us show that 
\begin{equation}
\label{eq:INJ}
0\to \overline{\CR}\otimes_{\CR}N_\CR
\to
\overline{\CR}\otimes_{\CR}L'
\end{equation}
is exact.
Note $\overline{\CR}\otimes_{\CR}N_\CR(\lambda)\cong N_{\overline{\CR}}(\lambda)$.
By Proposition \ref{prop:Fp} we have $N_{\overline{\CR}}\cong M_{\overline{\CR}}$.
Hence \eqref{eq:INJ} is a consequence of the commutative diagram
\[
\xymatrix{
\overline{\CR}\otimes_{\CR}N_\CR
\ar[dd]_{\cong}
\ar[r]
&
\overline{\CR}\otimes_{\CR}L'
\ar[d]
\\
&
\overline{\CR}\otimes_{\CR}M_\CR
\ar[d]
\\
N_{\overline{\CR}}
\ar[r]^{\cong}
&
M_{\overline{\CR}}.
}
\]

By \eqref{eq:T0} and \eqref{eq:INJ} we see from \eqref{eq:ES} that $\overline{\CR}\otimes_\CR L=0$.
Namely, $(q-1)L=L$.
Since $L$ is an $\CR$-submodule of the free $\CR$-module  $N_\CR$, we obtain from the condition (3) that
\[
L= \bigcap_{n=0}^\infty (q-1)^nL
\subset \bigcap_{n=0}^\infty (q-1)^n N_\CR=\{0\}.
\]
Hence $N_\CR\to M_\CR$ is injective.

We have proved \eqref{eq:NMR}.
The proof of Theorem \ref{thm:main1} is now complete.

\section{Application to the representation theory}
\subsection{}
In this section we use the notation of Section \ref{sec:review}.
Let $\ell>1$ be a positive integer satisfying (a1), (a2), (a3), and set $\zeta=\zeta_\ell\in\BC^\times$.
\subsection{}
Let us first give a description of the total center $Z(U_\zeta)$ of $U_\zeta$ following \cite{DK1}, \cite{DP}.
Besides the Harish-Chandra center $Z_\Har(U_\zeta)$, which is isomorphic to $\CO_\BC(H)^{W\bullet}$,  we have a big central subalgebra $Z_{\Fr}(U_\zeta)$, called the Frobenius center.
It is naturally isomorphic to the coordinate algebra $\CO_\BC(K)$ of the algebraic group
\[
K=\{(g_+h, g_-h^{-1})\mid g_{\pm}\in N^\pm,\; h\in H\} 
\subset B^+\times B^-.
\]
Then the natural map
\[
\CO_\BC(K)\otimes\CO_\BC(H)^{W\bullet}
\cong
Z_\Fr(U_\zeta)\otimes Z_{\Har}(U_\zeta)
\to Z(U_\zeta)
\]
induces an isomorphism
\begin{equation}
Z(U_\zeta)\cong
\CO_\BC(K)\otimes_{\CO_\BC(H)^{W}}
\CO_\BC(H)^{W\bullet}
\end{equation}
of $\BC$-algebras, where the algebra homomorphisms 
\[
\CO_\BC(H)^{W}\to \CO_\BC(H)^{W\bullet},
\qquad
\CO_\BC(H)^{W}\to \CO_\BC(K)
\]
are defined as follows.
Note that $\CO_\BC(H)^{W\bullet}$ and $\CO_\BC(H)^{W}$ are naturally regarded as the coordinate algebras of the affine varieties 
$H/W\bullet$ and $H/W$ respectively.
The algebra homomorphism $\CO_\BC(H)^{W}\to \CO_\BC(H)^{W\bullet}$ corresponds to the morphism 
$
p_\ell:H/W\bullet \to H/W
$
induced by $H\ni t\mapsto t^\ell\in H$.
For $g\in G$ its semisimple part $g_{s}$ is conjugate to some $h\in H$, which is uniquely determined up to $W$-action.
This gives a morphism $\pi:G\to H/W$ of algebraic varieties.
Define a morphism $\omega:K\to G$ of algebraic varieties by 
$\omega(x_1, x_2)=x_1x_2^{-1}$.
Then the algebra homomorphism 
$\CO_\BC(H)^{W}\to \CO_\BC(K)$ 
corresponds to the morphism $\pi\circ\omega:K\to H/W$.
Hence $Z(U_\zeta)$ is isomorphic to the coordinate algebra of the affine variety
$
K\times_{H/W}{H/W\bullet}
$
with respect to $\pi\circ\omega:K\to H/W$ and $p_\ell:H/W\bullet\to H/W$.

Recall that for $t\in H$ we have 
$U_{\zeta,t}=U_\zeta/U_\zeta\Ker(\xi_{\Har,t})$, where
$\xi_{\Har,t}:Z_\Har(U_\zeta)\to\BC$ is the character corresponding to the point $[t]\in H/W\bullet$. 
For $k\in K$ define a character $\xi_{\Fr,k}:Z_{\Fr}(U_\zeta)\to\BC$ as the composit of 
$Z_{\Fr}(U_\zeta)\cong\CO_\BC(K)\xrightarrow{k}\BC$, 
and set $U_\zeta(k)=U_\zeta/U_\zeta\Ker(\xi_{\Fr,k})$.
By the above description of the center, 
$\xi_{\Har,t}$ and $\xi_{\Fr,k}$ are compatible if and only if $t^\ell$ is conjugate to $\omega(k)_{s}$ in $G$.
In this case we obtain a character $\xi_{k,t}:Z(U_\zeta)\to\BC$ corresponding to the point $(k,[t])\in K\times_{H/W}(H/W\bullet)$.
We set 
$U_\zeta(k,t)=U_\zeta/U_\zeta\Ker(\xi_{k,t})$.
\subsection{}
Now we consider the representation theory of $U_\zeta$.

A version of Schur's Lemma tells us that 
if $M$ is an irreducible $U_\zeta$-module, then any central element acts on $M$ by a scalar multiplication.
In particular, there exists some $k\in K$ such that $M$ is an irreducible $U_\zeta(k)$-module.
So we should consider the category $\Mod(U_\zeta(k))$ for each $k\in K$.
Since $U_\zeta(k)$ is finite-dimensional, any irreducible $U_\zeta$-module is finite-dimensional.

It is known from the theory of  the quantum coadjoint action (\cite{DKP}) that 
for $k, k'\in K$ we have
\begin{equation}
\label{eq:qcoad}
\omega(k)=g\omega(k')g^{-1} \;\;(\exists g\in G)
\;\;\Longrightarrow\;\;
U_\zeta(k)\cong U_\zeta(k').
\end{equation}
Hence for each conjugacy class $C$ in $G$ we only need to 
consider $\Mod(U_\zeta(k))$ for a single $k\in K$ satisfying $\omega(k)\in C$.
\begin{remark}
It is proved in \cite{DKP} that  
$U_\zeta(k)\cong U_\zeta(k')$ if there exists a conjugacy class $C$ of $G$ such that $k$ and $k'$ are contained in the same connected component of $\omega^{-1}(C)$.
This together with \cite[Theorem 16.2]{DP}
implies \eqref{eq:qcoad} when $\omega(k)$ is not contained in the center of $G$.
In the case $\omega(k)$ is contained in the center, we can directly check \eqref{eq:qcoad} using the automorphism $F$ of $U_\BF(\Gg)$ given by 
\[
F(k_\lambda)=\epsilon_\lambda k_\lambda
\quad(\lambda\in P),
\qquad
F(e_i)=e_i,
\quad
F(f_i)=\epsilon_{\alpha_i}f_i
\quad(i\in I),
\]
where $\epsilon_\lambda\in\{\pm1\}$ ($\lambda\in P$) satisfy
\[
\epsilon_\lambda\epsilon_\mu=\epsilon_{\lambda+\mu}
\quad(\lambda, \mu\in P),
\qquad
\epsilon_0=1.
\]
\end{remark}

We say that a semisimple element $g\in G$ is exceptional if the rank of the root system of its 
centralizer $Z_G(g)$ coincides with that of $\Delta$.
It follows from the theory of parabolic induction (see \cite{DK2}) that 
in considering the representation theory of $U_\zeta(k)$ we are reduced to the exceptional case.
Namely, we may assume that $\omega(k)_{s}$ is exceptional.
The representation theory in the non-exceptional  case is the same as that in the  exceptional case for a certain smaller quantized enveloping algebra.
Note that for each $G$ there exist only finitely many exceptional semisimple conjugacy classes.

We fix an exceptional semisimple element $h_0\in H$ and $k_0\in K$ such that $\omega(k_0)_{s}=h_0$.
We will  consider the representation theory of $U_\zeta(k_0,t)$ for $t\in H$ satisfying $t^\ell=h_0$ 
in the following. 
We set $G_0=Z_G(h_0)$, and denote by $\Delta_0$ the root system of $G_0$.
Then $\Delta_0$ is a root subsystem of $\Delta$ such that 
$\sum_{\alpha\in\Delta}\BQ\alpha=
\sum_{\alpha\in\Delta_0}\BQ\alpha$.
We set 
$Q_0=\sum_{\alpha\in\Delta_0}\BZ\alpha\subset Q$.
\subsection{}
Let us give an application of our results on $\DD_{\CB_\zeta,t}$ 
to the representation theory of $U_\zeta$.

In \cite{T1} we have shown under the conditions (b1), (b2) that for any $t\in H$ the sheaf 
$\DD_{\CB_\zeta,t}$ is a split Azumaya algebra over a certain central subalgebra;
however, 
in view of the arguments of \cite{T1}, in order to ensure the split Azumaya property of $\DD_{\CB_\zeta, t}$ just for $t\in H$ satisfying $t^\ell=h_0$, we have only to assume 
\begin{itemize}
\item[(a4)]
$\ell$ is prime to $Q/Q_0$
\end{itemize}
instead of (b1), (b2).
We will assume (a4) in the following.

Let us consider the Beilinson-Bernstein derived equivalence.
We obtain the following from Lemma \ref{lem:imp}, Proposition \ref{prop:conj-eq}  and Theorem \ref{thm:main}.
\begin{theorem}
\label{thm:main2}
Let $t\in H$, and assume that there exists a Noetherian $\BA$-subalgebra $\CR$ of $\BC_\zeta$ satisfying the following conditions:
\begin{itemize}
\item[(1)] $t\in H(\CR)$;
\item[(2)] the ring $\overline{\CR}=\CR/(q-1)$ is non-zero and contains a field;
\item[(3)] $\bigcap_n(q-1)^n\CR=\{0\}$.
\end{itemize}
Then
 Conjecture \ref{conj:D} holds for $\zeta=\zeta_\ell$ and $t\in H$.
 Namely,  the natural homomorphism \eqref{eq:canont} is an isomorphism and we have 
\[
H^j(\CB_\zeta,\DD_{\CB_\zeta,t})=0
\qquad(j>0). 
\]
\end{theorem}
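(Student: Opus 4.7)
The plan is to observe that Theorem \ref{thm:main2} is a purely formal consequence of the machinery already developed, chaining together Theorem \ref{thm:main}, Proposition \ref{prop:conj-eq}, and Lemma \ref{lem:imp}. The hypotheses on $\CR$ in the statement of Theorem \ref{thm:main2} are exactly those required to invoke Theorem \ref{thm:main} with $\CK = \BC_\zeta$.

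First, I would apply Theorem \ref{thm:main} to the data $(\CK,\CR,t) = (\BC_\zeta,\CR,t)$. This yields directly that the canonical homomorphism
\[
\CO_{\BC_\zeta}(G)_{\ad,t}\;\longrightarrow\;\Ind^{\Gg,\Gb^-}_{\BC_\zeta}\!\bigl(\CO_{\BC_\zeta}(B^-)_{\ad,t}\bigr)
\]
is an isomorphism and that $R^j\Ind^{\Gg,\Gb^-}_{\BC_\zeta}(\CO_{\BC_\zeta}(B^-)_{\ad,t}) = 0$ for all $j>0$. One needs to observe that the natural morphism in Theorem \ref{thm:main} coincides with the Frobenius-reciprocity map $F_t$ of Conjecture \ref{conj:IVt}; this is tautological since both arise from the restriction $r_t\colon \CO_{\BC_\zeta}(G)_{\ad,t}\to \CO_{\BC_\zeta}(B^-)_{\ad,t}$. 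Hence Conjecture \ref{conj:IVt} is established in the case $R = \BC_\zeta$.

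Next, by Proposition \ref{prop:conj-eq}, the validity of Conjecture \ref{conj:IVt} for $R = \BC_\zeta$ is equivalent to the validity of Conjecture \ref{conj:Dft} for $\zeta=\zeta_\ell$ and the given $t$. This equivalence rests on the geometric description supplied by Propositions \ref{prop:rest1} and \ref{prop:rest2}, under which the localization functor $\CL$ identifies ${}^f\DD_{\CB_\zeta,t}$ with $V_{\BC_\zeta,t}\cong \CO_{\BC_\zeta}(B^-)_{\ad,t}$ and intertwines $R^j\Ind^{\Gg,\Gb^-}_{\BC_\zeta}$ with $H^j(\CB_\zeta,\bullet)$. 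Finally, Lemma \ref{lem:imp} yields Conjecture \ref{conj:D} from Conjecture \ref{conj:Dft}.

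There is essentially no obstacle in this last deduction: all of the substantive content has been absorbed into Theorem \ref{thm:main} (where the canonical-basis arguments and the reduction modulo $q-1$ to the classical situation of Proposition \ref{prop:Fp0} are carried out) together with the equivariant equivalence $\CL$ from \cite{T2}. The only point requiring care is to track that the three conjectural statements \ref{conj:IVt}, \ref{conj:Dft}, \ref{conj:D} match precisely under the chain of identifications, but this is bookkeeping rather than mathematics.
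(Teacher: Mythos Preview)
Your proposal is correct and follows exactly the approach of the paper: the paper states Theorem \ref{thm:main2} as an immediate consequence of Theorem \ref{thm:main}, Proposition \ref{prop:conj-eq}, and Lemma \ref{lem:imp}, and you have spelled out precisely this chain of implications. Your remark that the natural map in Theorem \ref{thm:main} coincides with $F_t$ is the only point beyond pure citation, and it is indeed tautological from the construction of $F_t$ via Frobenius reciprocity from $r_t$.
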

Unfortunately an $\BA$-subalgebra $\CR$ of $\BC_\zeta$ satisfying the condition (2) of Theorem \ref{thm:main2} exists only when $\ell$ is a prime power.
In the rest of this paper we assume in addition to (a1), \dots, (a4) that
\begin{itemize}
\item[(a5)]
$\ell$ is a 
power of a prime number $p$.
\end{itemize}
\begin{lemma}
\label{lem:zen}
For $t\in H$ satisfying $t^\ell=h_0$
there exists an $\BA$-subalgebra $\CR$ of $\BC_\zeta$ satisfying the conditions $(1), (2), (3)$ of Theorem \ref{thm:main2}.
\end{lemma}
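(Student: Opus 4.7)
The plan is to take $\CR$ to be the ring of integers of a cyclotomic number field, chosen large enough to contain both $\zeta$ and all character values $\chi_\lambda(t)$ for $\lambda\in P$. Since the paper assumes the order $N_0$ of $h_0=t^\ell$ is finite and prime to $\ell=p^m$, the element $t$ itself has finite order $M$ dividing $\ell N_0$. Writing $M=p^a b$ with $\gcd(b,p)=1$ gives $a\leqq m$ and $b\mid N_0$. I would then set $N=p^m b$, pick a primitive $N$-th root of unity $\omega\in\BC$ so that $\zeta=\omega^{N/\ell}$, and define
\[
\CR=\BZ[\omega]\subset\BC,
\]
regarded as an $\BA$-subalgebra of $\BC_\zeta$ via the ring map $q\mapsto\zeta\in\CR$. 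Since $\CR$ is the ring of integers of the cyclotomic field $\BQ(\omega)$, it is a Dedekind domain, in particular Noetherian.

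Condition (1) is then immediate: each $\chi_\lambda(t)$ is an $M$-th root of unity, hence lies in $\BZ[\omega]^\times=\CR^\times$, so $t$ determines an $\CR$-algebra homomorphism $\CO_\CR(H)\to\CR$, i.e.\ an element of $H(\CR)$. Condition (3) reduces, once (2) is established (so that $(1-\zeta)\CR$ is a proper ideal), to Krull's intersection theorem applied to the Noetherian domain $\CR$.

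The crux is condition (2), which I would handle via two classical facts from cyclotomic number theory. First, the identity $(1-\zeta)^{\phi(p^m)}=u\cdot p$ in $\BZ[\zeta]$ for some unit $u\in\BZ[\zeta]^\times$ puts $p$ into $(1-\zeta)\CR$, so the canonical map $\BZ\to\overline{\CR}=\CR/(1-\zeta)\CR$ factors through $\BF_p$. Second, because $\gcd(b,p)=1$, the extension $\BQ(\omega)/\BQ(\zeta)$ is unramified at the prime above $p$, and so $N_{\BQ(\omega)/\BQ}(1-\zeta)$ is a nontrivial power of $p$; in particular $1-\zeta$ is not a unit in $\CR$, so $\overline{\CR}\neq 0$, and the induced map $\BF_p\hookrightarrow\overline{\CR}$ exhibits $\overline{\CR}$ as a ring containing a field.

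The only real conceptual step is the choice of $\CR$; once this cyclotomic ring is identified, the verifications are essentially routine. The argument also makes transparent why the prime-power hypothesis (a5) is essential: for composite $\ell$ one has $\Phi_\ell(1)=1$, so each $1-\zeta_\ell^k$ with $\gcd(k,\ell)=1$ is already a unit in $\BZ[\zeta_\ell]$, and no $\CR\subset\BC_\zeta$ satisfying condition (2) can exist, exactly matching the remark preceding the lemma.
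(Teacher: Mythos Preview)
Your proof is correct and follows essentially the same approach as the paper: both take $\CR$ to be a ring of cyclotomic integers $\BZ[\zeta_N]$ with $N$ chosen so that $\ell\mid N$ and all values $\chi_\lambda(t)$ lie in $\CR$ (the paper uses $N=\ell m$ with $m$ the order of $h_0$, you use the possibly smaller $N=\ell b$). The verifications differ only cosmetically---the paper computes $\overline{\CR}\cong\BF_p[x]/(f_m(x))$ explicitly where you invoke the norm, and it proves (3) by a direct ascending-chain argument where you cite Krull's intersection theorem---but the underlying idea is identical.
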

\begin{proof}
Since $h_0$ is exceptional, it has finite order $m$ which is prime to $\ell$ by (a2), (a4) (see \cite{DK2}).
Hence we have $t^{\ell m}=1$.
We set 
\[
\CR=\BZ[\zeta_{\ell m}]\subset \BC.
\]
Then $\CR$ is obviously an $\BA$-subalgebra of $\BC_\zeta$ satisfying  $t\in H(\CR)$.
By $(m,\ell)=1$ the canonical homomorphism 
$\BQ(\zeta_\ell)\otimes_\BQ\BQ(\zeta_m)\to\BQ(\zeta_{\ell m})$ induced by the multiplication is an isomorphism, and hence 
\[
\CR\cong\BZ[\zeta_\ell]\otimes_\BZ\BZ[\zeta_m]
\cong
\BZ[q^{\pm1}]/(f_\ell(q))\otimes_\BZ\BZ[\zeta_m],
\]
where $f_n(q)$ denotes the $n$-th cyclotomic polynomial.
Hence we have
\[
\overline{\CR}=\CR/(q-1)
\cong
\BZ/(f_\ell(1))\otimes_\BZ\BZ[\zeta_m]
\cong
\BF_p\otimes_\BZ\BZ[\zeta_m]
\cong
\BF_p[x]/(f_m(x)).
\]
It follows that $\overline{\CR}$ is non-zero and contains the field $\BF_p$.
Let us show
$\bigcap_n(q-1)^n\CR=\{0\}$.
By the above argument
it is sufficient to show $\bigcap_n(\zeta_\ell-1)^n\BZ[\zeta_\ell]=\{0\}$.
Assume $\bigcap_n(\zeta_\ell-1)^n\BZ[\zeta_\ell]$
contains a non-zero element $a$.
Since $\BZ[\zeta_\ell]$ is an integral domain, we can write 
uniquely $a=(\zeta_\ell-1)^na_n$ for $a_n\in\BZ[\zeta_\ell]$.
Then we have an increasing sequence 
\[
(a_1)\subset(a_2)\subset\cdots\
\]
of ideals of $\BZ[\zeta_\ell]$.
Since $\BZ[\zeta_\ell]$ is Noetherian, there exists some $n$ such that $(a_n)=(a_{n+1})$.
By $a_n=(\zeta_\ell-1)a_{n+1}$ and $a_n\ne0$ we obtain $\zeta_\ell-1\in \BZ[\zeta_\ell]^\times$, which contradicts with 
\[
\BZ[\zeta_\ell]/(\zeta_\ell-1)
\cong
(\BZ[q^{\pm1}]/(f_\ell(q)))\otimes_{\BZ[q^{\pm1}]}
\BZ[q^{\pm1}]/(q-1)
\cong
\BZ/(f_\ell(1))\cong\BF_p\ne0.
\]
\end{proof}
By Lemma \ref{lem:zen} and Proposition \ref{prop:D-eq}
we obtain the following.
\begin{theorem}
\label{thm:D-equiv}
Let $t\in H$ be a regular element satisfying $t^\ell=h_0$.
Then we have an equivalence 
\[
D^b(\Mod_{\coh}(\DD_{\CB_\zeta,t}))
\cong
D^b(\Mod_{f}(U_{\zeta,t}))
\]
of triangulated  categories.
\end{theorem}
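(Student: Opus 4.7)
The plan is to observe that Theorem \ref{thm:D-equiv} is essentially a direct consequence of the three results that have already been assembled in the paper, so the proof amounts to checking that the hypotheses chain together. First I would invoke Lemma \ref{lem:zen} to produce, for the given $t \in H$ with $t^\ell = h_0$, a Noetherian $\BA$-subalgebra $\CR \subset \BC_\zeta$ satisfying conditions (1), (2), (3) of Theorem \ref{thm:main2}. This is where assumption (a5) (that $\ell$ is a prime power) enters decisively, because otherwise $\overline{\CR} = \CR/(q-1)$ would be zero and condition (2) would fail.

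Next I would feed this $\CR$ into Theorem \ref{thm:main2}, which yields Conjecture \ref{conj:D} for $\zeta$ and $t$, that is, the natural map $U_{\zeta,t} \to \Gamma(\CB_\zeta,\DD_{\CB_\zeta,t})$ is an isomorphism and $H^j(\CB_\zeta,\DD_{\CB_\zeta,t}) = 0$ for $j > 0$. This cohomological input is the substantive content of the paper, funneled through the equivalence between Conjecture \ref{conj:D} and the cohomological statement about $R\Ind^{\Gg,\Gb^-}$ via Proposition \ref{prop:conj-eq} together with Lemma \ref{lem:imp}.

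Finally, since $t$ is assumed regular (i.e.\ $|W\bullet t| = |W|$) and the integer $\ell$ satisfies (a1), (a2), (a3) by standing assumption, I would apply Proposition \ref{prop:D-eq} to deduce the triangulated equivalence
\[
D^b(\Mod_{\coh}(\DD_{\CB_\zeta,t})) \cong D^b(\Mod_f(U_{\zeta,t})).
\]
The split Azumaya property of $\DD_{\CB_\zeta,t}$ invoked implicitly in Proposition \ref{prop:D-eq} is available here thanks to \cite{T1} together with assumption (a4), which as remarked in the paper is enough for the case $t^\ell = h_0$ in place of the stronger (b1), (b2). There is no real obstacle at this final stage; all the difficulty has been absorbed into the proof of Theorem \ref{thm:main1} (and hence \ref{thm:main2}) earlier in the paper.
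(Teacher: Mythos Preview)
Your proposal is correct and matches the paper's own argument exactly: the paper states Theorem \ref{thm:D-equiv} as an immediate consequence of Lemma \ref{lem:zen} (feeding into Theorem \ref{thm:main2} to verify Conjecture \ref{conj:D}) and Proposition \ref{prop:D-eq}. One minor over-specification: Proposition \ref{prop:D-eq} as stated requires only (a1), (a2), (a3), so the split Azumaya property and assumption (a4) are not actually needed at this step---they enter later, for Theorem \ref{thm:K-gp}.
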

We fix a regular element $t_0\in H$ satisfying $t_0^\ell=h_0$
in the following.
Such $t_0$ exists  if $\ell$ is sufficiently large.
For example if $h_0=1$ we can find $t_0$ as above if $\ell$ is not smaller than the Coxeter number of $G$.
The representation theory of $U_\zeta(k_0,t)$ for non-regular $t$ should be deduced from the regular case using the translation principle.

We denote by $u_0\in G_0$ the unipotent part of $\omega(k)$.
Let $B_0^-$  be a Borel subgroup of $G_0$ and set $\CB_0=B_0^-\backslash G_0$.
We define a subvariety $\CB_0^{u_0}$ of $\CB_0$ by
\[
\CB_0^{u_0}
=\{
B_0^-g\in\CB_0\mid
gu_0g^{-1}\in B_0^-\}.
\]
We note that this variety is isomorphic to the subvariety 
\[
\{B^-x\in\CB\mid
x\omega(k)x^{-1}\in h_0N^-\}
\]
of $\CB$.

Similarly to \cite{BMR}, our Theorem \ref{thm:D-equiv} together with the main result of \cite{T1} concerning the split Azumaya property implies  the following.
\begin{theorem}
\label{thm:K-gp}
We have a canonical isomorphism
\[
K(\Mod_f(U_{\zeta}(k_0,t_0)))
\cong 
K(\Mod_\coh(\CO_{\CB_0^{u_0}}))
\]
of the Grothendieck groups, where 
$\Mod_\coh(\CO_{\CB_0^{u_0}})$ denotes the category of coherent $\CO_{\CB_0^{u_0}}$-modules.
\end{theorem}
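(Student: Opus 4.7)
The plan is to adapt the BMR argument of \cite{BMR} to the quantum setting, combining the derived equivalence of Theorem \ref{thm:D-equiv}, the split Azumaya property of $\DD_{\CB_\zeta,t_0}$ from \cite{T1} (valid under (a1)--(a4)), and a geometric identification of a certain Frobenius fiber with the Springer fiber $\CB_0^{u_0}$.

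First I would pass from $U_\zeta(k_0,t_0)$-modules to $U_{\zeta,t_0}$-modules. Since $Z_\Fr(U_\zeta) \cong \CO_\BC(K)$ is central and the character $\xi_{k_0,t_0}$ is the combination of $\xi_{\Har,t_0}$ with $\xi_{\Fr,k_0}$, the category $\Mod_f(U_\zeta(k_0,t_0))$ is the full subcategory of $\Mod_f(U_{\zeta,t_0})$ consisting of modules on which $Z_\Fr$ acts with generalized eigenvalue $k_0$. Dévissage along the closed embedding $\{k_0\} \hookrightarrow K$ identifies $K(\Mod_f(U_\zeta(k_0,t_0)))$ with the Grothendieck group of finitely generated $U_{\zeta,t_0}$-modules set-theoretically supported at $k_0$. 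Transporting this across the equivalence of Theorem \ref{thm:D-equiv}, which is $Z_\Fr$-linear, yields
\[
K(\Mod_f(U_\zeta(k_0,t_0))) \;\cong\; K\bigl(\Mod_{\coh}(\DD_{\CB_\zeta,t_0})^{k_0}\bigr),
\]
where the right-hand side denotes the Grothendieck group of coherent $\DD_{\CB_\zeta,t_0}$-modules supported over the point $k_0 \in K$ via the Frobenius center.

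Next I would invoke the split Azumaya property from \cite{T1}: on the commutative Frobenius scheme over which $\DD_{\CB_\zeta,t_0}$ is centered, the localization of $\DD_{\CB_\zeta,t_0}$ is a split Azumaya algebra under (a1)--(a4). Let $Y$ denote the fiber of this Frobenius scheme over $k_0 \in K$. Restricting to the formal neighborhood of $Y$ and using Morita equivalence via the splitting bundle identifies the category of coherent $\DD_{\CB_\zeta,t_0}$-modules supported on $Y$ with the category of coherent $\CO$-modules on the formal neighborhood of $Y$, which in turn has the same Grothendieck group as $\Mod_\coh(\CO_{Y_{\mathrm{red}}})$ by a further dévissage. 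The geometric input is then the identification $Y_{\mathrm{red}} \cong \CB_0^{u_0}$, which is consistent with the description of $\CB_0^{u_0}$ as $\{B^- x \in \CB \mid x \omega(k_0) x^{-1} \in h_0 N^-\}$ recorded after the definition of $\CB_0^{u_0}$, together with the explicit model of the Frobenius scheme from \cite{T1}.

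The main obstacle will be this last geometric identification: checking that the fiber $Y$ of the Frobenius-center projection over $k_0$ is really $\CB_0^{u_0}$ (rather than some thickening or disjoint union) and that the Morita equivalence from the Azumaya splitting descends compatibly to this fiber. This is exactly where the exceptionality of $h_0$, the condition (a4) that $\ell$ is prime to $|Q/Q_0|$, and the quantum coadjoint action of \cite{DKP} all intervene, each ensuring that the reduction from $G$ to $G_0 = Z_G(h_0)$ and the matching of Frobenius centers on the two sides is clean. Once the identification is in place, the chain of equivalences above assembles into the desired isomorphism of Grothendieck groups.
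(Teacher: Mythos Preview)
Your proposal is correct and follows the same approach as the paper, which does not give a detailed proof but simply states that the result follows ``similarly to \cite{BMR}'' from Theorem \ref{thm:D-equiv} together with the split Azumaya property of \cite{T1}. Your outline---d\'evissage along the Frobenius center, transport via the derived equivalence, Morita equivalence from the Azumaya splitting, and the geometric identification of the Frobenius fiber with the Springer fiber---is exactly the BMR argument the paper is invoking.
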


In particular, the number of simple objects in 
$\Mod_f(U_{\zeta}(k_0,t_0))$ coincides with the rank of 
$K(\Mod_\coh(\CO_{\CB_0^{u_0}}))$, 
which coincides with the dimension of the total cohomology group 
$H^*(\CB_0^{u_0},\BQ)$.

\section*{acknowledgment}
A part of this work was done while the author was staying at East China Normal University in 2019 October
as a Zijiang Professor.
I would like to thank the members of the Department of Mathematics of East China Normal University, especially Bin Shu, for their hospitality.

\bibliographystyle{unsrt}

\begin{thebibliography}{99}
\bibitem{And}
Andersen, H.: 
Tilting modules for algebraic and quantum groups. 
Algebra--representation theory (Constanta, 2000), 1--21, NATO Sci. Ser. II Math. Phys. Chem., {\bf28}, Kluwer Acad. Publ., Dordrecht, 2001. 


\bibitem{APW}
Andersen, H., 
Polo, P., 
Wen, K.: 
Representations of quantum algebras. 
Invent. Math. {\bf104} (1991), no. 1, 1--59.

\bibitem{BK}
Backelin, E., 
Kremnizer, K. 
Localization for quantum groups at a root of unity. 
J. Amer. Math. Soc. {\bf21} (2008), no. 4, 1001--1018.
 
\bibitem{BMR}
Bezrukavnikov, R., 
Mirkovi\'{c}, I.,
Rumynin, D.: 
Localization of modules for a semisimple Lie algebra in prime characteristic.  With an appendix by Bezrukavnikov and Simon Riche. 
Ann. of Math. (2) {\bf167} (2008), no. 3, 945--991.

\bibitem{BMR2}
Bezrukavnikov, R., 
Mirkovi\'{c}, I.,
Rumynin, D.: 
Singular localization and intertwining functors for reductive Lie algebras in prime characteristic. 
Nagoya Math. J. {\bf184} (2006), 1--55.

\bibitem{BM}
Bezrukavnikov, R., 
Mirkovi\'{c}, I.: 
Representations of semisimple Lie algebras in prime characteristic and the noncommutative Springer resolution, With an appendix by Eric Sommers.
Ann. of Math. (2) {\bf178} (2013), no. 3, 835--919.


\bibitem{Cal}
Caldero, P.: 
\'{E}l\'{e}ments ad-finis de certains groupes quantiques. 
C. R. Acad. Sci. Paris S\'{e}r. I Math. {\bf316} (1993), no. 4, 327--
329.


\bibitem{DK1}
De Concini, C., 
Kac, V.: 
Representations of quantum groups at roots of 1. 
Operator algebras, unitary representations, enveloping algebras, and invariant theory (Paris, 1989), 471--506, Progr. Math., 92, Birkhäuser Boston, Boston, MA, 1990. 


\bibitem{DK2}
De Concini, C., 
Kac, V.: 
Representations of quantum groups at roots of 1: reduction to the exceptional case. 
Infinite analysis, Part A, B (Kyoto, 1991), 141--149, Adv. Ser. Math. Phys., {\bf16}, World Sci. Publ., River Edge, NJ, 1992.

\bibitem{DKP}
De Concini, C., 
Kac, V., 
Procesi, C.:  
Quantum coadjoint action. 
J. Amer. Math. Soc. {\bf5} (1992), no. 1, 151--189.

\bibitem{DP}
De Concini, C., 
Procesi, C.: 
Quantum groups. 
$D$-modules, representation theory, and quantum groups (Venice, 1992), 31--140, 
Lecture Notes in Math., 1565, Springer, Berlin, 1993.


\bibitem{DL}
De Concini, C., 
Lyubashenko, V.: 
Quantum function algebra at roots of 1. 
Adv. Math. {\bf108} (1994), 205--262.

\bibitem{HO}
Hartshorne, R., 
Ogus, A.: 
On the factoriality of local rings of small embedding codimension. 
Comm. Algebra {\bf1} (1974), 415--437. 

\bibitem{Ja}
 Jantzen, J. C.:
Representations of algebraic groups. 
Second edition. Mathematical Surveys and Monographs, {\bf107}. 
American Mathematical Society, Providence, RI, 2003.

\bibitem{Jo}
 Joseph, A.:
 Quantum groups and their primitive ideals. 
 Ergebnisse der Mathematik und ihrer Grenzgebiete (3) {\bf29}. Springer-Verlag, Berlin, 1995. x+383 pp.


\bibitem{JL}
Joseph, A., 
Letzter, G.: 
Separation of variables for quantized enveloping algebras. Amer. J. Math. {\bf116} (1994), no. 1, 127--177.




\bibitem{Kas}
Kashiwara, M.: 
Global crystal bases of quantum groups.
Duke Math. J. {\bf69} (1993) 455--485.

\bibitem{LR}
Lunts, V., Rosenberg, A.: 
Localization for quantum groups. 
Selecta Math. (N.S.) {\bf5} (1999), 123--159. 


\bibitem{Lbook}
Lusztig, G.: 
Introduction to quantum groups. 
Progr. Math., {\bf110}, Boston etc. Birk\"hauser, 1993.


\bibitem{LK}
Lusztig, G.: 
Bases in equivariant K-theory II. 
Represent. Th. {\bf3}, (1999), 281--353.

\bibitem{LS}
Lusztig, G.: 
Study of a $\BZ$-form of the coordinate ring of a reductive group. 
J. Amer. Math. Soc. {\bf22} (2009), no. 3, 739--769.

\bibitem{M}
Manin, Yuri I.:
Topics in noncommutative geometry. 
M. B. Porter Lectures. 
Princeton University Press, Princeton, NJ, 1991. viii+164 pp. 

\bibitem{R}
 Rosenberg, Alexander L.: 
 Noncommutative algebraic geometry and representations of quantized algebras. 
Mathematics and its Applications, 330. 
Kluwer Academic Publishers Group, Dordrecht, 1995. xii+315 pp. 




\bibitem{St}
Steinberg, R.: 
Conjugacy classes in algebraic groups. 
Notes by Vinay V. Deodhar. 
Lecture Notes in Mathematics, Vol. 366. Springer-Verlag, Berlin-New York, 1974. vi+159 pp. 

\bibitem{St2}
Steinberg, R: 
On a theorem of Pittie,
Topology {\textbf{14}} (1975), 173--177.

\bibitem{T92}
Tanisaki, T.: 
Killing forms, Harish-Chandra isomorphisms, and universal  $R$-matrices for quantum algebras. 
Infinite analysis, Part A, B (Kyoto, 1991), 941--961, Adv. Ser. Math. Phys., {\bf16}, World Sci. Publ., River Edge, NJ, 1992.

\bibitem{T0}
Tanisaki, T.: 
The Beilinson-Bernstein correspondence for quantized enveloping algebras. 
Math. Z. {\bf250} (2005), 299--361.


\bibitem{T1}
Tanisaki, T.:
Differential operators on quantized flag manifolds at roots of unity. 
Adv. Math. {\bf230} (2012), 2235--2294.


\bibitem{T2}
Tanisaki, T.:
Differential operators on quantized flag manifolds at roots of unity, II. Nagoya Math. J. {\bf214} (2014), 1--52. 


\bibitem{TA}
Tanisaki, T.:
Affine open covering of the quantized flag manifolds at roots of unity.
arXiv 1912.08401.



\bibitem{Xi}
Xi, N.:
Irreducible modules of quantized enveloping algebras at roots of 1. 
Publ. RIMS {\bf32} (1996), 235--276.



\end{thebibliography}

\end{document}